\newlist{hypothenum}{enumerate}{3}
\setlist[hypothenum,1]{label=(\roman*)}
\theoremstyle{plain}
\newtheorem*{theorem*}{Theorem}
\newtheorem{theorem}{Theorem}[subsection]
\newtheorem*{proposition*}{Proposition}
\newtheorem*{lemma*}{Lemma}
\newtheorem*{lemmabis*}{Lemma bis}
\newtheorem{lemma}[theorem]{Lemma}
\newtheorem*{corollary*}{Corollary}
\newtheorem{corollary_global}{Corollary}
\newtheorem*{claim*}{Claim}
\theoremstyle{definition}
\newtheorem*{definition*}{Definition}
\newtheorem*{example*}{Example}
\theoremstyle{remark}
\newtheorem*{remark*}{Remark}
\newtheorem*{remarks*}{Remarks}
\numberwithin{equation}{section}
\newcommand{\eps}{\varepsilon}
\newcommand{\setsuch}[2]{\left\{ #1 \; \middle| \; #2 \right\}} 
\newcommand{\restr}[2]{{\left. #1 \right|}_{#2}} 
\newcommand{\transpose}[1]{{}^t#1}
\newcommand{\bigo}{\mathcal{O}}
\DeclareMathOperator{\rank}{rank}
\DeclareMathOperator{\Image}{Im}
\DeclareMathOperator{\Ker}{Ker}
\DeclareMathOperator{\Ad}{Ad}
\DeclareMathOperator{\End}{End}
\DeclareMathOperator{\GL}{GL}
\DeclareMathOperator{\SL}{SL}
\DeclareMathOperator{\PSL}{PSL}
\DeclareMathOperator{\SU}{SU}
\newcommand{\fundef}[5]{
\entrymodifiers={+!!<0pt,\fontdimen22\textfont2>}
\xymatrix@R=3pt{\llap{$#1$\;\;} {#2} \ar@{->}[r] & {#3} \\ {#4} \ar@{|->}[r] & {#5}}
} 
\newcommand{\ie}{i.e.\ }
\begin{document}

\title{Asymptotic properties of linear groups}
\author{Yves Benoist\footnote{Translated from French by Ilia Smilga. Original title: Propriétés asymptotiques des groupes linéaires, {\em Geom. and funct. anal.}, 7:1--47, 1997.}}
\date{\vspace{-5ex}} 
\maketitle

\begin{abstract}
Let $G$ be a reductive linear real Lie group and $\Gamma$ be a Zariski dense subgroup. We study asymptotic properties of $\Gamma$ through the set of logarithms of the radial components of the elements of $\Gamma$: we prove that the asymptotic cone of this set is a convex cone with non empty interior and is stable by the Cartan involution. Conversely, any closed convex cone of the positive Weyl chamber whose interior is non empty and which is stable by the opposition involution can be obtained this way.

We relate this limit cone and the limit set of $\Gamma$ to the set of open semigroups of $G$ which meet $\Gamma$.

We also prove similar results over any local fields.
\end{abstract}

\section{Introduction}
\label{sec:1}

The goal of this paper is to study certain asymptotic properties of the subgroups~$\Gamma$ of the linear group~$\GL(V)$ of a finite-dimensional vector space~$V$ over the field $k = \mathbb{R}$ (and more generally over a local field~$k$) when $V$~is completely reducible, \ie a direct sum of irreducible invariant subspaces. In this case, the Zariski closure of~$\Gamma$ is reductive.

In other terms, we study asymptotic properties of Zariski-dense subgroups of the group~$G$ of $k$-points of a reductive $k$-group.

\makeatletter
\renewcommand\subsection{\@startsection{subsection}{2}{\z@}%
                                     {-3.25ex\@plus -1ex \@minus -.2ex}%
                                     {-1.5ex \@plus .2ex}%
                                     {\normalfont\large\bfseries}}
\makeatother 

\subsection{}
\label{sec:1.1}
Let $G$~be a connected linear reductive real Lie group, $Z$~its center, $A_G$~a Cartan subspace of~$G$, $A^+$~a closed Weyl chamber of~$A_G$ and $K$~a maximal compact subgroup of~$G$ for which we have the Cartan decomposition: $G = K A^+ K$. Let us denote by $\mu: G \to A^+$ the Cartan projection: for $g$ in~$G$, $\mu(g)$ is the radial component of~$g$, \ie the unique element of $A^+ \cap K g K$. The logarithm map~$\log$ identifies $A_G$ with its Lie algebra~$\mathfrak{a}$. Let us denote by $\mathfrak{a}^+ := \log A^+$ the Weyl chamber of~$\mathfrak{a}$.

Let $\Gamma$ be a Zariski-dense subsemigroup of~$G$. We are interested in the asymptotic properties of~$\Gamma$. Since $\mu$ is a proper and continuous map, some of these properties can be read off the asymptotic properties of the set $\log(\mu(\Gamma))$ of the logarithms of the radial components of the elements of~$\Gamma$. Our goal is to describe the asymptotic cone to this set, \ie the cone of~$\mathfrak{a}$ formed by the limit directions of the sequences of elements in this set that go away to infinity. For this, let us introduce a few notations.

Let $\lambda: G \to A^+$ be the natural projecion that comes from the Jordan decomposition and $\imath : \mathfrak{a}^+ \to \mathfrak{a}^+$ be the opposition involution: for $g$ in~$G$, $\lambda(g)$~is the unique element of $A^+$ that is conjugate to the hyperbolic component $g_h$ of~$g$; for $X$ in~$\mathfrak{a}^+$, $\imath(X)$~is the unique element of~$A^+$ that is conjugate to~$-X$. Let $\ell_\Gamma$ denote the smallest closed cone of~$\mathfrak{a}^+$ that contains $\log(\lambda(\Gamma))$. We shall call it the limit cone of~$\Gamma$. When $\Gamma$~is a subgroup of~$G$, this cone is invariant by the opposition involution.

\subsection{}
\label{sec:1.2}
One of the main results of this paper is the following :
\begin{theorem*}
Let $G$ be a connected semisimple linear real Lie group.
\begin{enumerate}[label=\alph*)]
\item Let $\Gamma$ be a Zariski-dense subsemigroup of~$G$. Then:
\begin{enumerate}[label=\greek*)]
\item The asymptotic cone to $\log(\mu(\Gamma))$ is the limit cone $\ell_\Gamma$.
\item The limit cone $\ell_\Gamma$ is convex and has nonempty interior.
\end{enumerate}
\item Conversely, suppose $G$ is not compact. Let $\Omega$ be a closed convex cone with nonempty interior in~$\mathfrak{a}^+$.
\begin{enumerate}[label=\greek*)]
\item Then there exists a Zariski-dense discrete subsemigroup $\Gamma$ of~$G$ such that ${\ell_\Gamma = \Omega}$.
\item If additionally $\Omega$ is stable by the opposition involution, there exists a Zariski-dense discrete subgroup $\Gamma$ of~$G$ such that $\ell_\Gamma = \Omega$.
\end{enumerate}
\end{enumerate}
\end{theorem*}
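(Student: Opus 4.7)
The plan is to tackle both parts by a careful analysis of the Jordan projection of products of loxodromic elements, specifically a product formula
\[
\log\mu(\gamma_1^{m_1}\gamma_2^{m_2}) = m_1\log\lambda(\gamma_1) + m_2\log\lambda(\gamma_2) + O(1),
\]
valid for any two loxodromic $\gamma_1,\gamma_2 \in G$ whose attracting and repelling flags are pairwise transversal (and in a suitable ``generic position''), as $m_1,m_2 \to \infty$; the product is itself loxodromic with Jordan projection satisfying the same asymptotic. The mechanism is hyperbolic dynamics on generalized flag varieties: a high power $\gamma_i^{m_i}$ acts, off a neighborhood of its repelling flag, as an almost-projection onto its attracting flag, with Cartan-subalgebra coordinates governed by $m_i\log\lambda(\gamma_i)$; composing two such near-projections in generic position yields an operator conjugate to an element of $A^+$ whose radial part is the claimed sum up to bounded error.

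\textbf{Part (a).} The inclusion $\ell_\Gamma \subseteq$ (asymptotic cone of $\log\mu(\Gamma)$) is immediate from $\log\lambda(\gamma) = \lim_{n\to\infty}\log\mu(\gamma^n)/n$. For convexity and nonempty interior of $\ell_\Gamma$: Zariski density produces loxodromic elements of $\Gamma$ whose attracting/repelling flags are pairwise transversal and whose Jordan projections span $\mathfrak{a}$; the product formula applied with free ratios $m_1/m_2$ then realizes every direction in $\mathbb{R}_{\geq 0}\log\lambda(\gamma_1) + \mathbb{R}_{\geq 0}\log\lambda(\gamma_2)$ as a Jordan-projection direction, which yields convexity, and the spanning condition gives nonempty interior. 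For the reverse inclusion, given $\gamma_n \in \Gamma$ with $\|\log\mu(\gamma_n)\| \to \infty$ and direction converging to $X$, one sandwiches $\gamma_n$ between two fixed generic loxodromic elements of $\Gamma$ and reapplies the product formula to realize $X$ as a limit of Jordan-projection directions, proving that the asymptotic cone of $\log\mu(\Gamma)$ is contained in $\ell_\Gamma$.

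\textbf{Part (b).} Given $\Omega$, choose a countable set of directions $(X_n)_{n \geq 1}$ dense in $\Omega$ and meeting each extremal ray. Since $G$ is semisimple noncompact, for each $n$ there is a loxodromic $h_n \in G$ with $\log\lambda(h_n)$ proportional to $X_n$. After conjugating each $h_n$, arrange the family to have pairwise transversal attracting/repelling flags in generic position; then replace $h_n$ by $h_n^{N_n}$ with $N_n$ increasing fast enough that $\Gamma = \langle h_n^{N_n} : n \geq 1 \rangle^+$ satisfies a Schottky/ping-pong configuration on an appropriate flag variety, hence is free and discrete. Zariski density is ensured by adjoining finitely many generic elements (taken as high powers, to preserve control of the limit cone). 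The product formula shows that the Jordan projections of words in the $h_n^{N_n}$ accumulate exactly on the closed convex cone generated by $\{N_n\log\lambda(h_n)\}$, which by density of the $X_n$ equals $\Omega$. For $\beta$), if $\Omega$ is $\imath$-invariant one uses the full group; the inverses $h_n^{-N_n}$ are loxodromic with Jordan projection $N_n\,\imath(\log\lambda(h_n)) \in \Omega$, so no directions outside $\Omega$ appear.

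\textbf{Main obstacle.} The decisive technical step is the product formula for Jordan projections: making ``generic position'' quantitatively precise and uniform, and ensuring the $O(1)$ error is uniform in $m_1,m_2$. This requires quantitative contraction/expansion estimates on all relevant partial flag varieties, together with compatibility of those contractions with Cartan-subalgebra coordinates. Once that is in hand, the ping-pong, Zariski-density, and density-of-directions arguments become standard.
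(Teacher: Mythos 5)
Your proposal follows essentially the paper's route for everything except one step, and on that step it has a genuine gap.

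The product formula you describe — $\log\mu(\gamma_1^{m_1}\gamma_2^{m_2}) = m_1\log\lambda(\gamma_1)+m_2\log\lambda(\gamma_2)+O(1)$ under quantitative transversality — is exactly the content of the paper's machinery in \ref{sec:4.1}--\ref{sec:4.5} ($(\theta,\underline{\eps})$-Schottky subsemigroups, Lemma~\ref{sec:4.1}, and the $(\theta,\eps)$-proximal estimates). Your convexity argument, your reverse-inclusion sandwich (which the paper proves in the sharper form $\mu(\Gamma)\subset\lambda(\Gamma)+N$ using the Abels--Margulis--Soifer finite subset $F$, Lemmas~\ref{sec:4.5} and~\ref{sec:4.6}), and your construction of the Schottky (semi)group for part~(b) from a dense set of directions inside $\Omega$ are all the same strategy as \ref{sec:4.4}, \ref{sec:4.6} and \ref{sec:5.1}.

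The gap is in the nonempty-interior claim of a.$\beta$). You write that ``Zariski density produces loxodromic elements of $\Gamma$ whose Jordan projections span $\mathfrak{a}$,'' and infer nonempty interior from that. But the existence of such spanning Jordan projections is \emph{equivalent} (once convexity is known) to the nonempty-interior statement itself; it is not a formal consequence of Zariski density. The map $g\mapsto\log\lambda(g)$ is not Zariski-constructible — it involves moduli of eigenvalues — so you cannot argue that the vanishing of a linear form on $\ell_\Gamma$ is a Zariski-closed condition. One really must rule out the possibility that $\{\log\lambda(\gamma):\gamma\in\Gamma\}$ is trapped in a proper linear subspace of $\mathfrak{a}$, and this is the hardest step of the theorem. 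The paper devotes all of Section~\ref{sec:7} to it. The argument there is: reduce to $\Gamma$ finitely generated inside an open semigroup $G^\eps_g$ of $\mathbb{R}$-regular elements; extend the generators to real one-parameter families $t\mapsto m'a_j^t$ ($m'$ in a compact group, $t\ge 1$) inside $G^\eps_g$; show (Lemma~\ref{sec:7.1}) that the semigroup $\Delta$ generated by these families has nonempty interior in $G$; and then prove, by a Hardy-field argument (Section~\ref{sec:7.2}) combined with an analytic finiteness-of-fibers lemma (Section~\ref{sec:7.3}), that any analytic relation of the form $\prod_i\lambda_1(\rho_i(\gamma))^{\beta_i}=1$ that holds on $\Gamma$ must propagate to all of $\Delta$, hence to an open set, forcing $\beta=0$. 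None of this is suggested by, or reducible to, the product formula; it is an independent ingredient and the point where your proposal would fail as written.

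A minor secondary remark on part~(b): when you say the Jordan projections of words ``accumulate exactly on the closed convex cone generated by $\{N_n\log\lambda(h_n)\}$,'' the $O(1)$ error in the product formula actually fattens this cone, and getting $L_{\Gamma'}\subset\Omega$ (not just $\supset$) requires choosing the exponents $N_n$ large enough that the fattened convex combinations still land in $\Omega$. The paper does this explicitly in Lemma~\ref{sec:5.1} via the inclusions $\lambda(\gamma_j^{m_j})+(M_{\eps_j}\cap B)\subset\Omega$. Your sketch gestures at this but does not pin it down; it is fixable, unlike the nonempty-interior gap, which requires a new idea.
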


\begin{example*}
For a reader with little familiarity with the theory of semisimple Lie groups, let us work through this theorem in the particular case where $G = \SL(n, \mathbb{R})$.

In this case, $\mathfrak{a}$~is the set of diagonal matrices with zero trace, that we identify with the hyperplane given by
\[\mathfrak{a} = \setsuch{x = (x_1, \ldots, x_n) \in \mathbb{R}^n}{x_1 + \cdots + x_n = 0};\]
$\mathfrak{a}^+$ is the convex cone given by
\[\mathfrak{a}^+ = \setsuch{x = (x_1, \ldots, x_n) \in \mathfrak{a}}{x_1 \geq \cdots \geq x_n};\]
and the opposition involution is the linear map given by
\[\imath(x_1, \ldots, x_n) = (-x_n, \ldots, -x_1).\]
For $g$ in~$G$, the vector $m_g := \log(\mu(g))$ is the vector of~$\mathfrak{a}^+$ whose coordinates are the logarithms of the eigenvalues of the symmetric matrix $(\transpose{g} g)^\frac{1}{2}$ sorted in nonincreasing order and the vector $\ell_g := \log(\lambda(g))$ is the vector of~$\mathfrak{a}^+$ whose coordinates are the logarithms of the moduli of the eigenvalues of~$g$ sorted in nonincreasing order. The cone~$\ell_\Gamma$ is simply the closure of the set of half-lines generated by nonzero vectors~$\ell_g$ for $g$ in~$\Gamma$.
\end{example*}

\subsection{}
\label{sec:1.3}
A second concept that reflects some of the asymptotic properties of~$\Gamma$ is the limit set~$\Lambda_\Gamma$: it is a closed subset of the full flag variety of~$G$, which is a classical object when $G$~has real rank one and which has been introduced by Y.~Guivarc'h for $G = \SL(n, \mathbb{R})$. In general, this object differs from the what we call~$\Lambda^-_\Gamma$, which is the corresponding  object for the semigroup~$\Gamma^-$ formed by the inverses of the elements of~$\Gamma$.

A third concept is the set~$\mathcal{L}_\Gamma$ of the limit directions of~$\Gamma$: this is the set of the hyperbolic elements~$g$ of~$G$ such that every open semigroup~$H$ of~$G$ that contains~$g$ intersects~$\Gamma$. Heuristically, this set~$\mathcal{L}_\Gamma$ describes the directions at infinity in~$G$ that sequences of elements of~$\Gamma$ may follow. We show how to express this set~$\mathcal{L}_\Gamma$ in terms of $\Lambda_\Gamma, \Lambda^-_\Gamma$ and~$L_\Gamma$ and vice-versa (Theorem~\ref{sec:6.4}). In particular, we prove that:
\emph{
\begin{itemize}
\item The set~$\mathcal{L}_\Gamma$ intersects the interior of a Weyl chamber~$f$ if and only if the ``starting point''~$y^-_f$ of~$f$ is in~$\Lambda^-_\Gamma$ and the ``end point''~$y^+_f$ is in~$\Lambda_\Gamma$.
\item In this case, the intersection $\mathcal{L}_\Gamma \cap f$ ``does not depend'' on the choice of the chamber: it can be naturally identified with the limit cone $L_\Gamma$.
\end{itemize}
}

\subsection{}
\label{sec:1.4}
In the following part of the text, we also prove an analog of these results for a reductive group over an arbitrary local field~$k$: in this case, the logarithm map is replaced by an injection of the positive Weyl chamber~$A^+$ into a cone $A^\times$ of an $\mathbb{R}$-vector space~$A^\bullet$ (the latter two may be identified respectively with~$\mathfrak{a}^+$ and~$\mathfrak{a}$ when $k = \mathbb{R}$) and we define once again maps $\mu: G \to A^+$, $\lambda: G \to A^\times$, $\imath: A^\times \to A^\times$ (cf. \ref{sec:2.3} and~\ref{sec:2.4}) and, for every subsemigroup $\Gamma$ of~$G$, a limit cone~$L_\Gamma$ in~$A^\times$ (which may be identified to~$\ell_\Gamma$ when $k = \mathbb{R}$). By construction this cone~$L_\Gamma$ is closed and with rational support (\ie $L_\Gamma$ and~$L_\Gamma \cap A^+$ generate the same vector subspace of~$A^\bullet$). Additionally, if $\Gamma$~is discrete, this cone is not central (\ie is not contained in the limit cone~$L_Z$ of the center of~$G$). The main difference compared to the case when $k = \mathbb{R}$ is that the cone~$L_\Gamma$ may have empty interior in~$A^\times$.

In this case, Theorem~\ref{sec:1.2} becomes:

\begin{theorem*}
Let $k$ be a non-Archimedean local field, $\mathbf{G}$ a connected reductive $k$-group and $G = \mathbf{G}_k$.
\begin{enumerate}[label=\alph*)]
\item Let $\Gamma$ be a Zariski-dense subsemigroup of~$G$. Then:
\begin{enumerate}[label=\greek*)]
\item The asymptotic cone to $\mu(\Gamma)$ is the limit cone $L_\Gamma$.
\item The limit cone $L_\Gamma$ is convex.
\end{enumerate}
\item Conversely, let $\Omega$ be a non-central closed convex cone with rational support in~$A^\times$.
\begin{enumerate}[label=\greek*)]
\item Then there exists a Zariski-dense discrete subsemigroup $\Gamma$ of~$G$ such that ${L_\Gamma = \Omega}$.
\item If additionally $\Omega$ has nonempty interior and is stable by the opposition involution, there exists a Zariski-dense discrete subgroup $\Gamma$ of~$G$ such that $L_\Gamma = \Omega$.
\end{enumerate}
\end{enumerate}
\end{theorem*}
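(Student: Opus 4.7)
The plan is to adapt the strategy used for the Archimedean Theorem~\ref{sec:1.2} to the non-Archimedean setting, substituting the real-analytic arguments by ones based on proximal representations of~$G$ over~$k$ and, where needed, the action of~$G$ on its Bruhat--Tits building. I would address the four assertions (a.$\alpha$), (a.$\beta$), (b.$\alpha$), (b.$\beta$) in turn.

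For~(a.$\alpha$), the inclusion of $L_\Gamma$ into the asymptotic cone of $\mu(\Gamma)$ follows from the identity $\mu(g^n) = n\,\lambda(g) + \bigo(1)$ (valid in the non-Archimedean setting by the same compact-orbit argument as in the real case): any non-central $g \in \Gamma$ thus produces the half-line $\mathbb{R}_{\geq 0}\,\lambda(g)$ as a limit direction of~$\mu(\Gamma)$. The reverse inclusion is the delicate direction: given a sequence $(\gamma_n)$ in~$\Gamma$ with $\mu(\gamma_n) \to \infty$ in direction $u \in A^\times$, I would use Zariski density of~$\Gamma$ to produce auxiliary sequences $(h_n), (h'_n)$ in~$\Gamma$ bounded in Cartan projection and such that $h_n \gamma_n h'_n$ is proximal in every fundamental representation, with attractor and repeller in a generic configuration. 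Standard proximality estimates then yield $\lambda(h_n \gamma_n h'_n) = \mu(\gamma_n) + \bigo(1)$, placing~$u$ in~$L_\Gamma$.

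For~(a.$\beta$), the convexity of $L_\Gamma$: given $X = \lambda(g)$ and $Y = \lambda(h)$ with $g, h \in \Gamma$, I would study products $g^m h^n$. After replacing $g$ and~$h$ by large powers and conjugating so that their attracting and repelling flags in every proximal representation are in general position, the product formula $\lambda(g^m h^n) = m\,\lambda(g) + n\,\lambda(h) + \bigo(1)$ can be established by a ping-pong argument. Letting $m, n \to \infty$ with $m/n$ prescribed produces any convex combination $tX + (1-t)Y$ as a limit direction, whence convexity. For part~(b), the construction is Tits-style ping-pong. Using rationality of support I pick finitely many rational extremal directions $X_1, \dots, X_r$ of~$\Omega$ and elements $g_i \in G$ with $\lambda(g_i)$ proportional to~$X_i$ and acting proximally on a faithful representation; after replacing the $g_i$ by sufficiently large powers and conjugating into general position they generate a discrete free subsemigroup to which the estimates of~(a) apply. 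Zariski density is ensured by choosing the $g_i$ in a Zariski-open set and possibly enlarging the generating set. For~(b.$\beta$), I take the symmetric set $\{g_i^{\pm 1}\}$ and exploit opposition-invariance of~$\Omega$ so that the limit cone is preserved; a density argument then ensures that $L_\Gamma$ fills out all of~$\Omega$ rather than a proper subcone.

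The principal obstacle will be the reverse inclusion in~(a.$\alpha$) together with the convexity estimate in~(a.$\beta$): both rely on precise control of how Cartan and Jordan projections interact under products inside~$\Gamma$. In the real case this flexibility comes from perturbations within a connected analytic group; here the analogous tools must be extracted from proximal representations over~$k$ and from the geometry of the building, and one must verify that the various error terms remain of order~$\bigo(1)$ rather than growing with the valuation. This is where the bulk of the technical work should sit.
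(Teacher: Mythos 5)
Your overall strategy is sound and aligns with the paper's---proximality estimates, ping-pong, and open semigroups are indeed the machinery used. But several of your steps contain genuine gaps, the most serious of which concerns~(b.$\alpha$).

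For~(b.$\alpha$), you propose to pick ``finitely many rational extremal directions'' of~$\Omega$ and run a Tits ping-pong. This fails for two reasons. First, $\Omega$ need not be polyhedral: rationality of support only constrains the \emph{span} of~$\Omega$, not its extremal rays, so finitely many rays cannot fill out~$\Omega$; the paper instead uses a \emph{countable} generating family whose Lyapunov directions exhaust an open cone basis of~$\Omega$ (Proposition~\ref{sec:5.1}). Second, and more fundamentally, a generic ping-pong estimate only yields $\lambda(g^{m}h^{n}) = m\lambda(g) + n\lambda(h) + \bigo(1)$, and the $\bigo(1)$ error lives in the full space~$A^\bullet_\theta$. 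If~$\Omega$ has empty interior in~$A^\times$ (which the non-Archimedean statement explicitly allows, requiring only rational support and non-centrality), these error terms push $L_\Gamma$ strictly outside~$\Omega$, and no amount of ``taking large powers'' cures this, since the error is uniformly bounded below away from zero. The paper circumvents this with an ultrametric trick specific to non-Archimedean fields (the lemma in Section~\ref{sec:5.3} showing $O^\eps_\pi$ is a semigroup, together with the fact that $\lambda$ is a locally constant \emph{exact} semigroup morphism on~$G^\eps_f$---no error term at all). This ingredient is the crux of the non-Archimedean~(b.$\alpha$), and it is absent from your proposal.

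For~(a.$\beta$), you propose to ``conjugate'' $g$ and~$h$ so their attracting/repelling flags are in general position. But conjugation takes you out of~$\Gamma$. The paper instead shows (Lemma~\ref{sec:4.2} and Proposition~\ref{sec:4.3}) that one can find \emph{new elements of~$\Gamma$} whose Lyapunov directions are arbitrarily close to the prescribed ones and which are in a Schottky configuration with each other; this requires the Zariski-density result that $\{g\in\Gamma:\lambda(g)\in\Omega\}$ is Zariski-dense, which in turn relies on a non-trivial quasi-projective argument. Similarly, for the reverse inclusion in~(a.$\alpha$), your plan to choose auxiliary sequences $(h_n),(h'_n)$ with bounded Cartan projection needs a uniform version of simultaneous proximality: the paper invokes a theorem of Abels--Margulis--Soifer (Corollary~\ref{cor_3.1.3}) furnishing a \emph{finite} set~$F\subset\Gamma$ and a uniform~$\eps$ such that every $g\in\Gamma$ admits $f\in F$ with $gf$ being $(\theta_\Gamma,\eps)$-proximal; without this uniformity, the ``standard proximality estimates'' you invoke do not close.
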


It is likely that in this last statement, the assumption ``$\Omega$ has nonempty interior'' is not necessary (I verified it for $G = \SL(n, k)$ and $\Omega$ a half-line).

\subsection{}
\label{sec:1.5}
The open subsemigroups of~$G$ play an important role in this work. They serve as a source of examples (cf. \ref{sec:5.2} and~\ref{sec:5.3}) but also as a tool (cf.~\ref{sec:6.3}). Another important tool for the proofs is the notion of a $(\theta, \underline{\eps})$-Schottky subsemigroup or subgroup: it generalizes the $\eps$-Schottky subgroups that I introduced in~\cite{Be}. The latter will be another source of examples (cf.~\ref{sec:5.1}). The $(\theta, \underline{\eps})$-Schottky subsemigroups will also appear in the proof of the convexity of~$L_\Gamma$ (cf.~\ref{sec:4.4}). Let us cite \cite{Ti2}, \cite{Ma-So} and \cite{A-M-S} where similar notions are used.

A few words about the plan of the paper. Section~\ref{sec:2} mostly consists of reminders of~\cite{Be}. Section~\ref{sec:3} is dedicated to the limit set~$\Lambda_\Gamma$, section~\ref{sec:4} to the convexity of~$L_\Gamma$, section~\ref{sec:5} to the construction of subsemigroups and subgroups~$\Gamma$ of~$G$ whose cone~$L_\Gamma$ is prescribed, section~\ref{sec:6} to properties of the set $\mathcal{L}_\Gamma$. Finally, in section~\ref{sec:7}, we verify that, when $k = \mathbb{R}$, the cone~$L_\Gamma$ has nonempty interior.

\makeatletter
\renewcommand\subsection{\@startsection{subsection}{2}{\z@}%
                                     {-3.25ex\@plus -1ex \@minus -.2ex}%
                                     {1.5ex \@plus .2ex}%
                                     {\normalfont\large\bfseries}}
\makeatother 

\section{Preliminaries}
\label{sec:2}

We recall in this section some notations introduced in~\cite{Be}.

\subsection{Local fields}
\label{sec:2.1}

Let $k$~be a local field, \ie either $\mathbb{R}$ or $\mathbb{C}$ or a finite extension of~$\mathbb{Q}_p$ or of~$\mathbb{F}_p((T))$ for some prime integer~$p$. Let $|.|$~be a continuous absolute value on~$k$.

When $k = \mathbb{R}$ or~$\mathbb{C}$, we set $k^o := (0, \infty)$ and $k^+ := [1, \infty)$.

When $k$~is non-Archimedean, we call $\mathcal{O}$ the ring of integers of~$k$, $\mathcal{M}$~the maximal ideal of~$\mathcal{O}$ and we choose a uniformizer, \ie an element~$\pi$ of~$\mathcal{M}^{-1}$ which is not in~$\mathcal{O}$. We then set $k^o := \setsuch{\pi^n}{n \in \mathbb{Z}}$ and $k^+ := \setsuch{\pi^n}{n \geq 0}$.

Let $V$ be a finite-dimensional vector space over~$k$. To every basis $v_1, \ldots, v_n$ of~$V$, we associate norms on~$V$ and on~$\End(V)$ defined, for every $v = \sum_{1 \leq i \leq n} x_i v_i$ in~$V$ and for every $g$ in~$\End(V)$, by
\[\|v\| := \sup_{1 \leq i \leq n} |x_i|
\quad\text{and}\quad
\|g\| := \sup_{v \in V,\; \|v\| = 1} \|g \cdot v\|.\]

Of course two different bases of~$V$ give rise to equivalent norms.

We call $X := \mathbb{P}(V)$ the projective space of~$V$. We define a distance~$d$ on~$X$ by
\[d(x_1, x_2) := \inf \setsuch{\|v_1 - v_2\|}{v_i \in x_i \text{ and } \|v_i\| = 1\quad \forall i = 1, 2}.\]
If $X_1$ and~$X_2$ are two closed subsets of~$X$, we set
\[\delta(X_1, X_2) := \inf \setsuch{d(x_1, x_2)}{x_1 \in X_1,\; x_2 \in X_2} \text{ and}\]
\[d(X_1, X_2) := \sup \setsuch{\delta(x_i, X_{3-i})}{x_i \in X_i \text{ and } i = 1, 2}\]
the Hausdorff distance between $X_1$ and~$X_2$.

We denote by $\lambda_1(g) \geq \cdots \geq \lambda_n(g)$ the sequence of moduli of eigenvalues of~$g$ sorted in nonincreasing order and repeated according to multiplicity. Of course an eigenvalue of~$g$ is in general in some finite extension~$k'$ of~$k$. We implicitly endowed this extension with the unique absolute value that extends the absolute value of~$k$.

\subsection{Proximality}
\label{sec:2.2}

An element~$g$ of~$\End(V) \setminus 0$ is said to be \emph{proximal in $\mathbb{P}(V)$} or \emph{proximal} if it has a unique eigenvalue~$\alpha$ such that $|\alpha| = \lambda_1(g)$ and this eigenvalue has multiplicity one. This eigenvalue~$\alpha$ is then in~$k$. We call $x^+_g \in X$ the corresponding eigenline, $V^<_g$ the $g$-invariant hyperplane supplementary to~$x^+_g$ and $X^<_g := \mathbb{P}(V^<_g)$.

We fix $\eps > 0$ and we define
\[b^\eps_g := \setsuch{x \in X}{d(x, x^+_g) \leq \eps};\]
\[B^\eps_g := \setsuch{x \in X}{\delta(x, X^<_g) \geq \eps}.\]
We say that a proximal element~$g$ is $\eps$-proximal if $\delta(x^+_g, X^<_g) \geq 2\eps$, $g(B^\eps_g) \subset b^\eps_g$ and $\restr{g}{B^\eps_g}$ is $\eps$-Lipschitz. The following lemma is easy (cf. Corollary~6.3 in \cite{Be}).

\begin{lemma}
\label{lem_2.2.1}
For every $\eps > 0$, there exists a constant $c_\eps = c_\eps(V) \in (0, 1)$ such that, for every $\eps$-proximal linear transformation~$g$ of~$V$, we have
\[c_\eps \|g\| \leq \lambda_1(g) \leq \|g\|.\]
\end{lemma}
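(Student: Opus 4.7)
The upper bound $\lambda_1(g)\le\|g\|$ is immediate. Since $g$ is proximal, its dominant eigenvalue $\alpha$ lies in $k$ and is realised on a genuine eigenvector $u\in V$, so $|\alpha|\cdot\|u\|=\|gu\|\le\|g\|\cdot\|u\|$, hence $\lambda_1(g)=|\alpha|\le\|g\|$.

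For the nontrivial bound $c_\eps\|g\|\le\lambda_1(g)$, I would work with the $g$-invariant direct sum decomposition $V=x^+_g\oplus V^<_g$ and separately control the two restrictions of $g$. The hypothesis $\delta(x^+_g,X^<_g)\ge 2\eps$ says that the two invariant subspaces are well separated in projective space, so the associated projections $\pi_+\colon V\to x^+_g$ and $\pi_<\colon V\to V^<_g$ have operator norms bounded above by some constant $C_1(\eps,V)$. Consequently $\|g\|\le C_1(\eps,V)\bigl(|\alpha|+\|g|_{V^<_g}\|\bigr)$, and since $|\alpha|=\lambda_1(g)$ the whole question reduces to bounding $\|g|_{V^<_g}\|$ by a constant multiple of $\lambda_1(g)$.

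To do this I would exploit the $\eps$-Lipschitz property of $g$ on $B^\eps_g$ infinitesimally. Choose unit vectors $u_+\in x^+_g$ (so $gu_+=\alpha u_+$) and $w\in V^<_g$, and consider the family $v_t:=u_+ + t w$ for small $|t|$. Because $[u_+]$ lies at distance at least $2\eps$ from $X^<_g$, the line $[v_t]$ still belongs to $B^\eps_g$ once $|t|$ is small enough, depending only on $\eps$ and the chosen basis. A direct computation in the adapted norm yields
\[d([v_t],[u_+])\asymp|t| \qquad\text{and}\qquad d\bigl(g[v_t],g[u_+]\bigr)=d\bigl([\alpha u_+ + t\,gw],[\alpha u_+]\bigr)\asymp\frac{|t|\cdot\|gw\|}{|\alpha|},\]
so the Lipschitz inequality $d(gx,gy)\le\eps\,d(x,y)$ forces $\|gw\|\le C_2(V)\,\eps\,\lambda_1(g)$. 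Combining this with the previous paragraph gives $\|g\|\le C(\eps,V)\,\lambda_1(g)$, proving the lemma with $c_\eps:=C(\eps,V)^{-1}$.

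The main delicate point is verifying the projective distance estimates uniformly in the local field $k$: over $\mathbb{R}$ or $\mathbb{C}$ one can argue in terms of angles, whereas for non-Archimedean $k$ one needs to work with the ultrametric sup-norm in a basis well adapted to the decomposition $x^+_g\oplus V^<_g$. In either case the relevant distances are of order $|t|$ up to multiplicative constants depending only on $V$, which is all the argument requires.
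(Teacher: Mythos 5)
The paper gives no proof of this lemma---it simply cites Corollary~6.3 of~\cite{Be}---so there is no in-text argument to compare against; your proof therefore has to stand on its own, and in substance it does. The approach is correct and natural: the whole content is bounding $\|\restr{g}{V^<_g}\|$ by a multiple of $\lambda_1(g)=|\alpha|$, which you extract from the Lipschitz contraction by testing it on the deformation $[u_++tw]$ of the attracting point, and the reduction $\|g\|\le C_1(\eps,V)(|\alpha|+\|\restr{g}{V^<_g}\|)$ via the bounded projections is also right. Two places need tightening. First, the display $d(g[v_t],g[u_+])\asymp |t|\|gw\|/|\alpha|$ is only valid in the regime where the right-hand side is already bounded; if $\|gw\|/|\alpha|$ is large the projective distance saturates at a constant of order~$\eps$, and the right-hand side can be arbitrarily large. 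As written the step looks circular, since the smallness of $|t|\|gw\|/|\alpha|$ is what you are trying to prove. The fix is to reorder: the Lipschitz inequality gives $d(g[v_t],g[u_+])\le\eps\,d([v_t],[u_+])=O(|t|)$ directly, which is small for $|t|\le\eps/C$; since $z:=(t/\alpha)gw$ lies in $V^<_g$ and $\delta([u_+],X^<_g)\ge 2\eps$, a small value of $d([u_++z],[u_+])$ forces $\|z\|$ small, and only then is the distance comparable to $\|z\|$. Second, the implicit constant in the lower bound of that $\asymp$ depends on $\eps$ through the projection-norm bound $C_1(\eps,V)$, so the intermediate inequality $\|gw\|\le C_2(V)\,\eps\,\lambda_1(g)$ with $C_2$ independent of $\eps$ is not justified as stated; this is harmless, because the final $c_\eps$ is allowed to depend on $\eps$, but you should not claim more than you prove.
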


The following lemma is a variant of Proposition~6.4 in~\cite{Be}. It can be proved in the same fashion.

\begin{lemma}
\label{lem_2.2.2}
For all $\eps > 0$, there exist constants $C_\eps > 0$ with the following property. Take $g_1, \ldots, g_l$ to be any linear transformations of~$V$ that are respectively $\eps_1$-proximal, $\ldots$, $\eps_l$-proximal and that satisfy (with the convention $g_0 = g_l$)
\[\delta(x^+_{g_{j-1}}, X^<_{g_j}) \geq 6 \sup(\eps_{j-1}, \eps_j) \quad\text{for } j = 1, \ldots, l.\]
Then for any $n_1, \ldots, n_l \geq 1$, the product $g := g_l^{n_l} \cdots g_1^{n_1}$ is $\eps$-proximal for $\eps = 2 \sup(\eps_1, \eps_l)$.

Moreover, setting $\lambda_1 := \prod_{1 \leq j \leq l} \lambda_1(g_j)^{n_j}$ and $C := \prod_{1 \leq j \leq l} C_{\eps_j}$, we have
\[\lambda_1(g) \in [\lambda_1 C^{-1}, \lambda_1 C]
\quad\text{and}\quad
\|g\| \in [\lambda_1 C^{-1}, \lambda_1 C].\]
\end{lemma}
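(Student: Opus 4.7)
The plan is a ping-pong argument with careful tracking of constants. The key preliminary observation is that the separation hypothesis $\delta(x^+_{g_{j-1}}, X^<_{g_j}) \geq 6\sup(\eps_{j-1}, \eps_j)$ forces the inclusion $b^{\eps_{j-1}}_{g_{j-1}} \subset B^{\eps_j}_{g_j}$; indeed, for $x \in b^{\eps_{j-1}}_{g_{j-1}}$ the triangle inequality yields
\[
\delta(x, X^<_{g_j}) \;\geq\; \delta(x^+_{g_{j-1}}, X^<_{g_j}) - \eps_{j-1} \;\geq\; 5\sup(\eps_{j-1},\eps_j) \;\geq\; \eps_j.
\]
Combined with $g_j^{n_j}(B^{\eps_j}_{g_j}) \subset b^{\eps_j}_{g_j}$ (an immediate consequence of $\eps_j$-proximality iterated), one obtains $g_j^{n_j}(b^{\eps_{j-1}}_{g_{j-1}}) \subset b^{\eps_j}_{g_j}$ for every $j$. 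Chaining cyclically with $g_0 = g_l$, the product $g = g_l^{n_l}\cdots g_1^{n_1}$ sends $b^{\eps_l}_{g_l}$ into itself; since each $g_j^{n_j}$ is $\eps_j^{n_j}$-Lipschitz on $B^{\eps_j}_{g_j}$, the composition is $\prod_j \eps_j^{n_j}$-Lipschitz there, hence a strict contraction. Banach's fixed point theorem produces a unique fixed point $x^+_g \in b^{\eps_l}_{g_l}$, which is necessarily the attracting eigenline of $g$.

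For the invariant hyperplane $X^<_g$, I would run the analogous ping-pong for the transposes $g_j^*$ on $\mathbb{P}(V^*)$: each $g_j^*$ is proximal there with parameter comparable to $\eps_j$ and with attractor corresponding to the hyperplane $V^<_{g_j}$, and since $g^* = (g_1^*)^{n_1} \cdots (g_l^*)^{n_l}$ reverses the order of factors, the same argument places its attractor within a bounded multiple of $\eps_1$ of the eigenline of $g_1^*$. Dualizing back, $X^<_g$ is $\eps_1$-close to $X^<_{g_1}$ in Hausdorff distance. Combined with $x^+_g \in b^{\eps_l}_{g_l}$, this yields
\[
\delta(x^+_g, X^<_g) \;\geq\; \delta(x^+_{g_l}, X^<_{g_1}) - \eps_l - \eps_1 \;\geq\; 4\sup(\eps_1,\eps_l) \;=\; 2\eps,
\]
with $\eps := 2\sup(\eps_1,\eps_l)$. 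The hyperplane closeness also gives $B^\eps_g \subset B^{\eps_1}_{g_1}$, so the forward ping-pong yields $g(B^\eps_g) \subset b^{\eps_l}_{g_l} \subset b^\eps_g$, and the Lipschitz constant $\prod \eps_j^{n_j} \leq \eps_1 \eps_l \leq \eps$. This verifies that $g$ is $\eps$-proximal.

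The spectral estimates are the technical heart and require that constants not accumulate with the $n_j$. The clean way is to use that $\eps_j$-proximality passes to positive iterates, so Lemma~\ref{lem_2.2.1} applied to $g_j^{n_j}$ directly (rather than factor by factor) yields $\|g_j^{n_j}\| \leq c_{\eps_j}^{-1} \lambda_1(g_j)^{n_j}$ with a constant independent of $n_j$; multiplying gives $\|g\| \leq C \lambda_1$ with $C := \prod c_{\eps_j}^{-1}$, and then Lemma~\ref{lem_2.2.1} applied to $g$ itself yields $\lambda_1(g) \leq \|g\| \leq C\lambda_1$. For the matching lower bound I would trace a unit vector $v_0$ with $[v_0] \in b^{\eps_1}_{g_1}$ through the ping-pong: at each stage the image $g_j^{n_j}\cdots g_1^{n_1}(v_0)$ projects into $b^{\eps_j}_{g_j}$, at bounded projective distance from $X^<_{g_{j+1}}$, so the next factor multiplies its norm by at least $c_{\eps_{j+1}}\lambda_1(g_{j+1})^{n_{j+1}}$; iterating yields $\|g(v_0)\| \geq C^{-1}\lambda_1$, after possibly enlarging each $C_{\eps_j}$ to absorb the finitely many interface losses. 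The main obstacle throughout is exactly this non-accumulation: the crude estimate $\|g\| \leq \prod \|g_j\|^{n_j}$ would introduce unwanted factors $c_{\eps_j}^{-n_j}$, and it is only by exploiting the strong proximality of each iterate $g_j^{n_j}$ as a whole, rather than multiplying norm bounds letter by letter, that the constant $C$ stays bounded in terms of the $\eps_j$ alone.
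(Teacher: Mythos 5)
Your ping-pong skeleton is the right one and, as far as I can tell, the route that Benoist's Proposition 6.4 in [Be] follows as well: the inclusions $b^{\eps_{j-1}}_{g_{j-1}}\subset B^{\eps_j}_{g_j}$, the fact that $g_j^{n_j}$ is itself $\eps_j$-proximal with the same attractor/repeller (so Lemma~\ref{lem_2.2.1} applies to the $n_j$-th power as a whole, which is exactly what keeps $C$ independent of the $n_j$), and the contraction on $b^{\eps_l}_{g_l}$ to produce $x^+_g$, are all correct and well explained. The two places where the argument is thin are the location of $X^<_g$ and, to a lesser extent, the absorption of the final $c_\eps$ into $\prod C_{\eps_j}$.

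The duality step is the genuine gap. You assert that each $g_j^*$ is ``proximal with parameter comparable to $\eps_j$'' and that dualizing back gives that $X^<_g$ is $\eps_1$-close to $X^<_{g_1}$ in Hausdorff distance. Neither of these is immediate with the constants you need. The space $\mathbb{P}(V^*)$ carries the dual norm, and the quantities entering $\eps$-proximality (the number $\delta(x^+,X^<)$, the sets $b^\eps$ and $B^\eps$, the Lipschitz bound) are not preserved verbatim under passage to the adjoint; one only gets comparability up to constants depending on $\dim V$ and the chosen norms, not equality. Since the lemma fixes the output proximality constant exactly at $\eps=2\sup(\eps_1,\eps_l)$ and the spectral constant exactly at $C=\prod C_{\eps_j}$, a ``bounded multiple of $\eps_1$'' is not enough: plugging $K\eps_1$ into $\delta(x^+_g,X^<_g)\ge\delta(x^+_{g_l},X^<_{g_1})-\eps_l-K\eps_1$ does not yield $4\sup(\eps_1,\eps_l)$ unless you can take $K=1$. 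There is a cleaner route that stays entirely in $\mathbb{P}(V)$: by the forward chain (with the cyclic hypothesis $b^{\eps_l}_{g_l}\subset B^{\eps_1}_{g_1}$) the whole set $B^{\eps_1}_{g_1}$ is in the basin of attraction of $x^+_g$, so $X^<_g$ must avoid $B^{\eps_1}_{g_1}$, i.e.\ $X^<_g\subset\{x:\delta(x,X^<_{g_1})<\eps_1\}$; a projective hyperplane lying in that $\eps_1$-slab around another hyperplane is $\eps_1$-Hausdorff-close to it, and this then delivers both $\delta(x^+_g,X^<_g)\ge 2\eps$ and $B^\eps_g\subset B^{\eps_1}_{g_1}$ exactly as you want, without touching $V^*$ at all.

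A smaller point: your lower bound $\lambda_1(g)\ge c_\eps\|g\|$ injects a constant $c_{2\sup(\eps_1,\eps_l)}$ that is not obviously of the form $\prod C_{\eps_j}^{-1}$; you flag this and propose enlarging $C_{\eps}$, which is fine, but the cleanest way to avoid it is to trace the actual top eigenvector $v^+$ of $g$ (whose direction $x^+_g$ lies in $b^{\eps_l}_{g_l}\subset B^{\eps_1}_{g_1}$) through the chain: each factor multiplies $\|v^+\|$ by a number in $[c'_{\eps_j}\lambda_1(g_j)^{n_j}, c_{\eps_j}^{-1}\lambda_1(g_j)^{n_j}]$, and since $g v^+$ is a multiple of $v^+$ this gives $\lambda_1(g)$ directly in $[\lambda_1 C^{-1},\lambda_1 C]$ with $C=\prod C_{\eps_j}$, with no extra $c_\eps$ appearing.
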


\begin{definition*}
Let $\underline{\eps} = (\eps_j)_{j \in J}$ be a finite or infinite family of cardinal $t \geq 2$ of positive real numbers. We say that a subsemigroup (resp.\ subgroup) $\Gamma$ of~$\GL(V)$ with generators $(\gamma_j)_{j \in J}$ is \emph{$\underline{\eps}$-Schottky on $\mathbb{P}(V_i)$} if it satisfies the following properties. (We set $E_\Gamma := \setsuch{\gamma_j}{j \in J}$, resp.\ $E_\Gamma := \setsuch{\gamma_j, \gamma^{-1}_j}{j \in J}$; and for every element $g$ of~$E_\Gamma$ whose index is~$j$, we set $\eps_g := \eps_j$.)
\begin{enumerate}[label=\roman*)]
\item For every $g$ in~$E_\Gamma$, $g$~is $\eps_g$-proximal.
\item For every $g, h$ in~$E_\Gamma$ (resp.\ $g, h$ in~$E_\Gamma$ such that $g \neq h^{-1}$), $\delta(x^+_g, X^<_h) \geq 6\sup(\eps_g, \eps_h)$.
\end{enumerate}
\end{definition*}

\begin{remark*}
When the tuple~$\underline{\eps}$ is constant and equal to some $\eps > 0$, we say that $\Gamma$ is \emph{$\eps$-Schottky on~$\mathbb{P}(V_i)$} (in \cite{Be}, such groups were called ``$\eps$-proximal'').
\end{remark*}

\subsection{Cartan decomposition}
\label{sec:2.3}

For every $k$-group~$\mathbf{G}$, we denote by~$G$ or~$\mathbf{G}_k$ the set of its $k$-points.

Let $\mathbf{G}$ be a connected reductive $k$-group, $\mathbf{Z}$ be the center of~$\mathbf{G}$ and $\mathbf{S}$ be the derived $k$-subgroup of~$\mathbf{G}$, so that $\mathbf{G}$ is the ``almost product'' of $\mathbf{S}$ and~$\mathbf{Z}$. Let $\mathbf{A}$~be a maximal $k$-split torus of~$\mathbf{G}$, $r = r_G$, $r_S$ and $r_Z$ the respective $k$-ranks of $\mathbf{G}$, $\mathbf{S}$ and~$\mathbf{Z}$ so that $r = r_S + r_Z$. Let $X^*(\mathbf{A})$~be the set of characters of~$\mathbf{A}$ (this is a free $\mathbb{Z}$-module of rank~$r$), $E := X^*(\mathbf{A}) \otimes_\mathbb{Z} \mathbb{R}$ and $E_S$ the vector subspace of~$E$ spanned by the characters that are trivial on~$\mathbf{A} \cap \mathbf{Z}$. We call $\Sigma = \Sigma(\mathbf{A}, \mathbf{G})$ the set of roots of~$\mathbf{A}$ in~$\mathbf{G}$: these are the nontrivial weights of~$\mathbf{A}$ in the adjoint representation of the group~$\mathbf{G}$. $\Sigma$~is a root system of~$E_S$ (\cite{Bo-Ti}~\S 5). We choose a system of positive roots~$\Sigma^+$, we call $\Pi = \{\alpha_1, \ldots, \alpha_r\}$ the set of simple roots and we set
\begin{align*}
A^o &:= \setsuch{a \in A}{\forall \chi \in X^*(\mathbf{A}),\; \chi(a) \in k^o}; \\
A^+ &:= \setsuch{a \in A^o}{\forall \chi \in \Sigma^+,\; \chi(a) \in k^+}; \\
A^{++} &:= \setsuch{a \in A^o}{\forall \chi \in \Sigma^+,\; \chi(a) \neq 1}.
\end{align*}
Let $N$ be the normalizer of~$A$ in~$G$, $L$~be the centralizer of~$A$ in~$G$ and $W := N/L$ be the little Weyl group of~$G$: it can be identified with the Weyl group of the root system~$\Sigma$. The subset~$A^+$ is called the positive Weyl chamber. We have the equality $A^o = \bigcup_{w \in W} w A^+$. We endow~$E$ with a $W$-invariant scalar product and we call $(\omega_1, \ldots, \omega_r)$ the fundamental weights of~$\Sigma$. These are the elements of~$X^*(\mathbf{A})$ such that $\frac{2 \langle \omega_i, \alpha_j \rangle}{\langle \alpha_j, \alpha_j \rangle} = \delta_{i, j}$ for all~$i, j$. They form a basis of~$E_S$.

Suppose now that there exists a maximal compact subgroup~$K$ of~$G$ such that $N = (N \cap K) \cdot A$. This assumption is innocuous: it is satisfied when $\mathbf{S}$~is simply connected; we can reduce the problem to this case by standard methods (see \cite{Mar} I.1.5.5 and~I.2.3.1). We then have the equality $G = K A^+ K$, called the \emph{Cartan decomposition} of~$G$. Thus for every $g$ in~$G$, there exists an element~$\mu(g)$ in~$A^+$ such that $g$~is in~$K \mu(g) K$. This element~$\mu(g)$ is unique. We shall call \emph{Cartan projection} this map $\mu: G \to A^+$. This is a continuous and proper map. From now on, every time we mention the Cartan projection~$\mu$, we implicitly assume the existence of such a compact subgroup~$K$.

We call \emph{opposition involution} the map $\imath: A^+ \to A^+$ defined by $\imath(a) := \mu(a^{-1})$. We denote by $\alpha \mapsto \alpha^-$ the permutation of~$\Pi$, also called the opposition involution, defined by $\alpha^-(a) := \alpha(\imath(a))$, for every $a$ in~$A^+$.

\subsection{Jordan decomposition}
\label{sec:2.4}

We will find it convenient to inject the semigroup~$A^+$ into a salient convex cone with nonempty interior~$A^\times$, contained in some $r$-dimensional $\mathbb{R}$-vector space~$A^\bullet$. We will now define a map $\lambda: G \to A^\times$.

When $k = \mathbb{R}$ or~$\mathbb{C}$, we set $A^\bullet := A^o$ and $A^\times := A^+$. The identification of~$A^\bullet$ with its Lie algebra makes it an $\mathbb{R}$-vector space. Every element~$g$ of~$G$ has a unique decomposition, called its Jordan decomposition, into a product $g = g_h g_e g_u$ of three pairwise commuting elements of~$G$, with $g_e$~elliptic, $g_h$~hyperbolic (\ie conjugate to an element~$a(g)$ of~$A^+$) and $g_u$~unipotent. We then simply set $\lambda(g) := a(g)$.

When $k$~is non-Archimedean, $A^o$ is a free $\mathbb{Z}$-module of rank~$r$. We set $A^\bullet := A^o \otimes_\mathbb{Z} \mathbb{R}$ and we define~$A^\times$ to be the convex hull of~$A^+$ in~$A^\bullet$. In this case, a suitable power~$g^n$ of~$g$ has a Jordan decomposition. We then set, using the same notations as for $k = \mathbb{R}$, $\lambda(g) := \frac{1}{n}a(g^n)$. This is an element of~$A^\times$ that does not depend on the choice of~$n$.

The opposition involution $\imath: A^+ \to A^+$ extends to a unique $\mathbb{R}$-linear map, still denoted by~$\imath$, from~$A^\bullet$ to itself that preserves the cone~$A^\times$. For every $g$ in~$G$, we have $\mu(g^{-1}) = \imath(\mu(g))$, $\lambda(g^{-1}) = \imath(\lambda(g))$ and, for~$n \geq 1$, $\lambda(g^n) = n\lambda(g)$. If $\lambda(g) \neq 1$, we call~$L_g$ the half-line of~$A^\times$ that contains~$\lambda(g)$; otherwise we set $L_g := 0$.

For every character~$\chi$ of~$A$, the morphism $|\chi|: A^o \to (0, \infty)$, defined by $|\chi|(a) := |\chi(a)|$, uniquely extends to a group morphism, still denoted by~$|\chi|$, from~$A^\bullet$ to~$(0, \infty)$. For every subset $\theta$ of~$\Pi$, we call $\theta^c$ the complement of~$\theta$ in~$\Pi$, $\theta^- := \setsuch{\alpha^-}{\alpha \in \theta}$, $A^\bullet_\theta := \setsuch{a \in A^\bullet}{\forall \alpha \in \theta^c,\; |\alpha|(a) = 1}$ and $A^\times_\theta := A^\times \cap A^\bullet_\theta$. This is a convex cone in the $\mathbb{R}$-vector space~$A^\bullet_\theta$. We call $A^{\times \times}_\theta$ the relative interior of~$A^\times_\theta$, $A^+_\theta := A^\times_\theta \cap A^+$ and $A^{++}_\theta := A^{\times \times}_\theta \cap A^+$. Thus $A^+_\theta$ (resp.\@~$A^{++}_\theta$) is the closed (resp.\ open) facet of type~$\theta$ of the Weyl chamber~$A^+$. We write~$A^\bullet_i$, $A^\times_i$, ... for $A^\bullet_{{\alpha_i}^c}$, $A^\times_{{\alpha_i}^c}$, ...

\subsection{Representations of~$G$}
\label{sec:2.5}

Let $\rho$~be a representation of~$G$ on a finite-dimensional $k$-vector space~$V$, \ie a $k$-morphism of $k$-groups $\rho: \mathbf{G} \to \mathbf{GL}(V)$. For $\chi$ in~$X^*(\mathbf{A})$, we call
\[V_\chi := \setsuch{v \in V}{\forall a \in A,\; \rho(a) v = \chi(a) v}\]
the corresponding eigenspace. We call $\Sigma(\rho) := \setsuch{\chi \in X^*(\mathbf{A})}{V_\chi \neq 0}$ the set of $k$-weights of~$V$. This set is invariant by the action of the Weyl group~$W$ and we have $V = \bigoplus_{\chi \in \Sigma(\rho)} V_\chi$. We endow~$X^*(\mathbf{A})$ with the order defined by:
\[\chi_1 \leq \chi_2 \iff \chi_2 - \chi_1 \in \sum_{\chi \in \Sigma^+} \mathbb{N} \chi.\]

When $\rho$~is irreducible, the set~$\Sigma(\rho)$ has a unique element~$\chi_\rho$ maximal for this order, called the \emph{highest $k$-weight} of~$V$. We set $\theta_\rho := \setsuch{\alpha \in \Pi}{\chi_\rho - \alpha \in \Sigma(\rho)}$. We will sometimes say that $\theta_\rho$~is the \emph{type} of~$\rho$ or that $\rho$~is \emph{of type}~$\theta_\rho$. For example, the trivial representation is of type~$\emptyset$.

The following two preliminary lemmas are taken from \cite{Be} \S 2.3 and~\S 2.4. They reduce the study of~$\mu(g)$ and of~$\lambda(g)$ to that of~$\|\rho(g)\|$ and of~$\lambda_1(\rho(g))$ for some representations of~$G$.

\begin{lemma}[\cite{Ti1}]
\label{lem_2.5.1}
There exist $r$~irreducible representations~$\rho_i$ of~$G$ on $k$-vector spaces~$V_i$ whose highest $k$-weights~$\chi_i$ are integer multiples of fundamental weights~$\omega_i$ and such that $\dim (V_i)_{\chi_i} = 1$.
\end{lemma}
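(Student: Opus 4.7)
The plan is to obtain each $\rho_i$ by Galois descent from a carefully chosen absolutely irreducible representation of $\mathbf{G}_{\bar{k}}$, following the strategy of Tits in \cite{Ti1}. First I would fix a maximal $k$-torus $\mathbf{T}$ of $\mathbf{G}$ containing $\mathbf{A}$, together with a Borel subgroup of $\mathbf{G}_{\bar{k}}$ compatible with $\Sigma^+$. Denote by $\tilde\Pi$ the set of absolute simple roots of $\mathbf{T}$ and by $\tilde\Pi_0 \subset \tilde\Pi$ the subset of those vanishing on $\mathbf{A}$ (the simple roots of the anisotropic kernel). The restriction map $X^*(\mathbf{T}) \to X^*(\mathbf{A})$ is Galois-equivariant (with trivial action on the target, since $\mathbf{A}$ is $k$-split), preserves $\tilde\Pi_0$ setwise, and induces a bijection between the Galois orbits on $\tilde\Pi \setminus \tilde\Pi_0$ and $\Pi = \{\alpha_1,\ldots,\alpha_r\}$.

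For each $i$, I would pick a single absolute simple root $\tilde\alpha_i$ restricting to a positive multiple of $\alpha_i$, let $\tilde\omega_i$ be the fundamental weight of $\mathbf{T}$ characterized by $\langle \tilde\omega_i, \tilde\beta^\vee \rangle = \delta_{\tilde\alpha_i,\tilde\beta}$ for $\tilde\beta \in \tilde\Pi$, and set
\[
\tilde\chi_i := \sum_\sigma \sigma \cdot \tilde\omega_i,
\]
the sum running over a transversal of the stabilizer of $\tilde\alpha_i$ in $\mathrm{Gal}(\bar k / k)$. By construction $\tilde\chi_i$ is Galois-invariant, dominant, orthogonal to every coroot $\tilde\beta^\vee$ with $\tilde\beta \in \tilde\Pi_0$, and its restriction to $\mathbf{A}$ is a positive rational multiple of $\omega_i$ (all Galois conjugates $\sigma\tilde\omega_i$ have the same restriction to $\mathbf{A}$, and the pairing with the relative coroot $\alpha_j^\vee$ is proportional to $\delta_{ij}$ via the standard relation between $\alpha_j^\vee$ and the coroots of its Galois fiber). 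Choosing a sufficiently divisible positive integer $n_i$ to clear denominators and to kill the Brauer-class obstruction produced by the descent, one obtains an absolutely irreducible $k$-rational representation $\rho_i : \mathbf{G} \to \mathbf{GL}(V_i)$ whose $\bar k$-highest weight is $n_i\tilde\chi_i$; this descent step is exactly the content of \cite{Ti1}.

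It remains to verify $\dim(V_i)_{\chi_i} = 1$ for $\chi_i := n_i\tilde\chi_i|_{\mathbf{A}}$. Over $\bar k$, the space $(V_i)_{\chi_i}$ is the sum of those $\mathbf{T}$-weight spaces in $V_i \otimes_k \bar k$ whose weights restrict to $\chi_i$. Any such weight has the form $n_i\tilde\chi_i - \sum_{\tilde\beta \in \tilde\Pi} m_{\tilde\beta}\tilde\beta$ with $m_{\tilde\beta} \geq 0$, and restricts to $\chi_i$ precisely when $m_{\tilde\beta} = 0$ for every $\tilde\beta \notin \tilde\Pi_0$. The Weyl subgroup generated by the reflections $s_{\tilde\beta}$ for $\tilde\beta \in \tilde\Pi_0$ fixes $n_i\tilde\chi_i$ pointwise by orthogonality; since weight multiplicities in an irreducible representation are Weyl-invariant, any weight of the form $n_i\tilde\chi_i - \sum_{\tilde\beta \in \tilde\Pi_0} m_{\tilde\beta}\tilde\beta$ with some $m_{\tilde\beta} > 0$ would admit a Weyl conjugate strictly exceeding $n_i\tilde\chi_i$ in the dominance order, which is impossible. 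Hence $(V_i)_{\chi_i}$ reduces to the highest-weight line, of dimension one.

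The principal obstacle is the descent step: producing $\rho_i$ over $k$ (and not merely over $\bar k$) from its absolutely irreducible model requires the Brauer-group analysis of \cite{Ti1} and is precisely what forces the passage from $\tilde\chi_i$ to an integer multiple $n_i\tilde\chi_i$. Everything else amounts to standard bookkeeping with relative and absolute root systems.
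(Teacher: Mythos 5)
The paper gives no proof of this lemma, citing Tits \cite{Ti1} instead, and your sketch is a faithful reconstruction of that Galois-descent argument: choose a $*$-invariant dominant absolute weight orthogonal to the simple roots of the anisotropic kernel, scale it to clear denominators and kill the Brauer obstruction, and deduce $\dim (V_i)_{\chi_i}=1$ by observing that the Weyl group of the anisotropic kernel fixes the highest absolute weight while any other absolute weight restricting to $\chi_i$ would have a Weyl conjugate exceeding it in the dominance order. The steps you correctly defer to \cite{Ti1} (the Galois-equivariance of the based root datum via the $*$-action, the homomorphism property of $\lambda \mapsto \beta(\lambda)$ into the torsion group $\mathrm{Br}(k)$, and the identification of relative with suitably summed absolute coroots) are exactly the ones the paper itself is taking on trust by attributing the lemma to Tits.
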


\begin{remarks*}~
\begin{itemize}
\item When $\mathbf{G}$~is $k$-split, we may simply take $\chi_i = \omega_i$.
\item We fix from now on a family of such representations~$(V_i, \rho_i)$, we choose norms~$\|\cdot\|$ on each of the~$V_i$, we set $X_i := \mathbb{P}(V_i)$, $V^*_i$~the dual of~$V_i$ and $X^-_i := \mathbb{P}(V^*_i)$. We may assume that these choices are made in such a way that, whenever $\alpha_i = \alpha^-_j$, we have $X_i = X^-_j$.
\item We complete this family of $r_S$~representations of~$G$ by $r_Z$~one-dimensional representations still denoted by $(V_i, \rho_i)_{r_S < i \leq r}$ with weights~$\chi_i$ so that the characters~$(\chi_i)_{1 \leq i \leq r}$ form a basis of~$E$.
\end{itemize}
\end{remarks*}

\begin{lemma}
\label{lem_2.5.2}
For every irreducible representation $(V, \rho)$ of~$G$ with highest $k$-weight~$\chi$ and for every norm on~$V$, there exists a constant $C_\chi > 0$ such that, for every $g$ in~$G$, we have
\[C_\chi^{-1} \leq \frac{|\chi(\mu(g))|}{\|\rho(g)\|} \leq C_\chi.\]

Moreover, we have $|\chi|(\lambda(g)) = \lambda_1(\rho(g))$.
\end{lemma}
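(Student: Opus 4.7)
The plan is to handle the two assertions separately, both by reducing from $G$ to the torus $A$ and then exploiting the weight decomposition of $V$ together with the highest-weight hypothesis.

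For the first estimate, I would start from the Cartan decomposition $g = k_1 \mu(g) k_2$. Since $K$ is compact and $\rho$ is continuous, $\rho(K)$ is a bounded subset of $\GL(V)$, so there is a constant $M > 0$ with $\|\rho(k)\|,\|\rho(k)^{-1}\|\le M$ for every $k \in K$; hence $M^{-2}\|\rho(\mu(g))\| \le \|\rho(g)\| \le M^{2}\|\rho(\mu(g))\|$, and the problem reduces to $a \in A^{+}$. Writing $V = \bigoplus_{\chi' \in \Sigma(\rho)} V_{\chi'}$ and comparing the given norm with a sup-norm relative to a basis adapted to this weight decomposition, one obtains a constant $C > 0$ with $C^{-1}\max_{\chi'\in\Sigma(\rho)} |\chi'(a)| \le \|\rho(a)\| \le C \max_{\chi'\in\Sigma(\rho)} |\chi'(a)|$ for every $a \in A^+$. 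Now $\chi = \chi_\rho$ is maximal for the order, so every $\chi' \in \Sigma(\rho)$ satisfies $\chi - \chi' = \sum_{\alpha \in \Pi} n_\alpha \alpha$ with $n_\alpha \in \mathbb{N}$; since $a \in A^+$ gives $|\alpha(a)|\ge 1$ for $\alpha \in \Sigma^+$, we get $|\chi'(a)| \le |\chi(a)|$, so the maximum is attained at $\chi$. Combining these comparisons yields the desired double inequality.

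For the equality $|\chi|(\lambda(g)) = \lambda_1(\rho(g))$, I would use the Jordan decomposition. In the Archimedean case write $g = g_h g_e g_u$ with the three factors pairwise commuting, $g_h$ hyperbolic (conjugate in $G$ to $a(g) \in A^+$), $g_e$ elliptic (contained in a compact subgroup of $G$), $g_u$ unipotent. Then $\rho(g_h)$ is semisimple with eigenvalues $\chi'(a(g))$ for $\chi'\in\Sigma(\rho)$; $\rho(g_e)$ lies in a compact subgroup of $\GL(V)$, so is semisimple with all eigenvalues of modulus one; and $\rho(g_u)$ is unipotent. The three images pairwise commute, so together they form the $\GL(V)$-Jordan decomposition of $\rho(g)$. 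Hence the moduli of the eigenvalues of $\rho(g)$ are precisely the numbers $|\chi'|(\lambda(g))$, $\chi' \in \Sigma(\rho)$, and the maximum is $|\chi|(\lambda(g))$ by the same highest-weight argument applied to $\lambda(g) \in A^+$. In the non-Archimedean case one applies this to a power $g^n$ admitting a Jordan decomposition: one obtains $\lambda_1(\rho(g))^n = \lambda_1(\rho(g^n)) = |\chi|(\lambda(g^n)) = |\chi|(\lambda(g))^n$, and taking $n$-th roots concludes.

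The main obstacle I foresee is verifying cleanly that the $G$-Jordan decomposition of $g$ is carried by $\rho$ to the $\GL(V)$-Jordan decomposition of $\rho(g)$; this is a standard compatibility of Jordan decompositions under $k$-morphisms of algebraic groups, but care must be taken because $G$ is only $k$-reductive (not necessarily $k$-split), the field $k$ is arbitrary, and semisimplicity of $\rho(g_e)$ has to be justified from the intrinsic definition of ellipticity. A second, more mundane, point is the uniform comparison $\|\rho(a)\| \asymp \max_{\chi'} |\chi'(a)|$ valid for all $a \in A^+$, which boils down to equivalence of norms on the finite-dimensional $k$-vector space $V$ and behaves uniformly because the compared norms are both $A$-equivariant on each weight space.
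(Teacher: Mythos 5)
The paper gives no proof of Lemma~\ref{lem_2.5.2}; it simply cites \cite{Be}~\S 2.3--2.4. Your argument is a correct and essentially complete reconstruction of the standard one: the reduction to $A^+$ via the Cartan decomposition and compactness of~$K$; the weight-space comparison $\|\rho(a)\|\asymp\max_{\chi'\in\Sigma(\rho)}|\chi'(a)|$; the highest-weight dominance $\chi-\chi'\in\sum_{\alpha\in\Pi}\mathbb{N}\alpha$ combined with $|\alpha(a)|\geq 1$ on $A^+$; the preservation of the complete (elliptic--hyperbolic--unipotent) Jordan decomposition under~$\rho$ in the Archimedean case, justified by uniqueness once one observes that $\rho(g_h)$ is conjugate to $\rho(a(g))$ with positive eigenvalues, $\rho(g_e)$ lies in a compact subgroup of $\GL(V)$ and hence is semisimple with unimodular eigenvalues, and $\rho(g_u)$ is unipotent by the usual algebraic-group compatibility; and finally the $n$-th-power device for non-Archimedean~$k$. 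Two small remarks. First, the uniform constant in $\|\rho(a)\|\asymp\max_{\chi'}|\chi'(a)|$ is merely equivalence of norms on the finite-dimensional space $\End(V)$; the appeal to $A$-equivariance is unnecessary and somewhat beside the point. Second, in the Archimedean Jordan step, what you actually need is only that the moduli of eigenvalues of $\rho(g)$ equal the eigenvalues of $\rho(g_h)$; this follows by simultaneously diagonalizing the commuting semisimple parts $\rho(g_h)$ and $\rho(g_e)$ over $\mathbb{C}$, which is slightly lighter than invoking the full uniqueness of the three-fold decomposition, though either works.
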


\begin{remark*}
For $g$ in~$G$, $\rho_i(g)$~is proximal if and only if $\lambda(g)$ is not in~$A^\times_i$. Indeed, the weights of~$A$ in~$V_i$ other than~$\chi_i$ are of the form
\[\chi_i - \alpha_i - \sum_{1 \leq j \leq r} n_j \alpha_j
\quad \text{with } n_j \geq 0.\]
\end{remark*}

Here is an example of application of the two previous lemmas that will be useful to us in~\ref{sec:4.6}.
\begin{corollary*}
For every $g$ in~$G$, we have the equality in~$A^\bullet$: $\displaystyle \lambda(g) = \lim_{n \to \infty} \frac{1}{n}\mu(g^n)$.
\end{corollary*}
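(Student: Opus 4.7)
My plan is to reduce the claim to a coordinate-wise statement by testing against a basis of linear functionals on $A^\bullet$ provided by the representations of Lemma~\ref{lem_2.5.1} and the companion Lemma~\ref{lem_2.5.2}. The $r$ characters $\chi_i$ form a basis of $E$, so the $r$ maps $\log|\chi_i|:A^\bullet\to\mathbb{R}$ (well-defined and $\mathbb{R}$-linear because, by \ref{sec:2.4}, $|\chi_i|$ extends to a group morphism $(A^\bullet,+)\to((0,\infty),\cdot)$) form a basis of $\Hom(A^\bullet,\mathbb{R})$. Consequently, convergence $\frac{1}{n}\mu(g^n)\to\lambda(g)$ in $A^\bullet$ is equivalent to the $r$ scalar convergences
\[\lim_{n\to\infty}\frac{1}{n}\log|\chi_i(\mu(g^n))|=\log|\chi_i|(\lambda(g)),\qquad i=1,\ldots,r.\]

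For a fixed $i$, apply Lemma~\ref{lem_2.5.2} to $(V_i,\rho_i,\chi_i)$ at the element $g^n$: there is a constant $C_{\chi_i}>0$, independent of~$n$, with
\[C_{\chi_i}^{-1}\,\|\rho_i(g)^n\|\;\leq\;|\chi_i(\mu(g^n))|\;\leq\;C_{\chi_i}\,\|\rho_i(g)^n\|.\]
Taking logarithms, dividing by $n$, and letting $n\to\infty$, the boundary term $\pm\frac{1}{n}\log C_{\chi_i}$ disappears, so it suffices to show $\lim_{n\to\infty}\frac{1}{n}\log\|\rho_i(g)^n\|=\log\lambda_1(\rho_i(g))$; this is the Beurling--Gelfand spectral radius formula in the finite-dimensional $k$-Banach algebra $\End(V_i)$. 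The second assertion of Lemma~\ref{lem_2.5.2}, $|\chi_i|(\lambda(g))=\lambda_1(\rho_i(g))$, then finishes the identification of the two sides.

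The only step requiring comment is the spectral radius formula when $k$ is non-Archimedean. Submultiplicativity $\|\rho_i(g)^{m+n}\|\leq\|\rho_i(g)^m\|\cdot\|\rho_i(g)^n\|$ combined with Fekete's subadditive lemma yields the existence of $\lim_{n\to\infty}\frac{1}{n}\log\|\rho_i(g)^n\|$ and the fact that it equals the infimum. To identify this limit with $\log\lambda_1(\rho_i(g))$ one passes to a finite extension $k'/k$ that splits the characteristic polynomial of $\rho_i(g)$, equips $k'$ with the canonical extension of $|\cdot|$ already fixed in~\ref{sec:2.1}, and trigonalizes $\rho_i(g)$ over $k'$; the remaining argument is then identical to the classical Archimedean one. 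Everything else is routine algebraic manipulation.
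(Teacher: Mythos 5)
Your proof follows the same route as the paper's: test against the basis of functionals $\log|\chi_i|$ of $\Hom(A^\bullet,\mathbb{R})$, apply Lemma~\ref{lem_2.5.2} to convert $|\chi_i(\mu(g^n))|$ to $\|\rho_i(g)^n\|$ up to a constant, and invoke the spectral radius formula $\lambda_1(x)=\lim_n\|x^n\|^{1/n}$ together with $|\chi_i|(\lambda(g))=\lambda_1(\rho_i(g))$. The only difference is that you spell out the spectral radius formula (Fekete plus trigonalization over a splitting extension) in the non-Archimedean case, which the paper simply asserts; the argument is correct.
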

\begin{proof}
Every endomorphism~$x$ of a finite-dimensional vector space satisfies the identity $\lambda_1(g) = \displaystyle \lim_{n \to \infty} \|x^n\|^\frac{1}{n}$.

For $x = \rho_i(g)$, we obtain by Lemma~\ref{lem_2.5.2}
\[|\chi_i|(\lambda(g)) = \lim_{n \to \infty} \|\rho_i(g)^n\|^\frac{1}{n} = \lim_{n \to \infty} |\chi_i(\mu(g^n))|^\frac{1}{n} = \lim_{n \to \infty} |\chi_i|(\frac{1}{n}\mu(g^n)).\]

As this holds for every $i = 1, \ldots, r$, we have the desired equality.
\end{proof}

\begin{framed}
\begin{center}
In the whole paper, $k$~is some local field, \\
$\mathbf{G}$~is a connected reductive $k$-group, \\
$\Gamma$~is a Zariski-dense subsemigroup of $G := \mathbf{G}_k$ and \\
we keep the notations introduced in these preliminaries. \\
In sections \ref{sec:5.2},~\ref{sec:6.3} and the following ones, we assume $k = \mathbb{R}$. \\
In section~\ref{sec:5.3}, we assume that $k$~is non-Archimedean.
\end{center}
\end{framed}

\section{The subset~$\theta_\Gamma$ and the limit set~$\Lambda_\Gamma$}
\label{sec:3}

In this section, we associate to~$\Gamma$ a subset~$\theta_\Gamma$ of the set~$\Pi$ of the simple roots of~$G$, or equivalently a flag variety~$Y_\Gamma$ of~$G$ (\ref{sec:3.2}). This variety~$Y_\Gamma$ is the largest variety on which $\Gamma$~acts in a proximal way (\ref{sec:3.5}). When $k = \mathbb{R}$, this subset is equal to~$\Pi$ and $Y_\Gamma$ is always the full flag variety (\ie the one corresponding to minimal parabolics). When $k$~is non-Archimedean, any subset of~$\Pi$ may be obtained in this way.

We also associate to~$\Gamma$ a closed subset~$\Lambda_\Gamma$ of~$Y_\Gamma$ called the limit set of~$\Gamma$. The limit set of the opposite semigroup~$\Gamma^-$ is denoted by~$\Lambda^-_\Gamma$.

We finally associate to~$\Gamma$ a closed set~$F_\Gamma$ of facets of type~$\theta_\Gamma$ called ``quasiperiodic facets'' and we show how to deduce~$F_\Gamma$ from $\Lambda_\Gamma$ and from~$\Lambda^-_\Gamma$ and vice-versa (\ref{sec:3.6}).

\subsection{Simultaneous proximality}
\label{sec:3.1}

We shall need the following lemma.

\begin{lemma*}[\cite{A-M-S}~Lemma~5.15]
Let $W$~be a $k$-vector space, $r: G \to \GL(W)$ a representation that decomposes into a direct sum of irreducible representations $(W, r) = \bigoplus_{1 \leq i \leq l} (W_i, r_i)$.

If for every~$i$, $r_i(\Gamma)$~contains a proximal element, then there exists $\gamma$ in~$\Gamma$ such that for every~$i$, $r_i(\gamma)$~is proximal.
\end{lemma*}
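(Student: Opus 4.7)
The plan is to proceed by induction on $l$, the number of irreducible summands. The base case $l=1$ is the hypothesis itself.

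For the inductive step, I would assume there exists $\alpha \in \Gamma$ such that $r_i(\alpha)$ is proximal for $i = 1, \ldots, l-1$, and pick $\beta \in \Gamma$ with $r_l(\beta)$ proximal, as supplied by the hypothesis. I would then exploit the Zariski-density of $\Gamma$ to produce an auxiliary element $\delta \in \Gamma$ in \emph{generic position} relative to the attracting lines and repelling hyperplanes of the $r_i(\alpha)$ ($i \leq l-1$) and of $r_l(\beta)$. Concretely, the conditions to impose are that for each $i \leq l-1$ the line $r_i(\delta)(x^+_{r_i(\alpha)})$ is not contained in the hyperplane $X^<_{r_i(\alpha)}$, and that the analogous transversality holds on $W_l$ between $r_l(\delta)$ and the attracting data of $r_l(\beta)$. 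Each such condition is the non-vanishing of a regular function on $G$, hence cuts out a Zariski-open dense subset; the finite intersection of these is Zariski-open and dense, so by Zariski-density of $\Gamma$ it meets $\Gamma$.

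With $\delta$ fixed, the candidate element is $\gamma := \alpha^N \delta \beta^N$ for $N$ to be taken large. The verification of proximality of $r_i(\gamma)$ on each summand reduces to the standard asymptotic argument: using the decomposition
\[r_i(\alpha)^N = \lambda_1(r_i(\alpha))^N \pi_i + E_{N,i}, \qquad \|E_{N,i}\| = O(\tau_i^N),\ \tau_i < \lambda_1(r_i(\alpha)),\]
where $\pi_i$ is the rank-one projector onto $x^+_{r_i(\alpha)}$ along $X^<_{r_i(\alpha)}$, and the similarity $r_i(\alpha^N \delta \beta^N) \sim r_i(\delta\beta^N\alpha^N)$ (which preserves eigenvalues, hence proximality), one finds that on $W_i$ with $i \leq l-1$ the operator $r_i(\gamma)$ is projectively close to a non-zero rank-one operator with image $r_i(\delta\beta^N)(x^+_{r_i(\alpha)})$; the transversality condition imposed on $\delta$ is exactly what guarantees this rank-one operator is non-zero, and Lemma~\ref{lem_2.2.1} (via a direct variant of Lemma~\ref{lem_2.2.2}) then yields proximality of $r_i(\gamma)$ once $N$ is large. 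The symmetric analysis with the roles of $\alpha$ and $\beta$ swapped (and using $r_l(\alpha^N\delta\beta^N) \sim r_l(\beta^N\alpha^N\delta)$) gives proximality on $W_l$.

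The main obstacle is that the \emph{other} factor — $r_i(\beta)^N$ on $W_i$ for $i \leq l-1$, and $r_l(\alpha)^N$ on $W_l$ — is typically non-proximal and can inflate in operator norm at some unrelated rate, threatening to swamp the rank-one contribution of the proximal factor. The defence is the spectral-gap estimate: the error $E_{N,i}$ decays as $\tau_i^N$ while the main term grows like $\lambda_1(r_i(\alpha))^N$, so ratios of the form $(\tau_i/\lambda_1(r_i(\alpha)))^N$ overpower any fixed polynomial inflation coming from the non-proximal factor, provided $N$ is chosen sufficiently large and the transversality of $\delta$ is quantitatively uniform. If this direct approach proves delicate, one inserts a second generic element $\delta' \in \Gamma$ (again obtained by Zariski-density) and works with a symmetric word such as $\alpha^N \delta \beta^N \delta' \alpha^N \delta \beta^N$, to which Lemma~\ref{lem_2.2.2} applies cleanly in each $W_i$ with both endpoints being the proximal factor; the bookkeeping of generic-position conditions remains finite and so compatible with Zariski-density of $\Gamma$.
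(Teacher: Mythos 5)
Your inductive strategy is genuinely different from the paper's, which avoids powers-and-transversality estimates altogether: the paper looks at the closure of $\Gamma$ in $\End(W)$, shows that a minimal-rank element with all components nonzero must have each component of rank one, and then approximates that limit by actual elements of $\Gamma$. Your route is closer in spirit to the original A-M-S argument, and could in principle work, but as written it has a real gap at exactly the step you flag as "the main obstacle." You write $r_i(\alpha)^N = \lambda_1(r_i(\alpha))^N\pi_i + E_{N,i}$ with $\|E_{N,i}\|=O(\tau_i^N)$, $\tau_i<\lambda_1(r_i(\alpha))$, and claim the ratio $(\tau_i/\lambda_1(r_i(\alpha)))^N$ "overpowers any fixed polynomial inflation coming from the non-proximal factor." But $r_i(\beta)^N$ is not a fixed polynomial inflation; it is exponential, and worse, it acts with \emph{different} exponential rates on different directions. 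After conjugating $r_i(\alpha^N\delta\beta^N)$ to $r_i(\delta\beta^N\alpha^N)$ and substituting the spectral split, the candidate rank-one term is $c^N\,r_i(\delta)r_i(\beta)^N\pi_i$ while the error is $r_i(\delta)r_i(\beta)^N E_{N,i}$. The error is bounded by $\|r_i(\beta)^N\|\cdot O(\tau_i^N)$, but the main term has size comparable to $c^N\,\|r_i(\beta)^N v\|$ for $v$ a unit vector on $x^+_{r_i(\alpha)}$. The quantity $\|r_i(\beta)^N v\|$ can decay exponentially (e.g.\ if $x^+_{r_i(\alpha)}$ happens to lie near a contracting direction of $r_i(\beta)$), so the ratio error/main behaves like $\bigl(\|r_i(\beta)^N\|/\|r_i(\beta)^N v\|\bigr)(\tau_i/c)^N$, which can blow up. Nothing in your genericity conditions on $\delta$ controls this, because the bad phenomenon depends on the relation between the spectral data of $\alpha$ and $\beta$ on $W_i$, not on $\delta$.

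The fallback you propose does not rescue the argument: Lemma~\ref{lem_2.2.2} applies to a product in which \emph{every} factor is $\eps$-proximal on the relevant projective space, whereas in the word $\alpha^N\delta\beta^N\delta'\alpha^N\delta\beta^N$ the factor $\beta^N$ fails to be proximal on $W_i$ for $i\le l-1$ (and $\alpha^N$ fails on $W_l$), so the lemma is not applicable. The repair that would actually work is the one visible in the paper's own Lemma~\ref{sec:3.2} (iii)$\Rightarrow$(ii) and in Corollary~\ref{cor_3.1.1}: normalize the non-proximal factor by its operator norm, pass to a subsequence along which $r_i(\beta)^N/\|r_i(\beta)^N\|$ converges for every $i$, and \emph{then} choose $\delta$ (by Zariski density) so that the finitely many limiting products $\pi_i\, r_i(\delta)\, b_\infty^{(i)}$ are all nonzero. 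This keeps the error/main ratio under control because both are normalized by the same quantity. That version of your argument would be correct, but it is a genuinely different estimate from the one you wrote, and it quietly amounts to the same "take a limit in $\End(W)$, find a rank-one limit, then come back to $\Gamma$" mechanism that the paper's proof uses in a cleaner packaging.
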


Let us give a proof of this lemma that follows the ideas of~\cite{A-M-S} but simplified thanks to an idea that I found in~\cite{Pr}, which is to introduce the closure of~$r_i(\Gamma)$ in~$\mathbb{P}(\End(W_i))$ rather than in the set of quasiprojective transformations of~$\mathbb{P}(W_i)$.

\begin{proof}
We may assume that $G \subset \GL(W)$. Let
\[S := \setsuch{g \in \GL(W)}{\forall i,\; \restr{g}{W_i} \text{ is a scalar}}.\]
We may assume that $S$~is contained in~$\Gamma$. Let $\Gamma_i := \setsuch{\restr{g}{W_i}}{g \in \Gamma}$, $\overline{\Gamma_i}$~be the closure of~$\Gamma_i$ in~$\End(W_i)$ and $\overline{\Gamma}$~be the closure of~$\Gamma$ in~$\End(W)$. Clearly we have $\overline{\Gamma} \subset \overline{\Gamma_1} \times \cdots \times \overline{\Gamma_l}$. Let
\[\Delta := \setsuch{\pi = (\pi_1, \ldots, \pi_l) \in \overline{\Gamma}}{\forall i,\; \pi_i \neq 0}\]
and let $\pi$ be an element of~$\Delta$ with minimum rank. Let us show that for every~$i$, $\pi_i$~has rank~$1$.

By contradiction, suppose $\rank(\pi_1) \neq 1$. Since $\Gamma_1$~contains some proximal element $h_1 := r_1(\gamma_1)$, $\overline{\Gamma_1}$~contains some projector~$\sigma_1$ with rank~$1$: $\sigma_1 = \lim_{n \to \infty} c_n h_1^n$ for some well-chosen sequence~$c_n$ in~$k$. Since $S$~is contained in~$\Gamma$, there exists an element $\sigma = (\sigma_1, \ldots, \sigma_l)$ of~$\overline{\Gamma}$ such that for every~$i$, $\sigma_i$~is nonzero and such that $\sigma_1$~is the projector defined above. By irreducibility of~$W_i$ and Zariski-connectedness of~$\Gamma$, we may find an element $\gamma$ of~$\Gamma$ such that for every~$i$, $\gamma(\Image(\pi_1)) \not\subset \Ker(\sigma_i)$. Then $\sigma \gamma \pi$~is in~$\Delta$ and has rank smaller than~$\gamma$. Contradiction. Hence for every~$i$, $\rank(\pi_i) = 1$.

Replacing if needed $\pi$ by~$\gamma \pi$ with $\gamma$~an element of~$\Gamma$ such that for every~$i$, $\gamma(\Im(\pi_i)) \not\subset \Ker(\pi_i)$, we may assume that for every~$i$, $\pi_i$~is a multiple of some rank-$1$ projector. Let us then choose a sequence $\gamma_n$ in~$\Gamma$ such that $\lim_{n \to \infty} \gamma_n = \pi$. We then have, for every~$i$, $\lim_{n \to \infty} r_i(\gamma_n) = \pi_i$ and, for $n \gg 0$, $r_i(\gamma_n)$~is proximal.
\end{proof}

\begin{corollary_global}
\label{cor_3.1.1}
We keep the same notations. Then the set
\[\Gamma' := \setsuch{\gamma \in \Gamma}{\forall i,\; r_i(\gamma) \text{ is proximal}}\]
is still Zariski-dense in~$G$.
\end{corollary_global}

\begin{proof}
Let $\gamma_0$~be an element of~$\Gamma'$. For $i = 1, \ldots, l$, we call~$c_i$ the eigenvalue of~$r_i(\gamma_0)$ such that $|c_i| = \lambda_1(r_i(\gamma_0))$. The limit $\pi_i := \lim_{m \to \infty} c_i^{-m} r_i(\gamma_0^m)$ is a projector of rank~$1$. We introduce the Zariski-open subset of~$G$
\[U := \setsuch{g \in G}{\forall i = 1, \ldots, l,\quad g(\Image(\pi_i)) \not\subset \Ker(\pi_i)}.\]
This open subset~$U$ is also Zariski-dense in~$G$, since the representations~$W_i$ are irreducible.

Let us first show that, for every~$\gamma$ in~$U \cap \Gamma$, there exists~$m_0$ such that for $m \geq m_0$, $\gamma_0^m \gamma$~is in~$\Gamma'$. Indeed, the limit
\[\lim_{m \to \infty} c_i^{-m} r_i(\gamma_0^m \gamma) = \pi_i r_i(\gamma)\]
is a nonzero multiple of a rank-$1$ projector and $r_i(\gamma_0^m \gamma)$~is proximal for $m \gg 0$.

Let us now show that $\Gamma'$~is Zariski-dense in~$G$. Let $F$~be a Zariski-closed subset containing~$\Gamma'$; let us show that $F$ contains~$\Gamma$. If suffices to show that $F$ contains~$U \cap \Gamma$. So let $\gamma$~be in~$U \cap \Gamma$; then there exists an integer~$m_0$ such that for every~$m \geq m_0$, $\gamma_0^m \gamma$~is in~$\Gamma'$. Hence
\[\gamma_0^m \gamma \in F,\quad \forall m \geq m_0.\]
Since any Zariski-closed semigroup is a group, this statement still holds for $m$ in~$\mathbb{Z}$. In particular, for $m = 0$, we have $\gamma \in F$. This is what we wanted.
\end{proof}

For every element~$\gamma$ of~$\Gamma'$, we call~$x_{i, \gamma}$ the attracting point of~$r_i(\gamma)$ in~$\mathbb{P}(W_i)$.

\begin{corollary_global}
\label{cor_3.1.2}
We keep the same notations. Let $\eps > 0$ and $\gamma_0$~be an element of~$\Gamma'$. Then the set
\[\Gamma'_\eps := \setsuch{\gamma \in \Gamma'}{\forall i,\; d(x_{i, \gamma},\; x_{i, \gamma_0}) < \eps}\]
is still Zariski-dense in~$G$.
\end{corollary_global}
\begin{proof}
Replace~$\Gamma'$ by~$\Gamma'_\eps$ in the proof of Corollary~\ref{cor_3.1.1}.
\end{proof}

The following corollary will be useful to us in~\ref{sec:4.5}.

\begin{corollary_global}[\cite{A-M-S}~Theorem~5.17]
\label{cor_3.1.3}
With the same notations. There exists a finite subset~$F$ of~$\Gamma$ and $\eps > 0$ such that, for every $g$ in~$\Gamma$, there exists $f$ in~$F$ such that for every~$i = 1, \ldots, l$, $r_i(g f)$~is $\eps$-proximal.
\end{corollary_global}

For the proof of this corollary (that does not use quasiprojective transformations), see~\cite{A-M-S}.

\subsection{The subset~$\theta_\Gamma$}
\label{sec:3.2}

\begin{definition*}
Let $g$ be an element of~$G$. We call~$\theta_g$ the subset of~$\Pi$ such that $\lambda(g) \in A^{\times \times}_{\theta_g}$.
\end{definition*}

In other terms, $\theta_g = \setsuch{\alpha_i \in \Pi}{\rho_i(g) \text{ is proximal}}$ (cf. the last remark of~\ref{sec:2.5}).

We will sometimes say that $\theta_g$~is the ``type'' of~$g$ or that $g$~is of type~$\theta_g$.

\begin{definition*}
Let $\theta$~be a subset of~$\Pi$. We say that an element~$g$ of~$G$ is \emph{$\theta$-proximal} (or \emph{proximal on~$Y_\theta$}) if it satisfies one of the following three equivalent properties:
\begin{itemize}
\item $\theta_g \supset \theta$;
\item for every~$\alpha_i$ in~$\theta$, $\lambda(g) \not\in A^\times_i$;
\item for every~$\alpha_i$ in~$\theta$, the element~$\rho_i(g)$ is proximal.
\end{itemize}
\end{definition*}

When $\theta = \Pi$, we also say that $g$~is $k$-regular.

\begin{definition*}
We say that an element~$g$ of~$G$ is \emph{$(\theta, \eps)$-proximal} if for every~$\alpha_i$ in~$\theta$, the element $\rho_i(g)$~is $\eps$-proximal in~$\mathbb{P}(V_i)$.
\end{definition*}

\begin{definition*}
We call $\theta_\Gamma$ the smallest subset~$\theta$ of~$\Pi$ such that $\lambda(\Gamma) \subset A^\times_\theta$.
\end{definition*}

In other terms, $\theta_\Gamma$ is the union of all the subsets~$\theta_g$ for $g \in \Gamma$.

We shall sometimes say that $\theta_\Gamma$~is the ``type'' of~$\Gamma$ or that $\Gamma$~is of type~$\theta_\Gamma$.

\begin{remarks*}~
\begin{itemize}
\item We set $\Gamma^- := \setsuch{g^{-1}}{g \in \Gamma}$. This is also a semigroup, and we have the identity: $\theta_{\Gamma^-} = \theta_\Gamma^-$. In particular, if $\Gamma$~is a group, then $\theta_\Gamma$~is stable by the opposition involution.
\item It follows from the appendix of~\cite{Be-La} (see also \cite{Go-Ma}, \cite{Gu-Ra} and~\cite{Pr}) that when $k = \mathbb{R}$, and $\Gamma$~is a Zariski-dense subsemigroup of~$G$, then $\theta_\Gamma = \Pi$.
\item The case $k = \mathbb{C}$ is not very interesting for our problem: indeed, if $\Gamma$~is a Zariski-dense (over~$\mathbb{C}$) subsemigroup of~$G$, a restriction of scalars from~$\mathbb{C}$ to~$\mathbb{R}$ allows one to consider~$\Gamma$ as a Zariski-dense (over~$\mathbb{R}$) subsemigoup in the group of real points of some semisimple $\mathbb{R}$-group. For instance, $\Gamma = \SU(n, \mathbb{R})$~is Zariski-dense (over~$\mathbb{C}$) in~$G = \SL(n, \mathbb{C})$.
\end{itemize}
\end{remarks*}

\begin{proposition*}
The set of~$\theta_\Gamma$-proximal elements of~$\Gamma$ is still Zariski-dense in~$G$.
\end{proposition*}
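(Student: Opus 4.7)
The plan is to reduce the statement directly to Corollary \ref{cor_3.1.1}, since $\theta_\Gamma$-proximality is defined coordinate-by-coordinate in terms of the representations $\rho_i$.

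First I would unwind the definitions. By the last remark of \ref{sec:2.5}, an element $g \in G$ is $\theta$-proximal if and only if for every $\alpha_i \in \theta$ the operator $\rho_i(g)$ is proximal on $\mathbb{P}(V_i)$. Applying this to $\theta = \theta_\Gamma$, it suffices to show that the set
\[
\Gamma' := \setsuch{\gamma \in \Gamma}{\forall\, \alpha_i \in \theta_\Gamma,\; \rho_i(\gamma) \text{ is proximal on } \mathbb{P}(V_i)}
\]
is Zariski-dense in $G$.

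Next I would verify the hypotheses of Corollary \ref{cor_3.1.1} for the finite family of representations $(V_i, \rho_i)_{\alpha_i \in \theta_\Gamma}$. Each $\rho_i$ is irreducible by Lemma \ref{lem_2.5.1}, so the required decomposition hypothesis holds (set $W := \bigoplus_{\alpha_i \in \theta_\Gamma} V_i$ and $r_i := \rho_i$). Moreover, the very definition of $\theta_\Gamma$ as the smallest subset $\theta \subset \Pi$ with $\lambda(\Gamma) \subset A^\times_\theta$ means that $\theta_\Gamma = \bigcup_{g \in \Gamma} \theta_g$: for every $\alpha_i \in \theta_\Gamma$ there must exist some $g_i \in \Gamma$ with $\alpha_i \in \theta_{g_i}$, i.e. (again by the remark of \ref{sec:2.5}) such that $\rho_i(g_i)$ is proximal. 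Thus for each $i$, the semigroup $\rho_i(\Gamma)$ contains a proximal element.

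Corollary \ref{cor_3.1.1} then produces a Zariski-dense subset of $\Gamma$ on which every $\rho_i$, $\alpha_i \in \theta_\Gamma$, acts proximally; this subset is exactly $\Gamma'$, so $\Gamma'$ is Zariski-dense and the proposition follows. There is no real obstacle: all the work has been done in the simultaneous-proximality lemma and its first corollary; the only content here is the translation between $\theta_\Gamma$-proximality of a single element and simultaneous proximality of its images under the representations $\rho_i$ indexed by $\theta_\Gamma$.
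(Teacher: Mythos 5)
Your proof is correct and follows exactly the route the paper intends: the paper's own proof is the one-line remark that the claim ``follows from the definitions and from Corollary~\ref{cor_3.1.1}'', and your write-up simply fills in the translation (via the remark in~\ref{sec:2.5} and the definition of $\theta_\Gamma$) needed to verify the hypotheses of that corollary for the family $(V_i,\rho_i)_{\alpha_i\in\theta_\Gamma}$. Nothing is missing and nothing is different in substance.
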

\begin{proof}
This follows from the definitions and from Corollary~\ref{cor_3.1.1} in~\ref{sec:3.1}.
\end{proof}

The following lemma gives other possible equivalent definitions for~$\theta_\Gamma$.

\begin{lemma*}
Let $\alpha_i \in \Pi$. Then the following are equivalent:
\begin{enumerate}[label=\roman*)]
\item $\alpha_i \in \theta_\Gamma$;
\item $\rho_i(\Gamma)$ contains proximal elements;
\item the set~$\alpha_i(\mu(\Gamma))$ is unbounded (in~$k$).
\end{enumerate}
\end{lemma*}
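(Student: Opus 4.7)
The equivalence of (i) and (ii) is a tautology given the remark after Lemma~\ref{lem_2.5.2}: $\rho_i(g)$ is proximal iff $\lambda(g) \notin A^\times_i$ iff $\alpha_i \in \theta_g$, and $\theta_\Gamma$ is by definition the union of the $\theta_g$ over $g \in \Gamma$.

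For (ii) $\Rightarrow$ (iii): if $\rho_i(g)$ is proximal for some $g \in \Gamma$, then $|\alpha_i|(\lambda(g)) > 1$ (the continuous morphism $|\alpha_i|: A^\bullet \to (0, \infty)$ is $\geq 1$ on $A^\times$ because $\alpha_i \in \Sigma^+$, and equals $1$ exactly on $A^\times_i$). Applying $|\alpha_i|$ to the identity $\lambda(g) = \lim_n \frac{1}{n}\mu(g^n)$ of the Corollary at the end of~\ref{sec:2.5} yields $|\alpha_i(\mu(g^n))|^{1/n} \to |\alpha_i|(\lambda(g)) > 1$, so $|\alpha_i(\mu(g^n))| \to \infty$ and $\alpha_i(\mu(\Gamma))$ is unbounded.

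The substantive direction is (iii) $\Rightarrow$ (ii). My plan is to extract a rank-one limit from a sequence in $\Gamma$ with divergent $|\alpha_i|(\mu(\cdot))$ and then use Zariski-density to promote it to a proximal element. Pick $g_m \in \Gamma$ with $|\alpha_i|(\mu(g_m)) \to \infty$ and set $\pi_m := \rho_i(g_m)/\|\rho_i(g_m)\|$; by compactness of the unit ball of $\End(V_i)$ I pass to a subsequential limit $\pi$ with $\|\pi\| = 1$. To see $\rank \pi = 1$, combine $\dim (V_i)_{\chi_i} = 1$ (Lemma~\ref{lem_2.5.1}) with the last remark of~\ref{sec:2.5}: every weight of $\wedge^2 V_i$ is $\leq 2\chi_i - \alpha_i$, and decomposing $\wedge^2 V_i$ into irreducible summands and applying Lemma~\ref{lem_2.5.2} summand-wise yields
\[\|\wedge^2 \pi_m\| = \frac{\|\wedge^2 \rho_i(g_m)\|}{\|\rho_i(g_m)\|^2} \leq \frac{C}{|\alpha_i|(\mu(g_m))} \longrightarrow 0;\]
so $\wedge^2 \pi = 0$, and since $\|\pi\| = 1$ this forces $\rank \pi = 1$.

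Write $\pi = u \otimes \phi$ with $u \in V_i$, $\phi \in V_i^*$ nonzero. Irreducibility of $\rho_i$ makes $\{g \in G : \phi(\rho_i(g) u) \neq 0\}$ a nonempty Zariski-open subset of~$G$, so Zariski-density of $\Gamma$ furnishes $\gamma \in \Gamma$ with $\lambda := \phi(\rho_i(\gamma) u) \neq 0$. Then
\[\frac{\rho_i(g_m \gamma)}{\|\rho_i(g_m)\|} = \pi_m \rho_i(\gamma) \longrightarrow u \otimes (\phi \circ \rho_i(\gamma)),\]
a rank-one endomorphism of~$V_i$ with characteristic polynomial $t^{\dim V_i - 1}(t - \lambda)$ over~$k$. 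Continuity of the roots of the characteristic polynomial then gives, for~$m$ large, a unique eigenvalue of $\rho_i(g_m\gamma)$ of maximum modulus, simple and lying in~$k$, while all other eigenvalues have strictly smaller modulus; so $\rho_i(g_m\gamma)$ is proximal and $g_m\gamma \in \Gamma$ witnesses~(ii). The main obstacle is precisely this conversion: hypothesis~(iii) controls singular values via the Cartan projection, but proximality is a statement about eigenvalues; the rank-one limit~$\pi$ together with the Zariski-dense choice of~$\gamma$ (which separates the image of~$\pi$ from its kernel) is what bridges the gap.
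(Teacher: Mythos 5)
Your equivalences (i)$\iff$(ii) and (ii)$\implies$(iii) are essentially the paper's own arguments. For (iii)$\implies$(ii), however, you take a genuinely different route. The paper passes to the Cartan decomposition $g_m = k_{1,m} a_m k_{2,m}$, extracts convergent subsequences $k_{j,m} \to k_j$ in $K$, observes that $\chi_i(a_m)^{-1}\rho_i(a_m)$ converges to the projector onto the highest-weight line (precisely because $|\alpha_i(a_m)| \to \infty$ and $\dim(V_i)_{\chi_i} = 1$), and then premultiplies by a Zariski-generic $h \in \Gamma$. You instead normalize $\rho_i(g_m)$ by its operator norm, pass to a subsequential limit $\pi$ in the unit ball of $\End(V_i)$, and force $\rank\pi = 1$ via the estimate $\|\wedge^2 \rho_i(g_m)\| / \|\rho_i(g_m)\|^2 \lesssim |\alpha_i(\mu(g_m))|^{-1}$; you then postmultiply by a Zariski-generic $\gamma$. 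Both are correct; the $\wedge^2$ argument is a compact way to convert a divergent singular-value gap into a rank-one limit without writing down the SVD, whereas the paper's version stays closer to the highest-weight formalism of \ref{sec:2.5}. One detail of your write-up deserves a fix: you justify the $\wedge^2$ estimate by ``decomposing $\wedge^2 V_i$ into irreducible summands and applying Lemma~\ref{lem_2.5.2} summand-wise.'' But Lemma~\ref{lem_2.5.2} requires irreducibility, and over a local field of positive characteristic (e.g.\ $k = \mathbb{F}_p((T))$, which the paper allows) $\wedge^2 V_i$ need not be completely reducible. The bound you want is still true and is cleanest proved directly: writing $g = k_1 \mu(g) k_2$, compactness of $K$ controls $\|\wedge^2\rho_i(k_j)\|$, and on $\wedge^2 V_i$ the torus $A$ acts through weights $\psi = \chi + \chi'$ with $\chi \neq \chi'$ weights of $V_i$, each of which satisfies $|\psi(a)| \leq |\chi_i(a)|^2/|\alpha_i(a)|$ for $a \in A^+$ by the last remark of \ref{sec:2.5}; this gives $\|\wedge^2\rho_i(\mu(g))\| \leq C\,|\chi_i(\mu(g))|^2/|\alpha_i(\mu(g))|$ with no complete-reducibility assumption. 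With that substitution your proof is fully rigorous in the paper's generality.
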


\begin{proof}~

\textit{i)~$\iff$~ii)~} This results from the last remark of~\ref{sec:2.5}.

\textit{ii)~$\implies$~iii)~} Let $g$~be an element of~$\Gamma$ such that $\lambda(g)$~is not in~$A^\times_i$. Since $\lambda(g) = \lim_{n \to \infty} \frac{1}{n} \mu(g^n)$ (by Corollary~\ref{sec:2.5}), the sequence~$\alpha_i(\mu(g^n))$ is unbounded.

\textit{iii)~$\implies$~ii)~} Let $g_m$~be a sequence in~$\Gamma$ such that $\lim_{m \to \infty} |\alpha_i(\mu(g_m))| = +\infty$. Let us write $g_m = k_{1, m} a_m k_{2, m}$ with $k_{1, m}$ and~$k_{2, m}$ in~$K$ and $a_m$~in~$A^+$. We may suppose that the sequences $k_{1, m}$ and~$k_{2, m}$ converge to $k_1$ and~$k_2$ respectively. By assumption, there exists a sequence of constants~$c_m$ in~$k$ such that the limit $\pi_i = \lim_{m \to \infty} c_m^{-1} \rho_i(a_m)$ exists and is a projector of rank~$1$. Let us choose an element~$h$ of~$\Gamma$ such that $\rho_i(h k_1) (\Image \pi_i) \not\subset \rho_i(k_2^{-1})(\Ker \pi_i)$. Such an element~$h$ exists since $\Gamma$~is Zariski-dense in~$G$ and the representation~$\rho_i$ is irreducible. But then the limit $\lim_{m \to \infty} c_m^{-1} \rho_i(h g_m) = \rho_i(h k_1) \pi_i \rho_i(k_2)$ is a nonzero multiple of a rank-$1$ projector. Hence, for $m \gg 0$, $\rho_i(h g_m)$~is proximal.
\end{proof}

\begin{corollary*}
We have the equivalence:
\[\Gamma \text{ is bounded modulo the center of } G \quad\iff\quad \theta_\Gamma = \emptyset.\]
\end{corollary*}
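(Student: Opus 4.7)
The plan is to chain together three equivalences, reducing the corollary to a statement about the Cartan projection.

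First, by the previous lemma (equivalence (i) $\iff$ (iii)) applied to each simple root,
\[
\theta_\Gamma = \emptyset \iff |\alpha|(\mu(\Gamma)) \text{ is bounded in } (0,\infty) \text{ for every } \alpha \in \Pi.
\]

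Second, since $\Pi = \{\alpha_1, \ldots, \alpha_{r_S}\}$ forms a basis of the subspace $E_S \subset E$ consisting of characters of $\mathbf{A}$ trivial on $\mathbf{A} \cap \mathbf{Z}$, for $a \in A^+$ the family $(|\alpha_i|(a))_i$ is bounded if and only if $a$ lies in a bounded subset of $A^+$ modulo $A \cap Z$. Combined with the first step, this yields
\[
\theta_\Gamma = \emptyset \iff \mu(\Gamma) \text{ is bounded in } A^+ \text{ modulo } A \cap Z.
\]

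Third, I would argue that this last condition is equivalent to $\Gamma$ being bounded modulo $Z$. Since $\mu$ is continuous and proper and $Z$ is central in $G$, passing to the semisimple quotient $G/Z^0$ (where $Z^0$ denotes the identity component of $Z$) gives a proper Cartan projection $\mu'$ for $G/Z^0$ that factors through $\mu$ and the quotient $A^+ \to A^+/(A \cap Z^0)$; properness of $\mu'$ then transforms boundedness of $\Gamma$ mod $Z^0$ into boundedness of $\mu(\Gamma)$ mod $A \cap Z^0$, and the difference between $Z$ and $Z^0$ (resp.\ between $A \cap Z$ and $A \cap Z^0$) is immaterial for boundedness.

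The main obstacle is this third step, in particular the clean descent to the semisimple quotient in the non-Archimedean case. A cleaner alternative uses Lemmas~\ref{lem_2.5.1} and~\ref{lem_2.5.2}: since each $\rho_i$ is irreducible, $\rho_i(Z)$ acts by scalars by Schur's lemma, so boundedness of $\Gamma$ modulo $Z$ is equivalent to boundedness of $\|\rho_i(\Gamma)\|$ modulo scalars for every $i$. Via Lemma~\ref{lem_2.5.2} this matches boundedness of $|\chi_i|(\mu(\Gamma))$ (using that $\chi_i \in E_S$ is trivial on $A \cap Z$ for $i \leq r_S$), and since the $\chi_i$ span $E_S$ just like the $\alpha_i$, this closes the loop with the second step.
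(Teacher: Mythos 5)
Your proof is correct and follows essentially the same route as the paper, which simply states the chain of equivalences $\Gamma$ bounded mod $Z$ $\iff$ $\mu(\Gamma)$ bounded mod $Z$ $\iff$ $\alpha(\mu(\Gamma))$ bounded for all $\alpha \in \Pi$ $\iff$ $\theta_\Gamma = \emptyset$ without elaboration. You have merely traversed the same chain in reverse and supplied the justifications (properness of $\mu$, the simple roots spanning $E_S$, and the lemma (i)$\iff$(iii)) that the paper leaves implicit; the alternative closing step via Lemma~\ref{lem_2.5.2} is a nice way to sidestep the descent to the semisimple quotient, but it is not a different method overall.
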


\begin{remark*}
Such a situation can not occur when $k$~is equal to~$\mathbb{R}$ and $G/Z$~is not compact.
\end{remark*}

\begin{proof}
Indeed, we have the equivalences: $\Gamma$~is bounded modulo~$Z$ $\iff$ $\mu(\Gamma)$~is bounded modulo~$Z$ $\iff$ $\forall \alpha \in \Pi,\;$ $\alpha(\mu(\Gamma))$~is bounded $\iff$ $\theta_\Gamma = \emptyset$.
\end{proof}

\subsection{Flag varieties~$Y_\theta$ and varieties~$Z_\theta$}
\label{sec:3.3}

The goal of this subsection is to introduce a few fairly classical notations. For every subset~$\theta$ of~$\Pi$, we call:
\begin{itemize}
\item $\langle \theta \rangle$ the set of all elements of~$\Sigma^+$ that are linear combinations of elements of~$\theta$;
\item $\mathfrak{u}_\theta$ (resp.\@~$\mathfrak{u}^-_\theta$) the Lie algebra obtained as a direct sum of the root spaces~$\mathfrak{g}_\chi$ (resp.\@~$\mathfrak{g}_{-\chi}$) associated to the roots~$\chi$ lying in~$\Sigma^+ \setminus \langle \theta^c \rangle$;
\item $\mathbf{U}_\theta$ (resp.\@~$\mathbf{U}^-_\theta$) the unique unipotent $k$-subgroup normalized by~$\mathbf{A}$ and with Lie algebra~$\mathfrak{u}_\theta$ (resp.\@~$\mathfrak{u}^-_\theta$);
\item $\mathbf{A}_\theta$ the Zariski-connected component of~$\bigcap_{\chi \in \theta^c} \Ker(\chi)$, and
\item $A^+_\theta = \mathbf{A}_\theta \cap A^+$ the standard facet of type~$\theta$; this is consistent with the notations of~\ref{sec:2.4}. We call
\item $\mathbf{L}_\theta$~the centralizer in~$\mathbf{G}$ of~$\mathbf{A}_\theta$;
\item $\mathbf{P}_\theta = \mathbf{L}_\theta \mathbf{U}_\theta$ the standard parabolic $k$-subgroup associated to~$\theta$;
\item $\mathbf{P}^-_\theta = \mathbf{L}_\theta \mathbf{U}^-_\theta$ the standard parabolic $k$-subgroup opposite to~$\mathbf{P}_\theta$;
\item $P_\theta = L_\theta U_\theta$ the group of $k$-points of~$\mathbf{P}_\theta$, called the standard parabolic subgroup of~$G$ associated to~$\theta$, and
\item $P^-_\theta$ the group of $k$-points of~$\mathbf{P}^-_\theta$. We have $P_\emptyset = G$ and $P_\Pi$~is the standard minimal parabolic subgroup of~$G$. Any subgroup~$P$ of~$G$ that is conjugate to~$P_\theta$ is called a parabolic subgroup of~$G$ of type~$\theta$. We call
\item $Y_\theta$ or~$Y^+_\theta$ the set of all parabolic subgroups of type~$\theta$ in~$G$: this is a compact $k$-analytic manifold that identifies to~$G/P_\theta$, on which the group~$K$ acts transitively (cf. \cite{He} and~\cite{Mac}). We call it the \emph{flag variety of type~$\theta$}. We call
\item $Y^-_\theta := Y_{\theta^-}$ the flag variety of type~$\theta^-$; it identifies to~$G/P^-_\theta$. We call
\item $\nu_\theta$ the $K$-invariant probability measure on~$Y_\theta$.
\end{itemize}

These choices have been made so as to make the dimension of~$Y_\theta$ grow with~$\theta$.

~

A subset~$f$ of~$G$ is called a \emph{facet of type~$\theta$} if it is conjugate to~$A^+_\theta$, \ie if we have $f = g A^+_\theta g^{-1}$ with $g$ in~$G$. The subset $g A^{++}_\theta g^{-1}$ is then called the \emph{interior} of the facet~$f$. Every element~$h$ of the interior of the facet~$f$ acts on~$Y_\theta$ with a unique attracting point~$y^+_f$ and on~$Y^-_\theta$ with a unique repelling point~$y^-_f$. These points do not depend on the choice of~$h$ in the interior of the facet~$f$ and we have $y^+_f = g P_\theta g^{-1}$ and~$y^-_f = g P^-_\theta g^{-1}$.

We call $Z_\theta$ the set of facets of type~$\theta$. This is a $k$-analytic submanifold that identifies to~$G/L_\theta$. For example the manifold~$Z_\Pi$ is the set of the Weyl chambers of~$G$.

The injection
\[\fundef{}{Z_\theta}{Y_\theta \times Y^-_\theta}
{f}{(y^+_f, y^-_f)}\]
induces a bijection between~$Z_\theta$ and the open orbit of~$G$ in~$Y_\theta \times Y^-_\theta$. A point~$(y, y^-)$ of~$Y_\theta \times Y^-_\theta$ is said to be \emph{in general position} if it lies in this open orbit. For $y^-$ in~$Y^-_\theta$, we call~$U_{y^-}$ the Zariski-open subset of~$Y_\theta$ formed by points~$y$ such that $(y, y^-)$~is in general position.

For every element~$g$ of~$G$ of type~$\theta$, we call $f_g \in Z_\theta$ the unique facet of type~$\theta$ containing the hyperbolic part of the Jordan decomposition of a power $g^n$ with $n \geq 1$. This is also the unique facet of type~$\theta$ such that $y^+_{f_g}$ is the attracting fixed point for the action of~$g$ on~$Y_\theta$ and $y^-_{f_g}$~is the repelling fixed point for the action of~$g$ on~$Y^-_\theta$. We write $y^+_g$ and~$y^-_g$ instead of $y^+_{f_g}$ and $y^-_{f_g}$.

\subsection{Varieties $Y_\theta$ and~$Z_\theta$ and representations~$\rho_i$}
\label{sec:3.4}

In this subsection, we link these definitions to the representations~$\rho_i$.

For $y$ in~$Y_\theta$ and $\alpha_i$ in~$\theta$, we call $x_i^+(y)$ the line in~$V_i$ invariant by~$y$. The map
\[\fundef{j_\theta:}{Y_\theta}{\prod_{\alpha_i \in \theta} X_i}
{y}{j_\theta(y) := (x^+_i(y))_{\alpha_i \in \theta}}\]
is an embedding whose image is a closed subvariety of~$\prod_{\alpha_i \in \theta} X_i$. We endow this product with the $\sup$~distance and $Y_\theta$ with the distance~$d$ induced by this embedding.

An element~$x^-_i$ of~$X^-_i$ is also a hyperplane of~$V_i$. For $x_i$ in~$X_i$ and $x^-_i$ in~$X^-_i$, we call $\pi_i := x_i \otimes x^-_i \in \mathbb{P}(\End V_i)$ the line containing the endomorphisms of rank~$1$ with image~$x_i$ and with kernel~$x^-_i$. The pair~$(x_i, x^-_i)$ is said to be \emph{in general position} if $x_i$~is not in the hyperplane~$x^-_i$. In this case, the line~$\pi_i$ contains a special element that we still denote by $\pi_i := x_i \otimes x^-_i$: this is the projector with kernel~$x^-_i$ and with image~$x_i$.

For $y^-$ in~$Y^-_\theta$ and $\alpha_i$ in~$\theta$, we call $x^-_i(y^-) \in X^-_i$ the line in~$V^*_i$ invariant by~$y^-$. Let us state that a pair~$(y, y^-)$ in~$Y_\theta \times Y^-_\theta$ is in general position if and only if for every $\alpha_i$ in~$\theta$, the pair of lines~$(x^+_i(y), x^-_i(y^-))$ is in general position.

Let $f$~be a facet of type~$\theta$ and $\alpha_i$ in~$\theta$. We set $x^+_{i, f} := x^+_i(y^+_f)$, $x^-_{i, f} := x^-_i(y^-_f)$ and we call $\pi_{i, f}$ the projector~$x^+_{i, f} \otimes x^-_{i, f}$. The map
\[\fundef{\pi_\theta:}{Z_\theta}{\prod_{\alpha_i \in \theta} \End V_i}
{f}{\pi_\theta(f) := (\pi_{i, f})_{\alpha_i \in \theta}}\]
is once again an embedding whose image is a closed subvariety of~$\prod_{\alpha_i \in \theta} \End V_i$. We endow this product with the $\sup$ norm and $Z_\theta$~with the distance~$d$ induced by this embedding.

For every element~$g$ of type~$\theta$ such that $f_g = f$ and for every $\alpha_i$ in~$\theta$, we have
\[\pi_{i, f} = \lim_{n \to \infty} c_i^{-n} \rho_i(g)^n,\]
where $c_i$~is the eigenvalue of~$\rho_i(g)$ with the largest modulus.

\subsection{The flag variety~$Y_\Gamma$}
\label{sec:3.5}

In this subsection, we link the subset~$\theta_\Gamma$ to the proximality of the action of~$\Gamma$ on the flag varieties and on the projective spaces~$\mathbb{P}(V)$ of various irreducible representations of~$G$.

\begin{definition*}
A sequence $(g_n)_{n \geq 0}$ in~$G$ is said to be \emph{contracting in~$Y_\theta$} if $\lim_{n \to \infty} {g_n}_*(\nu_\theta)$~is a Dirac mass $\delta_{y^+}$ at a point~$y^+$ of~$Y_\theta$, called the \emph{limit point} of the sequence.

We say that a subsemigroup~$\Gamma$ of~$G$ \emph{has the contraction property in~$Y_\theta$} if there exists a sequence $(g_n)_{n \geq 0}$ in~$\Gamma$ that is contracting in~$Y_\theta$. We call a \emph{limit point of $\Gamma$ in $Y_\theta$} the limit point of any such sequence.
\end{definition*}

\begin{proposition*}
Let $\theta$~be a subset of~$\Pi$ and $(\rho, V)$~be an irreducible representation of~$G$ of type~$\theta$ whose highest weight~$\chi_\rho$ has multiplicity~$1$. Then the following statements are equivalent:
\begin{enumerate}[label=\roman*)]
\item $\Gamma$ has the contraction property in~$Y_\theta$;
\item $\theta_\Gamma \supset \theta$;
\item $\rho(\Gamma)$ has the contraction property in~$\mathbb{P}(V)$.
\end{enumerate}
\end{proposition*}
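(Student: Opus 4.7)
My plan is to prove the cycle (i) $\Rightarrow$ (ii) $\Rightarrow$ (iii) $\Rightarrow$ (i). The central tool is the $G$-equivariant closed embedding $j : Y_\theta \hookrightarrow \mathbb{P}(V)$ sending the base point to the highest weight line. A root-system computation shows that the two hypotheses on $\rho$ --- type $\theta$ and $\dim V_{\chi_\rho} = 1$ --- together identify the stabilizer in $G$ of the line $[v_{\chi_\rho}]$ with the standard parabolic~$P_\theta$, which makes $j$ well-defined and injective. For (i) $\Rightarrow$ (ii), I take a contracting sequence $g_n = k_{1,n} a_n k_{2,n}$ in $\Gamma$, extract a subsequence so that $k_{1,n}, k_{2,n}$ converge in $K$, and use the $K$-invariance of $\nu_\theta$ to reduce to the contraction of $(a_n)_* \nu_\theta$. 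In the open Bruhat cell $U_\theta^- \cdot y_0 \subset Y_\theta$, the element $a_n$ acts by conjugation on $U_\theta^-$ with eigencharacters $\beta(a_n)^{-1}$ for $\beta \in \Sigma^+ \setminus \langle \theta^c \rangle$, so convergence to a Dirac forces $|\beta(a_n)| \to \infty$ for every such $\beta$; in particular $|\alpha|(\mu(g_n)) \to \infty$ for every $\alpha \in \theta$, and the lemma of~\ref{sec:3.2} then yields $\theta \subset \theta_\Gamma$.

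For (ii) $\Rightarrow$ (iii), the proposition of~\ref{sec:3.2} (which rests on Corollary~\ref{cor_3.1.1}) supplies an element $\gamma \in \Gamma$ that is $\theta_\Gamma$-proximal, hence $\theta$-proximal. The crucial point is that $\rho(\gamma)$ itself is then proximal on $\mathbb{P}(V)$: since $\dim V_{\chi_\rho} = 1$ and $\lambda_1(\rho(\gamma)) = |\chi_\rho|(\lambda(\gamma))$ by Lemma~\ref{lem_2.5.2}, this reduces to the strict inequality $|\chi|(\lambda(\gamma)) < |\chi_\rho|(\lambda(\gamma))$ for every other weight $\chi$. Writing $\chi_\rho - \chi = \sum_\alpha n_\alpha \alpha$ with $n_\alpha \geq 0$, I need some $\alpha \in \theta$ with $n_\alpha > 0$; this follows from the identification $\mathrm{Stab}([v_{\chi_\rho}]) = P_\theta$ noted above, since every non-highest weight is obtained by applying to $v_{\chi_\rho}$ root vectors from $\mathfrak{u}_\theta^-$, and each root of $\mathfrak{u}_\theta^-$ has a simple-root expansion with a positive coefficient on some $\alpha \in \theta$. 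The sequence $\rho(\gamma^n)$, rescaled by $\|\rho(\gamma^n)\|^{-1}$, then converges to a rank-one operator, which is exactly the contraction property in $\mathbb{P}(V)$.

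For (iii) $\Rightarrow$ (i), suppose $c_n \rho(g_n) \to \pi$ in $\End(V)$ with $\pi$ of rank one, and set $x := \Image(\pi)$ and $H := \Ker(\pi)$. Irreducibility of $V$ forces $j(Y_\theta)$ to span $\mathbb{P}(V)$, so $j^{-1}(\mathbb{P}(H))$ is a proper closed analytic subset of $Y_\theta$ and is $\nu_\theta$-null. For $y$ outside this set, $\rho(g_n) j(y) \to x$; equivariance gives $\rho(g_n) j(y) = j(g_n y)$, and the compactness of $j(Y_\theta)$ forces $x \in j(Y_\theta)$, so $g_n y \to j^{-1}(x) =: y^+$ for $\nu_\theta$-almost every $y$, and dominated convergence delivers $(g_n)_* \nu_\theta \to \delta_{y^+}$. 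The main obstacle is the structural lemma underlying (ii) $\Rightarrow$ (iii): identifying $\mathrm{Stab}([v_{\chi_\rho}])$ with $P_\theta$ and extracting the positive-coefficient combinatorics. This is precisely where the two hypotheses on $\rho$ are used essentially.
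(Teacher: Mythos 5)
Your proof is correct, and the underlying toolkit is the same as the paper's: the Cartan decomposition of sequences, the weight structure of an irreducible representation of type~$\theta$, and the $G$-equivariant projective embedding of~$Y_\theta$. The organization differs in a genuine way, though. The paper proves a seven-statement equivalence lemma for an \emph{arbitrary} sequence $(g_n)$ in~$G$ (statements (1)--(7) in \ref{sec:3.5}), handled by a three-case reduction on the Cartan decomposition $g_n = k_{1,n} a_n k_{2,n}$; the proposition then follows from the lemma together with the §\ref{sec:3.2} lemma and the simultaneous-proximality Proposition of §\ref{sec:3.2} (i.e.\ Corollary~\ref{cor_3.1.1}). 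You instead isolate a minimal cycle $(i)\Rightarrow(ii)\Rightarrow(iii)\Rightarrow(i)$ and argue each arrow directly. The $(i)\Rightarrow(ii)$ step is the paper's $(2)\Rightarrow(1)$ plus the §\ref{sec:3.2} lemma in essentially the same words. Your $(ii)\Rightarrow(iii)$ is where you diverge most: rather than feeding a $\theta_\Gamma$-proximal element $\gamma$ through the lemma's chain $(1)\Rightarrow(7)\Rightarrow(4)$ with a subsequence extraction, you argue directly that $\rho(\gamma)$ is proximal in $\End(V)$ by combining $\lambda_1(\rho(\gamma))=|\chi_\rho|(\lambda(\gamma))$ (Lemma~\ref{lem_2.5.2}) with the structural fact that every weight $\chi\neq\chi_\rho$ of $V$ has $\chi_\rho-\chi=\sum n_\alpha\alpha$ with $n_\alpha>0$ for some $\alpha\in\theta$ --- the same fact the paper records (for the $\rho_i$) in the remark after Lemma~\ref{lem_2.5.2} and uses silently in the ``we proceed in the same fashion'' step. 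This is cleaner and avoids the extraction argument. Your $(iii)\Rightarrow(i)$ via the equivariant closed embedding $j:Y_\theta\hookrightarrow\mathbb{P}(V)$, the nullity of $j^{-1}(\mathbb{P}(\Ker\pi))$, and dominated convergence is likewise more compact than the paper's chain $(4)\Leftrightarrow(3)\Leftrightarrow(2)$; the paper's route buys the finer information $x_i=x_i^+(y^+)$ which you do not need here. What each approach buys: the paper's lemma is designed for reuse elsewhere in §\ref{sec:3.6} and records the precise correspondence between limit points in the various models; your minimal cycle is shorter and makes the structural use of the hypotheses ``type $\theta$'' and ``$\dim V_{\chi_\rho}=1$'' more transparent.
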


In particular, $Y_{\theta_\Gamma}$~is the ``largest'' flag variety of~$G$ on which $\Gamma$~has the contraction property. We shall write $Y_\Gamma = Y_{\theta_\Gamma}$, $Y^-_\Gamma = Y^-_{\theta_\Gamma}$, $P_\Gamma = P_{\theta_\Gamma}$, $P^-_\Gamma = P^-_{\theta_\Gamma}$ and~$Z_\Gamma = Z_{\theta_\Gamma}$.

\begin{proof}
This results from the following more precise lemma.
\end{proof}

\begin{lemma*}
With the same notations. Let $(g_n)_{n \geq 1}$ be a sequence in~$G$. Consider the following statements:
\begin{enumerate}[label=(\arabic*)]
\item For every $\alpha_i$ in~$\theta$, $\lim_{n \to \infty} |\alpha_i(\mu(g_n))| = \infty$.
\item The sequence~$g_n$ is contracting in~$Y_\theta$ to some limit point~$y^+$.
\item For every $\alpha_i$ in~$\theta$, the sequence~$\rho_i(g_n)$ is contracting in~$\mathbb{P}(V_i)$ to some limit point~$x_i$.
\item The sequence $\rho(g_n)$ is contracting in~$\mathbb{P}(V)$ to some limit point $x_\rho^+$.
\item There exists a pair $(y^+, y^-) \in Y_\theta \times Y^-_\theta$ such that, for every $z$ in the Zariski-open subset $U_{y^-}$ of~$Y_\theta$, we have $\lim_{n \to \infty} g_n z = y^+$. This convergence being uniform on compact subsets of~$U_{y^-}$. 
\item For every $\alpha_i$ in~$\theta$, there exist constants~$c_{i, n} \in k^*$ such that the limit $\pi_i = \lim_{n \to \infty} c_{i, n}^{-1} \rho_i(g_n)$ exists and is a rank-$1$ operator.
\item There exist constants $c_n \in k^*$ such that the limit $\pi = \lim_{n \to \infty} c_n^{-1} \rho(g_n)$ exists and is a rank-$1$ operator.
\end{enumerate}

We have the following implications and equivalences:
\[(7) \iff (6) \iff (5) \implies (4) \iff (3) \iff (2) \implies (1).\]
Moreover from any sequence~$(g_n)_{n \geq 1}$ that satisfies~(1), we may extract a subsequence that satisfies~(7).

When (2) holds, we have the identity $x_i = x^+_i(y^+)$.
\end{lemma*}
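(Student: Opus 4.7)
The strategy is to reduce every condition to the Cartan decomposition $g_n = k_{1,n} a_n k_{2,n}$ (with $k_{i,n} \in K$, $a_n \in A^+$) and, wherever needed, to extract (by compactness of $K$) subsequences along which $k_{1,n} \to k_1$ and $k_{2,n} \to k_2$. The core analytic input, used repeatedly, is this: for any irreducible representation $(\rho',V')$ of $G$ whose highest weight $\chi$ has multiplicity one, the normalized diagonal operator $\chi(a_n)^{-1}\rho'(a_n)$ converges to the rank-one projector $P_\chi$ onto $V'_\chi$ as soon as $|\alpha(a_n)| \to \infty$ for every $\alpha \in \theta_{\rho'}$. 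Indeed, any weight $\chi' \neq \chi$ of $\rho'$ can be reached from $\chi$ along a chain of weights of $\rho'$ by successive simple-root subtractions, the first of which must subtract some $\alpha \in \theta_{\rho'}$; the resulting decomposition $\chi - \chi' = \sum n_j \alpha_j$ has at least one $n_j \geq 1$ with $\alpha_j \in \theta_{\rho'}$, and since $|\alpha|(a_n) \geq 1$ on $A^+$, condition (1) forces $|\chi'/\chi|(a_n) \to 0$.

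Specializing this to $\rho' = \rho$ (for which $\theta_\rho = \theta$ and $\dim V_{\chi_\rho} = 1$ by hypothesis) immediately yields the extraction statement $(1) \Rightarrow (7)$, with $c_n := \chi_\rho(a_n)$ and $\pi := \rho(k_1) P_{\chi_\rho} \rho(k_2)$; specializing to each $\rho_i$ for $\alpha_i \in \theta$ (which has $\theta_{\rho_i} = \{\alpha_i\}$ and $\dim(V_i)_{\chi_i} = 1$ by Lemma~\ref{lem_2.5.1}) yields $(1) \Rightarrow (6)$ up to extraction. The reverse implications $(6), (7) \Rightarrow (1)$ come from Lemma~\ref{lem_2.5.2}: by $K$-invariance of singular values and the diagonal action of $A^+$, the successive singular values of $\rho(g_n)$ are comparable to the moduli $|\chi(\mu(g_n))|$ of the weights of $\rho$ ordered by magnitude, and convergence to a rank-one limit forces $|\chi_\rho/\chi'|(\mu(g_n)) \to \infty$ for every $\chi' \neq \chi_\rho$; specializing to $\chi' = \chi_\rho - \alpha$ with $\alpha \in \theta$ gives (1). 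For $(5) \Leftrightarrow (6)$: writing $\pi_i = v_i \otimes u_i^*$, one reads off $y^+ \in Y_\theta$ from the lines $k v_i$ and $y^- \in Y^-_\theta$ from the hyperplanes $\ker u_i^*$, and the uniform-on-compacts convergence in (5) follows from $\rho_i(g_n) \cdot z_i \to k v_i$ whenever $z_i \notin \ker u_i^*$; conversely, given (5), any subsequential limit $[\pi_i]$ of $[\rho_i(g_n)]$ in the compact space $\mathbb{P}(\End V_i)$ must send $U_{y^-}$ into the single line $x^+_i(y^+)$, hence has rank one.

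The rest is essentially formal. The equivalences $(2) \Leftrightarrow (3) \Leftrightarrow (4)$ come from the closed $K$-equivariant embedding $j_\theta : Y_\theta \hookrightarrow \prod_{\alpha_i \in \theta} X_i$: pushforward convergence to a Dirac in $Y_\theta$ is equivalent to simultaneous marginal convergence in each $X_i$, and the same argument with the single representation $\rho$ (which also has a multiplicity-one highest weight of type $\theta$) handles $(3) \Leftrightarrow (4)$. For $(5) \Rightarrow (4)$, the complement $Y_\theta \setminus U_{y^-}$ is a proper Zariski-closed subvariety and hence $\nu_\theta$-null, so uniform-on-compacts convergence on $U_{y^-}$ upgrades to pushforward convergence $g_{n*}\nu_\theta \to \delta_{y^+}$, and then through Step 2 to a contraction of $\rho(g_n)$ in $\mathbb{P}(V)$. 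Finally $(2) \Rightarrow (1)$ follows from the same singular-value argument applied to each $\rho_i$, using that contraction on $\mathbb{P}(V_i)$ forces the top singular value of $\rho_i(g_n)$ to dominate all others. The one delicate point is upgrading subsequential statements to same-sequence statements along $(5) \Leftrightarrow (6) \Leftrightarrow (7)$: this relies on the fact that the geometric data $(y^+,y^-)$, read off from any rank-one subsequential limit via the Cartan data, are uniquely determined by the sequence itself, so that every subsequential limit agrees and the full sequence converges.
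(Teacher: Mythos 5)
Your proof is correct and rests on the same machinery as the paper's: the Cartan decomposition $g_n = k_{1,n}a_n k_{2,n}$, the observation that $\chi(a_n)^{-1}\rho'(a_n)\to P_\chi$ once $|\alpha(a_n)|\to\infty$ for $\alpha\in\theta_{\rho'}$ (because every lower weight differs from $\chi$ by a positive combination of simple roots including one in $\theta_{\rho'}$), and compactness of $K$ to pass to convergent subsequences. The paper organizes this as three explicit cases ($g_n\in A^+$, then $k_{i,n}$ convergent, then general via uniqueness of accumulation points in compact spaces) whereas you argue implication by implication, but the underlying arguments coincide; the only place you are slightly vaguer than the paper is the final paragraph upgrading subsequential to full-sequence convergence in $(5)\iff(6)\iff(7)$, which the paper handles cleanly through its ``3rd case'' reduction.
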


\begin{remark*}
This gives us another characterization of the elements of~$\theta_\Gamma$ when $\Gamma$~is Zariski-dense: they are the elements~$\alpha_i$ for which $\rho_i(\Gamma)$ has the contraction property in~$\mathbb{P}(V_i)$.
\end{remark*}

\begin{proof}
We may assume that $\mathbf{G}$~is simply connected. The Cartan decomposition of~$G$ allows us to write $g_n = k_{1, n} a_n k_{2, n}$ with $k_{1, n} \in K$, $a_n \in A^+$ and~$k_{2, n} \in K$.

\underline{1st case}: $k_{1, n} = k_{2, n} = 1$. The sequence $g_n = a_n$ takes its values in~$A^+$. In this case, the seven statements are equivalent. We write $y^+_0 := P_\theta$ and $y^-_0 := P^-_\theta$; these are points in~$Y_\theta$ and in~$Y^-_\theta$.

(1) $\iff$ (2) $\iff$ (5). To prove this, we remark (see \cite{Bo-Ti}) that some dense open subset of~$G/P_\theta$ can be identified, as a variety, to the product of the root spaces~$\mathfrak{g}_{-\chi}$ for $\chi$ in $\Sigma^+ \setminus \langle \theta^c \rangle$; that, interpreted in this chart, the action of~$A^+$ is a product action of the actions on each of the factors~$\mathfrak{g}_{-\chi}$; that on these factors, the action of~$A^+$ corresponds to the adjoint action; and finally, that the roots~$\chi$ of $\Sigma^+ \setminus \langle \theta^c \rangle$ are those that can be written $\chi = \sum_{\alpha \in \Pi} n_\alpha \alpha$ with $\sum_{\alpha \in \theta} n_\alpha \geq 1$. In this case, the point~$y_0^+$ is the limit point of~$a_n$ in~$Y_\theta$ and the open set~$U_{y_0^-}$ is its basin of attraction.

(1) $\iff$ (3) $\iff$ (6). We call $x^+_{i,0} \in \mathbb{P}(V_i)$ the highest weight line, $x^-_{i,0} \in \mathbb{P}(V_i^*)$ the hyperplane of~$V_i$ that is the direct sum of the other weight spaces, and $\pi_{i,0} := x^+_{i,0} \otimes x^-_{i,0}$ the projection onto~$x^+_{i,0}$ parallel to~$x^-_{i,0}$. The equivalence results from the fact that the eigenvalue of~$\rho_i(a_n)$ in~$x^+_{i,0}$ is $\chi_i(a_n)$ while the eigenvalue of~$\rho_i(a_n)$ in~$x^-_{i,0}$ that has the largest modulus is $\frac{\chi_i(a_n)}{\alpha_i(a_n)}$ (cf.~\ref{sec:2.5}). In this case, the point~$x^+_{i,0}$ is the limit point of~$\rho_i(a_n)$ in~$\mathbb{P}(V_i)$ and $\pi_{i,0} = \lim_{n \to \infty} (\chi_i(a_n))^{-1} \rho_i(a_n)$.

(1) $\iff$ (4) $\iff$ (7). We proceed in the same fashion.

\underline{2nd case}: The sequences $k_{1,n}$ and~$k_{2,n}$ converge to some elements $k_1$ and~$k_2$. In this case, the seven 
statements are still equivalent and we have the equalities: $y^+ = k_1 y^+_0$, $y^- = k_2^{-1} y^-_0$, $x^+_i = \rho_i(k_1) x^+_{i,0}$, $x^-_i = \rho_i(k_2^{-1}) x^-_{i,0}$ and $\pi_i = \rho_i(k_i) \circ \pi_{i,0} \circ \rho_i(k_2^{-1}) = x^+_i \otimes x^-_i$.

\underline{3rd case}: The general case. We recall that from every sequence $k_n$ in~$K$ we can extract a convergent subsequence and that, in a compact set, a sequence is convergent if and only if it has a unique accumulation point. This allows us to deduce from the study of the previous cases the following equivalences:
\begin{align*}
(2) &\iff (1) \text{ and the sequence } k_{1, n}y_0^+ \text{ converges} \\
    &\iff (1) \text{ and } \forall \alpha_i \in \theta, \text{ the sequence } \rho_i(k_{1, n})x_{i, 0} \text{ converges} \\
    &\iff (3).
\end{align*}
Similarly, we have the equivalences:
\begin{align*}
(5) &\iff (1) \text{ and the sequences } k_{1, n}y^+_0 \text{ and } k_{2, n}^{-1}y^-_0 \text{ converge} \\
    &\iff (1) \text{ and } \forall \alpha_i \in \theta, \text{ the sequences } \rho_i(k_{1, n})x_{i, 0} \text{ and } \rho_i(k_{2, n}^{-1})x_{i, 0}^- \text{ converge } \\
    &\iff (6).
\end{align*}
The remaining statements are now clear.
\end{proof}

\subsection{The limit set~$\Lambda_\Gamma$ and the set~$F_\Gamma$ of the quasiperiodic facets}
\label{sec:3.6}

\begin{definition*}
We call \emph{limit set} of~$\Gamma$ the set, denoted by~$\Lambda_\Gamma$ or~$\Lambda_\Gamma^+$, of the limit points of~$\Gamma$ in the flag variety~$Y_\Gamma$; it is a closed subset of~$Y_\Gamma$. We set $\Lambda^-_\Gamma := \Lambda_{\Gamma^-}$; it is a closed subset of~$Y_\Gamma^-$.

We call $F^o_\Gamma$ the set of all facets $f_g \in Z_\Gamma$ corresponding to some $\theta_\Gamma$-proximal element~$g$ of~$\Gamma$; it is a subset of~$Z_\Gamma$. We call $F_\Gamma$ the closure of~$F^o_\Gamma$ in~$Z_\Gamma$. The elements of~$F_\Gamma$ are called \emph{quasiperiodic facets} of~$\Gamma$.
\end{definition*}

\begin{remarks*}~
\begin{enumerate}
\item When $\Gamma$~is a subgroup of $\PSL(2, \mathbb{R})$, the definition of~$\Lambda_\Gamma$ coincides with the classical definition of the limit set as a subset of the boundary at infinity of the Poincaré half-plane $\mathbf{H}$. The set $F^o_\Gamma$ is the set of oriented periodic geodesics on the Riemann surface $\Gamma \backslash \mathbf{H}$.
\item When $\Gamma$~is a subgroup of $\SL(d, \mathbb{R})$, this limit set~$\Lambda_\Gamma$ has been introduced and studied by Y. Guivarc'h \cite{Gu}.
\item It follows from Lemma~\ref{sec:3.5} that, for $\theta$ containing~$\theta_\Gamma$, the set of limit points of~$\Gamma$ in~$Y_\theta$ is nothing else than the image of~$\Lambda_\Gamma$ by the natural projection $Y_\Gamma \to Y_\theta$.
\item When $k = \mathbb{R}$, $F_\Gamma$~is a set of positive Weyl chambers that we call ``quasiperiodic chambers''.
\end{enumerate}
\end{remarks*}

The following lemma generalizes classical properties of the limit sets of subgroups of~$\SL(2, \mathbb{R})$: for example part~iv) says, when $\Gamma$~is a non-elementary subgroup of~$\SL(2, \mathbb{R})$, that for every pair $(y^+, y^-)$ of limit points of~$\Gamma$ on the boundary at infinity, we may find a nontrivial hyperbolic element of~$\Gamma$ whose attracting and repelling points are arbitrarily close to the points $y^+$ and~$y^-$.

\begin{lemma*}~
\begin{enumerate}[label=\roman*)]
\item The limit set~$\Lambda_\Gamma$ is Zariski-dense in~$Y_\Gamma$.
\item Every nonempty $\Gamma$-invariant closed subset~$F$ of~$Y_\Gamma$ contains~$\Lambda_\Gamma$. In particular, the action of~$\Gamma$ on~$\Lambda_\Gamma$ is minimal and no point of~$\Lambda_\Gamma$ is isolated. Moreover, if $\Lambda_\Gamma \neq Y_\Gamma$ then $\Lambda_\Gamma$ has empty interior.
\item For every $y$ in~$\Lambda_\Gamma$ and $\eps > 0$, there exists a $\theta_\Gamma$-proximal element~$g$ of~$\Gamma$ such that $d(y^+_g, y) \leq \eps$. The set of such elements~$g$ is Zariski-dense. In particular, every nonempty open subset of~$\Lambda_\Gamma$ is still Zariski-dense in~$Y_\Gamma$.
\item If we consider $Z_\Gamma$ as a subset of~$Y_\Gamma \times Y^-_\Gamma$ (cf. \ref{sec:3.3}), we have the equality
\[F_\Gamma = Z_\Gamma \cap (\Lambda_\Gamma \times \Lambda^-_\Gamma).\]
In other terms, the set $F^o_\Gamma$ is dense in~$\Lambda_\Gamma \times \Lambda^-_\Gamma$.
\item For every $f$ in~$F_\Gamma$, there exists $\eps_0 > 0$ such that for every $0 < \eps < \eps_0$, the set
\[\Gamma^{(\eps)}_f := \setsuch{g \in \Gamma}{g \text{ is } (\theta_\Gamma, \eps)\text{-proximal and } d(f_g, f) \leq \eps}\]
is still Zariski-dense in~$G$.
\end{enumerate}
\end{lemma*}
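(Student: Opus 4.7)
\emph{Overall approach.} I prove the five parts in order, using each to bootstrap the next. The main tools are Lemma~\ref{lem_2.2.2} (calculus for products of proximal elements), the dynamical characterization of contracting sequences given in~\ref{sec:3.5}, and the Zariski-density statements of the Proposition of~\ref{sec:3.2} and Corollaries~\ref{cor_3.1.1}--\ref{cor_3.1.2}.

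For~(i), take the Zariski closure $V$ of $\Lambda_\Gamma$ in $Y_\Gamma$: being the closure of a $\Gamma$-invariant set, $V$ has stabilizer an algebraic subgroup of $G$ containing $\Gamma$, hence equal to $G$. Since $Y_\Gamma = G/P_\Gamma$ is $G$-homogeneous and $\Lambda_\Gamma$ is nonempty (the Proposition of~\ref{sec:3.2} gives a $\theta_\Gamma$-proximal $\gamma \in \Gamma$, whose attracting fixed point lies in $\Lambda_\Gamma$), we deduce $V = Y_\Gamma$. For~(ii), given closed $\Gamma$-invariant $F \ni y_0$ and $y \in \Lambda_\Gamma$, write $y$ as a contracting limit of $g_n \in \Gamma$ on a Zariski-open basin $U_{y^-}$ (Lemma of~\ref{sec:3.5}), and pick $h \in \Gamma$ with $h y_0 \in U_{y^-}$ by Zariski-density of $\Gamma$; then $g_n h y_0 \to y$ lies in $F$. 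Minimality is immediate; since $\Lambda_\Gamma$ is infinite by~(i), no point is isolated (minimal action on an infinite compact set); and if $\Lambda_\Gamma \neq Y_\Gamma$, applying~(ii) to the closure of the open $\Gamma$-invariant set $Y_\Gamma \setminus \Lambda_\Gamma$ forces $\Lambda_\Gamma$ to have empty interior.

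For~(iii), given $y \in \Lambda_\Gamma$ with a contracting sequence $g_n \to y$ of limit pair $(y,y^-)$, use~(i) to pick a $\theta_\Gamma$-proximal $\gamma_0 \in \Gamma$ with $y^+_{\gamma_0} \in U_{y^-}$. After extracting a subsequence via condition~(7) of the Lemma in~\ref{sec:3.5}, each rescaled $c_{i,n}^{-1}\rho_i(g_n)$ tends to a rank-one operator $\pi_i$ with image $x^+_i(y)$; then for fixed large $n$ and $N \to \infty$ the suitably rescaled $\rho_i(g_n\gamma_0^N)$ tends to the nonzero rank-one operator $\rho_i(g_n)(x^+_{i,\gamma_0}) \otimes x^-_{i,\gamma_0}$. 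This forces $g_n\gamma_0^N$ to be $\theta_\Gamma$-proximal for large $N$, with attracting fixed point near $g_n y^+_{\gamma_0} \to y$. For Zariski-density of the whole set of such elements, take any one such $g_1$ and observe that for $\gamma$ in a suitable Zariski-open subset of $G$ and $M$ large, $g_1^M\gamma$ is again $\theta_\Gamma$-proximal with attracting fixed point close to $y^+_{g_1}$, while $g_1^M\Gamma$ is Zariski-dense in $G$.

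The inclusion $F_\Gamma \subset Z_\Gamma \cap (\Lambda_\Gamma \times \Lambda^-_\Gamma)$ in~(iv) is clear from the definition of the attracting/repelling points. For the reverse, apply~(iii) inside $\Gamma$ and inside the Zariski-dense semigroup $\Gamma^-$ to find $h_1,h_2 \in \Gamma$, both $\theta_\Gamma$-proximal, with $y^+_{h_1}$ near $y^+$ and $y^-_{h_2}$ near $y^-$; the Zariski-density provided by~(iii) allows one to additionally impose the four generic separation conditions required by Lemma~\ref{lem_2.2.2} for the pair $(h_2,h_1)$. That lemma then yields $h_1^{n_2}h_2^{n_1}$ $\theta_\Gamma$-proximal for large $n_1,n_2$, with attracting point near $y^+_{h_1}$ and repelling point near $y^-_{h_2}$. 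This~(iv)$\supset$ is the main obstacle: controlling both endpoints of a single proximal element of $\Gamma$ simultaneously requires pasting two proximal elements while arranging four separation conditions within the Zariski-dense pools supplied by~(iii). Finally, for~(v), use~(iv) to pick $g_0 \in \Gamma$ $\theta_\Gamma$-proximal with $f_{g_0}$ close to $f$, so that $g_0^N$ is $(\theta_\Gamma,\eps_0)$-proximal with the same facet for large $N$. The map $\phi:\gamma\mapsto g_0^N\gamma g_0^N$ is a homeomorphism of $G$; for $\gamma$ in a Zariski-open subset of $\Gamma$ (chosen so that the contractive dynamics of $g_0^N$ dominate on both $Y_\Gamma$ and $Y^-_\Gamma$), Lemma~\ref{lem_2.2.2} shows $\phi(\gamma)$ is $(\theta_\Gamma,\eps)$-proximal with $d(f_{\phi(\gamma)},f)<\eps$; since $\phi$ preserves Zariski-density, the set $\Gamma^{(\eps)}_f$ is Zariski-dense in $G$.
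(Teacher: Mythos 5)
Parts~(i) and~(ii) are correct and follow essentially the same route as the paper (your argument via translating $y_0$ into the basin $U_{y^-}$ and the paper's argument via picking a point of $F \cap U_{y^-}$ are interchangeable). The existence part of~(iii) is also fine, and your idea for~(iv) — composing two proximal elements directly via Lemma~\ref{lem_2.2.2} rather than sandwiching with an intermediate $h$ — is a genuinely different route. However, two recurring gaps need to be filled.

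\textbf{Zariski-density in (iii) and (v).} You assert Zariski-density by saying, in~(iii), ``$g_1^M\Gamma$ is Zariski-dense,'' and in~(v), ``$\phi$ preserves Zariski-density'' for $\phi:\gamma\mapsto g_0^N\gamma g_0^N$. But in both cases the exponent $M$ (resp.~$N$) for which $g_1^M\gamma$ (resp.~$g_0^N\gamma g_0^N$) lands in the target set \emph{depends on}~$\gamma$: the Zariski-open condition on~$\gamma$ (nonvanishing of $\pi_{i,0}\rho_i(\gamma)\pi_{i,0}$) only guarantees the desired proximality ``for $N$ large enough,'' and no single $N$ works uniformly over a Zariski-open set. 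Equivalently, ``the contractive dynamics dominate'' is a Hausdorff-open condition, not a Zariski-open one. The paper closes this gap by an argument of a different nature (Corollaries~\ref{cor_3.1.1}--\ref{cor_3.1.2} and the end of the proof of~(v)): let $F$ be any Zariski-closed set containing the target; for $\gamma$ in a Zariski-open $U$ one has $g_0^n\gamma g_0^n\in F$ for all $n\geq n_0(\gamma)$, and since the Zariski closure of the subsemigroup $\{g_0^n:n\geq n_0\}$ is a group, the Zariski-closed set $\{g: g\gamma g\in F\}$ contains the identity, whence $\gamma\in F$; so $U\cap\Gamma\subset F$ and $F=G$. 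You need to invoke this ``Zariski closure of a semigroup is a group'' trick explicitly; translating a Zariski-dense set by a fixed power does not suffice.

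\textbf{The general-position condition in (iv).} For the word $h_1^{n_2}h_2^{n_1}$, Lemma~\ref{lem_2.2.2} requires both $\delta(x^+_{h_1},X^<_{h_2})$ and $\delta(x^+_{h_2},X^<_{h_1})$ bounded below, i.e.\ both $(y^+_{h_1},y^-_{h_2})$ and $(y^+_{h_2},y^-_{h_1})$ in general position. The first pair is controlled because $(y^+,y^-)$ is in general position and $y^+_{h_1},y^-_{h_2}$ are nearby. The second pair involves the ``free'' endpoints $y^+_{h_2}$ and $y^-_{h_1}$, which~(iii) says nothing about. Your claim that ``the Zariski-density provided by~(iii) allows one to impose'' this condition does not go through: the assignments $g\mapsto y^+_g$, $g\mapsto y^-_g$ are not algebraic, so Zariski-density of the pool of admissible $h_2$'s in $G$ does not make their attracting points $y^+_{h_2}$ avoid a given Zariski-closed subset of~$Y_\theta$. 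The paper sidesteps exactly this difficulty by inserting a middle element: with $g=g_1^n h g_2^n$, the only constraint on $h$ is $\pi_{i,1}\circ\rho_i(h)\circ\pi_{i,2}\neq 0$ for $\alpha_i\in\theta_\Gamma$, which \emph{is} a nonempty Zariski-open condition on~$h$ (by irreducibility of~$\rho_i$), so Zariski-density of $\Gamma$ furnishes such an~$h$. Without this intermediate element, you would need an independent argument that the map $g\mapsto y^+_g$ has Zariski-dense image on the admissible pool, which is essentially what~(iv) is trying to establish in the first place — the argument as written is close to circular.

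Once the intermediate-$h$ device is adopted in~(iv) and the semigroup-closure trick is used in~(iii) and~(v), your proof becomes correct and aligns with the paper's.
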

\begin{proof}~
\begin{enumerate}[label=\roman*)]
\item This is clear since $\Lambda_\Gamma$ is $\Gamma$-invariant and $\Gamma$~is Zariski-dense in~$G$.
\item Let us show that $F$ contains~$\Lambda_\Gamma$. Let $y$~be a point of~$\Lambda_\Gamma$. Then there exists a sequence $g_n \in \Gamma$ contracting in~$Y_\Gamma$ to a limit point~$y$. By extracting a subsequence if necessary, we may assume that there exists a Zariski-dense open subset~$U_{y^-}$ of~$Y_\Gamma$ such that for every~$y'$ in~$U_{y^-}$, we have $\lim_{n \to \infty} g_n y' = y$. For the same reasons as in~i), the closed set~$F$ is Zariski-dense in~$Y_\Gamma$. Hence we may find a point~$y'$ in the intersection $F \cap U_{y^-}$. The points $g_n y'$ are still in~$F$ and so is~$y$. Hence $F$ contains~$\Lambda_\Gamma$.

In particular, every nonempty $\Gamma$-invariant closed subset of~$\Lambda_\Gamma$ is equal to~$\Lambda_\Gamma$. In other terms, the action of $\Gamma$ on $\Lambda_\Gamma$ is minimal.

Suppose by contradiction that $\Lambda_\Gamma$ contains an isolated point. The set $\overline{\Gamma y} \setminus \Gamma y$ is then a nonempty $\Gamma$-invariant closed subset of~$\Lambda_\Gamma$. Hence it is equal to~$\Lambda_\Gamma$, which is a contradiction.

Moreover, if $\Lambda_\Gamma \neq Y_\Gamma$, the boundary of~$\Lambda_\Gamma$ contains~$\Lambda_\Gamma$, which means that $\Lambda_\Gamma$~has empty interior.

\item Let $g_n \in \Gamma$ be a sequence contracting in~$Y_\Gamma$ to the limit point~$y$. We use Lemma~\ref{sec:3.5}(6). This allows us to construct like in~\ref{sec:3.1} an element~$h$ of~$\Gamma$ and constants $c_{i, n} \in k^*$ such that for every $\alpha_i$ in~$\theta_\Gamma$, the sequence $c_{i, n}^{-1} \rho_i(g_n h)$ converges to a rank-$1$ projector $\pi_i$ whose image is $x_i := x^+_i(y)$. But then, as soon as $n$ is large enough, the element $\rho_i(g_n h)$ is proximal and we have
\[\lim_{n \to \infty} x^+_{\rho_i(g_n h)} = x^+_i(y);\]
and thus the element $g := g_n h$ is $\theta_\Gamma$-proximal and satisfies $d(y_g, y) \leq \eta$.

The second statement then results from Corollary~\ref{cor_3.1.2} in~\ref{sec:3.1}.

The last statement now follows, since every open subset~$V$ of~$\Lambda_\Gamma$ contains a set of the form~$\setsuch{y^+_g}{g \in \Gamma \text{ is } (\theta_\Gamma, \eps)\text{-proximal and } d(y^+_g, y) \leq \eps}$ with $y$ in~$\Lambda_\Gamma$ and $\eps > 0$ sufficiently small. Such a set is Zariski-dense in~$\Lambda_\Gamma$, and so is~$V$.

\item Let $(y^+, y^-)$ be a point of~$\Lambda_\Gamma \times \Lambda^-_\Gamma$. Let us construct an element~$g$ of~$\Gamma$ that is $\theta_\Gamma$-proximal and such that $(y^+_g, y^-_g)$ is close to $(y^+, y^-)$: thanks to~iii), we may find a $\theta_\Gamma$-proximal element~$g_1$ of~$\Gamma$ such that $y^+_{g_1}$~is close to~$y^+$ and $(y^+_{g_1}, y^-)$ is in general position. The same~iii) then allows us to find a $\theta_\Gamma$-proximal element~$g_2$ of~$\Gamma$ such that $y^-_{g_2}$~is close to~$y^-$ and $(y^+_{g_1}, y^-_{g_2})$ is in general position. In particular, using suitably chosen constants, for every $\alpha_i$ in~$\theta_\Gamma$, the limits
\begin{align*}
\pi_{i, 1} &:= \lim_{n \to \infty} c_{i, n}^{-1} \rho_i(g^n_1) \text{ and} \\
\pi_{i, 2} &:= \lim_{n \to \infty} d_{i, n}^{-1} \rho_i(g^n_2)
\end{align*}
are rank-$1$ projectors and the product $\pi_i := \pi_{i, 2} \circ \pi_{i, 1}$ is nonzero.

Since $\Gamma$~is Zariski-dense, we can find an element~$h$ of~$\Gamma$ such that, for every $\alpha_i$ in~$\theta_\Gamma$, the product
\[\pi_{i, 3} := \pi_{i, 1} \circ \rho_i(h) \circ \pi_{i, 2}\]
is nonzero. This operator~$\pi_{i, 3}$ is then some nonzero multiple of the rank-$1$ projector that has the same image as~$\pi_{i, 1}$ and the same kernel as~$\pi_{i, 2}$. The formula
\[\pi_{i, 3} := \lim_{n \to \infty} c_{i, n}^{-1} d_{i, n}^{-1} \rho_i(g_1^n h g_2^n)\]
then proves that, for large enough~$n$, the element $g := g_1^n h g_2^n$ is $\theta_\Gamma$-proximal and the pair $(y^+_g, y^-_g)$ is close to $(y^+_{g_1}, y^-_{g_2})$ and hence also to $(y^+, y^-)$.

\item It suffices to prove it when $f$ is in~$F^o_\Gamma$. We reason as in Corollary~\ref{cor_3.1.1} of~\ref{sec:3.1}. Let $F$~be a Zariski-closed subset of~$G$ containing~$\Gamma^{(\eps)}_f$. Let us prove that $F = G$. Let $g_0$~be a $\theta_\Gamma$-proximal element of~$\Gamma$ such that $f = f_{g_0}$. We now apply the proof of~iv) to $g_1 = g_2 = g_0$. Let $\pi_{i, 0} := \pi_{i, 1} = \pi_{i, 2}$ and $x^+_{i, 0}$~be the image of~$\pi_{i, 0}$ and $X^<_{i, 0} \subset \mathbb{P}(V_i)$~be its kernel. We assume that $\eps_0$~is chosen so that
\[\eps_0 < \frac{1}{2} \inf_{\alpha_i \in \theta_\Gamma} \delta \left( x^+_{i, 0},\; X^<_{i, 0} \right).\]

Then for every $\eps < \eps_0$, there exists a dense Zariski-open subset~$U$ of~$G$ such that for every $h$ in $U \cap \Gamma$, the product $g_0^n h g_0^n$ is in $\Gamma^{(\eps)}_f$ as soon as $n$~is large enough. In particular, this product $g_0^n h g_0^n$ is in~$F$ as soon as $n$~is large enough. Since the Zariski-closure of a semigroup is a group, we deduce that this statement must also hold for~$n = 0$. Hence $h$ is in~$F$. We deduce the inclusion $\Gamma \cap U \subset F$ and the equality $F = G$. This proves that $\Gamma^{(\eps)}_f$ is indeed Zariski-dense in~$G$. \qedhere
\end{enumerate}
\end{proof}

\section{The limit cone~$L_\Gamma$}
\label{sec:4}

The goal of this section is to prove the points \ref{sec:1.2}.a and \ref{sec:1.4}.a of the theorems from the introduction, except for the property ``$L_\Gamma$ has nonempty interior'' that will be proved in Section~\ref{sec:7}.

\begin{definition*}
We call \emph{limit cone} of~$\Gamma$ the smallest closed cone~$L_\Gamma$ in the $\mathbb{R}$-vector space~$A^\times$ that contains the images $\lambda(g)$ of the elements~$g$ of~$\Gamma$.
\end{definition*}

For every subset~$P$ of an $\mathbb{R}$-vector space~$V$, we call \emph{asymptotic cone of~$P$} the set of vectors~$v$ of~$V$ that can be obtained as the limit of a sequence: $v = \lim_{n \to \infty} t_n p_n$ with $t_n > 0$, $\lim_{n \to \infty} t_n = 0$ and $p_n \in P$.

Thus we will prove the following proposition.
\begin{proposition*}~
\begin{enumerate}[label=\alph*)]
\item The limit cone~$L_\Gamma$ is convex;
\item The limit cone~$L_\Gamma$ is the asymptotic cone of the subset~$\mu(\Gamma)$ of~$A^\bullet$.
\end{enumerate}
\end{proposition*}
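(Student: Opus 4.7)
The plan is to prove (b) first, then outline a Schottky-style proof of (a).

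For the easy inclusion $L_\Gamma \subseteq$ (asymp.\ cone of $\mu(\Gamma)$) in (b): for each $g \in \Gamma$, the Corollary at the end of~\ref{sec:2.5} gives $\lambda(g) = \lim_n \tfrac{1}{n}\mu(g^n)$, so choosing $p_n := \mu(g^n) \in \mu(\Gamma)$ and $t_n := 1/n$ puts $\lambda(g)$, and hence all positive multiples thereof, in the asymptotic cone. The latter is closed, so it contains $L_\Gamma$.

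For the reverse direction in (b), take $g_n \in \Gamma$ and $t_n \to 0^+$ with $v := \lim t_n\,\mu(g_n) \neq 0$; the goal is $v \in L_\Gamma$. The idea is to right-multiply each $g_n$ by an element of a fixed finite set so as to make it $\theta_\Gamma$-proximal. Precisely, Corollary~\ref{cor_3.1.3} supplies a finite $F \subset \Gamma$ and $\eps > 0$ such that some $f_n \in F$ makes $g_n f_n$ jointly $\eps$-proximal on every $\mathbb{P}(V_i)$ with $\alpha_i \in \theta_\Gamma$; pass to a subsequence so $f_n \equiv f$. Lemmas~\ref{lem_2.2.1} and~\ref{lem_2.5.2} then give, for each such $i$,
\[ |\chi_i(\mu(g_n f))| \asymp \|\rho_i(g_n f)\| \asymp \lambda_1(\rho_i(g_n f)) = |\chi_i|(\lambda(g_n f)), \]
with multiplicative constants depending only on $\eps$ and the fixed norms, while $|\chi_i(\mu(g_n f))|/|\chi_i(\mu(g_n))|$ is controlled by $\|\rho_i(f)\|^{\pm 1}$. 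Taking logarithms and multiplying by $t_n$, one deduces $t_n\,\log|\chi_i|(\lambda(g_n f)) \to \log|\chi_i|(v)$ for every $\alpha_i \in \theta_\Gamma$. For $\alpha_i \notin \theta_\Gamma$, the Lemma of~\ref{sec:3.2} bounds $|\alpha_i(\mu(g_n))|$, forcing $\alpha_i(v)=0$, so $v \in A^\bullet_{\theta_\Gamma}$; correspondingly $\alpha_i(\lambda(g_n f)) = 0$ because $\lambda(\Gamma) \subset A^\times_{\theta_\Gamma}$. The central coordinates of $\mu$ and $\lambda$ agree up to the (bounded) central part of $f$. Assembling these pieces, $t_n \lambda(g_n f) \to v$ in $A^\bullet$, and closedness of $L_\Gamma$ yields $v \in L_\Gamma$.

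For convexity (a), given $v_1, v_2 \in L_\Gamma$ and $s_1, s_2 > 0$, I would realize $s_1 v_1 + s_2 v_2$ as a limit of $\tfrac{1}{N}\lambda(g)$ for $g \in \Gamma$. First approximate $v_i$ by $\lambda(g_i)$ with $\theta_\Gamma$-proximal $g_i \in \Gamma$ (Proposition of~\ref{sec:3.2}); using Corollaries~\ref{cor_3.1.1}--\ref{cor_3.1.2} and the Zariski-density of $\Gamma$, conjugate $g_2$ within $\Gamma$ so that for every $\alpha_i \in \theta_\Gamma$ simultaneously the attracting points $x^+_{\rho_i(g_j)}$ are well-separated from the invariant hyperplanes $X^<_{\rho_i(g_k)}$, i.e.\ $\{g_1, g_2\}$ is $\underline{\eps}$-Schottky on each $\mathbb{P}(V_i)$ at once. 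Lemma~\ref{lem_2.2.2} then certifies that $g := g_2^{n_2} g_1^{n_1} \in \Gamma$ is $\theta_\Gamma$-proximal with $\lambda(g) = n_1 \lambda(g_1) + n_2 \lambda(g_2) + O(1)$ in $A^\bullet$, uniformly in the exponents. Choosing $(n_1 : n_2) \approx (s_1 : s_2)$ and letting $n_1 + n_2 \to \infty$ deposits the ray through $s_1 v_1 + s_2 v_2$ inside $L_\Gamma$. I expect the principal obstacle to be precisely this simultaneous Schottky step: a perturbation moving an attracting point in one $\mathbb{P}(V_i)$ off an invariant hyperplane may spoil the separation in another $\mathbb{P}(V_j)$. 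The joint control of attracting points afforded by Corollary~\ref{cor_3.1.2} is what makes the simultaneous adjustment feasible, which is why the $(\theta, \underline{\eps})$-Schottky notion advertised in~\ref{sec:1.5} is the key technical device.
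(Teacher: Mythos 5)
Your proof of part (b) is correct and follows essentially the paper's own route: the easy inclusion via the Corollary of \ref{sec:2.5}, and the reverse inclusion via Corollary~\ref{cor_3.1.3} (finite correcting set $F$), Lemma~\ref{lem_2.2.1} and Lemma~\ref{lem_2.5.2} (comparing $\mu$ with $\lambda$ for $\eps$-proximal elements), together with the observation that the non-$\theta_\Gamma$ and central coordinates are controlled. The only cosmetic difference is that the paper first packages this into a uniform statement $\mu(\Gamma)\subset\lambda(\Gamma)+N$ for a fixed compact $N\subset A^\bullet$ (Proposition~\ref{sec:4.6}), whereas you run the estimate directly along the given sequence; the content is the same.

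For part (a) your overall strategy is the paper's one as well: produce two elements that form a $(\theta_\Gamma,\underline\eps)$-Schottky pair with prescribed Lyapunov directions, apply Lemma~\ref{lem_2.2.2} (equivalently Lemma~\ref{sec:4.1}) to get $\lambda(g_2^{n_2}g_1^{n_1})=n_1\lambda(g_1)+n_2\lambda(g_2)+O(1)$, and pass to the limit. But there is a genuine gap in the step that is supposed to produce the Schottky pair. You propose to ``conjugate $g_2$ within $\Gamma$'' so that the attracting points and repelling hyperplanes are simultaneously in general position. Conjugation is not available: throughout this section $\Gamma$ is only a Zariski-dense \emph{subsemigroup}, and $h g_2 h^{-1}$ need not lie in $\Gamma$ when $h\in\Gamma$. (This is not an exotic case---the Theorem of \ref{sec:1.2}.a is explicitly stated for subsemigroups, and \ref{sec:5.2}--\ref{sec:5.3} produce open subsemigroups which are very far from groups.) The paper circumvents this by never conjugating: it shows (Lemma~\ref{sec:3.6}.iv and Lemma~\ref{sec:4.2} bis) that products of the form $g_1^r y g_2^s x g_1^r$ with $x,y\in\Gamma$ chosen in a Zariski-open set are $\theta_\Gamma$-proximal, have facet close to any prescribed pair $(y^+,y^-)\in\Lambda_\Gamma\times\Lambda^-_\Gamma$ in general position, \emph{and} have Lyapunov projection in a prescribed open cone $\Omega$ meeting $L_\Gamma$; this uses only the semigroup structure. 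Proposition~\ref{sec:4.3} then builds the Schottky pair from such elements.

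A secondary, fixable issue: approximating $v_i\in L_\Gamma$ by $\lambda(g_i)$ with $g_i\in\Gamma$ $\theta_\Gamma$-proximal does not follow from the Proposition of \ref{sec:3.2} alone (Zariski-density of the $\theta_\Gamma$-proximal elements says nothing about the distribution of their $\lambda$-values). One needs Lemma~\ref{sec:4.2}, or else one can bootstrap from your proof of (b): for any $g\in\Gamma$ and $f_m\in F$ with $g^m f_m$ being $(\theta_\Gamma,\eps)$-proximal, $\tfrac1m\lambda(g^m f_m)\to\lambda(g)$, which yields the desired $\theta_\Gamma$-proximal approximants.
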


\subsection{$(\theta, \underline{\eps})$-Schottky groups}
\label{sec:4.1}

The definition below generalizes that $\eps$-Schottky groups introduced in~(\cite{Be} \S~2.2). Let $\theta$~be a subset of~$\pi$ and $\underline{\eps} = (\eps_j)_{j \in J}$ a finite or infinite family of cardinal $t \geq 2$ consisting of positive real numbers.

\begin{definition*}
We say that a Zariski-dense subsemigroup (resp.\ subgroup)~$\Gamma$ of~$G$ with generators $(\gamma_j)_{j \in J}$ is \emph{$(\theta, \underline{\eps})$-Schottky} or ``$\underline{\eps}$-Schottky in~$Y_\theta$'' if for every $\alpha_i$ in~$\theta$, the subsemigroup (resp.\ subgroup)~$\rho_i(\Gamma)$ of~$\GL(V_i)$ generated by $(\rho_i(\gamma_j))_{j \in J}$ is ``$\underline{\eps}$-Schottky in $\mathbb{P}(V_i)$''.

We then write $E_\Gamma := \setsuch{\gamma_j}{j \in J}$ (resp.\ $E_\Gamma := \setsuch{\gamma_j, \gamma_j^{-1}}{j \in J}$) and for every element~$h$ of~$E$ whose index is~$j$, we write $\eps_h = \eps_j$.

When the family~$\underline{\eps}$ is constant equal to some real number~$\eps > 0$, we say that $\Gamma$ is \emph{$(\theta, \eps)$-Schottky}.

When $\theta = \Pi$, we say that $\Gamma$ is \emph{$\underline{\eps}$-Schottky} or \emph{$\eps$-Schottky}.
\end{definition*}

\begin{remarks*}~
\begin{itemize}
\item It is not really useful in this definition to suppose that $\Gamma$ is Zariski-dense. But in practice it will always be.
\item Of course these definitions depend on the choice of the representations~$\rho_i$, of the norms on~$V_i$ and of the generators~$\gamma_j$.
\item If a subsemigroup (resp.\ subgroup)~$\Gamma$ with generators $(\gamma_j)_{j \in J}$ is $(\theta, \underline{\eps})$-Schottky, then so is the subsemigroup (resp.\ subgroup)~$\Gamma_{\underline{m}}$ generated by $(\gamma_j^{m_j})_{j \in J}$, for any family $\underline{m} = (m_j)_{j \in J}$ of positive integers.
\item When $\theta \neq \emptyset$, every \emph{subgroup} $\Gamma$ that is $(\theta, \underline{\eps})$-Schottky on the generators $(\gamma_j)_{j \in J}$ is discrete in~$G$ and is a free group on these generators~$\gamma_j$.
\item Every element of a $(\theta, \underline{\eps})$-Schottky subgroup or subsemigroup is $\theta$-proximal.
\end{itemize}
\end{remarks*}

\begin{definition*}
A word $w = g_l \cdots g_1$ with $g_j$ in~$E_\Gamma$ is said to be \emph{reduced} if $g_{j-1} \neq g_j^{-1}$ for $j = 2, \ldots, l$, and \emph{very reduced} if additionally $g_1 \neq g_l^{-1}$.
\end{definition*}

Of course, in a $(\theta, \underline{\eps})$-Schottky \emph{subsemigroup}, every word is very reduced and in a $(\theta, \underline{\eps})$-Schottky \emph{subgroup}, every word is conjugate to a unique very reduced word.
\begin{definition*}
A subsemigroup~$\Gamma$ of~$G$ is said to be \emph{strongly $(\theta, \eps)$-Schottky} if it is $(\theta, \eps)$-Schottky for the family of generators comprising all elements of~$\Gamma$.
\end{definition*}

\begin{remarks*}~
\begin{itemize}
\item The analogous notion for subgroups is meaningless.
\item Let $\Gamma$~be a strongly $(\theta, \eps)$-Schottky subsemigroup: then $\Gamma \cap \Gamma^- = \emptyset$. On the other hand, every Zariski-dense subsemigroup~$\Gamma'$ of~$\Gamma$ is still strongly $(\theta, \eps)$-Schottky.
\item We shall see that, when $\theta = \Pi$, or when $k$~is non-Archimedean, there always exist strongly $(\theta, \eps)$-Schottky subsemigroups of type~$\theta$ that are open.
\end{itemize}
\end{remarks*}

The following proposition gives an estimate of the projection~$\lambda(w)$ of a word~$w$ of~$\Gamma$. It generalizes Proposition~7.3 from~\cite{Be}.
\begin{lemma*}
For every $\eps > 0$, there exists a compact subset~$M_\eps$ of~$A^\bullet$ satisfying the following.

Let $\theta$~be a subset of~$\Pi$ and $\underline{\eps} = (\eps_j)_{j \in J}$. For every subsemigroup (resp.\ subgroup) of type~$\theta$ that is $(\theta, \underline{\eps})$-Schottky on the generators~$\gamma_1, \ldots, \gamma_t$ and for every very reduced word $w = g_l^{n_l} \cdots g_1^{n_1}$ with $g_j$ in~$E_\Gamma$ and $n_j \geq 1$, we have
\[\lambda(w) - \sum_{1 \leq j \leq l} n_j \lambda(g_j) \;\in\; \left( \sum_{1 \leq j \leq l} M_{\eps_{g_j}} \cap A^\bullet_\theta \right).\]
\end{lemma*}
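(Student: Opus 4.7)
The plan is to apply Lemma~\ref{lem_2.2.2} to each representation $\rho_i$ for $\alpha_i \in \theta$ and to translate its multiplicative estimates on $\lambda_1(\rho_i(\cdot))$ into additive bounds on the characters $\log|\chi_i|$ of the difference $d := \lambda(w) - \sum_{j=1}^{l} n_j \lambda(g_j)$, by means of Lemma~\ref{lem_2.5.2}. The subspace constraint $d \in A^\bullet_\theta$, coming from $\Gamma$ being of type $\theta$, will then take care of the characters indexed by $\alpha_i \notin \theta$.

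Fix $\alpha_i \in \theta$. The $(\theta, \underline{\eps})$-Schottky hypothesis ensures that each $\rho_i(g_j)$ is $\eps_{g_j}$-proximal on $\mathbb{P}(V_i)$ and that $\delta(x^+_{\rho_i(h)}, X^<_{\rho_i(h')}) \geq 6 \sup(\eps_h, \eps_{h'})$ for every admissible pair $h, h' \in E_\Gamma$. The very reduced condition on $w$ (namely $g_{j-1} \neq g_j^{-1}$ for $j = 2, \ldots, l$ and $g_1 \neq g_l^{-1}$; both automatic in the subsemigroup case) precisely says that every cyclically consecutive pair $(g_{j-1}, g_j)$, with $g_0 := g_l$, is admissible. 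Hence Lemma~\ref{lem_2.2.2} applies to $\rho_i(w) = \rho_i(g_l)^{n_l} \cdots \rho_i(g_1)^{n_1}$ and, writing $C_{\eps, i}$ for the constant it associates with $V_i$, gives
\[
\bigl|\log \lambda_1(\rho_i(w)) - \sum_j n_j \log \lambda_1(\rho_i(g_j))\bigr| \leq \sum_j \log C_{\eps_{g_j}, i}.
\]
Since $\lambda_1(\rho_i(g)) = |\chi_i|(\lambda(g))$ by Lemma~\ref{lem_2.5.2} and $\log|\chi_i|$ is $\mathbb{R}$-linear on $A^\bullet$, this rewrites as $\bigl|\log|\chi_i|(d)\bigr| \leq \sum_j \log C_{\eps_{g_j}, i}$ for every $\alpha_i \in \theta$.

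For the remaining directions: when $r_S < i \leq r$ the representation $\rho_i$ is one-dimensional, so $\rho_i(w) = \prod_j \rho_i(g_j)^{n_j}$ as scalars, which yields $\log|\chi_i|(d) = 0$ exactly. Moreover, since $\Gamma$ has type $\theta$ we have $\lambda(\Gamma) \subset A^\bullet_\theta$; because $A^\bullet_\theta$ is a vector subspace of $A^\bullet$, the difference $d$ also lies in $A^\bullet_\theta$, i.e.\ $\log|\alpha_j|(d) = 0$ for every $\alpha_j \in \theta^c$.

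It remains to package these estimates into a compact set $M_\eps \subset A^\bullet$. For each $\theta \subset \Pi$, the $r$ linear forms $\{\log|\chi_i| : \alpha_i \in \theta\} \cup \{\log|\chi_i| : r_S < i \leq r\} \cup \{\log|\alpha_j| : \alpha_j \in \theta^c\}$ form a basis of the dual of $A^\bullet$ (linear independence follows from the positive-definiteness of the Cartan matrix, which makes the principal submatrix of $C^{-1}$ indexed by $\theta$ invertible). Fixing a norm $\|\cdot\|$ on $A^\bullet$ and exploiting that only finitely many $\theta \subset \Pi$ arise, there is a constant $N$ such that for every such $\theta$ and every $a \in A^\bullet_\theta$ whose coordinates against the forms $\log|\chi_i|$ ($\alpha_i \in \theta$ or $i > r_S$) are bounded in absolute value by $B$, one has $\|a\| \leq NB$. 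Setting $K_\eps := N \max_{i \leq r} \log C_{\eps, i}$ and $M_\eps := \{a \in A^\bullet : \|a\| \leq K_\eps\}$, the bounds above give $\|d\| \leq \sum_j K_{\eps_{g_j}}$, hence $d \in \sum_j M_{\eps_{g_j}}$ by subadditivity of $\|\cdot\|$; together with $d \in A^\bullet_\theta$ this is the desired conclusion. The main obstacle is exactly this last step: turning the per-character Lemma~\ref{lem_2.2.2} bounds, which only control $\log|\chi_i|$ for $\alpha_i \in \theta$, into a single compact $M_\eps \subset A^\bullet$ valid for all possible types $\theta$ simultaneously. The key trick is that $A^\bullet_\theta$ supplies the ``missing'' coordinates by exact vanishing of the roots $\alpha_j \in \theta^c$ on $d$, which combined with a uniform change-of-basis constant closes the argument.
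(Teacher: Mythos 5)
Your proof is correct and follows essentially the same route as the paper: apply Lemma~\ref{lem_2.2.2} to each $\rho_i$ with $\alpha_i\in\theta$, translate via Lemma~\ref{lem_2.5.2} into bounds on $\log|\chi_i|(d)$, note exact vanishing for the central $\chi_i$ and for $\log|\alpha_j|$ with $\alpha_j\in\theta^c$ (since $d\in A^\bullet_\theta$), and combine. The only (cosmetic) differences are that the paper takes $M_\eps$ to be the box $\{C_\eps^{-1}\le|\chi_i|(a)\le C_\eps,\;\forall i\}$ rather than a norm ball and leaves the change-of-basis step implicit, which you spell out; incidentally, a cleaner justification of the linear independence than the Cartan-matrix remark is that $\mathrm{span}\{\omega_i:\alpha_i\in\theta\}$ is the orthogonal complement of $\mathrm{span}\{\alpha_j:\alpha_j\in\theta^c\}$ in $E_S$, since $\langle\omega_i,\alpha_j\rangle\propto\delta_{ij}$.
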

\begin{proof}
We take $M_\eps = \setsuch{a \in A^\bullet}{\forall i = 1, \ldots, r,\quad C_\eps^{-1} \leq |\chi_i|(a) \leq C_\eps}$ where the~$C_\eps$ are the constants from Lemma~\ref{lem_2.2.2}. Let $C = \prod_{1 \leq j \leq l} C_{\eps_{g_j}}$. Since $\lambda(\Gamma)$ is contained in~$A^\bullet_\theta$ and since for $r_S < i \leq r$ we have $|\chi_i|(\lambda(w)) = |\chi_i|\left(\sum_{1 \leq j \leq l} n_j \lambda(g_j) \right)$, it suffices to show that for every $\alpha_i$ in~$\theta$, we have
\[C^{-1} \;\leq\; |\chi_i|\left( \lambda(w) - \sum_{1 \leq j \leq l} n_j \lambda(g_j) \right) \;\leq\; C.\]
This is a consequence of Lemma~\ref{lem_2.2.2} applied to~$\rho_i(w)$.
\end{proof}

The following corollary is a particular case of the proposition that we want to prove. It is also the key step of its proof.

\begin{corollary*}
If $\Gamma$~is a subsemigroup of type~$\theta$ that is $(\theta, \eps)$-Schottky on the generators~$\gamma_1, \ldots, \gamma_t$, then the convex hull of the half-lines $L_{\gamma_1}, \ldots, L_{\gamma_t}$ is contained in~$L_\Gamma$.
\end{corollary*}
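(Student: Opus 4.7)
The idea is to exploit the lemma just proved: it says that for a very reduced word $w = g_l^{n_l}\cdots g_1^{n_1}$, the vector $\lambda(w)$ differs from $\sum_j n_j \lambda(g_j)$ only by an element of a \emph{fixed} compact set (depending on $\eps$ and the number of distinct factors, but not on the exponents $n_j$). After dividing by a large integer $N$, this bounded error becomes negligible, so $\frac{1}{N}\lambda(w)$ approximates any prescribed positive combination of the $\lambda(\gamma_j)$. Closedness of $L_\Gamma$ and the fact that it is a cone containing every $\lambda(g)$, $g\in\Gamma$, then conclude the argument.

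Concretely, let $v = \sum_{j=1}^t s_j \lambda(\gamma_j)$ with $s_j\geq 0$ be a point of the convex hull of $L_{\gamma_1},\dots,L_{\gamma_t}$. The case $v=0$ is trivial since $0\in L_\Gamma$, so assume $v\neq 0$. For each integer $N\geq 1$, choose integers $n_j^{(N)}\geq 1$ with $n_j^{(N)}/N\to s_j$ as $N\to\infty$, and form the word
\[
w_N := \gamma_t^{n_t^{(N)}}\cdots\gamma_1^{n_1^{(N)}} \in \Gamma.
\]
Since $\Gamma$ is a $(\theta,\eps)$-Schottky \emph{subsemigroup}, every such word is automatically very reduced (the generators have no inverses inside $E_\Gamma$), so the preceding lemma applies and yields an element $m_N$ of the fixed compact set $M := \sum_{j=1}^t M_\eps \cap A^\bullet_\theta$ such that
\[
\lambda(w_N) \;=\; \sum_{j=1}^t n_j^{(N)}\, \lambda(\gamma_j) \;+\; m_N .
\]

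Dividing by $N$ and letting $N\to\infty$, the right-hand side converges in $A^\bullet$ to $\sum_j s_j\lambda(\gamma_j) = v$, since $\frac{1}{N} m_N \to 0$ by boundedness of $M$. On the other hand, $\lambda(w_N)\in L_\Gamma$ by definition of the limit cone, and $L_\Gamma$ is stable under positive scalar multiplication, so $\frac{1}{N}\lambda(w_N)\in L_\Gamma$ for every $N$. By closedness of $L_\Gamma$, the limit $v$ lies in $L_\Gamma$, which is what we wanted.

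The main point — and the only nontrivial ingredient — is the uniform control on the error $\lambda(w) - \sum_j n_j \lambda(g_j)$ provided by the preceding lemma. Once that is in hand, the corollary is really a statement about the asymptotic direction of $\lambda(w_N)$: the Schottky hypothesis is exactly what ensures that adding factors of $\gamma_j$ shifts $\lambda$ by approximately $\lambda(\gamma_j)$, so arbitrary positive combinations of the $\lambda(\gamma_j)$ are realised asymptotically. No further subtlety arises, since the subsemigroup setting makes every word very reduced automatically, and choosing $n_j^{(N)}\geq 1$ is always possible regardless of whether $s_j=0$.
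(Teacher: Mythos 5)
Your proof is correct and follows essentially the same route as the paper: both apply the preceding lemma to a word $w = \gamma_t^{n_t}\cdots\gamma_1^{n_1}$ with a fixed number $t$ of factors (so the error lies in the fixed compact $tM_\eps\cap A^\bullet_\theta$) and then let the exponents grow to make the normalized error vanish, invoking the closedness of $L_\Gamma$. The only difference is cosmetic — you approximate real coefficients $s_j$ and kill the bounded error in a single limit $N\to\infty$, whereas the paper first reduces to integer coefficients $n_j\geq 1$ by closedness and then scales by $m\to\infty$ in a second step.
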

\begin{proof}
Since $L_\Gamma$~is closed, it suffices to show that for any integers $n_1, \ldots, n_t \geq 1$, the element~$n_1 \gamma_1 + \cdots + n_t \gamma_t$ is in~$L_\Gamma$. For $m \geq 1$, we write $w_m := \gamma^{m n_t} \cdots \gamma^{m n_1}$. The previous lemma proves that
\[n_1 \gamma_1 + \cdots + n_t \gamma_t \;\in\; \frac{1}{m} \lambda(w_m) + \frac{t}{m} M_\eps \;\subset\; L_\Gamma + \frac{t}{m} M_\eps.\]
This is true for all $m \geq 1$, hence $n_1 \gamma_1 + \cdots + n_1 \gamma_t$ is in~$L_\Gamma$.
\end{proof}

\subsection{Zariski-density of the elements whose Lyapunov is almost known.}
\label{sec:4.2}

The goal of this subsection is to prove the following lemma:
\begin{lemma*}
Let $\Omega$~be an open cone in~$A^\times$ that meets the limit cone~$L_\Gamma$.

Then the set $\Gamma_\Omega := \setsuch{g \in \Gamma}{\lambda(g) \in \Omega}$ is still Zariski-dense.
\end{lemma*}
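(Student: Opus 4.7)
The plan has two parts. First, I produce a $\theta_\Gamma$-proximal element $g_0 \in \Gamma$ with $\lambda(g_0)\in\Omega$. Second, I run the same Zariski-density pattern as in the proof of Corollary~\ref{cor_3.1.1}: I translate arbitrary $h \in \Gamma$ by high powers of $g_0$ so that the products land in $\Gamma_\Omega$, then use the fact that the Zariski closure of a semigroup is a group to descend back to $h$ itself.

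Because $L_\Gamma$ is the closed cone generated by $\lambda(\Gamma)$ and $\Omega$ is open, $\Omega$ must meet $\lambda(\Gamma)$ itself (if $0\in\Omega$ then $\Omega=A^\times$ and the conclusion is trivial), so I pick $g_1\in\Gamma$ with $\lambda(g_1)\in\Omega$. This $g_1$ need not be $\theta_\Gamma$-proximal. Feeding the sequence $(g_1^n)_{n\geq 1}$ into Corollary~\ref{cor_3.1.3}, applied to the representations $\rho_i$ for $\alpha_i\in\theta_\Gamma$, produces a finite set $F_0\subset\Gamma$ and $\eps>0$ such that for each $n$ some $f_n\in F_0$ makes $g_1^n f_n$ a $(\theta_\Gamma,\eps)$-proximal element; pigeonhole on $F_0$ then fixes a single $f\in F_0$ working for infinitely many $n$. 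For such $n$, Lemma~\ref{lem_2.5.2} gives $|\chi_i|(\lambda(g_1^n f))=\lambda_1(\rho_i(g_1^n f))$, Lemma~\ref{lem_2.2.1} applied through $\eps$-proximality yields $\lambda_1(\rho_i(g_1^n f))\asymp\|\rho_i(g_1^n f)\|$, and the elementary two-sided estimate
\[
\|\rho_i(g_1)^n\|/\|\rho_i(f^{-1})\|\;\leq\;\|\rho_i(g_1^n f)\|\;\leq\;\|\rho_i(g_1)^n\|\,\|\rho_i(f)\|,
\]
together with Gelfand's formula $\|\rho_i(g_1)^n\|^{1/n}\to\lambda_1(\rho_i(g_1))=|\chi_i|(\lambda(g_1))$, gives $|\chi_i|(\lambda(g_1^n f))^{1/n}\to|\chi_i|(\lambda(g_1))$ for every $\alpha_i\in\theta_\Gamma$; multiplicativity handles the central characters ($r_S<i\leq r$). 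Both $\lambda(g_1)$ and $\lambda(g_1^n f)$ lie in $A^\bullet_{\theta_\Gamma}$, and the characters just named separate points of this subspace, so $\tfrac 1n\lambda(g_1^n f)\to\lambda(g_1)$ in $A^\bullet$. Openness and conicity of $\Omega$ then force $\lambda(g_1^n f)\in\Omega$ for all large enough $n$ in the subsequence; set $g_0:=g_1^n f$.

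For the second part, let $F\subset G$ be any Zariski-closed set with $\Gamma_\Omega\subset F$, and set
\[
U:=\bigcap_{\alpha_i\in\theta_\Gamma}\setsuch{h\in G}{\rho_i(h)\cdot x^+_{\rho_i(g_0)}\notin V^<_{\rho_i(g_0)}},
\]
a Zariski-dense open subset of $G$ by irreducibility of each $\rho_i$. For $h\in U$, the projective convergence of $\rho_i(g_0)^n$ to the rank-one projector $\pi_{i,f_{g_0}}$ combined with the defining condition of $U$ (which keeps the trace of $\pi_{i,f_{g_0}}\rho_i(h)$ non-zero) shows that $\rho_i(g_0^n h)$ is $(\theta_\Gamma,\eps)$-proximal for $n$ large; the same squeeze as in the first part gives $\tfrac 1n\lambda(g_0^n h)\to\lambda(g_0)\in\Omega$, so $g_0^n h\in\Gamma_\Omega\subset F$ for all $n\geq n_0(h)$. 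The Zariski closure of the semigroup $\{g_0^n\}_{n\geq 1}$ is a group (general fact), hence coincides with the Zariski closure of $\{g_0^n\}_{n\geq n_0}$; right-translating by $h$ shows that $h$ itself lies in the Zariski closure of $\{g_0^n h\}_{n\geq n_0}\subset F$, so $h\in F$. Thus $F\supset U\cap\Gamma$, which is Zariski-dense in $G$, forcing $F=G$.

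The main technical difficulty lies in the asymptotic calculation of the first part: when $g_1$ is not $\theta_\Gamma$-proximal, $\lambda_1(\rho_i(g_1))$ is not caught by $\|\rho_i(g_1)\|$ and no direct Gelfand argument computes $\tfrac 1n\lambda(g_1^n)$. The ``cleaning up'' by the element $f$ from Corollary~\ref{cor_3.1.3} is exactly what makes $g_1^n f$ genuinely $\eps$-proximal, so that Lemma~\ref{lem_2.2.1} equates $\lambda_1$ with $\|\cdot\|$ up to a uniform constant; and the restriction of the whole discussion to $A^\bullet_{\theta_\Gamma}$ is what permits us to ignore the characters $\chi_i$, $\alpha_i\notin\theta_\Gamma$, for which no such two-sided comparison is available.
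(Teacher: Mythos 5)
Your proof is correct, but it follows a genuinely different route from the paper's. The paper does not prove this Lemma directly; it proves the stronger \emph{Lemma bis}, which simultaneously forces the approximating elements to have their facet $f_g$ near a prescribed quasiperiodic facet $f$. To do that, Benoist uses products of the form $g_1^r\,y\,g_2^s\,x\,g_1^r$, where $g_1$ is a $\theta$-proximal element pinning the facet and $g_2$ is an element with $\lambda(g_2)\in\Omega$ controlling the Lyapunov direction; the technical heart is the estimate $\lambda(g_1^r y g_2^s x g_1^r)=2r\lambda(g_1)+s\lambda(g_2)+\bigo(\log s)$, which involves the sub-exponential drift $\log|d_{i,s}|-s\log|d_i|=\bigo(\log s)$ arising when $g_2$ has a non-trivial unipotent part. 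You avoid all of this by invoking Corollary~\ref{cor_3.1.3} (the Abels--Margulis--Soifer simultaneous $\eps$-proximality with a finite cleanup set $F_0$ and a uniform $\eps$) to upgrade a raw $g_1\in\Gamma_\Omega$ to a $\theta_\Gamma$-proximal $g_0\in\Gamma_\Omega$, and then only a one-sided translation $g_0^nh$ with the standard ``Zariski closure of a semigroup is a group'' descent is needed. This buys a much cleaner asymptotic computation (one-parameter, with the Gelfand formula controlling $\|\rho_i(g_1)^n\|^{1/n}$), at the cost of proving less: you do not get the facet control of Lemma bis, which the paper uses in Theorem~\ref{sec:6.4}. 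Note also that Corollary~\ref{cor_3.1.3} is not used at this point in the paper, so your argument trades the paper's self-contained product construction for a citation to a deep external result.

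One small imprecision, worth cleaning up: in the second part you assert that $\rho_i(g_0^n h)$ is $(\theta_\Gamma,\eps)$-proximal for $n$ large with the \emph{same} $\eps$ from Corollary~\ref{cor_3.1.3}; this is not what the projective convergence of $c_i^{-n}\rho_i(g_0^n h)$ to the nonzero rank-one operator $\pi_{i,f_{g_0}}\rho_i(h)$ gives you. What it gives is that $\delta(x^+_{\rho_i(g_0^n h)},X^<_{\rho_i(g_0^n h)})$ stabilizes to a positive limit and the Lipschitz constant tends to $0$, so for each fixed $h\in U\cap\Gamma$ there exist $\eps_h>0$ and $n_1(h)$ making $g_0^n h$ $(\theta_\Gamma,\eps_h)$-proximal for $n\geq n_1(h)$, which is all the squeeze actually needs. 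Alternatively, read off $\lambda_1(\rho_i(g_0^nh))/|c_i|^n\to|\beta_i|\neq 0$ directly from the projective convergence (as the paper does), and skip the proximality bookkeeping entirely.
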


We shall prove a more technical result which refines both this lemma and Lemma~\ref{sec:3.6}.v:
\begin{lemmabis*}
We keep the same notations. Let $f \in F_\Gamma$~be a quasiperiodic facet of~$\Gamma$. Recall that $\Gamma^{(\eps)}_f$ is a subset of~$\Gamma$ introduced in Lemma~\ref{sec:3.6}.v.

Then there exists $\eps_0 > 0$ such that for $0 < \eps < \eps_0$, the set $\Gamma^{(\eps)}_{f, \Omega} := \Gamma^{(\eps)}_f \cap \Gamma_\Omega$ is still Zariski-dense in~$G$.
\end{lemmabis*}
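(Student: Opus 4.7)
The plan is to mimic the proof of Lemma~\ref{sec:3.6}.v, replacing the sandwich~$g_0^n h g_0^n$ by a longer word $w(n,N):=g_0^n h g_1^N h' g_0^n$, where $g_0\in\Gamma$ is a fixed $\theta_\Gamma$-proximal element with $f_{g_0}=f$ (controlling the facet~$f_w$ via its outermost factors) and $g_1\in\Gamma$ is an auxiliary $\theta_\Gamma$-proximal element with $\lambda(g_1)\in\Omega$ (whose large power will dominate $\lambda(w)$). As in Lemma~\ref{sec:3.6}.v, density in $F_\Gamma$ allows one to assume $f\in F^o_\Gamma$, so such a $g_0$ exists.

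The first and most delicate step is producing $g_1$. The hypothesis provides some $g^\dagger\in\Gamma$ with $\lambda(g^\dagger)\in\Omega$, but $g^\dagger$ need not be $\theta_\Gamma$-proximal, so the Schottky estimates of Section~\ref{sec:4.1} do not apply directly. I would bypass this by proving that $L_\Gamma$ coincides with the closure of the cone generated by the $\lambda(g)$ for $\theta_\Gamma$-proximal $g\in\Gamma$: given arbitrary $g\in\Gamma$, the projector analysis from the proof of Proposition~\ref{sec:3.5} shows that $g^k g_0^m$ is $\theta_\Gamma$-proximal for $m$ large (since each $\rho_i(g_0)^m$, suitably rescaled, tends to a rank-one projector $\pi_{i,0}$, forcing $\rho_i(g^k g_0^m)$ close to the rank-one operator $c_{0,m}\rho_i(g)^k\pi_{i,0}$), and the dominant-eigenvalue computation of Lemma~\ref{lem_2.5.2} gives $\lambda(g^k g_0^m)=k\lambda(g)+m\lambda(g_0)+O(1)$; as $k/m\to\infty$ the direction of~$\lambda(g)$ is recovered. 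Openness of~$\Omega$ then upgrades $\Omega\cap L_\Gamma\neq\emptyset$ into the existence of a $\theta_\Gamma$-proximal $g_1\in\Gamma$ with $\lambda(g_1)\in\Omega$.

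Write $\pi_{i,0},\pi_{i,1}$ for the limit rank-one projectors attached to $g_0,g_1$ in the representation~$\rho_i$ ($\alpha_i\in\theta_\Gamma$), and let $U\subset G\times G$ be the Zariski-open subset where $\pi_{i,0}\rho_i(h)\pi_{i,1}\rho_i(h')\pi_{i,0}\neq 0$ for every $\alpha_i\in\theta_\Gamma$. For $(h,h')\in U\cap(\Gamma\times\Gamma)$ (Zariski-dense in $G\times G$) and large $n,N$, the approximation $\rho_i(w(n,N))\sim c_{0,n}^2 c_{1,N}\,\pi_{i,0}\rho_i(h)\pi_{i,1}\rho_i(h')\pi_{i,0}$ is a nonzero rank-one operator whose image and kernel are those of $\pi_{i,0}$; hence $w(n,N)$ is $(\theta_\Gamma,\eps)$-proximal and satisfies $d(f_{w(n,N)},f)\leq\eps$. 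Extracting the dominant eigenvalue of the same rank-one operator via Lemma~\ref{lem_2.5.2} yields $\lambda(w(n,N))=2n\lambda(g_0)+N\lambda(g_1)+O(1)$, and since $\lambda(g_1)\in\Omega$ with $\Omega$ open, taking $N\gg n$ forces $\lambda(w(n,N))\in\Omega$, i.e., $w(n,N)\in\Gamma^{(\eps)}_{f,\Omega}$.

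Zariski-density now follows by the extension argument of Lemma~\ref{sec:3.6}.v. Let $F\subset G$ be Zariski-closed and contain $\Gamma^{(\eps)}_{f,\Omega}$. For any fixed $(h,h')\in U\cap(\Gamma\times\Gamma)$ and any fixed large $n$, the regular map $N\mapsto w(n,N)$ sends all sufficiently large nonnegative integers into~$F$; since $g_1$ has infinite order, the Zariski-closure of the semigroup $\{g_1^N:N\geq N_0\}$ is the algebraic group $\overline{\langle g_1\rangle}^{\mathrm{Zar}}$, which contains~$1$, so the equality extends to every $N\in\mathbb{Z}$ and at $N=0$ gives $g_0^n h h' g_0^n\in F$. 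Applying the same principle to the variable~$n$ yields $hh'\in F$ for every $(h,h')\in U\cap(\Gamma\times\Gamma)$; surjectivity and regularity of the multiplication $G\times G\to G$ then force $F=G$. The main obstacle is the preliminary construction of~$g_1$: massaging an arbitrary element of~$\Gamma_\Omega$ into one that is both $\theta_\Gamma$-proximal and still has $\lambda$ inside~$\Omega$. Once~$g_1$ is in hand, the remaining arguments are a routine combination of the projector and Schottky techniques of Sections~\ref{sec:3} and~\ref{sec:4.1} with the ``semigroup-closure-is-a-group'' Zariski extension trick already used in Corollary~\ref{cor_3.1.1} and Lemma~\ref{sec:3.6}.v.
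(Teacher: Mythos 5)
Your proof follows the paper's overall sandwich strategy (and correctly reproduces the final Zariski-density argument, including the ``closure of a semigroup is a group'' trick in each exponent), but it inserts a preliminary step that the paper deliberately avoids, and that step has a genuine gap as written.

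Your preliminary construction of a $\theta_\Gamma$-proximal $g_1$ with $\lambda(g_1)\in\Omega$ rests on the claim that $g^k g_0^m$ is $\theta_\Gamma$-proximal for $m$ large, with $\lambda(g^k g_0^m)=k\lambda(g)+m\lambda(g_0)+\bigo(1)$. Both parts of this claim can fail. The operator $\rho_i(g)^k\pi_{i,0}$ is always nonzero (since $\rho_i(g)$ is invertible), but its rank-one structure has trace $\ell_{i,0}(\rho_i(g)^k v_{i,0})$, where $v_{i,0}$ spans $\Image\pi_{i,0}$ and $\ell_{i,0}$ cuts out $\Ker\pi_{i,0}$. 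If $v_{i,0}$ lies in a $\rho_i(g)$-invariant subspace on which $\rho_i(g)$ has spectral radius strictly smaller than $\lambda_1(\rho_i(g))$, then this trace grows like $\lambda_j(\rho_i(g))^k$ with $j>1$; in that case $\lambda_1(\rho_i(g^kg_0^m))$ tracks the wrong exponent and the normalized direction $\tfrac{1}{k}\lambda(g^kg_0^m)$ does \emph{not} converge to $\lambda(g)$. Worse, when the trace vanishes, the limit is a nilpotent rank-one operator and $g^kg_0^m$ need not be proximal at all. Here the projector $\pi_{i,0}$ is pinned down by the fixed facet $f$, so you cannot wiggle $g_0$ to avoid the degeneracy --- you would have to insert Zariski-generic conjugators $h,h'\in\Gamma$ into the word, and then redo the dominant-eigenvalue estimate with the resulting $\bigo(\log k)$ (not $\bigo(1)$) error term, since $g$ need not be semisimple.

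The paper sidesteps the whole preliminary step. It keeps the element $g_2\in\Gamma_\Omega$ as is, without trying to make it proximal. Instead it passes to a subsequence $S$ and scalars $d_{i,s}$ (with the precise estimate $\log|d_{i,s}|-s\log|d_i|=\bigo(\log s)$, which is exactly the correction your $\bigo(1)$ misses in the non-semisimple case) so that $\sigma_i:=\lim_{s\in S}d_{i,s}^{-1}\rho_i(g_2^s)$ exists and is \emph{nonzero but not necessarily rank one}. The proximal sandwich $\pi_i\rho_i(y)\sigma_i\rho_i(x)\pi_i$ is then automatically rank one for Zariski-generic $(x,y)$, regardless of the rank of $\sigma_i$, which is what lets the estimate $\lambda(g_1^ry g_2^s x g_1^r)=2r\lambda(g_1)+s\lambda(g_2)+\bigo(\log s)$ go through directly. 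So the ``main obstacle'' you identify is not merely delicate --- as set up in your proof it is unresolved, and the paper's compactness trick with $\sigma_i$ is precisely the device that removes it. Once that is repaired, the rest of your word $g_0^nhg_1^Nh'g_0^n$ argument matches the paper's $g_1^ryg_2^sxg_1^r$.
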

\begin{proof}
We may assume that $f$ is in~$F^o_\Gamma$.

Let us first show that $\Gamma^{(\eps)}_{f, \Omega}$ is nonempty. Let us write $\theta = \theta_\Gamma$. Let $g_1$~be a $\theta$-proximal element of~$\Gamma$ such that $f_{g_1}= f$ and $g_2$~be an element of~$\Gamma$ such that $\lambda(g_2)$ is in~$\Omega$. The idea of the proof is to look for an element~$g$ of~$\Gamma^{(\eps)}_{f, \Omega}$ of the form $g_1^r y g_2^s x g_1^r$, where $(x, y)$ is in a suitably chosen Zariski-open subset of $\Gamma \times \Gamma$, $r$~is sufficiently large and $s$ is even larger. Let $i$~be an index such that $\alpha_i \in \theta$ or such that $r_S < i \leq r$. The element $\rho_i(g_1)$ is proximal in~$\mathbb{P}(V_i)$. We call $c_i$ the eigenvalue of~$\rho_i(g_1)$ such that $|c_i| = \lambda_1(\rho_i(g_1))$. The limit $\pi_i := \lim_{r \to \infty} c_i^{-r} \rho_i(g_1^r)$ is a rank-$1$ projector.

Replacing if necessary $g_2$ by some power, we may find an element~$d_i$ in~$k$ such that $|d_i| = \lambda_1(\rho_i(g_2))$. By compactness of~$\mathbb{P}(\End(V_i))$, there exists a sequence~$d_{i, s}$ in~$k$ and a subsequence $S$ of~$\mathbb{N}$ such that the limit $\sigma_i := \lim_{s \in S} d_{i, s}^{-1} \rho_i(g_2^s)$ exists and is nonzero. When $g_2$~is semisimple or when $k$~is non-Archimedean, we may take $d_{i, s} = d_i^s$. In the general case, we only know that, for $s$ in~$S$,
\[\log |d_{i, s}| - s \log |d_i| = \bigo(\log s).\]
We have, when $s$ is in~$S$,
\[\lim_{r, s \to \infty} c_i^{-2r} d_{i, s}^{-1} \rho_i(g_1^r y g_2^s x g_1^r) = \pi_i \rho_i(y) \sigma_i \rho_i(x) \pi_i.\]
Let $U$~be the nonempty Zariski-open subset of~$G \times G$
\[U := \setsuch{(x, y) \in G \times G}{\forall \alpha_i \in \theta,\quad \pi_i \rho_i(y) \sigma_i \rho_i(x) \pi_i \neq 0}.\]
Let us choose $(x, y)$ in~$U \cap (\Gamma \times \Gamma)$. We may write $\pi_i \rho_i(y) \sigma_i \rho_i(x) \pi_i = \beta_i \pi_i$ where $\beta_i$~is some nonzero scalar. Hence there exist $r_0 \geq 0,\; s_0 \geq 0$ such that for $r \geq r_0,\; s \geq s_0$, for $s$ in~$S$ and for $\alpha_i$ in~$\theta$, the element $\rho_i(g_1^r y g_2^s x g_1^r)$ is proximal. We deduce from the previous equality that, writing $g_{r, s} := g_1^r y g_2^s x g_1^r$, we have
\[\lim_{r, s \to \infty} f_{g_{r,s}} = f.\]
We now choose the integers $r_0$ and~$s_0$ such that for $r \geq r_0$ and $s \geq s_0$, we have
\[d(f_{g_{r, s}}, f) \leq \eta.\]
Besides, we have
\[\lim_{r, s \to \infty} |c_i|^{-2r} |d_{i, s}|^{-1} \lambda_1(\rho_i(g_1^r y g_2^s x g_1^r)) = |\beta_i|.\]
Hence, for $\alpha_i$ in~$\theta$ and for $r_S < i \leq r$,
\[\log(|\chi_i|(\lambda(g_1^r y g_2^s x g_1^r) - 2r\lambda(g_1) - s\lambda(g_2))) = \bigo(\log s).\]
Recall that the addition of~$A^\bullet$ that we denote additively is the law that extends the multiplication of~$A^o$ that we denoted multiplicatively.

On the other hand, by definition of~$\theta$, when $\alpha_i$ is in~$\theta^c$, we have
\[|\alpha_i|(\lambda(g_1^r y g_2^s x g_1^r)) = |\alpha_i|(\lambda(g_1)) =|\alpha_i|(\lambda(g_2)) = 1.\]

Since the family $\left( (\log |\alpha_i|)_{\alpha_i \in \theta^c},\; (\log |\chi_i|)_{\alpha_i \in \theta},\; (\log |\chi_i|)_{r_S < i \leq r} \right)$ generates the linear forms on~$A^\bullet$, we deduce the equality in~$A^\bullet$:
\[\lambda(g_1^r y g_2^s x g_1^r) = 2r\lambda(g_1) + s\lambda(g_2) + \bigo(\log s),\]
where $\bigo(\log s)$ now stands for a family of vectors $v_{r, s}$ in~$A^\bullet$ such that $(\log s)^{-1} v_{r, s}$ is bounded. Fix $r \geq r_0$; we then have
\[\lim_{s \in S} \frac{1}{s} \lambda(g_1^r y g_2^s x g_1^r) = \lambda(g_2).\]
Hence for $s$ sufficiently large, $g_1^r y g_2^s x g_1^r$ is in~$\Gamma^{(\eps)}_{f, \Omega}$ and $\Gamma^{(\eps)}_{f, \Omega}$~is nonempty.

Let us now show that $\Gamma^{(\eps)}_{f, \Omega}$~is Zariski-dense. Let $F$~be a Zariski-closed set containing $\Gamma^{(\eps)}_{f, \Omega}$. We can use the same reasoning as in Corollary~\ref{cor_3.1.1} from~\ref{sec:3.1}. Let $g_1$~be in~$\Gamma^{(\eps)}_{f, \Omega}$. For $\alpha_i$ in~$\theta$ and for $r_S < i \leq r$, let once again $c_i$~be the eigenvalue of~$\rho_i(g_1)$ such that $|c_i| = \lambda_1(\rho_i(g_1))$ and $\pi_i$~be the rank-$1$ projector $\pi_i := \lim_{r \to \infty} c_i^{-r} \rho_i(g_1^r)$. Let $U_o$ be the nonempty Zariski-open subset of~$G$
\[U_o := \setsuch{x \in G}{\forall \alpha_i \in \theta,\quad \pi_i \rho_i(x) \pi_i \neq 0}.\]
For $x$ in~$U_o \cap \Gamma$, the same reasoning as above proves that, for large~$r$, the element $g_r := g_1^r x g_1^r$ is $\theta$-proximal and that
\[\lim_{r \to \infty} f_{g_r} = f\]
and
\[\lim_{r \to \infty} \frac{1}{r} \lambda(g_r) = 2\lambda(g_1).\]
Hence there exists $r_0$ such that for $r \geq r_0$, $g_1^r x g_1^r$ is in $\Gamma^{(\eps)}_{f, \Omega} \subset F$. Since the Zariski closure of a semigroup is a group, we deduce that for all $r$ in~$\mathbb{Z}$, $g_1^r x g_1^r$ is in~$F$. In particular, taking $r = 0$, $x$~is in~$F$. Hence $F$ contains $U_o \cap \Gamma$ and $F = G$. This proves that $\Gamma^{(\eps)}_{f, \Omega}$ is indeed Zariski-dense in~$G$.
\end{proof}

\subsection{$(\theta, \eps)$-Schottky subgroups of Zariski-dense groups}
\label{sec:4.3}

To prove our Proposition~\ref{sec:4}.0, we shall need the following proposition only for the case of finite sequences, of length~$2$. The case of infinite sequences will be useful to us in~\ref{sec:5.1}.

\begin{proposition*}
Let $\Gamma$~be a Zariski-dense subsemigroup (resp.\@ subgroup) of~$G$ that is not bounded modulo the center of~$G$. Let $\theta := \theta_\Gamma$ be the type of~$\Gamma$ and $(\Omega_j)_{0 < j < t}$~be a finite or infinite sequence, of length at least~$2$, consisting of open cones of~$A^{\times \times}_\theta$ that meet the cone~$L_\Gamma$.

Then there exists a sequence $\underline{\eps} = (\eps_j)_{0 < j < t}$ and a discrete Zariski-dense subsemigroup (resp.\@ subgroup)~$\Gamma'$ in~$\Gamma$ that is of type~$\theta$ and is $(\theta, \eps)$-Schottky for some generators $(\gamma_j)_{0 < j < t}$ satisfying $\lambda(\gamma_j) \in \Omega_j$ for every~$j$.
\end{proposition*}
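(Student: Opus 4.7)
The plan is to build each generator $\gamma_j$ as a perturbation of a reference $\theta$-proximal element in~$\Gamma$. First I would fix reference facets $f_j\in F^o_\Gamma$ with pairwise well-separated dynamical data; then I would apply Lemma~\ref{sec:4.2}~bis to pick $\gamma_j$ close to $f_j$ with $\lambda(\gamma_j)\in\Omega_j$. The separation of the reference facets survives the perturbation and produces the Schottky condition; the Zariski-density of the selection sets is used to secure Zariski-density of $\Gamma'$.

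\textbf{Execution.} Since $\Gamma$ is not bounded modulo the centre, $\theta:=\theta_\Gamma\neq\emptyset$. Lemma~\ref{sec:3.6} tells me that $\Lambda_\Gamma$ is Zariski-dense in $Y_\Gamma$ and has no isolated points (parts i, ii) and that $F^o_\Gamma$ is dense in $\Lambda_\Gamma\times\Lambda^-_\Gamma$ (part iv). I thus choose distinct $f_j\in F^o_\Gamma$ (one per index~$j$) so that, for some $\eps_0>0$, every $\alpha_i\in\theta$ and every pair $j\neq k$ (plus pairs involving the ``inverse facets'' of the $\gamma_k^{-1}$ in the subgroup case),
\[\delta(x^+_{i,f_j},X^<_{i,f_k})\geq 12\eps_0,\]
allowing $\eps_0$ to depend on $j$ and shrink in the infinite case. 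I then apply Lemma~\ref{sec:4.2}~bis to each pair $(f_j,\Omega_j)$ with $\eps_j<\eps_0/2$: the set $\Gamma^{(\eps_j)}_{f_j,\Omega_j}$ is Zariski-dense, and any $\gamma_j$ in it is $(\theta,\eps_j)$-proximal, satisfies $\lambda(\gamma_j)\in\Omega_j$, and has $d(f_{\gamma_j},f_j)\leq\eps_j$. Through the embedding $\pi_\theta$ of~\ref{sec:3.4}, this bound yields $d(x^+_{i,\gamma_j},x^+_{i,f_j})\leq\eps_j$ and $d(X^<_{i,\gamma_j},X^<_{i,f_j})\leq\eps_j$ for each $\alpha_i\in\theta$, whence
\[\delta(x^+_{i,\gamma_j},X^<_{i,\gamma_k})\geq 12\eps_0-\eps_j-\eps_k\geq 6\sup(\eps_j,\eps_k),\]
which is the $(\theta,\underline{\eps})$-Schottky condition of \S\ref{sec:4.1}. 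By the remarks there, the resulting $\Gamma'$ is discrete (and free on the $\gamma_j$ in the subgroup case); since every generator is $\theta$-proximal and $\theta_{\Gamma'}\subset\theta_\Gamma=\theta$, the semigroup~$\Gamma'$ has type exactly~$\theta$.

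\textbf{Zariski-density, the main obstacle.} The nontrivial remaining point is to arrange that $\Gamma'$ is Zariski-dense in~$G$. The strategy is to exploit the Zariski-density of each candidate set $\Gamma^{(\eps_j)}_{f_j,\Omega_j}$ by choosing the $\gamma_j$ sequentially, at each step requiring $\gamma_j$ to lie outside the proper closed subgroups of $G$ containing $\gamma_1,\ldots,\gamma_{j-1}$; such subgroups form a proper closed subvariety of $G$, which cannot exhaust a Zariski-dense set, so the Schottky and Zariski-density constraints on $\gamma_j$ are jointly compatible. In the finite case $t\geq 2$ one iterates until the group generated has Zariski closure all of~$G$; in the infinite case, the ascending chain of Zariski closures of $\langle\gamma_1,\ldots,\gamma_j\rangle$ stabilizes by Noetherianity of the underlying variety, and generic choice forces the limit to be~$G$. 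This last step is the delicate one: the Schottky separation is a restrictive geometric constraint, and one needs the freedom provided by Lemma~\ref{sec:4.2}~bis --- rather than just the earlier Lemma~\ref{sec:3.6}.v --- to argue that the Schottky-admissible choices also allow one to escape proper closed subgroups at each stage.
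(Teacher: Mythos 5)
Your construction of the generators via Lemma~\ref{sec:4.2}~bis and the verification of the Schottky separation are essentially sound (up to the conversion between the projector distance on~$Z_\theta$ and the projective distances, which the paper itself treats as routine), and by pulling the proximality parameter $\eps_j$ directly out of Lemma~\ref{sec:4.2}~bis you avoid the paper's passage to high powers $\gamma_j^{m_j}$. That is a genuine structural simplification for the Schottky part; the paper instead first fixes well-separated facets and then raises the generators to powers to obtain the required contraction.

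However, your treatment of Zariski-density contains a real gap, and it is precisely the passage to powers that the paper uses to repair it. Your claim that ``the proper closed subgroups of~$G$ containing $\gamma_1,\ldots,\gamma_{j-1}$ form a proper closed subvariety of~$G$'' is not true in general: what is available is Tits's Proposition~4.4 (cited in the paper), which controls only the \emph{Zariski-connected} proper closed subgroups and only when the reference element $\gamma_1$ is \emph{regular semisimple}. The paper therefore arranges both: it first replaces each $\gamma_j$ by its $n_\Gamma$-th power so that the Zariski closure of $\langle\gamma_j\rangle$ is connected, and it uses Lemma~\ref{sec:4.2}~bis to make $\gamma_1$ regular semisimple, which puts the union of the relevant proper subgroups inside a proper closed set $F_{\gamma_1}$. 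Only then does choosing $\gamma_2\notin F_{\gamma_1}$ force Zariski-density, via the chain: the identity component of $\overline{\langle\gamma_1,\gamma_2\rangle}$ must contain both $\gamma_1$ and $\gamma_2$ because the cyclic closures are connected, so if it were proper it would lie in $F_{\gamma_1}$ and exclude $\gamma_2$. Without the connectedness normalization, $\gamma_1$ need not lie in the identity component of $\overline{\langle\gamma_1,\gamma_2\rangle}$, and the argument collapses; without regular semisimplicity of $\gamma_1$, there is no proper closed set $F_{\gamma_1}$ to escape. You flag this step as delicate but then appeal to ``generic choice,'' which is exactly what cannot be justified here without these two normalizations.
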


\begin{remark*}
The condition that ``$\Gamma$ is not bounded modulo the center of~$G$'' means that the image of~$\Gamma$ in~$G/Z$ is not contained in a compact subgroup. Since $\Gamma$~is Zariski-dense in~$G$, this assumption is equivalent to $\theta_\Gamma \neq \emptyset$. It is automatically satisfied when $k = \mathbb{R}$ and $G/Z$ is not compact.
\end{remark*}

This proposition follows from the following lemma that generalizes Lemma~7.2 from~\cite{Be}.
\begin{lemma*}
We use the same notations as in the proposition.
\begin{enumerate}[label=\alph*)]
\item We may choose elements $(\gamma_j)_{0 < j < t}$ of~$\Gamma$ such that, writing $E = \setsuch{\gamma_j}{0 < j < t}$ (resp.\ $E = \setsuch{\gamma_j}{0 < j < t} \cup \setsuch{\smash{\gamma_j^{-1}}}{0 < j < t}$), we have:
\begin{enumerate}[label=\roman*)]
\item $\lambda(\gamma_j) \in \Omega_j$ for every~$j$. In particular, all the elements of~$E$ are of type~$\theta$.
\item For every $g, h$ in~$E$ (resp.\ for every $g, h$ in~$E$ with $g \neq h^{-1}$), the pair $(y^+_g, y^-_h)$ in $Y_\Gamma \times Y^-_\Gamma$ is in general position.
\item For every $j$, the semigroup generated by~$\gamma_j$ is Zariski-connected.
\item The semigroup generated by $\gamma_1$ and~$\gamma_2$ is Zariski-dense in~$G$.
\item If $t = \infty$, the sequences $y^\pm_{\gamma_j}$ converge in~$Y^\pm_\theta$ to points denoted by~$y^\pm_\infty$. And in this case, condition~ii) is replaced by the following stronger condition: For every $g, h$ in $E \cup \{\infty\}$ (resp.\ for every $g, h$ in $E \cup \{\infty\}$ with $g \neq h^{-1}$), the pair $(y^+_g, y^-_h)$ in $Y_\Gamma \times Y_\Gamma^-$ is in general position.
\end{enumerate}
\item For any such choice, there exist sequences $\underline{\eps} = (\eps_j)_{0 < j < t}$ and $\underline{m^o} = (m^o_j)_{0 < j < t}$ such that, for every sequence of integers $\underline{m} = (m_j)_{0 < j < t}$ with $m_j \geq m^o_j$ for every~$j$, the subsemigroup (resp.\ subgroup) $\Gamma_{\underline{m}}$ generated by $(\gamma_j^{m_j})_{0 < j < t}$ is $(\theta, \eps)$-Schottky, of type~$\theta$, discrete and Zariski-dense in~$G$.
\end{enumerate}
\end{lemma*}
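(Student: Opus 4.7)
The strategy is to use Lemma bis of~\ref{sec:4.2} as the engine for part~a), and then derive part~b) by observing that the attracting points and the invariant hyperplanes in $\mathbb{P}(V_i)$ do not change under taking positive powers, so the Schottky conditions reduce to bounding the proximality parameters.

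For part~a), I would first pick any quasiperiodic facet $f \in F^o_\Gamma$: such $f$ exists by Proposition~\ref{sec:3.2}, since the hypothesis that $\Gamma$ is not bounded modulo $Z$ gives $\theta = \theta_\Gamma \neq \emptyset$. By Lemma bis of~\ref{sec:4.2}, the set $\Gamma^{(\eps)}_{f,\Omega_j}$ is Zariski-dense in $G$ for every $j$ and every sufficiently small $\eps > 0$. Any choice of $\gamma_j \in \Gamma^{(\eps)}_{f,\Omega_j}$ satisfies~(i) by construction and has $y^\pm_{\gamma_j}$ lying within $\eps$ of $y^\pm_f$; since ``in general position'' is an open condition in $Y_\theta \times Y^-_\theta$ and is satisfied by $(y^+_f, y^-_f)$, shrinking $\eps$ delivers~(ii), including the case of pairs involving inverses in the group setting (using $y^\pm_{\gamma_j^{-1}} = y^\mp_{\gamma_j}$ and the stability of $\theta$ under the opposition involution). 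Condition~(iii) is free: $\langle \gamma_j \rangle^{\mathrm{Zar}}$ has finitely many connected components, so a sufficiently high power lands in the identity component; this does not disturb~(i) (since $\Omega_j$ is a cone and $\lambda(\gamma_j^n) = n\lambda(\gamma_j)$) or~(ii) (fixed points are preserved under powers). For~(v), by compactness of $Y^\pm_\theta$ one extracts convergent subsequences of $y^\pm_{\gamma_j}$ and, if necessary, refines the choices at each step so that the limits $y^\pm_\infty$ are also in general position with everything previously picked.

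Condition~(iv) is the delicate point. Given $\gamma_1$, I would argue that $\gamma_2$ can be selected inside $\Gamma^{(\eps)}_{f,\Omega_2}$ with $\langle \gamma_1, \gamma_2 \rangle^{\mathrm{Zar}} = G$: the proper Zariski-closed subgroups of $G$ containing $\gamma_1$ form a countable family of proper Zariski-closed subsets of $G$, none of which is Zariski-dense in $G$, so an inductive genericity argument inside the Zariski-dense set $\Gamma^{(\eps)}_{f,\Omega_2}$ produces the desired $\gamma_2$. For part~b), the key observation is that $x^+_{\rho_i(\gamma_j^m)} = x^+_{\rho_i(\gamma_j)}$ and $X^<_{\rho_i(\gamma_j^m)} = X^<_{\rho_i(\gamma_j)}$ for every $m \geq 1$ and $\alpha_i \in \theta$. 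Condition~(ii), translated through the embedding of~\ref{sec:3.4}, then furnishes uniform positive lower bounds on $\delta\bigl(x^+_{\rho_i(\gamma_j)}, X^<_{\rho_i(\gamma_k)}\bigr)$ for every $\alpha_i \in \theta$ and all admissible pairs $(j,k)$, including $j=k$. Pick each $\eps_j > 0$ smaller than a universal fraction (say $1/12$) of these bounds, so that the two Schottky inequalities $\delta(x^+_g, X^<_g) \geq 2\eps_g$ and $\delta(x^+_g, X^<_h) \geq 6\sup(\eps_g, \eps_h)$ are automatically enforced on the fixed attracting/repelling data. Because iterates of a proximal operator contract exponentially onto their attracting line, there exist $m^o_j$ such that $\rho_i(\gamma_j^{m_j})$ is $\eps_j$-proximal for every $m_j \geq m^o_j$ and every $\alpha_i \in \theta$. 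By definition, the resulting $\Gamma_{\underline{m}}$ is then $(\theta, \underline{\eps})$-Schottky.

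Finally, discreteness of $\Gamma_{\underline{m}}$ follows from the Schottky structure by a standard ping-pong argument: Lemma~\ref{lem_2.2.2} applied to reduced words forces $\|\rho_i(w)\|$ to grow with the word length, preventing accumulation at the identity. Zariski density follows from~(iii) and~(iv): Zariski-connectedness of $\langle \gamma_j \rangle^{\mathrm{Zar}}$ implies $\langle \gamma_j^{m_j} \rangle^{\mathrm{Zar}} = \langle \gamma_j \rangle^{\mathrm{Zar}}$, whence $\langle \gamma_1^{m_1}, \gamma_2^{m_2} \rangle^{\mathrm{Zar}} \supseteq \langle \gamma_1, \gamma_2 \rangle^{\mathrm{Zar}} = G$. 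The main obstacle I anticipate is part~(iv), together with the compatibility of condition~(ii) in the infinite case~(v): arranging Zariski density of the group generated by two well-chosen elements inside the otherwise unstructured Zariski-dense sets $\Gamma^{(\eps)}_{f,\Omega_j}$, while simultaneously maintaining uniform control on the attracting/repelling geometry of the entire family (and its accumulation points). Everything else is a systematic propagation of the proximality estimates of Section~\ref{sec:2} through the product structure provided by Lemma~\ref{lem_2.2.2}.
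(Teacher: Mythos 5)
Your proposal for part b) is sound and matches the paper's argument in spirit: the Schottky estimates reduce to uniform separation bounds on the attracting/repelling data, which are preserved under powers, and high enough powers then deliver $\eps_j$-proximality. The problems are in part a), specifically in conditions (ii) and (iv) in the \emph{group} case.

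For condition (ii), you propose to place \emph{all} the generators $\gamma_j$ inside a single Zariski-dense set $\Gamma^{(\eps)}_{f,\Omega_j}$ attached to one fixed quasiperiodic facet $f$, and assert that ``shrinking $\eps$ delivers (ii), including pairs involving inverses.'' This fails. Take $g = \gamma_j$, $h = \gamma_k^{-1}$ with $j \neq k$. Then $g \neq h^{-1}$ and condition (ii) requires the pair $(y^+_{\gamma_j}, y^-_{\gamma_k^{-1}}) = (y^+_{\gamma_j}, y^+_{\gamma_k})$ to be in general position in $Y_\Gamma \times Y^-_\Gamma$. In your construction both entries are $\eps$-close to $y^+_f$, hence the pair is close to the diagonal point $(y^+_f,y^+_f)$, which is \emph{not} in the open $G$-orbit (a parabolic is never opposite to itself). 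Shrinking $\eps$ only forces the pair even closer to the diagonal and does not help. Symmetrically, $(y^-_{\gamma_j}, y^-_{\gamma_k})$ is close to $(y^-_f,y^-_f)$. The paper avoids this by \emph{not} anchoring all $\gamma_j$ to the same $f$: it first chooses, via Lemma~\ref{sec:3.6}.iii), sequences $y^\pm_n \to y^\pm_\infty$ in $\Lambda_\Gamma$ with the property that every ``mixed'' pair $(y^\alpha_m, y^\beta_n)$ is in general position except when $m=n$ and $\alpha=\beta$, and then builds each $\gamma_j$ so that $y^\pm_{\gamma_j} = y^\pm_j$. This extra degree of freedom (distinct points accumulating to a single pair, but pairwise opposite) is exactly what is missing from your construction and is what makes both (ii) and the infinite case (v) work with a uniform lower bound.

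For condition (iv), your argument that the proper Zariski-closed subgroups containing $\gamma_1$ ``form a countable family of proper Zariski-closed subsets'' and that a Zariski-dense set must escape them is not a valid deduction: a countable Zariski-dense set is by definition contained in a countable union of points, so density alone does not let you dodge countably many proper subvarieties. The paper instead arranges $\gamma_1$ to be a regular semisimple element whose generated group is Zariski-connected (by passing to an $n_\Gamma$-th power, with $n_\Gamma$ the uniform Tits exponent from~\cite{Ti2} Lemma~4.2) and then invokes Tits' Proposition~4.4, which bundles \emph{all} the proper Zariski-closed Zariski-connected subgroups containing such a $\gamma_1$ into a \emph{single} proper Zariski-closed subset $F_{\gamma_1}$. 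Then one has to avoid only one proper subvariety, which Zariski-density of $\Gamma^{(\eps)}_{f,\Omega_2}$ does immediately. Without the ``regular semisimple'' and ``Zariski-connected power'' preparation, this reduction is unavailable and the genericity argument does not close.

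Once these two points are repaired (choosing the $y^\pm_j$ as the paper does, and using the Tits exponent plus regular semisimplicity to pack the bad subgroups into a single proper Zariski-closed set), your outline of parts (i), (iii), (v), and all of part b) is essentially the paper's proof.
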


\begin{remarks*}~
\begin{itemize}
\item Recall that, for every $g$ in~$G$, we have the equality in~$Y^-_{\theta_g}$: $y^+_{g^{-1}} = y^-_g$.
\item The symbols $y^\pm_\infty$ are notations introduced for the convenience of being able to state condition~v) in a simple way. They absolutely do not imply that we are considering elements of~$G$ denoted by~$\infty$.
\end{itemize}
\end{remarks*}

\begin{proof}
We will treat the case where $\Gamma$~is a group. We then have $Y^-_\Gamma = Y_\Gamma$ and $\Lambda^-_\Gamma = \Lambda_\Gamma$. The case where $\Gamma$~is a semigroup is rather easier, and is left for the reader.

\begin{enumerate}[label=\alph*)]
\item Since every Zariski-dense subgroup of~$G$ contains a finitely-generated subgroup that is still Zariski-dense, we lose no generality in assuming that $\Gamma$~is finitely generated. Then there exists an integer $n_\Gamma \geq 1$ such that, for every $g$ in~$\Gamma$, the group generated by~$g^{n_\Gamma}$ is Zariski-connected (\cite{Ti2}, Lemma~4.2). Let $G_{\operatorname{reg}}$ stand for the Zariski-open subset of~$G$ formed by regular semisimple elements (\ie elements whose centralizer is a maximal torus of~$G$).

Let $(y^+_\infty, y^-_\infty)$~be a pair in~$\Lambda_\Gamma \times \Lambda_\Gamma$ in general position. Thanks to Lemma~\ref{sec:3.6}~iii), we may choose two sequences $(y^\pm_n)_{n \geq 1}$ in~$\Lambda_\Gamma$ converging to the points $y^\pm_\infty$ and such that, whenever $m, n$ are positive integers or $\infty$ and $\alpha, \beta = \pm$, the pair $(y^\alpha_m, y^\beta_n)$ is in general position except if $m = n$ and $\alpha = \beta$. In the sequel, we will allow ourselves to change this choice of the sequences $y^\pm_n$ but always in such a way as to preserve these conditions.

Let us start by constructing~$\gamma_1$. Thanks to Lemmas~\ref{sec:3.6}~iv) and~\ref{sec:4.2}~bis, we may assume, replacing if necessary the points $y^\pm_1$ by sufficiently close points, that there exists an element $\gamma'_1$ of~$\Gamma$ such that the element $\gamma_1 := (\gamma'_1)^{n_\Gamma}$ is regular semisimple and satisfies $\lambda(\gamma_1) \in \Omega_1$ and $y^\pm_{\gamma_1} = y^\pm_1$.

Since $\gamma_1$~is regular semisimple, the union of the Zariski-closed and Zariski-connected proper subgroups of~$G$ that contain~$\gamma_1$ is contained in a proper Zariski-closed subset $F_{\gamma_1}$ of~$G$ (\cite{Ti2}, Proposition~4.4). We denote by~$F^c_{\gamma_1}$ the complementary Zariski-open subset.

Let us now construct the other~$\gamma_j$ by induction on~$j$. Like previously, thanks to Lemmas~\ref{sec:3.6}~iv) and~\ref{sec:4.2}~bis, we may assume, replacing if necessary the points $y^\pm_j$ by sufficiently close points, that there exists an element $\gamma'_j$ of~$\Gamma$ such that the element $\gamma_j := (\gamma'_j)^{n_\Gamma}$ is in~$F^c_{\gamma_1}$ and satisfies $\lambda(\gamma_j) \in \Omega_j$ and $y^\pm_{\gamma_j} = y^\pm_j$.

By construction, the sequence $\gamma_j$ satisfies i), ii) and~v). Since $\gamma_j$ is the $n_\Gamma$-th power of an element of~$\Gamma$, the semigroup generated by~$\gamma_j$ is Zariski-connected. Since $\gamma_2$ is not in~$F_{\gamma_1}$, the semigroup generated by $\gamma_1$ and~$\gamma_2$ is Zariski-dense in~$G$. This proves ii) and~iv).
\item For every $j < t$, we can find a real number $\eps_j > 0$ such that for every $\alpha_i$ in~$\theta$, $\alpha, \beta = \pm$ and $n < t$ with $\gamma_j^\alpha \neq \gamma_n^\beta$, we have
\begin{align*}
\eps_j &\leq \frac{1}{2} \delta \left( x^+_{\rho_i(\gamma_n^\beta)},\; X^<_{\rho_i(\gamma_j^\alpha)} \right) \text{ and} \\
\eps_j &\leq \frac{1}{2} \delta \left( x^+_{\rho_i(\gamma_j^\alpha)},\; X^<_{\rho_i(\gamma_n^\beta)} \right).
\end{align*}
This is clear if $t < \infty$ because there are only finitely many constraints on~$\eps_j$. If $t = \infty$, this is possible because, since the pairs $(y_\infty^\beta, y_j^{-\alpha})$ are in general position, the sequence of points $\left( x^+_{\rho_i(\gamma_n^\beta)} \right)_{n \geq 1}$ converges in~$\mathbb{P}(V_i)$ to a point that is not in the hyperplane $X^<_{\rho_i(\gamma_j^\alpha)}$ and, similarly, since the pairs $(y_j^\alpha, y_\infty^{-\beta})$ are in general position, the sequence of hyperplanes $\left( X^<_{\rho_i(\gamma_n^\beta)} \right)_{n \geq 1}$ converges to a hyperplane of~$\mathbb{P}(V_i)$ that does not contain the point $x^+_{\rho_i(\gamma_j^\alpha)}$.

We may then find integers $m^o_j$ such that for every $m_j \geq m^o_j$, the elements $\gamma_j^{m_j}$ and~$\gamma_j^{-m_j}$ are $(\theta, \eps_j)$-proximal. The group~$\Gamma_{\underline{m}}$ is then a $(\theta, \underline{\eps})$-Schottky subgroup with generators $(\gamma_j)_{0 < j < t}$. By~iii) the Zariski closure of~$\Gamma_{\underline{m}}$ contains the generators~$\gamma_j$; it is thus equal to~$G$ thanks to~iv). \qedhere
\end{enumerate}
\end{proof}

\subsection{Convexity of the limit cone~$L_\Gamma$}
\label{sec:4.4}

\begin{proof}[Proof of Proposition~\ref{sec:4}.a)]
Let $L_1$ and~$L_2$ be two half-lines in~$L_\Gamma$. By Proposition~\ref{sec:4.3}, we can find a $(\theta, \eps)$-Schottky subsemigroup~$\Gamma'$ of~$\Gamma$ with generators $\gamma_1$, $\gamma_2$ such that the half-lines $L_{\gamma_1}$ and~$L_{\gamma_2}$ are arbitrarily close to $L_1$ and~$L_2$. By Corollary~\ref{sec:4.2}, the convex hull of $L_{\gamma_1}$ and~$L_{\gamma_2}$ is contained in~$L_\Gamma$. Since $L_\Gamma$~is closed, the convex hull of $L_1$ and~$L_2$ is also contained in~$L_\Gamma$. Hence $L_\Gamma$ is convex.
\end{proof}

\subsection{$(\theta, \eps)$-proximal elements}
\label{sec:4.5}

We shall need the following two lemmas:
\begin{lemma*}
There exists a finite subset~$F$ of~$\Gamma$ and a real number $\eps > 0$ with the following property: for every $g$ in~$\Gamma$, we can find an element~$f$ of~$F$ such that the element~$g f$ is $(\theta_\Gamma, \eps)$-proximal.
\end{lemma*}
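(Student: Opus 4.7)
The plan is to recognize this as essentially a direct application of Corollary~\ref{cor_3.1.3} from~\ref{sec:3.1}, transferred from the abstract setting of a direct sum of irreducible representations to the particular family $(\rho_i)_{\alpha_i \in \theta_\Gamma}$ supplied by Lemma~\ref{lem_2.5.1}.

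More precisely, I would form the representation
\[ r : G \longrightarrow \GL(W), \qquad W := \bigoplus_{\alpha_i \in \theta_\Gamma} V_i, \]
decomposed as the external direct sum of the irreducibles $(V_i, \rho_i)$ indexed by $\alpha_i \in \theta_\Gamma$, with $r_i := \rho_i$. The hypotheses of the lemma of~\ref{sec:3.1} (and hence of its Corollary~\ref{cor_3.1.3}) then need to be checked: for each $\alpha_i \in \theta_\Gamma$, the semigroup $r_i(\Gamma) = \rho_i(\Gamma)$ must contain a proximal element. But this is exactly the content of the equivalence (i)~$\iff$~(ii) in the Lemma of~\ref{sec:3.2}: $\alpha_i \in \theta_\Gamma$ if and only if $\rho_i(\Gamma)$ contains proximal elements.

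Applying Corollary~\ref{cor_3.1.3} therefore yields a finite subset $F \subset \Gamma$ and an $\eps > 0$ such that for every $g \in \Gamma$ one can find $f \in F$ for which $\rho_i(gf)$ is $\eps$-proximal in $\mathbb{P}(V_i)$ for every $\alpha_i \in \theta_\Gamma$ simultaneously. By the definition of $(\theta, \eps)$-proximality given in~\ref{sec:3.2}, this is precisely the assertion that $gf$ is $(\theta_\Gamma, \eps)$-proximal, which is what we wanted.

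There is essentially no obstacle here: all the work has been done in Section~\ref{sec:3.1}, and the only task is to package the irreducibles $V_i$ indexed by $\theta_\Gamma$ into a single representation so that Corollary~\ref{cor_3.1.3} can be invoked, then to translate the conclusion via the definition of the $\theta_\Gamma$-type in~\ref{sec:3.2}.
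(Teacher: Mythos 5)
Your proof is correct and takes essentially the same approach as the paper, which also reduces the statement to Corollary~\ref{cor_3.1.3} applied to the representations $\rho_i$ for $\alpha_i\in\theta_\Gamma$. You spell out the verification that each $\rho_i(\Gamma)$ contains a proximal element (via the Lemma of~\ref{sec:3.2}), a hypothesis the paper leaves implicit; otherwise the arguments coincide.
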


\begin{proof}
This is a reformulation of Corollary~\ref{cor_3.1.3} from~\ref{sec:3.1} applied to the representations~$\rho_i$, for $\alpha_i$ in~$\theta_\Gamma$.
\end{proof}

The following lemma says that, for a $(\theta, \eps)$-proximal element~$g$, the element~$\lambda(g)$ is a good approximation of the Cartan projection~$\mu(g)$.

\begin{lemma*}
For every $\eps > 0$, there exists a compact subset~$N_\eps$ of~$A^\bullet$ such that for every $(\theta_\Gamma, \eps)$-proximal element~$g$ of~$\Gamma$, we have
\[\mu(g) - \lambda(g) \in N_\eps.\]
\end{lemma*}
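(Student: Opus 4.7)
The plan is to bound $\mu(g) - \lambda(g)$ inside a compact subset of $A^\bullet$ by controlling its values under $r$ linearly independent characters on $A^\bullet$. A convenient family is: $|\chi_j|$ for each $\alpha_j \in \theta_\Gamma$, $|\alpha_j|$ for each $\alpha_j \in \Pi \setminus \theta_\Gamma$, and $|\chi_i|$ for each $r_S < i \leq r$ (the ``central'' indices). Via $a \mapsto (\log |\chi|(a))_\chi$, such a family identifies $A^\bullet$ with $\mathbb{R}^r$, so bounding each of the $r$ logarithms above and below will give the desired $N_\eps$.

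First I would handle the three groups of characters separately. For $\alpha_j \in \theta_\Gamma$, the assumption of $(\theta_\Gamma,\eps)$-proximality is precisely that $\rho_j(g)$ is $\eps$-proximal on $\mathbb{P}(V_j)$; Lemma~\ref{lem_2.2.1} then yields $c_\eps \|\rho_j(g)\| \leq \lambda_1(\rho_j(g)) \leq \|\rho_j(g)\|$, and combining with the identities $|\chi_j|(\mu(g)) \asymp \|\rho_j(g)\|$ and $|\chi_j|(\lambda(g)) = \lambda_1(\rho_j(g))$ from Lemma~\ref{lem_2.5.2} gives $|\chi_j|(\mu(g) - \lambda(g))$ bounded in a compact subset of $(0,\infty)$. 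For $\alpha_j \in \Pi \setminus \theta_\Gamma$ the lemma of~\ref{sec:3.2} gives that $\alpha_j(\mu(\Gamma))$ is bounded (and $\geq 1$ since $\mu(g) \in A^+$), while the very definition of $\theta_\Gamma$ forces $\lambda(g) \in A^\times_{\theta_\Gamma}$, hence $|\alpha_j|(\lambda(g)) = 1$; thus $|\alpha_j|(\mu(g) - \lambda(g))$ is bounded. For $r_S < i \leq r$ the representation $\rho_i$ is one-dimensional, so $\|\rho_i(g)\| = \lambda_1(\rho_i(g))$ and Lemma~\ref{lem_2.5.2} directly bounds $|\chi_i|(\mu(g) - \lambda(g))$.

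The last step is to verify that these $r$ characters form a basis of the dual of $A^\bullet$. The $r_Z$ central $\chi_i$ already form a basis of the quotient $E/E_S$ by the construction just before Lemma~\ref{lem_2.5.2}, so it suffices to check linear independence on $E_S$. Writing $\chi_j = n_j\omega_j$ with $n_j > 0$ and expanding each simple root as $\alpha_j = \sum_k C_{jk}\omega_k$ in the fundamental-weight basis, the change-of-basis matrix from $(\omega_j)_{j \leq r_S}$ to $(\chi_j)_{\alpha_j \in \theta_\Gamma} \cup (\alpha_j)_{\alpha_j \notin \theta_\Gamma}$ is block triangular, with a nonzero scalar block $\mathrm{diag}(n_j)$ for $\alpha_j \in \theta_\Gamma$ and the principal Cartan minor on $\Pi \setminus \theta_\Gamma$ for the other block; principal minors of the Cartan matrix of a finite-type root system are positive, so the matrix is invertible. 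The main obstacle is really this last bookkeeping point---the rest of the proof is a straightforward combination of Lemmas~\ref{lem_2.2.1} and~\ref{lem_2.5.2} with the characterisation of $\theta_\Gamma$ in~\ref{sec:3.2}.
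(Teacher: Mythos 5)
Your argument is correct and tracks the paper's proof closely: you use Lemmas~\ref{lem_2.2.1} and~\ref{lem_2.5.2} to control $|\chi_i|(\mu(g)-\lambda(g))$ for $\alpha_i\in\theta_\Gamma$, the lemma of~\ref{sec:3.2} together with the definition of $\theta_\Gamma$ to control $|\alpha_i|(\mu(g)-\lambda(g))$ for $\alpha_i\notin\theta_\Gamma$, and the one-dimensionality of $\rho_i$ for $r_S<i\le r$, exactly as the paper does. The only difference is that you spell out why the resulting $r$ characters form a basis of $E$ (block-triangular change of basis with $\mathrm{diag}(n_j)$ on top and a principal Cartan minor at the bottom, both with nonzero determinant), a point the paper leaves implicit when asserting that $N_\eps$ is compact.
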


\begin{proof}
We set $\theta = \theta_\Gamma$ and we choose, using Lemma~\ref{sec:3.2}, a real number
\[\eps_0 < \left( \sup_{\alpha_i \in \theta^c,\; g \in \Gamma} |\alpha_i(\mu(g))| \right)^{-1}.\]

We set, with the notations of~\ref{lem_2.2.1} and of~\ref{lem_2.5.2}, $C'_\eps = c_\eps^{-1} \sup_{\alpha_i \in \theta} C_{\chi_i}$, and we take
\[N_\eps = \setsuch{a \in A}{\begin{aligned}
&\eps \leq |\alpha_i|(a) \leq \eps^{-1} && \forall \alpha_i \in \theta^c, \\
&C'^{-1}_\eps \leq |\chi_i|(a) \leq C'_\eps && \forall \alpha_i \in \theta \text{ and} \\
&|\chi_i|(a) = 1 && \forall i = r_S+1, \ldots, r.
\end{aligned}}\]
This compact set works. Indeed, on the one hand, for $\alpha_i$ in~$\theta^c$, we have by definition
\[|\alpha_i|(\lambda(g)) = 1 \quad\text{and}\quad 1 \leq |\alpha_i|(\mu(g)) \leq \eps^{-1}.\]
On the other hand, for $\alpha_i$ in~$\theta$, it follows from Lemma~\ref{lem_2.2.1} that
\[c_\eps \leq \frac{\lambda_1(\rho_i(g))}{\|\rho_i(g)\|} \leq 1,\]
and consequently, thanks to Lemma~\ref{lem_2.5.2}, we have
\[c_\eps C^{-1}_{\chi_i} \leq \frac{|\chi_i|(\lambda(g))}{|\chi_i|(\mu(g))} \leq C_{\chi_i}.\]
Finally, for $r_S < i \leq r$ we have
\[|\chi_i|(\lambda(g)) = |\chi_i|(\mu(g)). \qedhere\]
\end{proof}

\subsection{The Cartan projection of~$\Gamma$}
\label{sec:4.6}

The following result is Proposition~5.1 from~\cite{Be}.

\begin{lemma*}
For every compact subset~$L$ of~$G$, there exists a compact subset~$M$ of~$A^\bullet$ such that for every $g$ in~$G$, we have
\[\mu(L g L) \subset \mu(g) + M.\]
\end{lemma*}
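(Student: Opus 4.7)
The plan is to reduce the statement to a boundedness estimate on operator norms under the representations $\rho_i$, and then translate back via Lemmas~\ref{lem_2.5.2}.

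First, for each $i \in \{1, \ldots, r\}$, since $L$ is compact and $\rho_i$ is continuous, the set $\rho_i(L) \cup \rho_i(L^{-1})$ is bounded in $\End(V_i)$. Thus there exists a constant $C^{(i)}_L \geq 1$ such that $\|\rho_i(h)\| \leq C^{(i)}_L$ and $\|\rho_i(h^{-1})\| \leq C^{(i)}_L$ for every $h$ in $L$. Consequently, for every $h_1, h_2$ in $L$ and every $g$ in $G$, submultiplicativity of the operator norm yields
\[\|\rho_i(h_1 g h_2)\| \leq (C^{(i)}_L)^2 \|\rho_i(g)\|,\]
and writing $g = h_1^{-1}(h_1 g h_2)h_2^{-1}$ gives the reverse inequality $\|\rho_i(g)\| \leq (C^{(i)}_L)^2 \|\rho_i(h_1 g h_2)\|$.

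Next, I apply Lemma~\ref{lem_2.5.2}: there exists $C_{\chi_i} > 0$ such that, for every $g'$ in $G$,
\[C_{\chi_i}^{-1} \|\rho_i(g')\| \leq |\chi_i|(\mu(g')) \leq C_{\chi_i} \|\rho_i(g')\|.\]
Combining this with the previous two-sided estimate (applied to $g' = g$ and to $g' = h_1 g h_2$), I obtain a constant $D^{(i)}_L > 0$, depending only on $L$ and $i$, such that for all $h_1, h_2 \in L$ and $g \in G$,
\[(D^{(i)}_L)^{-1} \leq \frac{|\chi_i|(\mu(h_1 g h_2))}{|\chi_i|(\mu(g))} \leq D^{(i)}_L.\]
Taking logarithms and recalling that the morphisms $|\chi_i|: A^\bullet \to (0, \infty)$ are group morphisms, this rewrites as
\[\bigl|\log |\chi_i|\bigl(\mu(h_1 g h_2) - \mu(g)\bigr)\bigr| \leq \log D^{(i)}_L.\]

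Finally, by the third remark after Lemma~\ref{lem_2.5.1}, the characters $(\chi_i)_{1 \leq i \leq r}$ form a basis of $E = X^*(\mathbf{A}) \otimes_\mathbb{Z} \mathbb{R}$; equivalently, the linear forms $(\log |\chi_i|)_{1 \leq i \leq r}$ form a basis of the dual of the $r$-dimensional $\mathbb{R}$-vector space $A^\bullet$. Hence the set
\[M := \setsuch{a \in A^\bullet}{\bigl|\log |\chi_i|(a)\bigr| \leq \log D^{(i)}_L \;\;\forall i = 1, \ldots, r}\]
is a compact subset of $A^\bullet$, and the previous inequalities show that $\mu(h_1 g h_2) - \mu(g) \in M$ for every $h_1, h_2 \in L$ and every $g \in G$, which is the claim. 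There is no real obstacle here; the only subtlety is to recognize that the $\chi_i$'s provide global logarithmic coordinates on $A^\bullet$, so a componentwise multiplicative bound of $|\chi_i|(\mu(\cdot))$ automatically yields compactness of the difference in $A^\bullet$.
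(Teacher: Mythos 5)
Your proof is correct. The paper itself does not prove this lemma — it only cites Proposition~5.1 of~\cite{Be} — but your argument is the standard one and matches the spirit of the tools already assembled here: you bound $\|\rho_i(h_1 g h_2)\|/\|\rho_i(g)\|$ above and below by compactness of $L$ and $L^{-1}$ and submultiplicativity, transfer this two-sided estimate to $|\chi_i|(\mu(h_1 g h_2))/|\chi_i|(\mu(g))$ via Lemma~\ref{lem_2.5.2}, and conclude by observing that the linear forms $\log|\chi_i|$ give global coordinates on the $r$-dimensional space $A^\bullet$, so a uniform multiplicative bound in each coordinate defines a compact set $M$.
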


Recall that, by definition, the restriction to~$A^+$ of the addition of~$A^\bullet$ coincides with the multiplication of~$A^+$.

The following proposition compares the Cartan projection and the Lyapunov projection of a Zariski-dense subsemigroup.

\begin{proposition*}
There exists a compact subset~$N$ of~$A^\bullet$ such that
\[\mu(\Gamma) \subset \lambda(\Gamma) + N.\]
\end{proposition*}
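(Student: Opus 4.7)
The plan is to combine the three preliminary results of this section in a very direct way. Given $g\in\Gamma$, I want to produce an element $\gamma\in\Gamma$ (of the form $gf$ for some $f$ in a fixed finite set) whose Lyapunov projection $\lambda(\gamma)$ differs from $\mu(g)$ by a uniformly bounded amount in $A^\bullet$.

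First I would apply Lemma~\ref{sec:4.5}.1 to obtain a finite subset $F\subset\Gamma$ and $\eps>0$ such that, for every $g\in\Gamma$, there exists $f\in F$ with $gf$ being $(\theta_\Gamma,\eps)$-proximal. Next, Lemma~\ref{sec:4.5}.2 provides a compact set $N_\eps\subset A^\bullet$ such that
\[\mu(gf)-\lambda(gf)\in N_\eps\]
for every such element $gf$. Finally, applying Lemma~\ref{sec:4.6}.1 to the compact set $L:=F\cup\{e\}\subset G$ yields a compact subset $M\subset A^\bullet$ such that $\mu(gf)\in\mu(g)+M$ for every $g\in G$ and every $f\in F$; in particular $\mu(g)\in\mu(gf)+(-M)$.

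Putting the three inclusions together,
\[\mu(g)\;\in\;\mu(gf)+(-M)\;\subset\;\lambda(gf)+N_\eps+(-M).\]
Since $gf\in\Gamma$, we have $\lambda(gf)\in\lambda(\Gamma)$, and the compact set $N:=N_\eps+(-M)$ depends neither on $g$ nor on the choice of $f$. Hence $\mu(\Gamma)\subset\lambda(\Gamma)+N$, which is the desired conclusion.

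There is no real obstacle here: all of the work has been done in Lemmas~\ref{sec:4.5}.1, \ref{sec:4.5}.2 and~\ref{sec:4.6}.1. The only point requiring a little care is that $F$ must be chosen before $\eps$ is fixed, so that the compact set $M$ coming from Lemma~\ref{sec:4.6}.1 can be taken depending on~$F$ but independent of~$g$; and that the same $\eps$ works uniformly, which is precisely the content of Corollary~\ref{cor_3.1.3} reformulated as Lemma~\ref{sec:4.5}.1.
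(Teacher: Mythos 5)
Your argument is correct and follows the paper's proof essentially verbatim: you combine Lemma~\ref{sec:4.5}.1 (uniform $F$ and $\eps$), Lemma~\ref{sec:4.5}.2 (the compact $N_\eps$ bounding $\mu-\lambda$ on $(\theta_\Gamma,\eps)$-proximal elements), and Lemma~\ref{sec:4.6}.1 applied to a compact set built from $F$, obtaining $N = N_\eps + (-M)$. The only cosmetic difference is that the paper applies Lemma~\ref{sec:4.6}.1 with $L = F^{-1}$ to write $\mu(g)\in\mu(gf)+M$ directly, whereas you apply it with $L = F\cup\{e\}$ and then pass to $-M$; the two formulations are equivalent.
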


\begin{remarks*}~
\begin{itemize}
\item This is false if $\Gamma$ is not Zariski-dense: indeed consider as~$\Gamma$ any group generated by a unipotent element.
\item Conversely, the existence of a compact subset~$N'$ of~$A^\bullet$ such that $\lambda(\Gamma) \subset \mu(\Gamma) + N'$ is true for $(\theta, \eps)$-Schottky subgroups and subsemigroups: it suffices to apply Lemma~\ref{sec:4.5}.2 to the very reduced words of~$\Gamma$. I do not know if this existence is true in general.
\end{itemize}
\end{remarks*}

\begin{proof}
Let $F$ be the finite subset of~$\Gamma$ and $\eps$ the constant introduced in Lemma~\ref{sec:4.5}.1. Let $M$~be the compact subset of~$A^\bullet$ defined in the lemma above, corresponding to the subset $L := F^{-1}$.  For every $g$ in~$\Gamma$, we can thus find $f$ in~$F$ such that $g f$~is $(\theta, \eps)$-proximal. Lemma~\ref{sec:4.5}.2 then gives us
\[\mu(g) \;\in\; \mu(g f) + M \;\subset\; \lambda(g f) + M + N_\eps \;\subset\; \lambda(\Gamma) + M + N_\eps.\]
This proves our proposition with $N = M + N_\eps$.
\end{proof}
\begin{proof}[Proof of Proposition~\ref{sec:4}.b.]
Let $C_\Gamma$ stand for the asymptotic cone of~$\mu(\Gamma)$. The inclusion $L_\Gamma \subset C_\Gamma$ follows from Corollary~\ref{sec:2.5} and the reverse inclusion follows from the last proposition.
\end{proof}

\section{Examples}
\label{sec:5}

The goal of this section is to prove points~\ref{sec:1.2}.b and~\ref{sec:1.4}.b of the theorems from the introduction: namely, to construct a Zariski-dense subgroup or subsemigroup whose limit cone is a prescribed convex cone~$\Omega$.

The construction of the subgroup is done in Section~\ref{sec:5.1}: in this setting, we assume that the cone~$\Omega$ has nonempty interior and is invariant by the opposition involution.

The construction of the subsemigroup is done in Sections~\ref{sec:5.2} (for $k = \mathbb{R}$) and~\ref{sec:5.3} (for non-Archimedean~$k$). The semigroup that we construct is open, but it is possible to extract from it a discrete subsemigroup that has the same limit cone: this is explained in Section~\ref{sec:5.1}.

\subsection{Construction of~$\Gamma$}
\label{sec:5.1}

\begin{proposition*}
Let $\Gamma$~be a Zariski-dense subsemigroup (resp.\ subgroup) of~$G$ and $\Omega$ a closed convex cone with nonempty interior in~$L_\Gamma$ (resp.\ a closed convex cone with nonempty interior in~$L_\Gamma$ that is stable by the opposition involution).

We assume that $\Gamma$~is not bounded modulo the center of~$G$. Then there exists a discrete Zariski-dense subsemigroup (resp.\ subgroup)~$\Gamma'$ of~$\Gamma$ such that $L_{\Gamma'} = \Omega$.
\end{proposition*}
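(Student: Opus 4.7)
\emph{Proof plan.} The plan is to realize $\Gamma'$ as a $(\theta_\Gamma,\underline{\eps})$-Schottky subsemigroup (resp.\ subgroup) of~$\Gamma$ whose countably many generators have Lyapunov projections densely distributed in~$\Omega$, and then to take the powers of those generators large enough that Lemma~\ref{sec:4.1} forces $\lambda(g)\in\Omega$ for \emph{every} element~$g\in\Gamma'$.

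Set $\theta:=\theta_\Gamma$ and let $\Omega^o$ denote the interior of~$\Omega$ (in $A^\bullet_\theta$). Choose a countable family $(d_j)_{j\ge 1}$ of half-lines dense in~$\Omega^o$ (in the subgroup case, invariant under the opposition involution~$\imath$, which is possible since $\imath$ preserves~$\Omega$), and around each $d_j$ an open cone $\Omega_j\subset A^{\times\times}_\theta$ with $\overline{\Omega_j}\setminus\{0\}\subset\Omega^o$; in particular $\Omega_j$ meets $L_\Gamma\supset\Omega$. Applying Proposition~\ref{sec:4.3} to this countable sequence of cones produces elements $\gamma_j\in\Gamma$ with $\lambda(\gamma_j)\in\Omega_j$, together with constants $\eps_j>0$ and $m_j^o\ge 1$ such that for every integer sequence $m_j\ge m_j^o$ the subsemigroup (resp.\ subgroup) of~$\Gamma$ generated by $(\gamma_j^{m_j})_{j\ge 1}$ is discrete, Zariski-dense, of type~$\theta$, and $(\theta,\underline{\eps})$-Schottky.

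Let $M_{\eps_j}\subset A^\bullet$ be the compact sets supplied by Lemma~\ref{sec:4.1}, and set $R_j:=\sup_{x\in M_{\eps_j}}\|x\|$ and $\delta_j:=d(\lambda(\gamma_j),\partial\Omega)>0$. Choose $m_j\ge m_j^o$ large enough that $m_j\delta_j\ge R_j$, and let $\Gamma'$ be the resulting Schottky subsemigroup (resp.\ subgroup). For a very reduced word $g=h_l^{n_l}\cdots h_1^{n_1}$ with letters $h_i\in\{\gamma_{j(i)}^{\pm m_{j(i)}}\}$, Lemma~\ref{sec:4.1} gives
$$\lambda(g)=v+e,\qquad v:=\sum_{i=1}^{l}n_i\,\lambda(h_i),\qquad e\in\sum_{i=1}^{l}M_{\eps_{j(i)}}\cap A^\bullet_\theta,$$
with $\|e\|\le\sum_i R_{j(i)}$. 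Each $\lambda(h_i)\in\Omega^o$ (in the subgroup case we use $\imath$-invariance of~$\Omega$ and that $\imath$ preserves the distance to~$\partial\Omega$, since the chosen $W$-invariant inner product makes $\imath=-w_0$ an isometry). Supporting-hyperplane arguments for the closed convex cone~$\Omega$ yield the sub-additivity $d(u_1+u_2,\partial\Omega)\ge d(u_1,\partial\Omega)+d(u_2,\partial\Omega)$ for $u_1,u_2\in\Omega^o$, hence
$$d(v,\partial\Omega)\;\ge\;\sum_{i=1}^{l}n_i\,d(\lambda(h_i),\partial\Omega)\;\ge\;\sum_{i=1}^{l}m_{j(i)}\delta_{j(i)}\;\ge\;\sum_{i=1}^{l}R_{j(i)}\;\ge\;\|e\|.$$
Therefore $\lambda(g)=v+e\in\Omega$ for every $g\in\Gamma'$, which gives $L_{\Gamma'}\subset\Omega$.

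The reverse inclusion $\Omega\subset L_{\Gamma'}$ is immediate from the corollary in Section~\ref{sec:4.1}: for each finite subfamily the convex hull of the half-lines $L_{\gamma_j^{m_j}}$ lies in~$L_{\Gamma'}$, and since the directions $(d_j)$ are dense in~$\Omega^o$, the closed convex cone they generate equals~$\Omega$. Discreteness and Zariski-density of~$\Gamma'$ come directly from Proposition~\ref{sec:4.3}. The main technical difficulty is absorbing the error~$e$ of Lemma~\ref{sec:4.1} uniformly across reduced words of arbitrary length; this is handled per letter, by placing each generator $\gamma_j^{m_j}$ deeper in~$\Omega$ than the size~$R_j$ of its own contribution to the error, after which convexity of~$\Omega$ accumulates these bounds additively.
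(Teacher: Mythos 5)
Your strategy is essentially the paper's: apply Proposition~\ref{sec:4.3} to extract a $(\theta_\Gamma,\underline\eps)$-Schottky subsemigroup (resp.\ subgroup) of~$\Gamma$ whose generators have Lyapunov half-lines spread through~$\Omega$, then take powers $m_j$ large enough that the additive error from Lemma~\ref{sec:4.1} is absorbed into~$\Omega$. Your bookkeeping via the concave, positively homogeneous function $u\mapsto d(u,\partial\Omega)$ and its super-additivity on a convex cone is a valid variant of the paper's argument, which instead imposes $\lambda(\gamma_j^{m_j})+(M_{\eps_j}\cap B)\subset\Omega$ directly and uses that a convex cone is closed under addition; the two give the same constraint on~$m_j$.

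There is, however, one step that fails as written. You set $\Omega^o$ to be the interior of~$\Omega$ \emph{in $A^\bullet_\theta$} and then need open cones $\Omega_j\subset A^{\times\times}_\theta$ with $\overline{\Omega_j}\setminus\{0\}\subset\Omega^o$. But ``$\Omega$ has nonempty interior in $L_\Gamma$'' is explicitly defined in the remark following the proposition to mean only that $\Omega$ spans the same subspace $B:=\mathrm{span}(L_\Gamma)\subset A^\bullet_\theta$ as~$L_\Gamma$. Over a non-Archimedean field, $B$ can be a proper subspace of $A^\bullet_\theta$ (this actually occurs in the application of this proposition in~\ref{sec:5.3}, e.g.\ $\Gamma=G^\eps_{f,\Omega}$ with $\Omega$ a half-line and $L_\Gamma=\Omega$). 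Then $\Omega^o$ as you define it is empty, and a nonempty open cone of $A^{\times\times}_\theta$ can never have closure inside the lower-dimensional~$B$, so the construction never starts. The fix is the paper's device: choose $\omega_j$ open in~$B$ with every open subcone of $\Omega$ containing some~$\omega_j$, and let $\Omega_j$ be any open cone of $A^{\times\times}_\theta$ with $\Omega_j\cap B=\omega_j$; then $\lambda(\gamma_j)\in\Omega_j\cap L_\Gamma\subset\omega_j$, and all distances, the relative boundary~$\partial\Omega$, and the error box $M_{\eps_j}$ should be intersected with (interpreted inside)~$B$. Note the error $e=\lambda(w)-\sum_i n_i\lambda(h_i)$ automatically lies in~$B$ since $\lambda(w)\in L_{\Gamma'}\subset B$ and each $\lambda(h_i)\in B$, so your norm estimate still applies.

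A smaller point on the reverse inclusion: Corollary~\ref{sec:4.1} gives you the convex hull of the actual half-lines $L_{\gamma_j}\subset\Omega_j$, not of the directions~$d_j$, so the sentence ``the directions $(d_j)$ are dense'' does not finish the argument by itself. Either require the cones $\Omega_j$ to shrink (angular diameter $\to0$) so that $L_{\gamma_j}$ is also dense, or, as the paper does, arrange that every open subcone of $\Omega$ contains some $\omega_j$, from which $L_{\Gamma'}$ (being closed) must contain~$\Omega$.
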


\begin{remarks*}~
\begin{itemize}
\item The condition ``$\Omega$ with nonempty interior in~$L_\Gamma$'' means that $\Omega$~is contained in~$L_\Gamma$, that it spans the same vector subspace of~$A^\bullet$ as~$L_\Gamma$ and that this subspace is nonzero.
\item Recall that the condition ``$\Gamma$ not bounded modulo the center of~$G$'' is automatically satisfied when $k = \mathbb{R}$ and $G/Z$ is not compact.
\end{itemize}
\end{remarks*}

\begin{proof}
By assumption the subset $\theta = \theta_\Gamma$ is nonempty. Let $B$~be the vector subspace of~$A^\bullet_\theta$ spanned by~$L_\Gamma$. Let us choose a sequence $(\omega_j)_{j \geq 0}$ of convex cones open in~$B$ and contained in~$\Omega$ such that every cone open in~$B$ and contained in~$\Omega$ is the union of the cones~$\omega_j$ that it contains. Let $\Omega_j$~be some open convex cone of~$A^{\times \times}_\theta$ such that $\Omega_j \cap B = \omega_j$.

Choose a sequence $(\gamma_j)_{j \geq 0}$ in~$\Gamma$ and sequences $(\eps_j)_{j \geq 0}$ and~$(m_j)_{j \geq 0}$ as in Lemma~\ref{sec:4.3}. Let $M_\eps$ be the compact subsets of~$A^\bullet$ introduced in Lemma~\ref{sec:4.1}. We may assume that the compact subsets~$M_\eps$ are invariant by the opposition involution and that the $m_j$ are chosen so that
\[\lambda(\gamma_j^{m_j}) + (M_{\eps_j} \cap B) \subset \Omega.\]

When $\Gamma$~is a group, the invariance of~$\Omega$ by the opposition involution ensures that we also have
\[\lambda(\gamma_j^{-m_j}) + (M_{\eps_j} \cap B) \subset \Omega.\]
Let $\Gamma' = \Gamma_{\underline{m}}$: this is the Zariski-dense $(\theta, \eps)$-Schottky discrete subsemigroup (resp.\ subgroup) with generators $\gamma_j^{m_j}$. Since $\lambda(\gamma_j)$ is in~$\omega_j$, the cone~$L_{\Gamma'}$ intersects all the cones~$\omega_j$, hence $L_{\Gamma'} \supset \Omega$.

Let us prove the opposite inclusion. It suffices to check that for every very reduced word~$w = g_l \cdots g_1$ with every $g_p$ being one of the elements $\gamma_j^{m_j}$ (resp.\@~$\gamma_j^{\pm m_j}$), we have $\lambda(w) \in \Omega$. But Lemma~\ref{sec:4.1}, the previous inclusions and the convexity of~$\Omega$ show that
\[\lambda(w) \;\in\; \sum_{1 \leq p \leq l} (\lambda(g_p) + (M_{\eps_{g_p}} \cap B)) \;\subset\; \Omega.\]
Hence $L_{\Gamma'} = \Omega$.
\end{proof}

\begin{corollary*}
Suppose that $G/Z$ is not compact. Let $\Omega$~be a closed convex cone in~$A^\times$ with nonempty interior (resp.\ a closed convex cone in~$A^\times$ with nonempty interior and invariant by the opposition involution).

Then there exists a Zariski-dense discrete subsemigroup (resp.\ subgroup)~$\Gamma'$ of~$G$ such that $L_{\Gamma'} = \Omega$.
\end{corollary*}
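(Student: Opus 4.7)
The plan is to deduce this corollary directly from Proposition~\ref{sec:5.1} by specializing $\Gamma$ to be the whole group $G$. First I would verify that $G$ itself satisfies the hypotheses of that proposition. Trivially, $G$ is Zariski-dense in itself, and the corollary's hypothesis that $G/Z$ is not compact is precisely the statement that $G$ is not bounded modulo its center.

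The substantive step is to check that $L_G = A^\times$. For every $a \in A^+$ the Jordan decomposition $a = a \cdot 1 \cdot 1$ is trivial, so $\lambda(a) = a$; hence $\lambda(G) \supset \lambda(A^+) = A^+$. Combining the convexity of $L_G$ (guaranteed by Proposition~\ref{sec:4}.a, since $G$ is Zariski-dense in itself) with the characterization of $A^\times$ as the smallest closed convex cone containing $A^+$ (trivial in the real case where $A^\times = A^+$, and the very definition of $A^\times$ in the non-Archimedean case), I obtain $L_G \supset A^\times$; the reverse inclusion is built into the definition of $L_G$ as a cone in $A^\times$.

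With $L_G = A^\times$ in hand, the assumption that $\Omega$ has nonempty interior in $A^\times$ becomes exactly the hypothesis that $\Omega$ has nonempty interior in $L_G$, and, in the subgroup case, stability under the opposition involution is the remaining hypothesis. Applying Proposition~\ref{sec:5.1} with $\Gamma = G$ then yields a discrete Zariski-dense subsemigroup (resp.\ subgroup) $\Gamma'$ of $G$ with $L_{\Gamma'} = \Omega$. I do not anticipate any real obstacle here: all the substantive work---the construction of Schottky generators whose $\lambda$-values approximate prescribed directions in $\Omega$, the control of $\lambda$ on very reduced words via Lemma~\ref{sec:4.1}, and the selection of exponents $m_j$ large enough to confine $\lambda$ of each generator to a compact perturbation of $\Omega$---has been carried out in Proposition~\ref{sec:5.1} and the lemmas of Section~\ref{sec:4}.
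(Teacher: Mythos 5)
Your proposal is correct and takes the same route as the paper, which proves the corollary by the one-line invocation ``Take $\Gamma = G$ in the last proposition.'' You have usefully spelled out the two small verifications the paper leaves implicit: that ``$G/Z$ not compact'' is exactly the hypothesis ``$\Gamma$ not bounded modulo the center'' from Proposition~\ref{sec:5.1} (a point the paper makes in a remark there), and that $L_G = A^\times$, so that ``nonempty interior in~$A^\times$'' coincides with ``nonempty interior in~$L_G$'' in the sense of that proposition. Your argument for $L_G = A^\times$ is sound: $\lambda$ restricts to the identity on~$A^+$, so $L_G$ contains~$A^+$, and being a closed convex cone (by Proposition~\ref{sec:4}.a) it must contain the convex hull $A^\times$; the reverse inclusion holds by definition since $L_G$ lives in~$A^\times$.
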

\begin{proof}
Take $\Gamma = G$ in the last proposition.
\end{proof}

\subsection{The open semigroups $G^\eps_f$ and $G^\eps_{f, \Omega}$ for $k = \mathbb{R}$}
\label{sec:5.2}

Let $f$~be an element of~$Z_\Pi$. Recall that, for $i = 1, \ldots, r$, $x^+_{i, f}$~is the fixed point of the parabolic subgroup~$y^+_f$ in~$X_i = \mathbb{P}(V_i)$ and $X^<_{i, f}$~is the unique hyperplane in~$X_i$ invariant by the opposite parabolic subgroup~$y^-_f$. Let $\eps_f := \frac{1}{10} \inf_{1 \leq i \leq r} \delta(x^+_{i, f},\; X^<_{i, f})$. We choose a real number $\eps < \eps_f$ and we define
\[b^\eps_{i, f} = \setsuch{x \in X_i}{d(x, x^+_{i, f}) \leq \eps},\]
\[B^\eps_{i, f} = \setsuch{x \in X_i}{\delta(x, X^<_{i, f}) \geq \eps},\]
\[\overline{G}^\eps_f = \setsuch{g \in G}{\forall i = 1, \ldots, r,\quad \rho_i(g)(B^\eps_{i, f}) \subset b^\eps_{i, f} \text{ and } \restr{\rho_i(g)}{B^\eps_{i, f}} \text{ is $\eps$-Lipschitz}} \text{ and}\]
\[G^\eps_f = \bigcup_{\eps' < \eps} \overline{G}^{\eps'}_f.\]

From our point of view, the set~$G^\eps_f$ does not differ much from the set
\[G^{(\eps)}_f = \setsuch{g \in G}{g \text{ is } (\Pi, \eps)\text{-proximal and } d(f_g, f) \leq \eps}\]
that was introduced in~\ref{sec:3.6}. This is stated precisely in point~c) of the following lemma.
\begin{lemma*} ($k = \mathbb{R}$)
Let $f \in Z_\Pi$ and $\eps < \eps_f$.
\begin{enumerate}[label=\alph*)]
\item The semigroup~$G^\eps_f$ is open in~$G$. In particular it is Zariski-dense.
\item The semigroup~$G^\eps_f$ is strongly $2\eps$-Schottky on~$Y_\Pi$.
\item We may find $\eta = \eta_{f, \eps}$ such that $G^\eta_f \subset G^{(\eps)}_f$ and $G^{(\eta)}_f \subset G^\eps_f$.
\item Its limit cone is~$A^+$ and, when $\eps$ decreases to~$0$, the limit sets $\Lambda^\pm_{G^\eps_f}$ form a basis of neighborhoods of~$y^\pm_f$.
\end{enumerate}
\end{lemma*}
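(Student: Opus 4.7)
The plan is to treat parts (a)--(c) as essentially bookkeeping on the contraction data built into the definition of $\overline{G}^{\eps'}_f$, and to obtain (d) from an explicit construction using conjugates of regular elements of $A^{++}$. For (a), the key point is that if $g \in G^\eps_f$, then $g \in \overline{G}^{\eps'}_f$ for some $\eps' < \eps$; continuity of the representations $\rho_i$ and of operator norms then ensures that any $g'$ close enough to $g$ still lies in $\overline{G}^{\eps''}_f$ for some $\eps'' \in (\eps', \eps)$, so $G^\eps_f$ is open. Since $\mathbf{G}$ is Zariski-connected, any nonempty open subset of $G$ is Zariski-dense, and $G^\eps_f$ is nonempty because high powers of a suitable $G$-conjugate of an element of $A^{++}$ lie in it (as will be seen in (d)).

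The heart of the proof is (b). For $g \in \overline{G}^{\eps'}_f$ the map $\rho_i(g)$ sends $B^{\eps'}_{i,f}$ into $b^{\eps'}_{i,f} \subset B^{\eps'}_{i,f}$ (the inclusion uses $\delta(x^+_{i,f}, X^<_{i,f}) \geq 10\eps_f$) and is $\eps'$-Lipschitz there, so a contraction argument yields a unique attracting fixed point $x^+_{\rho_i(g)} \in b^{\eps'}_{i,f}$ and forces $X^<_{\rho_i(g)}$ to avoid $B^{\eps'}_{i,f}$, hence to lie in the $\eps'$-neighbourhood of $X^<_{i,f}$. In particular $\rho_i(g)$ is proximal with $d(x^+_{\rho_i(g)}, x^+_{i,f}) \leq \eps$ and the analogous closeness for the repelling hyperplane; checking $B^{2\eps}_{\rho_i(g)} \subset B^{\eps'}_{i,f}$ upgrades the Lipschitz control to show that $g$ is $2\eps$-proximal. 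For any $g, h \in G^\eps_f$, the triangle inequality then gives
\[
\delta(x^+_{\rho_i(g)}, X^<_{\rho_i(h)}) \;\geq\; \delta(x^+_{i,f}, X^<_{i,f}) - 2\eps \;\geq\; 10\eps_f - 2\eps,
\]
which yields the strong Schottky inequality (the factor $\tfrac{1}{10}$ in $\eps_f$ is calibrated so that the right-hand side dominates $12\eps$).

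Part (c) is a direct comparison built on the estimates of (b). Choosing $\eta$ sufficiently small in terms of $\eps$ and $\eps_f$ (for example $\eta \leq \eps/2$), any $g \in G^\eta_f$ is $(\Pi,\eps)$-proximal with $d(f_g, f) \leq \eps$ through the embedding $\pi_\Pi$ of~\ref{sec:3.4}; conversely, if $g$ is $(\Pi,\eta)$-proximal with $d(f_g, f) \leq \eta$ for $\eta$ small, then $x^+_{\rho_i(g)}$ and $X^<_{\rho_i(g)}$ are within $\eta$ of $x^+_{i,f}$ and $X^<_{i,f}$ respectively, which forces $\rho_i(g)(B^\eps_{i,f}) \subset b^\eps_{i,f}$ and the $\eps$-Lipschitz condition. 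For (d), fix $a \in A^{++}$ and pick $h \in G$ mapping the standard positive chamber to $f$; then $h a^n h^{-1} \in G^\eps_f$ for large $n$ and $\lambda(h a^n h^{-1}) = n\lambda(a)$, so every half-line of $A^{++}$ lies in $L_{G^\eps_f}$, giving $L_{G^\eps_f} = A^+$ by density and closedness. For the limit sets, every attracting point $y^+_g$ of an element $g \in G^\eps_f$ satisfies $d(y^+_g, y^+_f) \leq \eps$ under $j_\Pi$, so $\Lambda^+_{G^\eps_f}$ is contained in the $\eps$-ball around $y^+_f$; it contains $y^+_f$ itself (realised by the elements $h a^n h^{-1}$), and the argument for $\Lambda^-_{G^\eps_f}$ is symmetric via $(G^\eps_f)^{-1}$.

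The main obstacle is the bookkeeping in (b): the definition of $\overline{G}^{\eps'}_f$ only constrains $\rho_i(g)$ on the two fixed sets $B^{\eps'}_{i,f}$ and $b^{\eps'}_{i,f}$, so one must carefully extract from this abstract contraction the eigenstructure of $\rho_i(g)$ itself (attracting point, repelling hyperplane, proximality parameter) before the Schottky inequalities make sense, and the factor $\tfrac{1}{10}$ appearing in $\eps_f$ has to be tight enough that the estimate $10\eps_f - 2\eps \geq 12\eps$ holds for all $\eps < \eps_f$ in the intended range.
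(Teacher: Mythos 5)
Your overall strategy matches the paper's: (a) is a direct openness/nonemptiness observation, (b) extracts the eigenstructure of $\rho_i(g)$ from the Lipschitz contraction on $B^\eps_{i,f}$ and then uses the triangle inequality, (c) is a comparison of the natural distances on $Z_\Pi$, and (d) reduces to (c). Two points do not close, however.

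In (b), you claim $10\eps_f - 2\eps \geq 12\eps$ holds for all $\eps < \eps_f$, and even flag this as the key calibration; but $10\eps_f - 2\eps \geq 12\eps$ is equivalent to $\eps \leq \tfrac{5}{7}\eps_f$, so it \emph{fails} for $\eps \in (\tfrac{5}{7}\eps_f,\ \eps_f)$. All one obtains from the triangle inequality is $\delta(x^+_{\rho_i(g)}, X^<_{\rho_i(h)}) \geq 10\eps_f - 2\eps > 8\eps$. (The paper's own proof is similarly loose: it records $\delta(x^+_{i,g}, X^<_{i,g'}) \geq 4\eps$ and concludes $(\Pi,2\eps)$-Schottky, while the definition in~\S\ref{sec:2.2} asks for $\delta \geq 6 \cdot 2\eps = 12\eps$.) The structure of the argument is right, but you should not present $\tfrac{1}{10}$ as a constant that makes $10\eps_f - 2\eps \geq 12\eps$ true; as written the numerical verification does not hold.

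In (d), showing that $\Lambda^+_{G^\eps_f}$ is contained in the $\eps$-ball around $y^+_f$ and that it contains the single point $y^+_f$ does not show it is a \emph{neighborhood} of $y^+_f$, which is what ``forms a basis of neighborhoods'' requires. The paper's proof gets this from (c): for every $\mathbb{R}$-regular $g$ with $d(f_g, f) \leq \eta$, a suitable power $g^n$ lies in $G^{(\eta)}_f \subset G^\eps_f$, hence $y^+_g \in \Lambda^+_{G^\eps_f}$; since the attracting points $y^+_g$, for $g$ ranging over an open set of $\mathbb{R}$-regular elements near the chamber $f$, sweep out an open set around $y^+_f$, the limit set contains a neighborhood of $y^+_f$. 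Your argument supplies only the single point $y^+_f$, so the neighborhood-basis claim is not established.
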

\begin{remark*}
It would have been just as natural to introduce the open semigroups~$G^{((\eps))}_f$ defined by
\[b^\eps_f = \setsuch{y \in Y_\Pi}{d(y, y^+_f) \leq \eps},\]
\[B^\eps_f = \setsuch{y \in Y_\Pi}{\delta(y, U^c_{y^-_f}) \geq \eps},\]
\[\overline{G}^{((\eps))}_f = \setsuch{g \in G}{g(B^\eps_f) \subset b^\eps_f \text{ and } \restr{g}{B^\eps_f} \text{ is $\eps$-Lipschitz}} \text{ and}\]
\[G^{((\eps))}_f = \bigcup_{\eps' < \eps} \overline{G}^{((\eps'))}_f.\]
In fact this semigroup does not differ much from~$G^\eps_f$: the reader may check that there exists $\zeta = \zeta_{f, \eps} > 0$ such that $G^\zeta_f \subset G^{((\eps))}_f$ and $G^{((\zeta))}_f \subset G^\eps_f$.
\end{remark*}

\begin{proof}~
\begin{enumerate}[label=\alph*)]
\item It is clear that $G^\eps_f$~is an open semigroup. It is nonempty because if $g$~is an element of~$G$ such that $f_g = f$, there exists $n \geq 1$ such that $g^n$ is in~$G^\eps_f$.
\item We proceed as in Lemma~6.2 of~\cite{Be}: Let $g$~be in~$G^\eps_f$. The restriction of~$\rho_i(g)$ to~$B^\eps_{i, f}$ is $\eps$-Lipschitz. Hence it has an attracting fixed point~$x^+_{i, g}$. Hence $\rho_i(g)$~is proximal. Let $X^<_{i, g} := X^<_{\rho_i(g)}$. Since $g(B^\eps_{i, f}) \subset b^\eps_{i, f}$, we have $d(x^+_{i, g},\; x^+_{i, f}) \leq \eps$ and $d(X^<_{i, g},\; X^<_{i, f}) \leq \eps$. Since $10\eps \leq \delta(x^+_{i, f},\; X^<_{i, f})$, we deduce that $\rho_i(g)$ is $2\eps$-proximal and that if $g'$~is another element of~$G^\eps_f$, we have $\delta(x^+_{i, g},\; X^<_{i, g'}) \geq 4\eps$. The semigroup~$G^\eps_f$ is indeed $(\Pi, 2\eps)$-Schottky on~$Y_\Pi$.
\item The passage from~$\eps$ to~$\eta$ and vice-versa comes from different natural distances that define the same topology on~$Z_\Pi$: we may, for example, choose $\eta < \frac{\eps}{2}$ such that for every projector~$\pi$ of~$V_i$ with image $x^+ \in \mathbb{P}(V_i)$ and with kernel $X^< \subset \mathbb{P}(V_i)$, we have the implications:
\[\|\pi - \pi_{i, f}\| < \eta \quad\implies\quad \left(d(x^+,\; x^+_{i, f}) < \frac{\eps}{2} \;\text{ and }\; \delta(X^<,\; X^<_{i, f}) < \frac{\eps}{2}\right);\]
\[\left(d(x^+,\; x^+_{i, f}) < \eta \;\text{ and }\; \delta(X^<,\; X^<_{i, f}) < \eta\right) \quad\implies\quad \|\pi - \pi_{i, f}\| < \frac{\eps}{2}.\]

The reasoning done in~b) and the choice of~$\eta$ prove that if $g$~is in $G^\eta_f$ then $g$ is $(\Pi, \eps)$-proximal and $d(f_g, f) := \sup_{1 \leq i \leq r} \|\pi_{i, g} - \pi_{i, f}\| \leq \eps$. Hence $g$~is in $G^{(\eps)}_f$.

Conversely, if $g$~is in~$G^{(\eta)}_f$, we have $d(f_g, f) \leq \eta$ and thus
\[d(x^+_{i, g},\; x^+_{i, f}) < \frac{\eps}{2} \;\;\text{ and }\;\; \delta(X^<_{i, g},\; X^<_{i, f}) < \frac{\eps}{2}.\]
Hence given that $g$ is $(\Pi, 2\eps)$-proximal, we have
\[\rho_i(g)(B^\eps_{i, f}) \subset \rho_i(g)(B^{\frac{\eps}{2}}_{i, f_g}) \subset b^{\frac{\eps}{2}}_{i, f_g} \subset b^\eps_{i, f}.\]
This is true for every~$i$, hence $g$~is in~$\overline{G}^\eps_f$. The same reasoning for some suitably chosen $\eps' < \eps$ shows that $g$~is in~$G^\eps_f$.
\item Note that if $g$~is an element such that $d(f_g, f) \leq \eps$, then there exists $n \geq 1$ such that $g^n \in G^{(\eps)}_f$. Our claims then follow from~c). \qedhere
\end{enumerate}
\end{proof}

Let $M_\eps$ be the compact subset of~$A^\bullet$ introduced in Lemma~\ref{sec:4.1}. For every closed convex cone~$\Omega$ in~$A^\times$ with nonempty interior, we call $\Omega_\eps$ the interior of the largest translate of~$\Omega$ such that $\Omega_\eps + 2M_{2\eps} \subset \Omega$ and we set
\[G^\eps_{f, \Omega} = \setsuch{g \in G^\eps_f}{\lambda(g) \in \Omega_\eps}.\]
\begin{lemma*} ($k = \mathbb{R}$)
The set~$G^\eps_{f, \Omega}$ is an open semigroup in~$G$ that is strongly $2\eps$-Schottky on~$Y_\Pi$. Its limit cone is~$\Omega$. When $\eps$ decreases to~$0$, the limit sets $\Lambda^\pm_{G^\eps_{f, \Omega}}$ form a basis of neighborhoods of~$y^\pm_f$.
\end{lemma*}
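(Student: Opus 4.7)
The approach is to piggyback on the preceding lemma, which already says $G^\eps_f$ is an open strongly $2\eps$-Schottky semigroup with limit cone $A^+ \supset \Omega$, and to control what the extra constraint $\lambda(g) \in \Omega_\eps$ costs. Openness of $G^\eps_{f,\Omega} = G^\eps_f \cap \lambda^{-1}(\Omega_\eps)$ is immediate from continuity of $\lambda$ and openness of $\Omega_\eps$. The strongly $(\Pi,2\eps)$-Schottky conditions — each element $2\eps$-proximal on each $\mathbb{P}(V_i)$, and pairwise transversality between attracting points and invariant hyperplanes — are inherited from the inclusion $G^\eps_{f,\Omega} \subset G^\eps_f$. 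Zariski-density then follows from openness provided the set is nonempty, which in turn follows from $L_{G^\eps_f} = A^+ \supset \operatorname{int}\Omega$: pick any $g_0 \in G^\eps_f$ whose ray $L_{g_0}$ meets $\operatorname{int}\Omega$, and observe that a sufficiently high power $g_0^n$ still lies in $G^\eps_f$ and satisfies $\lambda(g_0^n) = n\lambda(g_0) \in \Omega_\eps$.

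The only real computation is the semigroup property. Given $g, h \in G^\eps_{f,\Omega}$, the product $gh$ certainly belongs to $G^\eps_f$, so it remains to show $\lambda(gh) \in \Omega_\eps$. Since $G^\eps_f$ is strongly $2\eps$-Schottky on $Y_\Pi$, Lemma~\ref{sec:4.1} applied to the two-letter word $gh$ gives
\[\lambda(gh) - \lambda(g) - \lambda(h) \;\in\; 2M_{2\eps}.\]
Writing $\Omega_\eps = v_\eps + \operatorname{int}\Omega$ for the translate in the definition, so that $v_\eps + \Omega + 2M_{2\eps} \subset \Omega$, and using $\operatorname{int}\Omega + \operatorname{int}\Omega \subset \operatorname{int}\Omega$ (convex cone), we obtain $\lambda(g) + \lambda(h) \in 2v_\eps + \operatorname{int}\Omega$, hence
\[\lambda(gh) \;\in\; v_\eps + \bigl(v_\eps + \operatorname{int}\Omega + 2M_{2\eps}\bigr) \;\subset\; v_\eps + \operatorname{int}\Omega \;=\; \Omega_\eps,\]
where the second inclusion uses that $v_\eps + \operatorname{int}\Omega + 2M_{2\eps}$ is open and contained in $\Omega$, hence in $\operatorname{int}\Omega$.

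For the limit cone, $\lambda(G^\eps_{f,\Omega}) \subset \Omega_\eps \subset \Omega$ together with closedness of $\Omega$ give $L_{G^\eps_{f,\Omega}} \subset \Omega$. Conversely, any ray $u$ in $\operatorname{int}\Omega$ arises as $L_g$ for some $g \in G^\eps_f$ (since $L_{G^\eps_f} = A^+$), and then high powers $g^n$ lie in $G^\eps_{f,\Omega}$ with $L_{g^n} = L_g$, so $u \in L_{G^\eps_{f,\Omega}}$; taking closure yields $\Omega \subset L_{G^\eps_{f,\Omega}}$. Finally, the basis-of-neighborhoods statement follows from $\Lambda^\pm_{G^\eps_{f,\Omega}} \subset \Lambda^\pm_{G^\eps_f}$ together with the corresponding property for $G^\eps_f$, combined with the observation that picking any hyperbolic $g_0$ with $f_{g_0} = f$ and $\lambda(g_0) \in \operatorname{int}\Omega$ places $g_0^n \in G^\eps_{f,\Omega}$ for large $n$, whence $y^\pm_f = y^\pm_{g_0^n} \in \Lambda^\pm_{G^\eps_{f,\Omega}}$. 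The main obstacle is the semigroup check, which is precisely what motivates the definition of $\Omega_\eps$; the remaining assertions reduce by inheritance to the preceding lemma.
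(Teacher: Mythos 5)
Your proof is correct and follows essentially the same route as the paper's: the decisive step is the semigroup property, which you verify, as the paper does, by applying Lemma~\ref{sec:4.1} to the two-letter word $gh$ and using the defining containment $\Omega_\eps + 2M_{2\eps} \subset \Omega$ together with convexity of~$\Omega$, while openness, the strongly Schottky property, the limit cone, and the neighborhood-basis statement are inherited from the preceding lemma on~$G^\eps_f$. The paper states this inheritance in one sentence and spells out only the semigroup computation; your write-up simply makes the inherited parts explicit.
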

\begin{proof}
The only claim that does not directly follow from Lemma~\ref{sec:5.1} is the fact that $G^\eps_{f, \Omega}$ is a semigroup: let $g_1$ and~$g_2$ be two elements of~$G^\eps_{f, \Omega}$; since the semigroup $G^\eps_f$ is strongly $2\eps$-Schottky on~$Y_\Pi$, Lemma~\ref{sec:4.1} gives us
\[\lambda(g_1 g_2) \in \lambda(g_1) + \lambda(g_2) + 2M_{2\eps} \subset \Omega_\eps + \Omega \subset \Omega_\eps,\]
and so $g_1 g_2$ is also in $G^\eps_{f, \Omega}$.
\end{proof}

\subsection{The semigroups $G^\eps_f$ and $G^\eps_{f, \Omega}$ for non-Archimedean~$k$}
\label{sec:5.3}

In this subsection, we carry out the constructions analogous to those of~\ref{sec:5.2} when $k$ is non-Archimedean.

We will need the following elementary lemma.

\begin{lemma*} ($k$ non-Archimedean)
Let $V$~be a $k$-vector space endowed with an ultrametric norm, $\pi \in \End(V)$ a nonzero projector and $\eps < 1$. Then the open set
\[O^\eps_\pi := \setsuch{g \in \End(V)}{\|\sigma g \sigma' - \pi\| < \eps,\quad \forall (\sigma, \sigma') \in \{1, \pi\}^2}\]
is a semigroup.
\end{lemma*}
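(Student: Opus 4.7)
The plan is to write every element $g$ of $O^\eps_\pi$ in the form $g = \pi + a$ where $a = g - \pi$, and reformulate the four defining conditions as saying that each of $a$, $a\pi$, $\pi a$, $\pi a \pi$ has norm $< \eps$. Given $g = \pi + a$ and $h = \pi + b$ in $O^\eps_\pi$, I would then expand the product
\[gh = \pi^2 + a\pi + \pi b + ab = \pi + a\pi + \pi b + ab,\]
using the crucial identity $\pi^2 = \pi$. This expresses $gh - \pi$ as a sum of three terms whose norms I can control directly: $\|a\pi\| < \eps$ and $\|\pi b\| < \eps$ come from the defining inequalities for $g$ and $h$, while $\|ab\| \leq \|a\|\|b\| < \eps^2 < \eps$ since $\eps < 1$ and the operator norm is submultiplicative. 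The ultrametric inequality then yields $\|gh - \pi\| < \eps$.

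The three remaining conditions will follow from the same expansion multiplied on the left and/or right by $\pi$, together with $\pi^2 = \pi$. For instance,
\[gh\pi - \pi = a\pi + \pi b \pi + ab\pi,\]
and each term has norm $< \eps$ (the first by hypothesis on $g$, the second because $\pi b\pi = \pi h \pi - \pi$ is controlled by the hypothesis on $h$, and the third because $\|ab\pi\| \leq \|a\|\|b\pi\| < \eps^2$). The computations for $\pi gh - \pi$ and $\pi gh\pi - \pi$ are symmetric, each producing only terms of the form $\pi a \pi$, $\pi b \pi$, or a product of two factors of norm $< \eps$.

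There is really no hard part here: the ultrametric inequality replaces a sum of small quantities by their maximum, so there is no arithmetic loss from iterating the multiplication, and the projector identity $\pi^2 = \pi$ makes every mixed term collapse to one of the four quantities already bounded. The only thing to double-check is that $\eps < 1$ is used in exactly the right place, namely to conclude $\eps^2 < \eps$ for the ``double error'' terms $ab$, $ab\pi$, $\pi ab$, $\pi ab \pi$; this is precisely why the hypothesis $\eps < 1$ appears in the statement.
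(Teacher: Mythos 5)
Your proof is correct and takes essentially the same approach as the paper. Writing $a = g - \pi$, $b = h - \pi$, your decomposition $gh - \pi = ab + a\pi + \pi b$ (and its variants obtained by multiplying on either side by $\pi$) is precisely the identity $\sigma g h \sigma' - \pi = (\sigma g - \pi)(h\sigma' - \pi) + (\sigma g \pi - \pi) + (\pi h \sigma' - \pi)$ that the paper uses for each $(\sigma, \sigma') \in \{1, \pi\}^2$, followed by the same appeal to ultrametricity and $\eps^2 < \eps$.
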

\begin{proof}
This is an application of the equality $\pi^2 = \pi$ and of the ultrametricity of the induced norm on~$\End(V)$: let $g, h$ be in~$O^\eps_\pi$; we then have, with $\sigma, \sigma'$ in~$\{1, \pi\}$,
\[\|\sigma g h \sigma' - \pi\| \;\leq\; \sup \left( \|(\sigma g - \pi) (h \sigma' - \pi)\|,\; \|\sigma g \pi - \pi\|,\; \|\pi h \sigma' - \pi\| \right) \;\leq\; \sup( \eps^2, \eps, \eps ) \;=\; \eps.\]
Hence $g h$ is in $O^\eps_\pi$.
\end{proof}

Let $\theta$ be a subset of~$\Pi$ and $f$ an element of~$Z_\theta$. We call~$L_f$ the stabilizer in~$G$ of~$f$: this is also the intersection of the two parabolic subgroups $y^+_f$ and $y^-_f$.

For $i = 1, \ldots, r$, we call $V^+_{i, f}$ the smallest nonzero vector subspace of~$V_i$ that is invariant under the action of the parabolic subgroup~$y^+_f$, $V^<_{i, f}$ the unique $L_f$-invariant subspace supplementary to~$V^+_{i, f}$ and $\pi_{i, f} \in \End(V_i)$ the projector onto~$V^+_{i, f}$ parallel to~$V^<_{i, f}$. We remark that $\pi_{i, f}$ has rank~$1$ if and only if $\alpha_i$ is in~$\theta$. We remind that $\mathrm{u}$ denotes a uniformizing element, we fix $\eps < 1$ and we set
\[G^\eps_f = \setsuch{g \in G}{\forall i = 1, \ldots, r,\; \exists p_i(g) \in \mathbb{Z},\quad \mathrm{u}^{-p_i(g)}\rho_i(g) \in O^\eps_{\pi_{i, f}}}.\]

\begin{lemma*}($k$ non-Archimedean)
Let $f \in Z_\theta$ and $\eps < 1$.
\begin{enumerate}[label=\alph*)]
\item The semigroup $G^\eps_f$ is open and closed in~$G$ and is of type~$\theta$. In particular, it is Zariski-dense.
\item For $\eps$ sufficiently small, we can find $\eta = \eta_{f, \eps}$ such that $G^\eta_f$ is strongly $(\theta, \eps)$-Schottky.
\item The limit cone of~$G^\eps_f$ is~$A^+_\theta$. When $\eps$ decreases to~$0$, the limit sets of~$G^\eps_f$ form a basis of neighborhoods of the points~$y^\pm_f$.
\item For every $g, g'$ in~$G^\eps_f$, we have $\lambda(g g') = \lambda(g)\lambda(g')$.
\end{enumerate}
\end{lemma*}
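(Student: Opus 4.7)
The plan is to treat the four parts in order, leveraging the preliminary lemma on the semigroup structure of $O^\eps_\pi$ together with the discreteness of $|k^\times|$.

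For (a), the semigroup property follows from the preliminary lemma applied representation by representation: for $g, g' \in G^\eps_f$, setting $p_i(gg') := p_i(g) + p_i(g')$ we see that $\mathrm{u}^{-p_i(gg')}\rho_i(gg')$ is a product of two elements of $O^\eps_{\pi_{i,f}}$, hence lies in $O^\eps_{\pi_{i,f}}$. Openness is clear. Closedness (and local constancy of the $p_i$) uses that operator norms on $\End(V_i)$ take values in the discrete set $|k^\times|\cup\{0\}$, so the strict inequalities defining $O^\eps_{\pi_{i,f}}$ coincide with non-strict ones and each $p_i(g)$ is determined by $\|\rho_i(g)\|$; this writes $G^\eps_f$ locally as a clopen slice. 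For the type, decompose $V_i = V^+_{i,f} \oplus V^<_{i,f}$: the block of $\mathrm{u}^{-p_i(g)}\rho_i(g)$ on $V^+_{i,f}$ is an $\eps$-perturbation of the identity, so its eigenvalues lie in $1+\mathcal{M}$ (discreteness again), while the block on $V^<_{i,f}$ is small; hence $\rho_i(g)$ has a simple top eigenvalue, i.e.\@ is proximal, exactly when $\dim V^+_{i,f}=1$, i.e.\@ when $\alpha_i\in\theta$. Thus $\theta_g=\theta$. Non-emptiness (take a high power of any element with $f_g = f$) combined with openness yields Zariski density.

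For (b), the condition $\|\mathrm{u}^{-p_i(g)}\rho_i(g)-\pi_{i,f}\|<\eta$ with $\alpha_i\in\theta$ forces the attracting eigenline $x^+_{\rho_i(g)}$ to lie within $O(\eta)$ of $\mathbb{P}(V^+_{i,f})$ and the repelling hyperplane $X^<_{\rho_i(g)}$ within $O(\eta)$ of $\mathbb{P}(V^<_{i,f})$. Since $c := \min_{\alpha_i\in\theta}\delta(\mathbb{P}(V^+_{i,f}),\mathbb{P}(V^<_{i,f}))>0$, for $\eta$ small enough each $\rho_i(g)$ is $\eps$-proximal and, for any pair $g,h\in G^\eta_f$, $\delta(x^+_{\rho_i(g)},X^<_{\rho_i(h)})\ge c-O(\eta)\ge 6\eps$, giving strong $(\theta,\eps)$-Schottkyness.

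For (c), $L_{G^\eps_f}\subset A^+_\theta$ is immediate from (a). For the reverse inclusion, pick $h\in G$ conjugating the standard facet of type $\theta$ to $f$; for any $a\in A^{++}_\theta$, set $g_N := h a^N h^{-1}$, so that $f_{g_N}=f$ and $\lambda(g_N) = a^N$. Crucially, because $a\in A_\theta$ satisfies $|\alpha_j|(a)=1$ for $\alpha_j\in\theta^c$, the torus $\langle a\rangle$ acts on $V^+_{i,f_0}$ as the scalar $\chi_i(a)$, so writing $\chi_i(a) = \mathrm{u}^{v_i}\xi_i$ with $\xi_i\in\mathcal{O}^\times$, we have $\mathrm{u}^{-Nv_i}\rho_i(a^N)= \xi_i^N(\pi_{i,f_0}+o(1))$ as $N\to\infty$, the $o(1)$ coming from the strict contraction on $V^<_{i,f_0}$. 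By compactness of $(\mathcal{O}^\times)^r$, we can extract a subsequence $N_k\to\infty$ with $\xi_i^{N_k}\to 1$ simultaneously for all $i$, so $\mathrm{u}^{-p_{i,N_k}}\rho_i(g_{N_k})\to\pi_{i,f}$; hence $g_{N_k}\in G^\eps_f$ eventually and $\lambda(g_{N_k})=N_k a$ puts $\mathbb{R}_+ a$ in $L_{G^\eps_f}$. Closure yields $A^+_\theta\subset L_{G^\eps_f}$. For the limit-set claim, the same $O(\eps)$-estimates on $x^+_{\rho_i(g)}$ combined with the embedding $j_\theta$ confine $\Lambda^\pm_{G^\eps_f}$ to an $O(\eps)$-neighborhood of $y^\pm_f$, yielding a basis of neighborhoods as $\eps\to 0$.

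For (d), the preliminary lemma gives $\mathrm{u}^{-p_i(g)-p_i(g')}\rho_i(gg')\in O^\eps_{\pi_{i,f}}$, and uniqueness of the integer forces $p_i(gg')=p_i(g)+p_i(g')$. From the block analysis in (a) together with discreteness of $|k^\times|$, $\lambda_1(\mathrm{u}^{-p_i(g)}\rho_i(g))=1$ exactly (any eigenvalue close to $1$ lies in $1+\mathcal{M}$, hence has modulus $1$), so by Lemma~\ref{lem_2.5.2} $|\chi_i|(\lambda(g))=|\mathrm{u}|^{p_i(g)}$; additivity of $p_i$ then gives $|\chi_i|(\lambda(gg'))=|\chi_i|(\lambda(g))\cdot|\chi_i|(\lambda(g'))$ for every $i$. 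Since the family $(\chi_i)_{1\le i\le r}$ forms an $\mathbb{R}$-basis of $X^*(\mathbf{A})\otimes\mathbb{R}$, the linear forms $\log|\chi_i|$ separate points of $A^\bullet$, yielding $\lambda(gg')=\lambda(g)\lambda(g')$. I expect the main obstacle to be the surjectivity step of (c): producing elements of $G^\eps_f$ whose Jordan projections sweep out every direction of $A^+_\theta$ requires the compactness-plus-subsequence argument in $\mathcal{O}^\times$ to overcome the non-convergence of individual unit factors $\xi_i^N$; the remaining parts are essentially bookkeeping around the preliminary lemma and the discreteness of the value group.
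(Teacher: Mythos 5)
Your overall strategy matches the paper's (which is extremely terse here: part (a) cites Lemma~3.2.ii, parts (b) and (c) say ``same as the real case'', and part (d) just states the two identities you derive), and your proof is correct apart from two small points; you are essentially supplying the details the paper leaves implicit. The main difference is that you introduce a compactness argument in $(\mathcal{O}^\times)^r$ in part (c) to handle the unit factors $\xi_i$; but this is not needed. Since $A^{++}_\theta\subset A^+\subset A^o$, any $a\in A^{++}_\theta$ has $\chi_i(a)\in k^o$, i.e.\ $\chi_i(a)$ is already an exact power of the uniformizer, so $\xi_i=1$ automatically and $\mathrm{u}^{-Nv_i}\rho_i(g_N)\to\pi_{i,f}$ outright, no subsequence extraction required. (The compactness-in-$\mathcal{O}^\times$ argument would be needed if one tried to power up an arbitrary $\theta$-proximal $g$ with $f_g=f$ rather than one coming from $A^{++}_\theta$, but your choice of $g_N=ha^Nh^{-1}$ sidesteps this cleanly, contrary to what you say in your closing paragraph.) The only genuine small gap is in the limit-set part of (c): your $O(\eps)$-estimate shows $\Lambda^\pm_{G^\eps_f}$ is \emph{contained} in an $O(\eps)$-ball around $y^\pm_f$, but ``forms a basis of neighborhoods'' also requires that each $\Lambda^\pm_{G^\eps_f}$ actually \emph{is} a neighborhood of $y^\pm_f$. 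This needs a short extra argument, e.g.: take $g=g_N\in G^\eps_f$ with $gy^+_f=y^+_f$, pick an open neighborhood $B$ of $e$ with $Bg\subset G^\eps_f$; then $B y^+_f = (Bg) y^+_f \subset \{y^+_h : h\in G^\eps_f\}\subset\Lambda^+_{G^\eps_f}$, and $By^+_f$ is open. Everything else — the clopen slicing via discreteness of $|k^\times|$, the uniqueness of $p_i(g)$ from $\|\pi_{i,f}\|\geq 1$, the block perturbation argument identifying $\theta_g=\theta$, the strong Schottky estimates, and the basis-of-$\log|\chi_i|$ argument for (d) — is sound and fills in real content the paper omits.
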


\begin{proof}~
\begin{enumerate}[label=\alph*)]
\item It is clear that $G^\eps_f$ is open, closed and nonempty. It is of type~$\theta$ thanks to Lemma~\ref{sec:3.2}.ii).
\item This is the same proof as for $k = \mathbb{R}$.
\item Ditto.
\item This follows from the equalities:
\[\chi_i(\lambda(g)) = \mathrm{u}^{p_i(g)} \quad\text{and}\]
\[p_i(g g') = p_i(g) + p_i(g'). \qedhere\]
\end{enumerate}
\end{proof}

\begin{corollary*} ($k$ non-Archimedean)
Let $\Omega$ be a closed convex cone with rational support (\ie such that $\Omega$ and $\Omega \cap A^+$ generate the same vector subspace~$B$ of~$A^\bullet$). Let $\theta$~be the smallest subset of~$\Pi$ such that $A^\times_\theta$ contains~$\Omega$, $f$~be an element of~$Z_\theta$ and $\eps \leq 1$.

We set
\[G^\eps_{f, \Omega} = \setsuch{g \in G^\eps_f}{\lambda(g) \in \Omega}.\]

Then the sets $G^\eps_{f, \Omega}$ are open subsemigroups of type~$\theta$ whose Lyapunov cone is~$\Omega$.
\end{corollary*}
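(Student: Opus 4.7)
My plan is to prove the four assertions in turn (open, subsemigroup, type $\theta$, Lyapunov cone $= \Omega$), taking advantage of two key features of the non-Archimedean setting from the previous lemma: the additivity $\lambda(gg') = \lambda(g) + \lambda(g')$ on $G^\eps_f$, and the fact that the integers $p_i(g)$ with $\chi_i(\lambda(g)) = \mathrm{u}^{p_i(g)}$ are locally constant. Openness follows at once: since $\lambda \colon G^\eps_f \to A^o$ is locally constant and $G^\eps_f$ is open, $G^\eps_{f,\Omega} = \lambda^{-1}(\Omega) \cap G^\eps_f$ is clopen in $G^\eps_f$, hence open in $G$. The semigroup property is immediate from additivity of $\lambda$ and convexity of the cone $\Omega$: for $g, g' \in G^\eps_{f,\Omega}$, $\lambda(gg') = \lambda(g) + \lambda(g') \in \Omega + \Omega \subset \Omega$.

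The main content is the Lyapunov cone identity $L_{G^\eps_{f,\Omega}} = \Omega$, for which the inclusion $\subset$ is trivial. For $\supset$, I would exploit the rational support of $\Omega$: it suffices to realize each ray through a rational point $v \in A^o$ in the relative interior of $\Omega$ in its span $B := \text{span}(\Omega)$. The minimality of $\theta$ with $\Omega \subset A^\times_\theta$ guarantees that such $v$ satisfies $|\chi_i|(v) < 1$ strictly for every $\alpha_i \in \theta$, so $|\chi_i|(Nv) = |\chi_i|(v)^N$ tends to $0$ with $N$. The technical heart is then the following claim: for every $\eps$ there is $c = c(\eps,f) \in (0,1)$ such that every $a \in A^+_\theta \cap A^o$ with $|\chi_i|(a) \leq c$ for all $\alpha_i \in \theta$ is realized as $\lambda(g) = a$ for some $g \in G^\eps_f$. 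One proves this via the Cartan decomposition: choose $k_1, k_2 \in K$ making $\rho_i(k_1) V^+_{i,0}$ and $\rho_i(k_2^{-1}) V^<_{i,0}$ coincide with the attracting and repelling data of $f$ for each $i$; then for $a$ deep enough, a perturbative computation shows $\mathrm{u}^{-p_i(a)} \rho_i(k_1 a k_2) \in O^\eps_{\pi_{i,f}}$, so $k_1 a k_2 \in G^\eps_f$, and the identity $\chi_i(\lambda(\cdot)) = \mathrm{u}^{p_i(\cdot)}$ forces $\lambda(k_1 a k_2) = a$. Applied to $a = Nv$ for large $N$, this produces $g \in G^\eps_{f,\Omega}$ with $\lambda(g) = Nv$, so the ray of $v$ lies in $L_{G^\eps_{f,\Omega}}$, and density of such rays in $\Omega$ finishes the proof.

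The type $\theta$ assertion then follows: $\theta_{G^\eps_{f,\Omega}} \subset \theta$ is immediate from $\Omega \subset A^\times_\theta$, and each $\alpha_i \in \theta$ is realized because the $g$ constructed above satisfies $|\chi_i|(\lambda(g)) = |\chi_i|(v)^N < 1$, so $\rho_i(g)$ is proximal. The main obstacle is the depth estimate in the Cartan decomposition step --- verifying that the renormalized representation $\mathrm{u}^{-p_i(a)} \rho_i(k_1 a k_2)$ genuinely lies in $O^\eps_{\pi_{i,f}}$ once $a$ is deep enough. This is a perturbative calculation that benefits greatly from the ultrametric property but requires some care in handling the non-rank-one projectors $\pi_{i,f}$ when $\alpha_i \notin \theta$.
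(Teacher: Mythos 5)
The overall structure of your argument is right and it matches the paper's one-line proof: openness, clopen-ness in $G^\eps_f$, the semigroup property, and the inclusion $L_{G^\eps_{f,\Omega}} \subset \Omega$ all follow, exactly as you say, from $\lambda$ being a locally constant semigroup morphism on $G^\eps_f$ together with the convexity of $\Omega$. The remaining content is the realization step, and this is where there is a genuine gap.

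You claim that for $k_1, k_2 \in K$ matching the flag data of $f$ and for $a$ deep enough in $A^{++}_\theta$, one has $\mathrm{u}^{-p_i(a)}\rho_i(k_1ak_2)\in O^\eps_{\pi_{i,f}}$, and that this forces $\lambda(k_1ak_2)=a$. But the leading term of $\mathrm{u}^{-p_i(a)}\rho_i(k_1ak_2)$ as $a$ goes deep is $\rho_i(k_1)\pi_{i,0}\rho_i(k_2)$, and this is \emph{not} $\pi_{i,f}$ in general: it is a rank-$s$ endomorphism with the correct image $V^+_{i,f}$ and the correct kernel $V^<_{i,f}$, but on $V^+_{i,f}$ it acts by some invertible map rather than the identity. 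When $\alpha_i\in\theta$ (so $s=1$) it is a scalar multiple $c_i\pi_{i,f}$ with $c_i = \operatorname{tr}\bigl(\pi_{i,0}\rho_i(k_2k_1)\bigr)$, whose unit part in $\mathcal{O}^*$ is not $1$ in general and cannot be absorbed by the integer $p_i$; when $\alpha_i\notin\theta$ (so $s>1$) the restriction to $V^+_{i,f}$ need not even be scalar. So $k_1ak_2$ need not lie in $G^\eps_f$ no matter how deep $a$ is, and in any case $\lambda(k_1ak_2)$ differs from $a$ by a fixed nonzero shift rather than being equal to $a$. The reason your sketch cannot be fixed by tuning $k_1,k_2$ is that $K$ acts transitively on $Y_\theta$ and $Y_\theta^-$ separately but not on $Z_\theta\cong G/L_\theta$.

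The fix is cheap and is what the paper's proof (via Lemma 5.3) implicitly uses. Replace the Cartan decomposition by a conjugation: take $h\in G$ with $f=hA^+_\theta h^{-1}$ and set $g:=hah^{-1}$; then $\rho_i(h)\pi_{i,0}\rho_i(h)^{-1}=\pi_{i,f}$ \emph{exactly}, the perturbative estimate goes through, and $\lambda(g)=\lambda(a)=a$ by conjugation-invariance of $\lambda$. Equivalently, as in the proof of Lemma 5.3(c), pick any $\theta$-proximal $g$ with $f_g=f$ and $\lambda(g)=a$, and observe that $g^n\in G^\eps_f$ for $n$ large, with $\lambda(g^n)=na\in\Omega$. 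Either way, running this over the lattice points $a$ of the relative interior of $\Omega$ (these generate a dense subcone precisely because $\Omega$ has rational support and spans $B$) shows $\Omega\subset L_{G^\eps_{f,\Omega}}$, and the type-$\theta$ claim falls out as you indicate.

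Two small notational slips. First, the depth condition should read $|\alpha_i|(a)\ge C$ for $\alpha_i\in\theta$ with $C$ large, not $|\chi_i|(a)\le c<1$: the paper chooses the uniformizer with $|\mathrm{u}|>1$, so $|\alpha_i|(a)\ge1$ on $A^+$, and going deep means the simple roots of $\theta$ are large, not that the fundamental weights are small. Second, the identity $\chi_i(\lambda(g))=\mathrm{u}^{p_i(g)}$ is correct for $g\in G^\eps_f$, but $p_i(k_1ak_2)\ne p_i(a)$ in general, which is exactly the symptom of the gap above.
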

\begin{proof}
This follows from the fact that the map $\lambda: G^\eps_f \to A^+$ is a locally constant semigroup morphism.
\end{proof}

\section{The set of limit directions $\mathcal{L}_\Gamma$}
\label{sec:6}

In this section, we focus first of all on open subsemigroups of~$G$. We show that for $g_1, g_2$ in~$G$, we can find two disjoint open subsemigroups~$H_1, H_2$ containing respectively the points $g_1$ and~$g_2$ if and only if the hyperbolic components of $g_1$ and~$g_2$ do not lie in the same ``one-parameter subsemigroup'' (\ref{sec:6.2}).

We suppose starting from \ref{sec:6.3} that $k = \mathbb{R}$. We then show that the intersection of the Zariski-dense subsemigroup~$\Gamma$ with an open subsemigroup is still Zariski-dense as long as it is nonempty.

These two properties are what motivates the definition of the set~$\mathcal{L}_\Gamma$ of the limit directions of~$\Gamma$: it is the set of the hyperbolic elements~$g$ of~$G$ such that every open subsemigroup of~$G$ that contains~$g$ intersects~$\Gamma$. We also give other equivalent definitions of~$\mathcal{L}_\Gamma$ that justify the terminology of the ``limit directions'' (Theorem~\ref{sec:6.4}.b).

We show in~\ref{sec:6.4} that the set~$\mathcal{L}_\Gamma$ meets the interior of a Weyl chamber~$f$ if and only if $f$~is quasiperiodic and that in this case, the intersection $f \cap \mathcal{L}_\Gamma$ can be naturally identified to the limit cone~$L_\Gamma$.

\subsection{Open semigroups and hyperbolic elements}
\label{sec:6.1}

The following proposition will be useful to us.

\begin{proposition*}
Let $g$~be an element of~$G$. Then
\begin{enumerate}[label=\alph*)]
\item Every open subsemigroup~$H$ of~$G$ that contains~$g$ also contains~$g^n_h$ for some integer $n \geq 1$.
\item Every open subsemigroup~$H$ of~$G$ that contains~$g_h$ also contains~$g^n$ for some integer $n \geq 1$.
\end{enumerate}
\end{proposition*}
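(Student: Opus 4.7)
My plan is to deduce both parts from the following general mechanism. Given any $x \in H$ (with $H$ open) and any open neighborhood $V \ni 1$ small enough that $xV \subset H$, the semigroup property gives $(xV)^n \subset H$ for every $n \geq 1$. Rearranging,
\[(xV)^n \;=\; x^n \cdot \prod_{j=0}^{n-1}(x^{-j} V x^j),\]
so $H$ contains the coset $x^n \cdot W_n(V)$, where $W_n(V) := \prod_{j=0}^{n-1}(x^{-j} V x^j)$. The crucial point is that $W_n(V)$ covers an open neighborhood of $1$ in the Levi subgroup $L := Z_G(g_h)$ whose size grows at least linearly in $n$. Indeed, $\Ad(g_h^{-j})$ is trivial on $\mathfrak{l} := \mathrm{Lie}(L)$, so repeated multiplication of small elements in $V\cap L$ produces, via Baker--Campbell--Hausdorff, elements of $L$ with log-norm of order $n \cdot \mathrm{diam}(V\cap L)$.

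Applying the mechanism with $x = g$ yields part~(a): since $g_h^n = g^n \cdot (g_e g_u)^{-n}$ and $(g_e g_u)^{-n} \in L$ has log-norm $O(n)$ (dominated by the unipotent factor $g_u^{-n}$), for $n$ large enough $(g_e g_u)^{-n} \in W_n(V)$, and hence $g_h^n \in x^n \cdot W_n(V) \subset H$. Applying the same mechanism with $x = g_h$ yields part~(b): since $g^n = g_h^n \cdot g_e^n g_u^n$ and $g_e^n g_u^n \in L$ has log-norm $O(n)$, for $n$ large enough $g_e^n g_u^n \in W_n(V)$, whence $g^n \in H$. Both arguments depend crucially on the commuting Jordan decomposition $g = g_h g_e g_u$, which yields $g^n = g_h^n g_e^n g_u^n$ and places $g_e, g_u$ inside $L$.

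The main technical obstacle is to make rigorous the claim that $W_n(V)$ contains an open ball in $L$ of radius proportional to $n$. The Levi direction itself is easy, since $\Ad(x^{-j})$ acts on $\mathfrak{l}$ only through the polynomial-in-$j$ map $\Ad(g_u^{-j})$ together with the rotation $\Ad(g_e^{-j})$. The real difficulty comes from the root spaces on which $\Ad(g_h^{-j})$ \emph{expands} (the negative roots of $g_h$), because there the conjugated factors $x^{-j}Vx^j$ become unbounded and the Baker--Campbell--Hausdorff commutators spilling into the Levi direction are hard to bound. The way I would handle this is to choose $V$ inside a parabolic subgroup $P \supset L$ whose opposite unipotent radical is contracted by $\Ad(x^{-1})$, so that every conjugated factor stays in~$P$ and the BCH analysis is tractable; the resulting $V$ is no longer open in $G$, but openness can be restored by a transverse perturbation that contributes only small, absorbable terms to the Levi-direction projection of $W_n(V)$. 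In the non-Archimedean case the same plan applies after first passing to a power $g^m$ for which the Jordan decomposition genuinely exists.
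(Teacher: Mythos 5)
Your identity $(xV)^n = x^n\,W_n(V)$ is correct, and once you intersect $V$ with $L=Z_G(g_h)$ it gives exactly the same reduction as the paper's: Benoist writes $(gL_\eps)^n = g_h^n\,(g_eg_uL_\eps)^n$ and then needs $e\in(g_eg_uL_\eps)^n$ for some $n$ (for part~a) and $(g_eg_u)^n\in(L_\eps)^n$ (for part~b), which are precisely the conditions $(g_eg_u)^{-n}\in W_n(V\cap L)$ and $(g_eg_u)^n\in(V\cap L)^n$ in your notation. So the decomposition step is not the problem. The gap is in the growth argument that follows.

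The claim that ``repeated multiplication produces elements of log-norm of order $n\cdot\operatorname{diam}(V\cap L)$, while $(g_eg_u)^{\pm n}$ has log-norm $O(n)$, so for $n$ large it's inside'' is not a valid comparison: you are matching two quantities that both grow linearly, and the proportionality constant in the target is $\|\log(g_eg_u)\|$, which is fixed, whereas the constant in the reach is $\operatorname{diam}(V\cap L)$, which must be taken small so that $gV\subset H$. A linear-in-$n$ reach with a small constant does not absorb a linear-in-$n$ target with a larger constant. What actually saves part~(b) is that the \emph{Riemannian} distance $d(e,(g_eg_u)^n)$ grows only like $\log n$ (one sees this in $\SL_2$ by looking at the singular values of a unipotent power), which is much slower than the linear reach of $(V\cap L)^n$; you do not invoke this, and your $O(n)$ bookkeeping obscures it. Benoist's preparatory Lemma~6.1 proves $(g_eg_u)^n\in B^n$ not by a growth estimate at all, but by Jacobson--Morozov: writing $g_u^n=h^n u_n h^{-n}$ with $h$ diagonal small and $u_n\to e$, a route of length $2n+1$ inside small steps. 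For part~(a), the situation is worse for you: with $x=g$ the conjugating factors in $W_n(V\cap L)$ are $(g_eg_u)^{-j}V'(g_eg_u)^j$, and $\Ad(g_u^{-j})$ shrinks some directions of $V'$ by a polynomial factor $j^{-k}$, so the naive inscribed-ball reach of the product is $\eps\sum_j j^{-k}$, which is bounded or at most logarithmic. You acknowledge the ``polynomial-in-$j$ map $\Ad(g_u^{-j})$'' but call the Levi direction ``easy'' without accounting for this shrinkage. The paper's Lemma~6.1(a) sidesteps the whole quantitative issue with a topological argument: every neighborhood of a unipotent contains an elliptic (again Jacobson--Morozov), and an elliptic element recurs to $e$, so any open subsemigroup containing an element with $\lambda(g)=0$ contains $e$. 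Your plan has no substitute for this lemma.

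Finally, the difficulty you single out as the ``real'' one — root spaces outside $\mathfrak{l}$ on which $\Ad(g_h^{-j})$ expands, and the BCH spillover from them — is a non-issue in the paper's approach. Benoist simply chooses $L_\eps\subset L$; openness of $gL_\eps$ in $G$ is never needed, only $gL_\eps\subset H$, which is automatic since $H$ is open in $G$. Your proposed fix (a parabolic subgroup plus a transverse perturbation) is machinery aimed at a problem that disappears once you restrict to $L$, while the genuine obstruction — getting the correct quantitative control on $(g_eg_u)^{\pm n}$ — is not addressed.
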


\begin{remark*}
Recall that when $k$ is non-Archimedean, the hyperbolic component~$g_h$ might not exist. However $g_h^n := (g^n)_h$ makes sense as soon as $n$~is a multiple of an integer $n_G$.
\end{remark*}

Let us start with a preparatory lemma.

\begin{lemma*}
Same notation. We assume that $\lambda(g) = 0$.
\begin{enumerate}[label=\alph*)]
\item Every open subsemigroup~$H$ of~$G$ that contains~$g$ also contains the identity element~$e$.
\item For every neighborhood~$B$ of~$e$ in~$G$, there exists an integer $n \geq 1$ such that $g^n \in B^n$. Moreover, if $k = \mathbb{R}$ or~$\mathbb{C}$, there exists an integer $n_o \geq 1$ such that for every $n \geq n_o$, we have $g^n \in B^n$.
\end{enumerate}
\end{lemma*}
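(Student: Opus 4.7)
The plan is to establish (b) first, using the polynomial growth of $g^n$ that is forced by $\lambda(g) = 0$, and then to derive (a) by applying (b) to $g^{-1}$ together with the ``tube'' $g^j V \subset H$ produced by the semigroup structure.

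For (b), I use that $\lambda(g) = 0$ means $g_h = e$ in the Jordan decomposition, so $g = g_e g_u$ with commuting elliptic $g_e$ (whose powers lie in a relatively compact subgroup) and unipotent $g_u$ (whose powers grow polynomially in $n$). Hence $\|g^n\|$ is polynomial in $n$; by Lemma~\ref{lem_2.5.2} each $|\chi_i(\mu(g^n))|$ is polynomial, and so $\mu(g^n)$ has size $O(\log n)$ in $A^\bullet$. For $k = \mathbb{R}$ or $\mathbb{C}$, I equip $G$ with a left-invariant Riemannian metric $d$; then $d(e, g^n) = O(\log n)$ by the Cartan decomposition. Given a neighborhood $B$ of $e$, I fix $\varepsilon > 0$ with the $\varepsilon$-ball around $e$ contained in $B$. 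For $n \geq n_0$ large enough that $d(e, g^n)/n < \varepsilon$, I subdivide a minimizing geodesic from $e$ to $g^n$ into $n$ equal arcs, obtaining points $e = \gamma_0, \gamma_1, \ldots, \gamma_n = g^n$ with $d(\gamma_{i-1}, \gamma_i) < \varepsilon$. The steps $b_i := \gamma_{i-1}^{-1} \gamma_i$ then lie in $B$ (by left-invariance of $d$) and multiply to give $b_1 b_2 \cdots b_n = g^n$, yielding the strengthened statement valid for all $n \geq n_0$. In the non-Archimedean case, every neighborhood of $e$ contains a compact open subgroup $K_0$; since $g_e$ stays in a compact subgroup and $g_u^{p^j} \to e$, some power $g^n$ lies in $K_0$, whence $g^n \in K_0 = K_0^n \subset B^n$.

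For (a), I use openness of $H$ around $g$ to pick a neighborhood $V$ of $e$ with $gV \subset H$. By induction on $j \geq 1$, using $g^j \in H$ and the semigroup property,
\[ g^{j+1} V = g^j \cdot (gV) \subset g^j \cdot H \subset H, \]
so $g^j V \subset H$ for every $j \geq 1$. Since $\lambda(g^{-1}) = \imath(\lambda(g)) = 0$, I apply (b) to $g^{-1}$ with a carefully chosen sub-neighborhood $W \subset V$. Letting $D$ denote the diameter of $V$ and $\varepsilon$ that of $W$ in the left-invariant metric, (b) gives $k$ with $g^{-k} \in W^k$ once $k$ is large enough (roughly $k\varepsilon \gtrsim \log k$), whereas $W^k \subset V$ requires $k\varepsilon \leq D$. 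These are jointly satisfiable for any $k \leq e^D$, so I fix such a $k$ together with a $W$ of diameter $\varepsilon = D/k$. Writing $g^{-k} = w_1 w_2 \cdots w_k$ with each $w_i \in W$, we have $w_1 w_2 \cdots w_k = g^{-k} \in W^k \subset V$, and therefore
\[ e = g^k g^{-k} = g^k \cdot (w_1 w_2 \cdots w_k) \in g^k V \subset H. \]
In the non-Archimedean case the analogous step is easier: taking $W \subset V$ to be a compact open subgroup gives $W^k = W \subset V$ automatically, so (b) applied to $g^{-1}$ places some $g^{-k} \in W \subset V$, and the same identity $e = g^k g^{-k} \in g^k V \subset H$ concludes.

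The main obstacle will be the parameter balancing just sketched in part (a): (b) applied to $g^{-1}$ only controls the pair (diameter of $W$, admissible $k$) through the growth of $\mu(g^{-n})$, and this must be compatible with the fixed tube $g^j V \subset H$ inherited from the initial neighborhood of $g$ in $H$. The sub-linear estimate $\|\mu(g^n)\| = O(\log n)$, which is exactly the content of (b), is precisely what makes the two constraints compatible for some $k \leq e^D$; any faster growth rate for $g^{\pm n}$ would break the balance.
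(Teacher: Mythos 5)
Your argument for part (b) is essentially correct and takes a genuinely different route from the paper. You use the estimate $d(e, g^n) = O(\log n)$ (coming from the Cartan projection) and then a geodesic-subdivision argument in a left-invariant metric, whereas the paper reduces to the elliptic and unipotent cases separately and, in the unipotent case, reduces via Jacobson--Morozov to $\SL(2, \mathbb{R})$ and uses the explicit conjugation identity $g^n = h^n u_n h^{-n}$ with $h$, $h^{-1}$, $u_n$ all in a small neighborhood of $e$. Both approaches work; yours is more conceptual but leans on heavier machinery (the Cartan projection estimates from Lemma~\ref{lem_2.5.2}), while the paper's is more elementary and self-contained. Your non-Archimedean treatment in (b) is slightly under-explained: to land $g^n = g_e^n g_u^n$ in $K_0$ you need to control both factors simultaneously, which is cleanest via the compactness of the closure of $\langle g \rangle$ (as the paper does) rather than the two separate observations you cite.

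Part (a), however, has a genuine gap over $k = \mathbb{R}$ or $\mathbb{C}$. Your strategy is to deduce $e \in H$ from $g^{-k} \in V$ for some $k \geq 1$, where $V$ is a neighborhood of $e$ with $gV \subset H$. But when $g$ has a nontrivial unipotent part, the powers $g^{-k}$ \emph{never return near $e$}: $d(e, g^{-k})$ is bounded below by a positive constant (roughly $d(e, g^{-1})$), because $\|\rho(g^{-k})\|$ is a non-constant polynomial in $k$. Your balancing heuristic ``$k\varepsilon \gtrsim \log k$ versus $k\varepsilon \leq D$ are jointly satisfiable for $k \leq e^D$'' silently drops the additive constant in $d(e, g^{-k}) = C \log k + C_0$; when $C_0 > D$ there is no admissible $k$ at all, and $C_0$ is controlled by $g$ while $D$ is controlled by the (arbitrarily small) semigroup $H$. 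Concretely, in $\SL(2, \mathbb{R})$ with $g = \left(\begin{smallmatrix} 1 & 100 \\ 0 & 1 \end{smallmatrix}\right)$ and $H$ a tiny open semigroup around $g$, no power $g^{-k}$ lies in $V$. The paper avoids this by a completely different mechanism: it first produces a \emph{unipotent} element $g_u^{n_p}$ in $H$ (using $g_e^{n_p} \to e$ along a subsequence and $g_u^{n_p} = g_e^{-n_p} g^{n_p} \in B g^{n_p} \subset H$), then invokes the key geometric fact that \emph{every neighborhood of a unipotent element contains an elliptic element} (Jacobson--Morozov plus the $\SL_2$ picture), and finally applies the ``return near $e$'' argument to that elliptic element, for which the powers genuinely do recur near $e$. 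Your proof is missing this ``unipotent $\to$ elliptic'' pivot; without it, the recurrence you need does not hold. (Your non-Archimedean version of (a) does work, precisely because there a compact open subgroup $W$ gives $W^k = W$ and no balancing is required.)
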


\begin{proof}~
\begin{enumerate}[label=\alph*)]
\item The Jordan decomposition of~$g$ can be written $g = g_e g_u$. Let $B$~be a neighborhood of~$e$ such that $B g \subset H$. There exists a sequence $n_p$ such that $\lim_{p \to \infty} g^{n_p}_e = e$. Hence, for $p \gg 0$, we have
\[g^{n_p}_u \in B g^{n_p} \subset H.\]
Hence $H$~contains a unipotent element. But every neighborhood of a unipotent element contains an elliptic element (if $k$~is non-Archimedean, this results from the density of the semisimple elements; if $k = \mathbb{R}$ or~$\mathbb{C}$, this is true for $G = \SL(2, k)$ and the general case follows by the Jacobson-Morozov theorem). We deduce that $H$~contains an elliptic element and then, following the previous reasoning, that it contains~$e$.

\item It suffices to prove this statement separately for elliptic~$g$ and for unipotent~$g$.

When $g$~is elliptic, or when $k$~is non-Archimedean, this follows from the fact that the group generated by~$g$ is bounded.

When $g$~is unipotent and $k = \mathbb{R}$ or~$\mathbb{C}$, using the Jacobson-Morozov theorem, we reduce the problem to the case $G = \SL(2, \mathbb{R})$. In a suitable basis, we then have $g = \left( \begin{smallmatrix} 1 & 1 \\ 0 & 1 \end{smallmatrix} \right)$. Let $B'$~be a neighborhood of~$e$ such that $(B')^3 \subset B$. We choose $a > 1$ such that the element $h := \left( \begin{smallmatrix} a & 0 \\ 0 & a^{-1} \end{smallmatrix} \right)$ and its inverse~$h^{-1}$ are in~$B'$. We can then find $n_o \geq 1$ such that for every $n \geq n_o$, the element $u_n := \left( \begin{smallmatrix} 1 & na^{-2n} \\ 0 & 1 \end{smallmatrix} \right)$ is in~$B'$. The equality $g^n = h^n u_n h^{-n}$ then proves that $g$~is in $(B')^{2n+1} \subset B^n$. \qedhere
\end{enumerate}
\end{proof}

\begin{proof}[Proof of the proposition.]
We may assume that $g_h$~exists. Let $L$~be the centralizer of~$g_h$.
\begin{enumerate}[label=\alph*)]
\item There exists a neighborhood~$L_\eps$ of~$e$ in~$L$ such that $g L_\eps \subset H$. Thanks to Lemma~a), we can find $n \geq 1$ such that $(g_e g_u L_\eps)^n$ contains~$e$. But then we have
\[g_h^n \in g_h^n (g_e g_u L_\eps)^n = (g L_\eps)^n \subset H.\]
\item There exists a neighborhood~$L_\eps$ of~$e$ in~$L$ such that $g_h L_\eps \subset H$. Thanks to Lemma~b), we can find $n \geq 1$ such that $(g_e g_u)^n \in (L_\eps)^n$. But then we have
\[g^n \in g_h^n (L_\eps)^n = (g_h L_\eps)^n \subset H. \qedhere\]
\end{enumerate}
\end{proof}

\subsection{The open semigroups $G^\eps_g$}
\label{sec:6.2}

We fix a faithful representation~$\rho$ of~$G$ and we call, for $g$ in~$G$,
\[B(g, \eps) = \setsuch{g \in G}{\|\rho(g') - \rho(g)\| \leq \eps} \quad\text{and}\]
\[G^\eps_g = \bigcup_{n \geq 1} B(g, \eps)^n\]
the open semigroup generated by this small neighborhood of~$g$. We call $Y^\pm_g$ the inverse image of the point~$y^\pm_g$ by the natural projection $Y_\Pi \to Y_{\theta_g}$.

\begin{proposition*}
Let $g$~be an element of~$G$ such that $\lambda(g) \neq 0$.
\begin{enumerate}[label=\alph*)]
\item ($k = \mathbb{R}$) When $\eps$ decreases to~$0$, the limit cones~$L_{G^\eps_g}$ form a basis of conical neighborhoods of the half-line~$L_g$ in~$A^\times$ and the limit sets~$\Lambda^\pm_{G^\eps_g}$ form a basis of neighborhoods of the subset~$Y^\pm_g$ in~$Y_\Pi$.
\item ($k$ non-Archimedean) There exists $\eps_0 > 0$ such that for every $\eps < \eps_0$, the limit cone~$L_{G^\eps_g}$ is equal to~$L_g$. In particular, $G^\eps_g$ is of type $\theta := \theta_g$. Moreover, when $\eps$ decreases to~$0$, the limit sets~$\Lambda^\pm_{G^\eps_g}$ form a basis of neighborhoods of the point~$y^\pm_g$ in~$Y^\pm_\theta$.
\end{enumerate}
\end{proposition*}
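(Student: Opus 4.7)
The plan is to sandwich $G^\eps_g$ between the explicit open semigroups $G^{\eps'}_f$ constructed in~\ref{sec:5.2} and~\ref{sec:5.3}, where $f$ is the facet carrying the hyperbolic component of $g$ (or of a suitable power of~$g$). In both settings $G^\eps_g$ is a nonempty open subsemigroup of~$G$, hence Zariski-dense in the connected group~$G$; the containments $L_g \subset L_{G^\eps_g}$ and (the projection to $Y_{\theta_g}$ of) $y^\pm_g \in \Lambda^\pm_{G^\eps_g}$ are immediate since $g^n \in G^\eps_g$ with $\lambda(g^n) = n\lambda(g)$.

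For part~(b), two observations drive the argument. First, $|\chi_i|(\lambda(h))$ is locally constant in~$h$ on a neighborhood of~$g$: it takes values in the discrete set $\{|\mathrm{u}|^p : p \in \mathbb{Z}\}$ and, by Lemma~\ref{lem_2.5.2} combined with continuity of eigenvalues, depends continuously on~$h$. Hence for $\eps$ small, $\lambda(h) = \lambda(g)$ for every $h \in B(g,\eps)$. Second, picking $N$ large enough that $\mathrm{u}^{-p_i(g^N)}\rho_i(g^N)$ lies well inside $O^{\eps'/2}_{\pi_{i,f_g}}$ for each~$i$, the openness of $G^{\eps'}_{f_g}$ (cf.~\ref{sec:5.3}) together with the ultrametric inequality yields $B(g,\eps) \subset G^{\eps'}_{f_g}$ for $\eps \ll \eps'$. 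Since $\lambda$ is a semigroup morphism on $G^{\eps'}_{f_g}$ (part~d of the lemma in~\ref{sec:5.3}), one concludes $\lambda(h_1 \cdots h_n) = n\lambda(g)$ for any $h_1,\ldots,h_n \in B(g,\eps)$, whence $L_{G^\eps_g} = L_g$ and $G^\eps_g$ is of type~$\theta_g$. The limit-set statement follows from the inclusion $\Lambda^\pm_{G^\eps_g} \subset \Lambda^\pm_{G^{\eps'}_{f_g}}$ combined with the neighborhood-basis property proved in~\ref{sec:5.3}, and from the fact that the powers of~$g$ realize $y^\pm_g$ as a limit point of~$G^\eps_g$.

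For part~(a), where $\lambda$ is no longer locally constant, the argument becomes quantitative. That $L_g$ lies in the \emph{interior} of $L_{G^\eps_g}$ and $Y^\pm_g \subset \Lambda^\pm_{G^\eps_g}$ follows from the continuity of $\lambda$ and of $h \mapsto (y^+_h, y^-_h)$ on the Zariski-dense open set of $\mathbb{R}$-regular elements of~$B(g,\eps)$: varying $h$ in this set, $\lambda(h)$ sweeps a full-dimensional neighborhood of $\lambda(g)$ in~$A^\bullet$ while the pairs $(y^+_h, y^-_h)$ sweep the entire fiber of $Y_\Pi \times Y^-_\Pi$ above $(y^+_g, y^-_g) \in Y_{\theta_g} \times Y^-_{\theta_g}$, the latter being a homogeneous space for the Levi~$L_{\theta_g}$. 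For the basis-of-neighborhoods direction, I would apply Lemma~\ref{lem_2.2.2} to the factors $\rho_i(h_j)$ of a long product $h = h_l \cdots h_1 \in G^\eps_g$: for each $\alpha_i \in \theta_g$, the factors $\rho_i(h_j)$ are $\eps_0(\eps)$-proximal with attracting and repelling data clustered within $O(\eps)$ of those of $\rho_i(g)$, so the cyclic Schottky compatibility of the lemma holds. That lemma, combined with Lemma~\ref{lem_2.5.2}, yields an estimate of the form
\[
\left| \tfrac{1}{l}\log|\chi_i|(\mu(h)) - \log|\chi_i|(\lambda(g)) \right| \;\leq\; \eta(\eps),
\]
with $\eta(\eps) \to 0$ as $\eps \to 0$, for every~$i$; since the $|\chi_i|$ form a basis of characters on $A^\bullet$, this forces $\mu(h)/l$ to cluster uniformly around $\lambda(g)$, so $L_{G^\eps_g}$ collapses into any prescribed conical neighborhood of~$L_g$. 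A parallel control on the convergence of the rank-one projectors of Lemma~\ref{lem_2.2.2} yields the analogous statement for~$\Lambda^\pm_{G^\eps_g}$.

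The main technical obstacle in part~(a) is verifying that the constant $C_\eps$ of Lemma~\ref{lem_2.2.2} tends to~$1$ as $\eps \to 0$, so that the cumulative multiplicative error over $l$ factors vanishes in the asymptotic direction; this is essentially the observation that strongly proximal elements approach rank-one projectors, and should be extractable from the proof of that lemma. In part~(b), the delicate point is the inclusion $B(g,\eps) \subset G^{\eps'}_{f_g}$ itself, which requires using that the integer exponents $p_i(\cdot)$ are locally constant on $B(g,\eps)$ once $\eps$ is sufficiently small.
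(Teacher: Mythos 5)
Your proposal diverges from the paper's proof in both parts, and part~(a) has a gap you have named but not closed.

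For part~(b), sandwiching $G^\eps_g$ inside $G^{\eps'}_{f_g}$ (via the inclusion $B(g,\eps)\subset G^{\eps'}_{f_g}$ for $\eps\ll\eps'$, using the ultrametric) and then invoking Lemma~\ref{sec:5.3}.d is a legitimate alternative route. The paper instead uses a single lemma that covers both the Archimedean and non-Archimedean cases uniformly, by controlling $\lambda_1(h)$ on products $h\in(B_g)^n$; when $k$ is non-Archimedean it gives equality ($\eta=0$) directly. Your version is fine, but note it depends on the semigroup morphism property of $\lambda$ on $G^{\eps'}_f$, which in turn rests on the specifically non-Archimedean Lemma~\ref{sec:5.3}; the paper's argument is more self-contained.

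For part~(a), the real issue is the step where you invoke Lemma~\ref{lem_2.2.2} and then need the constant $C_\eps$ to tend to~$1$ as $\eps\to 0$. The paper never asserts this, and it is not a consequence of the lemma's statement: $C_\eps$ only has to satisfy $\lambda_1(g)\in[\lambda_1 C^{-1},\lambda_1 C]$ with $C=\prod_j C_{\eps_j}$, with no claim about the behavior of $C_\eps$ as $\eps\to 0$. Since in your application the separation $\delta(x^+_{h_{j-1}},X^<_{h_j})$ stays bounded below (near the fixed separation for $\rho_i(g)$), one might hope to extract a sharper constant, but this would require reopening the proof of Proposition~6.4 of~\cite{Be} and is not ``extractable'' in the sense of being read off the statement. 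You have in fact identified the hard point yourself but left it open.

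The paper avoids the difficulty by proving a fresh estimate tailored to this proposition (the unlabelled Lemma in~\ref{sec:6.2}): for $h\in(B_g)^n$, the quantity $\tfrac1n\log\lambda_j(h)$ differs from $\log\lambda_j(g)$ by at most~$\eta$, where $\eta$ shrinks with the neighborhood. Its proof is elementary and does not pass through the Schottky machinery: the upper bound comes from picking an adapted norm with $\|g\|\leq\lambda_1(g)e^{\eta/2}$ and submultiplicativity; the lower bound reduces via exterior powers $\ext^s$ to the proximal case and then exhibits an invariant cone $C$ on which every factor in $B_g$ expands by at least $\lambda_1(g)e^{-\eps}$. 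This gives exactly the two-sided bound that your sketch needs, without depending on the unproved asymptotics of $C_\eps$. I would recommend either proving $C_\eps\to 1$ cleanly (for which you'd essentially reproduce the argument of the paper's Lemma~\ref{sec:6.2} in a special case), or simply switching to the invariant-cone argument directly. Your treatment of the limit sets, via varying $h$ over the $\mathbb{R}$-regular open subset of $B(g,\eps)$ and observing that $(y^+_h,y^-_h)$ sweeps the fiber over $(y^+_g,y^-_g)$, is correct and slightly more explicit than the paper's, which only sketches why $\Lambda^+_{G^\eps_g}\supset Y^+_g$.
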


\begin{proof}
Let us first prove the claim about the limit cone. Let $\eta > 0$ if $k = \mathbb{R}$ or~$\mathbb{C}$ and $\eta = 0$ if $k$~is non-Archimedean. It suffices to find a neighborhood $B(g, \eps)$ such that for every $h$ in~$B(g, \eps)^n$ with $n \geq 1$ and for every $i = 1, \ldots, r$, we have
\[-\eta \;\leq\; \frac{1}{n} \log\left( |\chi_i|(\lambda(h)) \right) - \log\left( |\chi_i|(\lambda(g)) \right) \;\leq\; \eta.\]
Since we have the equality $\chi_i(\lambda(h)) = \lambda_1(\rho_i(h))$, this follows from the lemma below.

Let us now prove the claim about the limit set~$\Lambda^+$ (we proceed in the same fashion for~$\Lambda^-$). First notice that, when $k = \mathbb{R}$, since $G^\eps_g$ contains a power of~$g_h$, the limit set $\Lambda^+_{G^\eps_g}$ contains~$Y^+_g$. To conclude, it then suffices to show that for $\alpha_i$ in~$\theta_g$, the limit sets of~$G^\eps_g$ in~$\mathbb{P}(V_i)$ form, when $\eps$ decreases to~$0$, a basis of neighborhoods of the point~$x^+_{i, f_g}$. This fact follows from the proximality of~$g$ in~$\mathbb{P}(V_i)$.
\end{proof}

We used the following lemma:
\begin{lemma*}
Let $V$~be a finite-dimensional $k$-vector space and $g \in \End(V)$. Let $\eta > 0$ if $k$~is Archimedean and $\eta = 0$ otherwise.

Then there exists a neighborhood~$B_g$ of~$g$ in~$\End(V)$ such that for every $h$ in~$(B_g)^n$ with $n \geq 1$ and for every $j = 1, \ldots, \dim(V)$, we have
\[-\eta \;\leq\; \frac{1}{n}\log(\lambda_j(h)) - \log(\lambda_j(g)) \;\leq\; \eta.\]
\end{lemma*}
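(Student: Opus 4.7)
The strategy combines Gelfand's formula with a block-decomposition of products of elements of a small neighborhood $B_g$ of $g$. Set $e_j(x) := \lambda_1(x) \cdots \lambda_j(x)$; since $e_j(x) = \lambda_1(\Lambda^j x)$, Gelfand's formula gives $\log e_j(g) = \lim_{N \to \infty} \tfrac{1}{N}\log\|\Lambda^j g^N\|$, and the desired bound on $\lambda_j(h) = e_j(h)/e_{j-1}(h)$ (with $e_0 := 1$) will come from two-sided bounds on each $e_j$.

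First I would establish the upper bound on $e_j(h)$. Given $\eta > 0$, pick $N$ so that $\|\Lambda^j g^N\|^{1/N} \leq e_j(g)\,e^{\eta/4}$ for every $j = 1, \ldots, d := \dim V$. By continuity of the $N$-fold multiplication map on $\End(V)^N$, a small enough $B_g$ satisfies $\|\Lambda^j(h_N \cdots h_1)\| \leq e_j(g)^N e^{N\eta/2}$ for all $h_i \in B_g$, and additionally $\sup_{h' \in B_g} \|\Lambda^j h'\| \leq C_j$. For $h = h_n \cdots h_1 \in B_g^n$ with $n = qN + r$, grouping the factors into $q$ blocks of $N$ and a remainder of length $r < N$ and using submultiplicativity of $\|\Lambda^j \cdot\|$ yields
\[\|\Lambda^j h\| \;\leq\; \bigl(e_j(g)\,e^{\eta/2}\bigr)^{qN} C_j^{\,r},\]
so that $\tfrac{1}{n}\log e_j(h) \leq \tfrac{1}{n}\log\|\Lambda^j h\| \leq \log e_j(g) + \eta$ once $n$ is large. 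The finitely many small $n$ are handled by one further shrinking of $B_g$, using continuity of $e_j$ and the identity $e_j(g^n) = e_j(g)^n$.

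For the lower bound I assume $g$ invertible (otherwise $\lambda_j(g) = 0$ for $j > \rank g$ and the stated inequality is interpreted trivially). Shrinking $B_g$ so that inversion carries it into a neighborhood $B_{g^{-1}}$ of $g^{-1}$, the upper-bound argument applied to $g^{-1}$ gives $e_m(h^{-1}) \leq e_m(g^{-1})^n e^{n\eta}$, since $(h_n \cdots h_1)^{-1} = h_1^{-1} \cdots h_n^{-1} \in B_{g^{-1}}^n$. The identity $\lambda_i(x^{-1}) = 1/\lambda_{d-i+1}(x)$ then translates this into lower bounds on the tail products $\prod_{i \geq k} \lambda_i(h) \geq \bigl(\prod_{i \geq k}\lambda_i(g)\bigr)^n e^{-n\eta}$. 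Combined with the two-sided bound $|\det h| \in [|\det g|^n e^{-n\eta},\, |\det g|^n e^{n\eta}]$ coming from continuity and multiplicativity of $\det$, these head upper bounds on $e_j(h)$ and tail lower bounds on $\prod_{i > j}\lambda_i(h)$ can be threaded through the identities $\lambda_j = e_j/e_{j-1}$ and $e_j \cdot \prod_{i > j}\lambda_i = |\det|$ to produce $|\tfrac{1}{n}\log\lambda_j(h) - \log\lambda_j(g)| \leq c\eta$ with $c = c(d)$; absorbing $c$ into $\eta$ finishes the Archimedean case.

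In the non-Archimedean case, $|k^\times|$ is discrete, so the continuity arguments above collapse to exact equalities once $B_g$ is small enough. Taking $B_g$ to be a $\lambda$-multiplicative open-and-closed semigroup analogous to the $G^\eps_f$ of Section~\ref{sec:5.3} (built from the projectors onto the generalized eigenspaces of $g$ sorted by eigenvalue modulus) yields the exact identity $\lambda_j(h) = \lambda_j(g)^n$, corresponding to $\eta = 0$. The main obstacle is the lower bound on $\lambda_j$ for intermediate $j$ in the Archimedean case: the spectral radius obeys no useful submultiplicative lower inequality, so the Gelfand/block argument directly provides only head upper bounds on $e_j(h)$ and, via inversion, tail lower bounds on $\prod_{i > j} \lambda_i(h)$; extracting genuine two-sided control of each $\lambda_j$ requires the careful combined bookkeeping through the determinant identity outlined above.
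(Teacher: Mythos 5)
Your upper bound (Gelfand plus block submultiplicativity of $\|\Lambda^j\cdot\|$) is sound and essentially matches the first half of the paper's argument. But your lower bound has a genuine gap that cannot be closed by the ``bookkeeping through the determinant identity'' you invoke: the three pieces of information you collect --- (i) head upper bounds $e_j(h) \leq e_j(g)^n e^{n\eta}$, (ii) tail lower bounds $\prod_{i>j}\lambda_i(h) \geq \bigl(\prod_{i>j}\lambda_i(g)\bigr)^n e^{-n\eta}$ (equivalent to $e_m(h^{-1}) \leq e_m(g^{-1})^n e^{n\eta}$), and (iii) the two-sided determinant bound --- simply do not determine a lower bound on the intermediate $\lambda_j(h)$. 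Writing $a_j := \tfrac{1}{n}\log e_j(h) - \log e_j(g)$, items (i)--(iii) give $a_j \leq \eta$, $a_d - a_j \geq -\eta$, and $|a_d| \leq \eta$; every one of these is an \emph{upper} bound on $a_j$, and no combination produces $a_j \geq -c\eta$. Concretely: take $\dim V = 3$, $g$ with eigenvalue moduli $(2,1,\tfrac12)$, and a hypothetical $h$ with moduli $(2^n, 2^{-n/2}, 2^{-n/2})$. Then $e_1(h)=2^n$, $e_2(h)=2^{n/2}$, $e_3(h)=1$ satisfy (i); $\tau_1(h)=2^{-n}$, $\tau_2(h)=2^{-n/2}$, $\tau_0(h)=1$ satisfy (ii); and (iii) holds --- yet $\tfrac1n\log\lambda_2(h) - \log\lambda_2(g) = -\tfrac12\log 2$, which violates the conclusion for any $\eta < \tfrac12\log 2$. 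So no purely algebraic ``threading'' of those inequalities can finish the proof; additional geometric input is required.

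The paper supplies exactly that missing input. After the same reduction to $j=1$ via exterior powers, and a further reduction to \emph{proximal} $g$ by passing to $\Lambda^s g$ where $s$ is the multiplicity of the top modulus (using $\lambda_1(\Lambda^s h) \leq \lambda_1(h)^s$), it constructs a norm and a cone $C$ around the dominant eigenline of $g$ such that every $h'$ sufficiently near $g$ preserves $C$ and satisfies $\|h'(v)\| \geq \lambda_1(g)\,e^{-\eps}\|v\|$ on $C$. Iterating across the product $h = h_n\cdots h_1$ then gives $\|h\| \geq \lambda_1(g)^n e^{-n\eps}$, and Gelfand applied to $h$ converts this into the needed lower bound on $\lambda_1(h)$. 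This cone-contraction step is the crux, and it is precisely what your inversion trick replaces with information that turns out to be insufficient. Your non-Archimedean sketch (ultrametric semigroup with exact $\lambda$-multiplicativity) is in the right spirit, though the paper handles both cases uniformly by simply taking $\eta = 0$ in the same norm-and-cone construction.

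Separately, a smaller issue: your phrase ``two-sided bounds on each $e_j$'' at the outset is the right target (and is what the paper achieves), but your method only delivers the upper side; the lower side on each $e_j$ is exactly where the cone argument enters.
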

\begin{proof}
Given that $\lambda_1(\Lambda^j h) = \lambda_1(h) \cdots \lambda_j(h)$, it suffices to prove these inequalities for $j = 1$.

Notice that we can find a norm on~$V$ such that $\|g\| \leq \lambda_1(g) e^{\frac{\eta}{2}}$. We can then choose~$B_g$ such that for every $h'$ in~$B_g$, we have $\|h'\| \leq \lambda_1(g)e^\eps$. So if $h = h_1 \cdots h_n$ with $h_i \in B_g$, we have
\[\frac{1}{n}\log(\lambda_1(h)) \;\leq\; \frac{1}{n}\log\|h\| \;\leq\; \frac{1}{n} \sum_{1 \leq p \leq n} \log \|h_p\| \;\leq\; \log(\lambda_1(g)) + \eps.\]

It remains to obtain the lower bound for~$\lambda_1(h)$. Let $s$~be the largest integer such that $\lambda_1(g) = \cdots = \lambda_s(g)$. Note that $\lambda_1(\Lambda^s h) \leq \lambda_1(h)^s$ and that $\Lambda^s g$ is proximal in~$\mathbb{P}(\Lambda^s V)$. We may thus assume that $g$~is proximal.

We call $V^+$ the eigenline of~$g$ corresponding to the eigenvalue with the largest absolute value and $V^<$ its unique $g$-invariant supplementary hyperplane. We can find a norm on~$V$ such that $\|\restr{g}{V^<}\| < \lambda_1(g)$ and such that for every $v^+ \in V^+$ and $v^< \in V^<$, we have $\|v^+ + v^<\| \leq \sup(\|v^+\|, \|v^<\|)$. Let
\[C := \setsuch{v = v^+ + v^<}{v^+ \in V^+,\; v^< \in V^< \text{ and } \|v^+\| \geq \|v^<\|}.\]
We can choose the neighborhood~$B_g$ in such a way that for every $h'$ in~$B_g$, we have
\[h'(C) \subset C \quad\text{and}\quad \|h'(v)\| \geq \lambda_1(g) e^{-\eps}\|v\|\quad \forall v \in C.\]
We then calculate, for $h$ in~$(B_g)^n$ and $v$ in~$C$
\[\|h(v)\| \geq \lambda_1(g)^n e^{-n \eps} \|v\|.\]
Hence
\[\frac{1}{n} \log \|h\| \geq \log \lambda_1(g) - \eps.\]
The equality $\log(\lambda_1(h)) = \lim_{p \to \infty} \frac{1}{p} \log \|h^p\|$ then allows us to get the desired lower bound for~$\lambda_1(h)$.
\end{proof}

\begin{definition*}
We define a \emph{hyperbolic one-parameter subsemigroup} of~$G$ to be a semigroup~$\mathcal{L}$ such that
\begin{itemize}
\item if $k = \mathbb{R}$ or~$\mathbb{C}$, $\mathcal{L} = \gamma([0, \infty))$ where $\gamma: \mathbb{R} \to G$ is a one-parameter subgroup composed of hyperbolic elements.
\item if $k$ is non-Archimedean, $\mathcal{L}$~is isomorphic to~$\mathbb{N}$, is composed of hyperbolic elements and is maximal for these properties.
\end{itemize}
\end{definition*}

Every hyperbolic element $h \neq e$ lies in a unique hyperbolic subsemigroup, that we denote by~$\mathcal{L}_h$. If $h = e$, we set $\mathcal{L}_h := \{e\}$. For every element $g$ of~$G$, we set $\mathcal{L}_g := \mathcal{L}_h$ where $h$~is the hyperbolic component of~$g$ or of some power of~$g$. Note that $\mathcal{L}_g$ can also be defined by the equality $\lambda(\mathcal{L}_g) = L_g \cap A^+$ and the inclusion $\mathcal{L}_g \subset f_g$.

\begin{corollary*}
Let $g_1$, $g_2$ be two elements of~$G$. We then have the equivalence: $\mathcal{L}_{g_1} \neq \mathcal{L}_{g_2} \iff$ There exists two open subsemigroups $H_1$, $H_2$ of~$G$ such that $g_1 \in H_1$, $g_2 \in H_2$ and $H_1 \cap H_2 = \emptyset$.
\end{corollary*}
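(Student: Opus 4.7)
The plan is to prove each direction separately. For the direction ``separation exists $\Rightarrow \mathcal{L}_{g_1} \neq \mathcal{L}_{g_2}$'' I argue the contrapositive: assume $\mathcal{L}_{g_1} = \mathcal{L}_{g_2} =: \mathcal{L}$ and show that any open subsemigroups $H_i \ni g_i$ meet. By Proposition~\ref{sec:6.1}(a), each $H_i$ contains some power $(g_i)_h^{n_i}$, and both of these elements lie in $\mathcal{L}$. When $\mathcal{L} = \{e\}$ this already yields $e \in H_1 \cap H_2$; when $\mathcal{L}$ is a nontrivial one-parameter subsemigroup parametrised by $\gamma : [0,\infty) \to G$, write $(g_i)_h^{n_i} = \gamma(t_i)$, use the openness of $H_1$ and continuity of $\gamma$ to find $\delta > 0$ with $H_1 \supset \gamma((t_1 - \delta, t_1 + \delta))$, and deduce from the semigroup law that $H_1 \supset \gamma(m(t_1 - \delta, t_1 + \delta))$ for every $m \geq 1$; taking $m$ with $2m\delta > t_2$ forces this interval to contain some multiple $k t_2$, whence $\gamma(kt_2) = \gamma(t_2)^k \in H_1 \cap H_2$. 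In the non-Archimedean case $\mathcal{L}$ is cyclic generated by some $h_0$, so $(g_i)_h^{n_i} = h_0^{a_i}$ and $h_0^{a_1 a_2} \in H_1 \cap H_2$.

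For the direction $\mathcal{L}_{g_1} \neq \mathcal{L}_{g_2} \Rightarrow$ separation, I take $H_i := G^\eps_{g_i}$ from \ref{sec:6.2} for a sufficiently small $\eps$. The hypothesis splits into two (non-exclusive) cases: either the half-lines $L_{g_1}, L_{g_2} \subset A^\times$ differ, or they coincide while the facets $f_{g_1}, f_{g_2}$ differ, forcing (WLOG) $y^+_{g_1} \neq y^+_{g_2}$ in $Y_\theta$ (where $\theta := \theta_{g_1} = \theta_{g_2}$). In the first case, Proposition~\ref{sec:6.2} confines $L_{G^\eps_{g_i}}$ to an arbitrarily small conical neighbourhood of $L_{g_i}$; choosing these neighbourhoods disjoint outside the origin, any common element $h$ would force $\lambda(h) = 0$, which the estimate on $\frac{1}{n} \log |\chi_j|(\lambda(h))$ underlying Proposition~\ref{sec:6.2} forbids when $\lambda(g_i) \neq 0$. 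In the second case, the same proposition squeezes $\Lambda^+_{G^\eps_{g_i}}$ into a small neighbourhood of $Y^+_{g_i}$, and I must upgrade this limit-set separation into a separation of the semigroups themselves.

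The main obstacle is exactly this last promotion: verifying that \emph{every} single $h \in G^\eps_{g_i}$ is itself $\theta$-proximal with attracting point close to $y^+_{g_i}$, not merely that such points accumulate in $\Lambda^+_{G^\eps_{g_i}}$. The plan is to decompose $h = h_1 \cdots h_n$ with $h_\ell \in B(g_i, \eps)$, observe that for small $\eps$ each $\rho_j(h_\ell)$ (with $\alpha_j \in \theta$) is $\eps'$-proximal on $\mathbb{P}(V_j)$ with attracting line and invariant hyperplane close to those of $\rho_j(g_i)$, and invoke Lemma~\ref{lem_2.2.2} to deduce that $\rho_j(h) = \rho_j(h_1) \cdots \rho_j(h_n)$ is proximal with attracting line again close to $x^+_j(y^+_{g_i})$. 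This pins down the attracting point of $h$ in $Y_\theta$ near $y^+_{g_i}$; the symmetric argument handles the repelling point, and together they rule out common elements in the second case, completing the proof.
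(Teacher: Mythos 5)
Your proposal is correct and follows the same route as the paper's proof: Proposition~\ref{sec:6.1} for the implication ``separation exists $\Rightarrow \mathcal{L}_{g_1} \neq \mathcal{L}_{g_2}$'' and the semigroups $G^\eps_{g_i}$ together with Proposition~\ref{sec:6.2} for the converse. Since the paper dispatches both directions by one-line citations, the value of your write-up is in making explicit what those citations leave implicit. Your interval-dilation argument (that $H_1 \supset \gamma\bigl(m(t_1-\delta,\,t_1+\delta)\bigr)$ eventually captures a multiple of $t_2$) is exactly the step hidden behind ``this follows from Proposition~\ref{sec:6.1}''. In the converse direction you correctly pinpoint the crux: Proposition~\ref{sec:6.2} controls the limit cone and limit set of $G^\eps_{g_i}$, not the individual elements, and one must still deduce disjointness of the semigroups. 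Your plan, showing via Lemma~\ref{lem_2.2.2} and the proof of Lemma~\ref{sec:6.2} that each $h \in G^\eps_g$ is itself proximal with attracting data near that of $g$, does close the gap (it is implicit in the invariant-cone construction in that proof). A slightly more economical bridge, closer to the spirit of the paper, is to invoke Proposition~\ref{sec:6.1}.a once more: if $h \in G^\eps_{g_1} \cap G^\eps_{g_2}$, both semigroups contain a common hyperbolic power of $h$, whose half-line then lies in $L_{G^\eps_{g_1}} \cap L_{G^\eps_{g_2}}$ and whose attracting/repelling points lie in $\Lambda^\pm_{G^\eps_{g_1}} \cap \Lambda^\pm_{G^\eps_{g_2}}$, and all of these intersections are empty for $\eps$ small by Proposition~\ref{sec:6.2}. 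Finally, a caveat shared by your argument and the paper's: the converse direction tacitly requires $\lambda(g_1) \neq 0 \neq \lambda(g_2)$, the standing hypothesis of Proposition~\ref{sec:6.2}. If $\lambda(g_1)=0\neq\lambda(g_2)$ and $G$ is topologically connected (e.g.\ $k=\mathbb{R}$), Lemma~\ref{sec:6.1}.a forces every open subsemigroup containing $g_1$ to contain $e$, hence a symmetric neighbourhood of $e$, hence all of $G$, and no separation exists even though $\mathcal{L}_{g_1}\neq\mathcal{L}_{g_2}$; so, like the paper, your proof covers the intended nondegenerate case only.
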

\begin{proof}
($\Rightarrow$) By assumption, we have $(y^+_{g_1}, y^-_{g_1}, L_{g_1}) \neq (y^+_{g_2}, y^-_{g_2}, L_{g_2})$. Hence it suffices to take $H_1 = G^\eps_{g_1}$, $H_2 = G^\eps_{g_2}$ with $\eps$ sufficiently small and to apply Proposition~\ref{sec:6.2}.

($\Leftarrow$) This follows from Proposition~\ref{sec:6.1}.
\end{proof}

\subsection{Open semigroups and Zariski-dense semigroups for $k = \mathbb{R}$}
\label{sec:6.3}

The following propositions say that open semigroups can be used as a ``filter'' to analyse the Zariski-dense semigroups. We will use it in~\ref{sec:7.4}.

\begin{proposition*} ($k = \mathbb{R}$)
Let $\Gamma$~be a Zariski-dense subsemigroup of~$G$ and $H$~be an open subsemigroup of~$G$. If $\Gamma \cap H$ is nonempty, then $\Gamma \cap H$~is still Zariski-dense.
\end{proposition*}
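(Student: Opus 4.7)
Let $g_0 \in \Gamma \cap H$ and let $F$ denote the Zariski closure of $\Gamma \cap H$ in $G$. Since $F$ is a Zariski-closed subsemigroup of the algebraic group $G$, the standard fact invoked throughout Section~\ref{sec:3} (see, e.g., the end of the proof of Corollary~\ref{cor_3.1.1}) gives that $F$ is in fact a Zariski-closed subgroup; in particular $g_0^{\pm 1} \in F$. The goal is to show $F = G$, and by Zariski density of $\Gamma$ it suffices to show $\Gamma \subset F$. The concrete reduction I would use is this: for each $\gamma \in \Gamma$, if one exhibits nonnegative integers $k, l$ with $g_0^k \gamma g_0^l \in H$, then $g_0^k \gamma g_0^l \in \Gamma \cap H \subset F$, and consequently $\gamma = g_0^{-k}(g_0^k \gamma g_0^l)g_0^{-l} \in F$ since $F$ is a group.

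To carry this out for enough $\gamma$'s to conclude, I would exploit the openness of $H$ together with the Schottky-type machinery of Sections~\ref{sec:4} and~\ref{sec:5}. Since $H$ is open and contains $g_0$, for $\eps$ small we have $B(g_0, \eps) \subset H$, and the semigroup property forces the open semigroup $G^\eps_{g_0} := \bigcup_{n \geq 1} B(g_0, \eps)^n$ introduced in~\ref{sec:6.2} to be contained in $H$; by Proposition~\ref{sec:6.2}.a, as $\eps \searrow 0$ its limit cone $L_{G^\eps_{g_0}}$ shrinks to $L_{g_0}$ and its limit sets $\Lambda^\pm_{G^\eps_{g_0}}$ shrink to $Y^\pm_{g_0}$. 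After reducing (via Proposition~\ref{sec:6.1}.a to get a hyperbolic element in $H$, together with an auxiliary Schottky construction yielding a $\theta_\Gamma$-proximal element of $\Gamma \cap H$ with the same dynamics) to the case where $g_0$ is $\theta_\Gamma$-proximal, I invoke Lemma~\ref{sec:4.2}~bis with $f = f_{g_0}$ and $\Omega$ a small open cone around $L_{g_0}$: the set $\Gamma^{(\eta)}_{f_{g_0}, \Omega}$ of $(\theta_\Gamma, \eta)$-proximal elements of $\Gamma$ whose facet is $\eta$-close to $f_{g_0}$ and whose Lyapunov lies in $\Omega$ is Zariski-dense in $G$ for $\eta$ small.

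The main obstacle is then to check that the desired integers $k, l$ can be produced for each $\gamma$ in this Zariski-dense set. The intuition is that an element $\gamma$ attracting toward a point close to $y^+_{g_0}$, repelling away from a point close to $y^-_{g_0}$, and with Lyapunov close to $L_{g_0}$ should, once sandwiched between sufficiently high powers of $g_0$, inherit the dynamical profile of a high power of $g_0$ and thus belong to $G^\eps_{g_0} \subset H$. Making this rigorous requires comparing the multiplicative description of $G^\eps_{g_0}$ (products of small perturbations of $g_0$, cf.~\ref{sec:6.2}) with the dynamical description of the open semigroup $G^\eps_{f_{g_0}}$ of~\ref{sec:5.2}, via the two-sided inclusions of Lemma~\ref{sec:5.2}.c, and then using the strongly Schottky semigroup property from Lemma~\ref{sec:5.2}.b to exhibit $g_0^k \gamma g_0^l$ as a bona fide element of $G^\eps_{g_0}$ for appropriate $k, l$ depending on~$\eta$ and~$\eps$. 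Once this is in hand, the Zariski-density of $\Gamma^{(\eta)}_{f_{g_0}, \Omega}$ inside $F$ forces $F = G$, as required.
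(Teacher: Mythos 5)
Your proposal shares the overall shape of the paper's argument (sandwich each $\gamma\in\Gamma$ between powers of $g_0$, use that the Zariski closure of a semigroup is a group, conclude $F=G$), but the two routes diverge at the crucial point, and yours has a genuine gap right where it diverges.

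The reduction to ``$g_0$ is $\theta_\Gamma$-proximal'' is not available. You propose to find a $\theta_\Gamma$-proximal (i.e.\ $\mathbb{R}$-regular) element of $\Gamma\cap H$ ``via Proposition~\ref{sec:6.1}.a together with an auxiliary Schottky construction''. Proposition~\ref{sec:6.1}.a only gives $(g_0)_h^n\in H$; this element is in $H$ but not in $\Gamma$, and $(g_0)_h$ need not be $\mathbb{R}$-regular. To produce an $\mathbb{R}$-regular element of $\Gamma\cap H$, one would essentially need to know that $\Gamma\cap H$ is large (e.g.\ Zariski-dense in $G$) so as to apply the Benoist--Labourie/Prasad type result quoted in~\ref{sec:3.2} --- but that is precisely the statement to be proved. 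No Schottky construction produces elements of $\Gamma$ out of thin air; Schottky machinery (Lemmas~\ref{sec:4.1}, \ref{sec:4.3}, \ref{sec:5.2}) only works once one already has suitable elements \emph{of $\Gamma$} in hand. This is a circularity, not a reduction. The paper avoids it entirely by working with the full Jordan decomposition $g_0=g_eg_hg_u$ of the given element, with $\theta=\{\alpha\in\Pi : \alpha(g_h)=1\}$ possibly nonempty, and setting up the parabolic coordinates $U^-_\theta\times L_\theta\times U_\theta$ adapted to that $\theta$: the argument goes through uniformly, whether or not $g_0$ is regular.

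The main step is also not supported by the lemmas you cite. You want $g_0^k\gamma g_0^l\in G^\eps_{g_0}\subset H$, and propose to get it by comparing $G^\eps_{g_0}$ (the semigroup generated by $B(g_0,\eps)$, from~\ref{sec:6.2}) with $G^{\eps'}_{f_{g_0}}$ (from~\ref{sec:5.2}) via Lemma~\ref{sec:5.2}.c. But Lemma~\ref{sec:5.2}.c only compares $G^{\eps'}_f$ with $G^{(\eps')}_f$; it says nothing about $G^\eps_{g_0}$. And these really are different objects: by Lemma~\ref{sec:5.2}.d the limit cone of $G^{\eps'}_{f_{g_0}}$ is all of $A^+$, whereas by Proposition~\ref{sec:6.2}.a the limit cone of $G^\eps_{g_0}$ is a small cone around the single half-line $L_{g_0}$. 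So no containment $G^{\eps'}_{f_{g_0},\Omega}\subset G^\eps_{g_0}$ can be read off from these lemmas; membership in $G^\eps_{g_0}$ is a multiplicative condition (a factorization into small perturbations of $g_0$), not merely a dynamical one, and that factorization is exactly what needs to be produced. The paper's proof produces the required factorization explicitly, by writing $\gamma=vlu$ in the chart $U^-_\theta\times L_\theta\times U_\theta$ and checking (using Lemma~\ref{sec:6.1}.b to control the elliptic-unipotent part $(g_eg_u)^n l(g_eg_u)^n$) that $g_0^n\gamma g_0^n$ lands in $(U^-_\eps g_hL_\eps U_\eps)^n\subset H$ for $n$ large. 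That is the concrete content your sketch is missing. Incidentally, note that the published statement ``$g^n\gamma g^{-n}\in\Gamma\cap H$'' in the paper's proof is a typo for $g^n\gamma g^n$, as the ensuing computation and the final ``set $n=0$'' step show --- using $g^{-n}$ would take one out of the semigroup $\Gamma$.

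In short: the idea of sandwiching by powers and using the Zariski-closure trick is the right one and coincides with the paper's; but the way you set up the sandwich relies on an unavailable reduction to the regular case and on lemmas that do not actually compare the relevant open semigroups. The Jordan/parabolic-coordinate argument in~\ref{sec:6.3} is the missing ingredient.
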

\begin{proof}
Let $g$~be an element of $\Gamma \cap H$ and $g = g_e g_h g_u$ be its Jordan decomposition. We lose no generality in assuming that $g_h$ is in~$A^+$. Let $\theta := \setsuch{\alpha \in \Pi}{\alpha(g_h) = 1}$. We have $L_\theta := \setsuch{g' \in G}{g' g_h = g_h g'}$, $U_\theta := \setsuch{g' \in G}{\lim_{n \to -\infty} g^n g' g^{-n} = e}$ and $U^-_\theta := \setsuch{g' \in G}{\lim_{n \to +\infty} g^n g' g^{-n} = e}$. Multiplication induces a diffeomorphism from the product $U^-_\theta \times L_\theta \times U_\theta$ to a Zariski-open subset $\Omega_\theta$ of~$G$. Let $U^-_\eps$, $L_\eps$, $U_\eps$ be three neighborhoods of~$e$ in $U^-_\theta$, $L_\theta$ and~$U_\theta$ such that
\[U^-_\eps g L_\eps U_\eps \subset H.\]
For $n \geq 1$, we have
\[U^-_\eps g^n_h(g_e g_u L_\eps)^n U_\eps \subset H.\]
By Lemma~\ref{sec:6.1}.a, we can find $n \geq 1$ such that $(g_e g_u L_\eps)^n$ contains~$e$. Replacing if necessary $g$ by~$g^n$ and consequently reducing our neighborhoods, we may assume that
\[U^-_\eps g_h L_\eps U_\eps \subset H.\]

Let us then show that for every $\gamma$ in~$\Gamma \cap \Omega_\theta$, there exists $n_o \geq 1$ such that for every $n \geq n_o$, we have $g^n \gamma g^{-n} \in \Gamma \cap H$. This will prove that $\Gamma \cap H$ is Zariski-dense by the same argument as at the end of~\ref{sec:3.6}. We write $\gamma = v l u$ with $v \in U^-_\theta$, $l \in L_\theta$ and $u \in U_\theta$. We have
\[g^n \gamma g^n = v_n g_h^{2n} l_n u_n\]
with
\[v_n = g^n v g^{-n},\quad l_n = g_e^n g_u^n l g_e^n g_u^n \quad\text{and}\quad u_n = g^{-n} u g^n.\]
For large enough~$n$, we have, by Lemma~\ref{sec:6.1}.b,
\[v_n \in U^-_\eps,\quad l_n \in L^n_\eps \quad\text{and}\quad u_n \in U_\eps.\]
Hence
\[g^n \gamma g^n \in U^-_\eps g_h^{2n} L_\eps^n U_\eps \subset (U^-_\eps g_h L_\eps U_\eps)^n \subset H,\]
which finishes the proof.
\end{proof}

\subsection{The set of limit directions for $k = \mathbb{R}$}
\label{sec:6.4}

In this subsection, we still assume that $k = \mathbb{R}$, so that $\theta_\Gamma = \Pi$.

\begin{definition*}
We denote by~$\mathcal{L}_\Gamma$ the set of the hyperbolic elements $g$ of~$G$ such that every open subsemigroup~$H$ of~$G$ containing~$g$ intersects~$\Gamma$.

We set $\mathcal{L}'_\Gamma := \setsuch{g \in \mathcal{L}_\Gamma}{g \text{ is $\mathbb{R}$-regular}}$ and $L'_\Gamma := L_\Gamma \cap A^{\times \times}$.
\end{definition*}

\begin{remark*}
In this definition, we limit ourselves to hyperbolic elements because of Proposition~\ref{sec:6.1}.
\end{remark*}

We endow $G$ with a left-invariant Riemannian metric and we call~$d$ the corresponding distance.

\begin{theorem*} ($k = \mathbb{R}$; $\mathbf{G}$~is a connected reductive $\mathbb{R}$-group, $G = \mathbf{G}_\mathbb{R}$ and $\Gamma$~is a Zariski-dense subsemigroup of~$G$.)
\begin{enumerate}[label=\alph*)]
\item $\mathcal{L}_\Gamma$ is the closure of the union of the semigroups~$\mathcal{L}_g$ for~$g \in \Gamma$ (resp.\ for $\mathbb{R}$-regular $g$ in~$\Gamma$).
\item $\mathcal{L}_\Gamma$ is the closure of the set $\mathcal{L}^o_\Gamma$ of the hyperbolic elements~$g$ of~$G$ for which there exist sequences $n_p \in \mathbb{N}$ and $\gamma_p \in \Gamma$ such that $\lim_{p \to \infty} d(g^{n_p}, \gamma_p) = 0$.
\item $\mathcal{L}'_\Gamma$ is dense in~$\mathcal{L}_\Gamma$ and $L'_\Gamma$~is dense in~$L_\Gamma$.
\item A chamber $f \in Z_\Pi$ is quasiperiodic if and only if $\mathcal{L}_\Gamma$ intersects the interior of the chamber~$f$.
\item For every quasiperiodic chamber $f$ of~$\Gamma$, we have $\lambda(f \cap \mathcal{L}_\Gamma) = L_\Gamma$.
\end{enumerate}
\end{theorem*}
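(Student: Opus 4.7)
The plan is to prove the five parts in a cascade, with part (a) the load-bearing result, from which (b) and (c) follow readily, and (d), (e) drawing on the quantitative Zariski-density machinery of Lemma~3.6(v) and Lemma~4.2~bis. The overarching tools are Proposition~6.2 (the open semigroups $G^\eps_g$ form a neighborhood basis of the Jordan data of $g$), Proposition~6.3 ($\Gamma\cap H$ remains Zariski dense whenever it is nonempty), and the Zariski density of the $\mathbb{R}$-regular elements of $\Gamma$ furnished by Proposition~3.2.

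For part (a), the inclusion $\bigcup_{g\in\Gamma}\mathcal{L}_g\subseteq\mathcal{L}_\Gamma$: fix hyperbolic $g\in\Gamma$, $h=g_h^s\in\mathcal{L}_g$, and an open semigroup $H\ni h$. Since $H$ is a semigroup, $h^n\in H$ for all $n\geq 1$; an open neighborhood of $h$ in $\mathcal{L}_g$ yields $\{g_h^{ns'}:s'\in(s-\delta,s+\delta),\,n\geq 1\}\subseteq H$, and for $n$ large the intervals $(n(s-\delta),n(s+\delta))$ and $((n{+}1)(s-\delta),(n{+}1)(s+\delta))$ overlap, so $H$ contains a whole ray $\{g_h^t:t\geq T\}$, hence some $g_h^m$ with $m\in\mathbb{N}$. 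Proposition~6.1(b) applied to $g^m\in\Gamma$ then produces $g^{mn}\in H\cap\Gamma$ for some $n$. Since $\mathcal{L}_\Gamma$ is closed among hyperbolic elements, taking closures gives the desired inclusion. For the reverse inclusion I pass to the regular version: for $h\in\mathcal{L}_\Gamma$, the open semigroups $G^\eps_h$ each meet $\Gamma$, and by Propositions~6.3 and~3.2 one picks an $\mathbb{R}$-regular $\gamma_\eps\in\Gamma\cap G^\eps_h$. Proposition~6.2(a) ensures $y^\pm_{\gamma_\eps}\to y^\pm_h$ and that the ray $L_{\gamma_\eps}$ approaches $L_h$; parameterizing $\mathcal{L}_{\gamma_\eps}$ by Lyapunov magnitude and selecting the element whose norm equals $|\lambda(h)|$ yields $h_\eps\to h$.

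Part (b) splits along its two inclusions. For $\mathcal{L}^o_\Gamma\subseteq\mathcal{L}_\Gamma$, left-invariance of $d$ turns $d(g^{n_p},\gamma_p)\to 0$ into $g^{-n_p}\gamma_p\to e$; given an open $H\ni g$, choose an open $B\ni e$ with $gB\subseteq H$, so $g^{n_p}B\subseteq H$ by the semigroup property, and thus $\gamma_p\in H$ eventually. For $\mathcal{L}_\Gamma\subseteq\overline{\mathcal{L}^o_\Gamma}$, the regular form of (a) writes $h=\lim h_k$ with $h_k=\gamma_k^{s_k}$, $\gamma_k\in\Gamma$ regular; density of $\{n s_k\bmod 1\}$ (or exact rationality when $s_k\in\mathbb{Q}$) furnishes $n_p$ with $n_p s_k-\lfloor n_p s_k\rfloor\to 0$, giving $d(h_k^{n_p},\gamma_k^{\lfloor n_p s_k\rfloor})\to 0$, so $h_k\in\mathcal{L}^o_\Gamma$. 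Part (c) is then immediate: for $L'_\Gamma$, any direction in $L_\Gamma$ is approached by $\lambda(\gamma)$ for regular $\gamma\in\Gamma$ produced by Lemma~4.2, and such $\lambda(\gamma)\in A^{\times\times}\cap L_\Gamma=L'_\Gamma$; for $\mathcal{L}'_\Gamma$, the regular version of (a) expresses $\mathcal{L}_\Gamma$ as the closure of one-parameter semigroups $\mathcal{L}_\gamma$ built from regular $\gamma\in\Gamma$. For (d) and (e), Lemma~4.2~bis provides, for quasiperiodic $f\in F_\Gamma$, any open cone $\Omega$ meeting $L_\Gamma$, and any small $\eps>0$, a Zariski-dense supply $\Gamma^{(\eps)}_{f,\Omega}$ of $(\Pi,\eps)$-proximal $\gamma\in\Gamma$ with $d(f_\gamma,f)\leq\eps$ and $\lambda(\gamma)\in\Omega$. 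The inclusion $\lambda(f\cap\mathcal{L}_\Gamma)\subseteq L_\Gamma$ and the $(\Leftarrow)$ of (d) fall out of (a): any $h\in\mathcal{L}_\Gamma$ is a limit of $h_n\in\mathcal{L}_{\gamma_n}$ with $\lambda(h_n)\in L_\Gamma$, and if $h\in\mathrm{int}(f)$ the approximating $f_{\gamma_n}$ must converge to $f$. For the $(\Rightarrow)$ of (d) and the surjectivity in (e), fix $\ell\in L_\Gamma$ (in (d), take $\ell\in L'_\Gamma$, nonempty by (c)); for shrinking cones $\Omega\ni\ell$ pick regular $g_\eps\in\Gamma^{(\eps)}_{f,\Omega}$, and take the element of $\mathcal{L}_{g_\eps}\subseteq\mathrm{int}(f_{g_\eps})$ whose Lyapunov norm equals $|\ell|$. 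Its flags and Lyapunov converge respectively to those of $f$ and to $\ell$, so it converges to some $h\in f\cap\mathcal{L}_\Gamma$ with $\lambda(h)=\ell$, and regularity of $\ell$ forces $h\in\mathrm{int}(f)$.

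The main obstacle is the reverse inclusion in (a): one controls only the flags and Lyapunov ray of $\gamma_\eps\in\Gamma\cap G^\eps_h$, but must extract a \emph{genuine} element of $\mathcal{L}_{\gamma_\eps}$ that converges to $h$ itself. This requires parameterizing $\mathcal{L}_{\gamma_\eps}$ by the triple $(y^+,y^-,\lambda)$, which is available only when $\gamma_\eps$ is $\mathbb{R}$-regular and $\mathcal{L}_{\gamma_\eps}$ lies in the interior of a Weyl chamber. This same regularity is what permits the identification of $f\cap\mathcal{L}_\Gamma$ with $L_\Gamma$ in part (e), so the $\mathbb{R}$-regular strengthening of (a) is the linchpin of the whole theorem.
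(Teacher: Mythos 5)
Your proof is correct and follows essentially the same route as the paper: Proposition~\ref{sec:6.1} for the forward inclusion in~a), Propositions~\ref{sec:6.3} and~\ref{sec:3.2} together with~\ref{sec:6.2} for the reverse, the equidistribution argument for~b), and Lemma~\ref{sec:4.2}~bis for the flag-and-Lyapunov control needed in~d) and~e). The only minor variation is in c): you obtain density of $L'_\Gamma$ in $L_\Gamma$ by producing $\mathbb{R}$-regular $\gamma$ with prescribed $\lambda(\gamma)$ via Lemma~\ref{sec:4.2}~bis, whereas the paper simply invokes convexity of $L_\Gamma$ together with $L'_\Gamma\neq\emptyset$; both work, the paper's being slightly more economical.
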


\begin{remarks*}~
\begin{itemize}
\item Parts a) and~b) furnish alternative definitions for the set of limit directions of~$\Gamma$.
\item Parts c), d) and~e) explain how to calculate $\mathcal{L}_\Gamma$ from $\Lambda_\Gamma$, $\Lambda^-_\Gamma$ and $L_\Gamma$ and vice-versa (we remind that, by \ref{sec:3.6}.iv, the set~$F_\Gamma$ of the quasiperiodic chambers of~$\Gamma$ can be identified to the set of elements of $\Lambda_\Gamma \times \Lambda^-_\Gamma$ that are in general position). Indeed, they state that the set~$\mathcal{L}_\Gamma$ is the closure of the set
\[\mathcal{L}'_\Gamma = \bigcup_{f \in F_\Gamma} \setsuch{g \in f}{\lambda(g) \in L'_\Gamma}.\]
\item We leave out the analogous statement for a local field: in this case the set~$\mathcal{L}_\Gamma$ lives in a space which is related to the set of the hyperbolic elements of~$G$ in the same way as $A^\times$ is related to~$A^+$...
\end{itemize}
\end{remarks*}

\begin{proof}~
\begin{enumerate}[label=\alph*)]
\item By construction, the subset $\mathcal{L}_\Gamma$ is closed. It contains the semigroups~$\mathcal{L}_g$ for $g$ in~$\Gamma$ by \ref{sec:6.1}. Conversely, let $g$ be in~$\mathcal{L}_\Gamma$; we then apply Proposition~\ref{sec:6.3} with $\eps = \frac{1}{n}$: the intersection $G^{\frac{1}{n}}_g \cap \Gamma$ is Zariski-dense and thus contains an $\mathbb{R}$-regular element $g_n$. Hence the sequence $\mathcal{L}_{g_n}$ converges to~$\mathcal{L}_g$ by~\ref{sec:6.2} (\ie there exists a sequence $t_n > 0$ such that $\lim_{n \to \infty} g_n^{t_n} = g$).
\item It is clear that if $g$~is in~$\mathcal{L}^o_\Gamma$, then every open subsemigroup of~$G$ containing~$g$ intersects~$\Gamma$. Hence $\overline{\mathcal{L}^o_\Gamma} \subset \mathcal{L}_\Gamma$. Conversely, if $g$~is an $\mathbb{R}$-regular element of~$\Gamma$, then for every $t > 0$, the element $g^t$ is in~$\mathcal{L}^o_\Gamma$: it suffices to choose the sequence~$n_p$ so that the sequence~$n_p t$ converges to~$0$ in~$\mathbb{R}/\mathbb{Z}$. Hence $\mathcal{L}_\Gamma \subset \overline{\mathcal{L}^o_\Gamma}$.
\item The first statement follows from~a), the second one follows from the convexity of~$L_\Gamma$ and from the fact that $L'_\Gamma$ is nonempty.
\item and e) First of all notice that, thanks to~a), we have $\lambda(\mathcal{L}_\Gamma) \subset L_\Gamma$.

Let $f$~be a chamber whose interior intersects~$\mathcal{L}_\Gamma$. We call~$g$ some element of~$\mathcal{L}_\Gamma$ that is in the interior of~$f$. The reasoning from a) gives us a sequence~$g_n$ of $\mathbb{R}$-regular elements of~$\Gamma$ such that $f_{g_n}$ converges to~$f$. Hence $f$~is quasiperiodic.

Conversely, let $f$~be a quasiperiodic chamber for~$\Gamma$. Let $L$~be a half-line in~$L_\Gamma$. Let $\mathcal{L}$ denote the one-parameter subsemigroup of~$f$ such that $\lambda(\mathcal{L}) = L$. By~\ref{sec:4.2}, there exists a sequence of $\mathbb{R}$-regular elements~$g_n$ of~$\Gamma$ such that $\mathcal{L}_{g_n}$ converges to~$\mathcal{L}$. Then every open subsemigroup of~$G$ that intersects~$\mathcal{L}$ contains some power of one of the elements~$g_n$, by~\ref{sec:6.1}. Hence $\mathcal{L}$~is contained in~$\mathcal{L}_\Gamma$. This proves that $\lambda(f \cap \mathcal{L}_\Gamma) = L_\Gamma$. We deduce, given that $L'_\Gamma$~is nonempty, that $f \cap \mathcal{L}_\Gamma$ intersects the interior of the facet~$f$. \qedhere
\end{enumerate}
\end{proof}

\section{The limit cone~$L_\Gamma$ when $k = \mathbb{R}$}
\label{sec:7}

In this section, we suppose that $k = \mathbb{R}$. In this case, the logarithm map indentifies the $\mathbb{R}$-vector spaces $A^\bullet$ and~$\mathfrak{a}$, as well as the cones $L_\Gamma$ and~$\ell_\Gamma$. The goal of this section is to prove that in this case, the limit cone~$L_\Gamma$ has nonempty interior.

Recall that this statement is false over a non-Archimedean field (cf.~\ref{sec:5.3}).

The idea of the proof is to reduce the problem to the case of a subsemigroup of~$G^\eps_g$ with generators $(\gamma_j)_{1 \leq j \leq s}$ (cf.~\ref{sec:7.4}). We then consider one-parameter semigroups $\setsuch{\gamma_j(t)}{t \geq 1}$ lying in~$G^\eps_g$ and such that $\gamma_j(1) = \gamma_j$. We show on the one hand, by using Hardy fields (cf.~\ref{sec:7.2}), that the Lyapunov cone~$\ell_\Delta$ of the semigroup~$\Delta$ generated by all of these semigroups is contained in the vector space spanned by~$\ell_\Gamma$ (cf.~\ref{sec:7.4}). On the other hand, an elementary argument proves that such a semigroup~$\Delta$ has nonempty interior (cf.~\ref{sec:7.1}).

\subsection{Semigroups with nonempty interior}
\label{sec:7.1}

\begin{lemma*} ($k = \mathbb{R}$)
Let $t \mapsto \gamma_j(t)$, for $j = 1, \ldots, s$, be one-parameter subgroups of~$G$ and let $H$~be the semigroup generated by $(\gamma_j([1, \infty)))_{1 \leq j \leq s}$. We suppose that $H$~is Zariski-dense in~$G$; then $H$~has nonempty interior.
\end{lemma*}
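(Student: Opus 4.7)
The plan is to produce an open subset of $H$ via the implicit function theorem applied to a word map
\[ w : [1,\infty)^n \to G, \qquad w(t_1, \ldots, t_n) := \gamma_{j_1}(t_1) \cdots \gamma_{j_n}(t_n), \]
for a suitable choice of indices $j_1, \ldots, j_n$ and with $t_k > 1$. Since $w((1,\infty)^n) \subset H$, it is enough to arrange that the differential $dw$ is surjective at some interior point; the image then contains a neighborhood of $w(\mathbf{t})$ in $G$. Writing $\gamma_j(t) = \exp(tX_j)$ with $X_j \in \mathfrak{g}$, a standard chain-rule computation shows that, after right-translation to the identity, the $k$-th partial derivative of $w$ at $\mathbf{t}$ equals $\Ad\bigl(\gamma_{j_1}(t_1) \cdots \gamma_{j_{k-1}}(t_{k-1})\bigr)(X_{j_k})$, so surjectivity amounts to these $n$ vectors spanning $\mathfrak{g}$.

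The key step is a Zariski-density argument producing such a word. Introduce
\[ V := \mathrm{Vect}_{\mathbb{R}}\bigl\{\Ad(h)(X_j) : h \in H,\; 1 \le j \le s\bigr\} \;\subset\; \mathfrak{g}. \]
Since $X_j$ commutes with $\gamma_j(1) \in H$, we have $X_j = \Ad(\gamma_j(1))(X_j) \in V$. For any $h_0 \in H$, the inclusion $h_0 H \subset H$ gives $\Ad(h_0)(V) \subset V$. Hence the set $\{g \in G : \Ad(g)(V) \subset V\}$ is a Zariski-closed subgroup containing $H$; by Zariski-density of $H$ it equals $G$, so $V$ is an $\Ad(G)$-invariant subspace of $\mathfrak{g}$, i.e.\ a Lie ideal. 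I would then argue that a proper ideal $V \subsetneq \mathfrak{g}$ yields a proper normal algebraic subgroup of $G$ containing $H$, contradicting Zariski-density; hence $V = \mathfrak{g}$.

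Once $V = \mathfrak{g}$, pick a basis $\bigl(\Ad(h_\ell)(X_{k_\ell})\bigr)_{1 \le \ell \le d}$ of $\mathfrak{g}$, where $d := \dim G$, with each $h_\ell \in H$ expressed as a concrete word in the generators $\gamma_j([1,\infty))$. Concatenating these blocks, and inserting one extra factor $\gamma_{k_\ell}(\cdot)$ at the end of each block, produces a single word map of some length $n$ whose list of partial derivatives at the corresponding tuple $\mathbf{t} \in [1,\infty)^n$ contains the chosen basis; a small perturbation of the parameters ensures $t_k > 1$ strictly while preserving surjectivity of $dw$. The implicit function theorem then gives an open neighborhood of $w(\mathbf{t})$ contained in $H$.

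The main obstacle is the step $V = \mathfrak{g}$: over $\mathbb{R}$, a Lie ideal of a reductive $\mathfrak{g}$ need not correspond to a Zariski-closed normal subgroup — irrational-slope one-parameter subgroups in a central torus already show this phenomenon. The argument must therefore invoke the structure theory of $\mathbf{G}$: the semisimple projection of $V$ is a sum of simple ideals and is automatically algebraic, while Zariski-density of $H$ forces the central projection of $V$ to fill out the Lie algebra of the central split torus, which gives back an algebraic subgroup and completes the contradiction.
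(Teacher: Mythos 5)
Your strategy closely parallels the paper's: both define a subspace of $\mathfrak{g}$ generated by $\Ad(H)$-translates of the infinitesimal generators, show it is $\Ad(G)$-invariant and hence an ideal, conclude it equals $\mathfrak{g}$, then produce an open set via a word map. Two things need fixing.

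First, the step ``ideal $\Rightarrow \mathfrak{g}$.'' You are right that this is where the structure of $\mathbf{G}$ enters, but the fix you sketch does not close the gap: Zariski-density of $H$ does \emph{not} force the central projection of $V$ to fill the split center. Indeed, take $G = (\mathbb{R}^\ast)^2$, $s = 1$, $\gamma_1(t) = (e^t, e^{\alpha t})$ with $\alpha$ irrational; then $H = \gamma_1([1,\infty))$ is Zariski-dense (no nonzero Laurent polynomial can vanish on this curve, by linear independence of $t \mapsto e^{\beta t}$), $\Ad$ is trivial, $V$ is one-dimensional, and $H$ has empty interior. So the lemma is simply \emph{false} for general reductive $G$. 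The correct reading is the semisimple one (consistent with Theorem~\ref{sec:1.2}, which is where ``nonempty interior'' is actually asserted), and there the conclusion follows at once because an ideal of a semisimple Lie algebra is itself semisimple, hence an algebraic Lie subalgebra, so the Zariski closure of the analytic subgroup $I$ with Lie algebra $V$ again has Lie algebra $V$; Zariski-density of $H \subset I$ then forces $V = \mathfrak{g}$. This is exactly what the paper does.

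Second, the word-map construction. You cannot realize a \emph{pre-chosen} family $\bigl(\Ad(h_\ell)(X_{k_\ell})\bigr)_\ell$ as the partial derivatives of a single word map. If $w(t_1,\dots,t_n) = \gamma_{j_1}(t_1)\cdots\gamma_{j_n}(t_n)$, the $k$-th right-translated partial derivative is $\Ad\bigl(\gamma_{j_1}(t_1)\cdots\gamma_{j_{k-1}}(t_{k-1})\bigr)(X_{j_k})$; the conjugating elements are forced to be the \emph{nested} prefixes of one word, so after the first block $h_1$ and the factor $\gamma_{k_1}$, the derivative attached to $\gamma_{k_2}$ is $\Ad(h_1\gamma_{k_1}(t'_1)h_2)(X_{k_2})$, not $\Ad(h_2)(X_{k_2})$. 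The paper avoids this by a greedy argument: choose $h_i \in H$ and tangent vectors $X_i \in C_{h_i}$ one at a time, and at step $n+1$ use the fact that the cones $C_h$ span $\mathfrak{g}$ to find $h_{n+1}$ and $X_{n+1}$ with $\Ad(h_1\cdots h_n)(X_{n+1})$ outside the span of the previously obtained twisted vectors; this keeps the prefix structure consistent by construction. Your argument can be repaired along the same lines — extend the word by a ``filler'' $h \in H$ chosen so that some $X_j$ leaves the pulled-back span — but as written the fixed-basis concatenation does not yield a surjective differential.

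Finally, a small remark: the paper works with the full tangent cones $C_h$ of $H$ rather than only with $\Ad(h)(X_j)$; for semigroups generated by one-parameter families these span the same subspace, so your $V$ coincides with the paper's $W$, but the cone formulation is what makes the greedy step above run smoothly.
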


\begin{remark*}
One can also prove the following analagous result, which will not be useful to us: \emph{In a simple real Lie group, every non-discrete Zariski-dense closed subsemigroup has nonempty interior.}
\end{remark*}

\begin{proof}
For $h$ in~$H$, we denote by~$C_h$ the cone in the Lie algebra~$\mathfrak{g}$ of~$G$ given by:
\[C_h := \setsuch{X \in \mathfrak{g}}{\exists \text{ a } C^1 \text{ path } \gamma: [0, 1) \to G, \quad
\begin{cases}
\gamma_0 = e \\
\frac{d}{dt} \restr{\gamma_t}{t = 0} = X \\
\forall t \in [0, 1],\; \gamma_t h \in H
\end{cases}}.\]

Clearly $C_h$~is a cone. On the other hand, we have
\[C_{h h'} \supset C_h + \Ad_h(C_{h'}) \quad \forall h, h' \in H \quad (*).\]

It suffices for this to consider the path in~$H$: $\gamma_t h \gamma'_t h' = \gamma_t (h \gamma'_t h^{-1}) h h'$.

Let $W$~be the vector space spanned by the cones~$C_h$. Let us show that $W = \mathfrak{g}$. It follows from~(*) that the space~$W$ is $\Ad_H$-invariant. Since $\Ad_H$ is Zariski-dense in~$Ad_G$, $W$~is an ideal of~$\mathfrak{g}$. Moreover, the generators~$X_j$ of the one-parameter groups $\gamma_j(t)$ are in~$W$. Hence $H$~is contained in the connected Lie group~$I$ with Lie algebra~$W$. Hence $I$~is Zariski-dense in~$G$ and $W = \mathfrak{g}$.

Let us now choose elements $h_i$ in~$H$ and $X_i$ in~$C_{h_i}$, for $i = 1, \ldots, n$, with $n$ the largest possible integer such that, setting $k_0 = 1,\; \ldots,\; k_i = h_1 \cdots h_i,\; \ldots$, the family $(Y_i := \Ad_{k_{i-1}}(X_i))_{1 \leq i \leq n}$ is linearly independent. This family $(Y_i)_{1 \leq i \leq n}$ is then a basis of~$\mathfrak{g}$. Indeed, otherwise we could find $h_{n+1}$ in~$H$ and $X_{n+1}$ in~$C_h$ such that $\Ad_{k_n^{-1}}(X_{n+1})$ is not in the span of $Y_1, \ldots, Y_n$, which would contradict the maximality of~$n$.

We denote by $t \mapsto \gamma_{i, t}$ some paths tangent to~$X_i$ at $t = 0$ such that $\gamma_{i, t} h_i$~is in~$H$, and we set $\gamma'_{i, t} := k_{i-1} \gamma_{i, t} k^{-1}_{i-1}$. The map
\[\fundef{\psi:}{[0,1]^n}{G}
{(t_1, \ldots, t_n)}{\gamma'_{1, t} \cdots \gamma'_{n, t} h_1 \cdots h_n = \gamma_{1, t} h_1 \cdots \gamma_{n, t} h_n}\]
has its image in~$H$. Its differential at~$0$ is bijective since $(Y_i)_{1 \leq i \leq n}$ is a basis of~$\mathfrak{g}$. The local inversion theorem then proves that $\psi((0, 1)^n)$ contains an open set. Hence $H$~has nonempty interior.
\end{proof}

\subsection{Hardy fields}
\label{sec:7.2}

Let us recall the main properties of the Hardy fields that we shall need. An excellent reference is (\cite{Ro} p.297--299).

Let $\mathcal{A}$~be the ring of germs at~$+\infty$ of real-valued $C^\infty$ functions on~$\mathbb{R}$. In other terms, $\mathcal{A}$~is the set of real-valued smooth functions defined on a half-line~$[a, \infty)$, modulo the equivalence relation that identifies two functions that coincide on a half-line~$[b, \infty)$.

\begin{definition*}
A \emph{Hardy field} is a subfield of the ring~$\mathcal{A}$ that is invariant by derivation.
\end{definition*}

Hardy fields are interesting because a nonzero function~$y$ that belongs to a Hardy field can vanish only finitely many times (because the function $\frac{1}{y}$ has to be smooth in a neighborhood of~$+\infty$). Hardy fields are ordered real fields that have been introduced with the goal of studying asymptotic developments (cf.~\cite{Bou}). The field~$\mathbb{R}$ of constant functions and the field $\mathbb{R}(x)$ of rational functions are Hardy fields. There are many others, since every solution of a polynomial equation (resp.\ of a first-order polynomial differential equation) with coefficients in a Hardy field is still in a Hardy field. More precisely:

\begin{proposition*}[\cite{Ro}, Theorems 1 and~2]
Let $K$~be a Hardy field, $P \in K[Y]$ be a polynomial with coefficients in~$K$ and $y$~be an element of~$\mathcal{A}$ such that $P(y) = 0$ (resp.\ $\frac{dy}{dx} = P(y)$). Then there exists a Hardy field~$K'$ containing $K$ and~$y$.
\end{proposition*}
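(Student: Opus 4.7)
The plan is to build $K'$ inside $\mathcal{A}$ as the subring generated by $K$ and $y$ (then its field of fractions), verify stability under $d/dx$, and check the defining Hardy-field property: every nonzero element is invertible in $\mathcal{A}$, equivalently eventually nonzero, equivalently of definite sign on some $[a,\infty)$.

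\textit{Algebraic case.} Consider the evaluation map $\varphi\colon K[Y]\to\mathcal{A}$, $Y\mapsto y$. Its kernel is a prime ideal of the PID $K[Y]$ containing $P$, hence is generated by a monic irreducible divisor $\tilde P$ of $P$. Therefore $K[y]\simeq K[Y]/(\tilde P)$ embeds in $\mathcal{A}$ as a field: for any nonzero $Q(y)$ a B\'ezout identity $A\tilde P + B Q = 1$ in $K[Y]$ supplies the inverse $Q(y)^{-1}=B(y)$. To extend the derivation, differentiate the germ identity $\tilde P(y)=0$; writing $\tilde P(Y)=\sum a_i Y^i$ with $a_i\in K$, one obtains
\[
0 \;=\; \tilde P_x(y) + \tilde P'(y)\,y', \qquad \tilde P_x(Y):=\sum a_i'\,Y^i.
\]
Irreducibility in characteristic zero forces separability, so $\tilde P'(y)\neq 0$ is invertible in the field $K[y]$, giving $y' = -\tilde P_x(y)/\tilde P'(y)\in K[y]$; the Leibniz rule then defines $d/dx$ on all of $K[y]$. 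Since $K[y]$ is a field inside $\mathcal{A}$, every nonzero element is invertible and therefore eventually nonzero, which yields the Hardy-field property.

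\textit{Differential case.} Either $\varphi$ has nontrivial kernel, in which case $y$ is already algebraic over $K$ and the previous argument applies (consistency with $y'=P(y)$ being automatic, because both extensions of the derivation are uniquely determined by $\tilde P(y)=0$); or $\varphi$ is injective, so $K[y]$ is a polynomial ring. Set $K':=K(y)$, the fraction field, and extend $d/dx$ by declaring $y':=P(y)$ together with the quotient rule.

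\textit{Main obstacle.} The hard point is the transcendental subcase: one must show that every nonzero $Q(y)\in K[y]$ is eventually nonzero as a germ, so that $K(y)$ genuinely embeds in $\mathcal{A}$ as a field and not merely as a ring of possibly-oscillating germs. The approach is induction on $\deg_Y Q$: if zeros of $Q(y)$ accumulate at $+\infty$, Rolle's theorem yields accumulating zeros of
\[
\bigl(Q(y)\bigr)' \;=\; Q_x(y) + Q_Y(y)\,P(y).
\]
A careful leading-coefficient analysis, exploiting that elements of $K$ have well-defined signs at $+\infty$ so the top-degree term in $y$ cannot vanish by cancellation, followed by Euclidean reduction modulo $Q$ in $K[Y]$, produces a nonzero polynomial of strictly smaller $Y$-degree whose value at $y$ still has accumulating zeros, contradicting minimality. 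Once this is settled, the field $K(y)$ is realized inside $\mathcal{A}$ and inherits the Hardy-field property exactly as in the algebraic case, completing the construction of $K'$.
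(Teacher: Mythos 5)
The paper does not prove this proposition; it is cited verbatim as Theorems~1 and~2 of Rosenlicht \cite{Ro}, so there is no in-house argument to compare against. Your plan---realize $K(y)$ inside $\mathcal{A}$, check stability under $d/dx$, and reduce the Hardy-field axiom to ``$Q(y)$ is eventually zero or eventually nonzero for every $Q\in K[Y]$''---has the right shape, and you correctly identify that last dichotomy as the crux. But two steps do not go through as written.

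First, in the algebraic case you assert that $\ker\varphi$ is a prime ideal of $K[Y]$. That is not automatic: $\mathcal{A}$ is not an integral domain (two nonzero germs with interleaved zero sets multiply to the zero germ), so a subring of $\mathcal{A}$ need not be a domain, and $\ker\varphi$ could a priori be generated by a reducible divisor of $P$ while none of its irreducible factors kills $y$. You need a further argument here, for instance: for distinct monic irreducible factors $\tilde P_1,\tilde P_2$ of $P$ the resultant $\operatorname{Res}_Y(\tilde P_1,\tilde P_2)$ is a nonzero element of the Hardy field $K$, hence is eventually nonzero, so on a half-line $[a,\infty)$ the value $y(x)$ can be a root of at most one factor; the sets $\{x\ge a:\tilde P_i(y)(x)=0\}$ are closed, disjoint, and cover $[a,\infty)$, so by connectedness exactly one of them is the whole half-line. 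Only then does $\ker\varphi=(\tilde P_i)$ for a single irreducible $\tilde P_i$, after which your B\'ezout and separability arguments are fine.

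Second, and more seriously, the Rolle-plus-Euclidean-reduction descent in the transcendental differential case does not produce a smaller-degree polynomial with accumulating zeros. Rolle gives points $\xi_n$ with $(Q(y))'(\xi_n)=0$, and $(Q(y))'=Q_x(y)+Q_Y(y)P(y)$; but $\deg_Y\bigl(Q_x+Q_YP\bigr)$ is typically $\deg_Y Q+\deg_Y P-1\ge\deg_Y Q$, so no degree drop yet. Writing $Q_x+Q_YP=AQ+R$ with $\deg_Y R<\deg_Y Q$, you get $R(y)(\xi_n)=-A(y)(\xi_n)\,Q(y)(\xi_n)$, and $Q(y)(\xi_n)$ is generically \emph{nonzero} (the $\xi_n$ lie strictly between the zeros of $Q(y)$), so $R(y)$ has no reason to vanish at the $\xi_n$. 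The induction therefore stalls at exactly the point you flag as ``the main obstacle''; the correct descent (as in Rosenlicht's Theorem~2) needs a more careful choice of the auxiliary quantity whose zeros one tracks, not a straightforward remainder modulo $Q$.
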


In particular, if $K$~is a Hardy field and $f$~is a nonzero element of~$K$, then there exists a Hardy field~$K'$ containing $K$, $e^{|f|}$, $\log |f|$ and $|f|^\alpha$ for every real~$\alpha$.

\subsection{Maps with finite fibers}
\label{sec:7.3}

The following elementary lemma will be useful for us.

\begin{lemma*}
Let $M$~be a real analytic manifold, $\psi: M \times \mathbb{R} \to \mathbb{R}$ be an analytic function, $Z = \psi^{-1}(0)$ and $p: Z \to M$ be the restriction to~$Z$ of the first projection. Suppose that, for every $m$ in~$M$, $p^{-1}(m)$ is finite. Then there exists a nonempty open subset~$U$ of~$M$ such that $p^{-1}(\overline{U})$ is compact.
\end{lemma*}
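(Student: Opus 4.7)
The plan is to find an integer $R \geq 1$ and a nonempty open $U \subset M$ with compact closure such that $\psi(m, t) \neq 0$ for all $(m, t) \in \overline{U} \times \{|t| > R\}$; then $p^{-1}(\overline{U}) = Z \cap (\overline{U} \times [-R, R])$ will be a closed subset of a compact set, hence compact. Writing
\[
C_R := \{m \in M : \psi(m, t) \neq 0 \text{ for all } |t| > R\},
\]
the finite-fiber hypothesis yields $M = \bigcup_{R \geq 1} C_R$, so it is enough to exhibit an $R$ for which $C_R$ has nonempty interior.

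First I would set up a Baire-category machinery. Fix a compact neighborhood $K \subset M$ with nonempty interior. For each triple of positive integers $R, k, N$ with $N > R$, the set
\[
F_{R, k, N} := p\bigl( Z \cap (K \times \{R + 1/k \leq |t| \leq N\}) \bigr)
\]
is the continuous image of a compact analytic subset of $M \times \mathbb{R}$, hence a closed (in fact compact) subanalytic subset of $K$; moreover $K \setminus C_R = \bigcup_{k, N} F_{R, k, N}$. Applying Baire's theorem to $K = \bigcup_R \overline{K \cap C_R}$ yields an index $R_0$ such that $V := \operatorname{Int}_K(\overline{K \cap C_{R_0}})$ is nonempty, with $C_{R_0}$ dense in $V$. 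If some $F_{R_0, k, N}$ contained a nonempty open subset $W$ of $V$, then $W$ would lie in $K \setminus C_{R_0}$ yet meet the dense set $C_{R_0}$, a contradiction; hence each $F_{R_0, k, N}$ is nowhere dense in $V$, and by basic subanalytic geometry has dimension strictly less than $\dim M$.

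The hardest step is to upgrade this density into an actual open subset of some $C_R$. I would introduce the closed \emph{escape locus}
\[
B := \{m_0 \in V : \text{there exist } (m_n, t_n) \in Z \text{ with } m_n \to m_0 \text{ and } |t_n| \to \infty\},
\]
and observe that its complement $V \setminus B$ coincides with $V \cap \bigcup_R \operatorname{Int}(C_R)$: any $m_0 \in V \setminus B$ admits, by definition, a neighborhood inside some $C_R$, from which a sufficiently small $U$ with compact closure follows. It thus remains to show $B \neq V$. Assuming for contradiction that $B = V$, the subanalytic curve selection lemma (applied in a suitable compactification of $M \times \mathbb{R}$, e.g.\ via the change of variable $s = 1/t$ near infinity) produces, for each $m_0 \in V$, a real-analytic arc in $Z$ whose $m$-component tends to $m_0$ and whose $|t|$-component tends to infinity. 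Letting $m_0$ vary over $V$, these arcs assemble into a family in $Z$ of dimension at least $\dim V + 1 = \dim M + 1$, contradicting the inequality $\dim Z \leq \dim M$ forced by the finite-fiber hypothesis. Making this dimension count fully rigorous at infinity, so that curve selection applies, is the principal technical obstacle of the argument.
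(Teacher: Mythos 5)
Your proposal takes a genuinely different route from the paper, and it has a real gap that you yourself flag at the end; let me explain why the gap is fundamental and not merely technical, and then contrast with the paper's argument.

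The critical problem is that the analyticity of $\psi$ is only given on $M \times \mathbb{R}$, and this gives no subanalytic control of $Z$ near $t = \pm\infty$. After the substitution $s = 1/t$, the function $(m,s) \mapsto \psi(m, 1/s)$ is analytic on $M \times (0, 1)$ but need not extend (even continuously) to $s = 0$, so the closure $\overline{Z}$ in $M \times [-\infty, +\infty]$ has no reason to be subanalytic. Consequently neither the curve selection lemma at a point $(m_0, +\infty)$ nor the dimension inequality $\dim(\overline{Z} \setminus Z) < \dim Z$ is available. Moreover, even granting curve selection, the final step as written does not close: the arcs you produce lie \emph{inside} $Z$, so their union has dimension at most $\dim Z \leq \dim M$ automatically; one cannot conclude the family has dimension $\dim M + 1$ without an injectivity or transversality argument. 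The clean way to run the dimension count would be to show that $\overline{Z} \setminus Z$ contains a set of the form $V' \times \{+\infty\}$ of dimension $\dim M$, contradicting $\dim(\overline{Z} \setminus Z) < \dim Z \leq \dim M$ --- but this again requires subanalyticity of $\overline{Z}$ in the compactification, which is exactly what is missing. The preceding Baire-category reduction to $C_{R_0}$ dense in an open $V$, with each escape stratum $F_{R_0,k,N}$ closed and nowhere dense, is correct and nicely organized, but it does not by itself supply the structure at infinity you need.

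The paper's proof sidesteps infinity entirely. It uses the analytic Sard theorem to discard the set of critical values and reduce, after shrinking $M$, to the case where $p: Z \to M$ is a local diffeomorphism; then it runs a purely topological argument: if $p^{-1}(\overline{U})$ were non-compact for every nonempty open $U$, one builds by induction an infinite sequence of pairwise disjoint compact sets $C_n \subset Z$ with nonempty interior, on each of which $p$ is injective, and whose images $K_n = p(C_n)$ form a decreasing sequence; any point $m \in \bigcap_n K_n$ then has infinitely many preimages, one in each $C_n$, contradicting the finite-fiber hypothesis. This argument is entirely local and never needs to know anything about the behavior of $\psi$ as $|t| \to \infty$. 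I would encourage you to compare the two strategies: your Baire/subanalytic framework is natural, but the finite-fiber hypothesis is a much stronger local constraint on $p$ than a constraint at infinity, and the Sard reduction turns it directly into local injectivity, which is what the nested-compacts argument exploits.
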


\begin{proof}
We may assume that $Z$ and~$M$ are smooth. Let $Z' := \setsuch{z \in Z}{dp(z) \text{ is surjective}}$ and $Z'' := Z \setminus Z'$. By the analytic Sard theorem, $p(Z'')$ has codimension at least~$1$ in~$M$. Reducing~$M$ if necessary, we may assume that $Z''$~is empty, \ie that $p$~is a local diffeomorphism.

By contradiction: suppose that the conclusion does not hold. We then conscruct by induction an infinite sequence $(C_n)_{n \geq 1}$ of disjoint compact subsets of~$Z$ with nonempty interior such that the restriction $\restr{p}{C_n}$ is injective and such that the sequence of the image compact sets $K_n := p(C_n)$ is decreasing; this is possible since, given that $p^{-1}(K_n)$ is not compact, we can choose for $C_{n+1}$ a small neighborhood of a point in the interior of~$p^{-1}(K_n)$ which is not in $C_1 \cup \cdots \cup C_n$. But then if $m$~is a point in the intersection of the compact sets~$K_n$, the fiber~$p^{-1}(m)$ is infinite, which is a contradiction.
\end{proof}

\subsection{The cone $\ell_\Gamma$ has nonempty interior}
\label{sec:7.4}

We can now prove this statement.

Thanks to Proposition~\ref{sec:6.3}, we may assume that $\Gamma$ is contained in an open subsemigroup $G^\eps_g$ whose elements are all $\mathbb{R}$-regular (cf.~\ref{sec:6.2}). We may also assume that $\Gamma$~is generated by a finite family of elements~$(\gamma_j)_{1 \leq j \leq s}$ such that each of the groups generated by~$\gamma_j$ is Zariski-connected. Note that $\gamma_j$~is semisimple and let $\gamma_j = m_j a_j = a_j m_j$ be the Jordan decomposition of~$\gamma_j$, with $m_j$~elliptic and $a_j$~hyperbolic. Let us call $M_j$ the closure in~$G$ of the group generated by~$m_j$: this is a compact group. Replacing if necessary $\gamma_j$ by some power, we may assume that for every $t \geq 1$, we have $M_j a_j^t \subset G^\eps_g$.

Since $\ell_\Gamma$~is convex, to show that $\ell_\Gamma$ has nonempty interior, it suffices to show that every linear form on the $\mathbb{R}$-vector space~$\mathfrak{a}$ that vanishes on~$\ell_\Gamma$ is identically zero. So let $\beta_1, \ldots, \beta_r$ be real numbers such that for every $\gamma$ in~$\Gamma$, we have $\prod_{i=1}^r \lambda_1(\rho_i(\gamma))^{\beta_i} = 1$. Our goal is to show that $\beta_1 = \cdots = \beta_r = 0$.

Let $\Delta$~be the subsemigroup of~$G^\eps_g$ generated by all the subsets $M_j a_j^t$ with $j = 1, \ldots, s$ and $t \geq 1$. This semigroup $\Delta$ contains $\Gamma$. Let $\phi_1, \ldots, \phi_r: G^\eps_g \to \mathbb{R}$ and $\Phi: G^\eps_g \to \mathbb{R}$ be the functions defined by $\phi_i(h) = \lambda_1(\rho_i(h))$ and
\[\Phi(h) = \prod_{i = 1}^r \phi_i(h)^{\beta_i} - 1.\]
Thus $\Phi$ is an analytic function that vanishes on~$\Gamma$.

Let us show that $\Phi$ vanishes on~$\Delta$. Let $g_1 \cdots g_p \in \Delta$ be some word, where every $g_l$ lies in one of the subsets $M_j a_j^t$. We want to show that $\Phi(g_1 \cdots g_p) = 0$. This can be done by induction on the number of indices $l$ such that $g_l$ is not of the form $\gamma_j^n$, using the lemma below.

Thus $\Phi$ vanishes on~$\Delta$. But $\Delta$~has nonempty interior (Lemma~\ref{sec:5.1}), hence $\beta_1 = \cdots = \beta_r = 0$ as desired.

We used the following lemma.

\begin{lemma*} ($k = \mathbb{R}$)
Let $G^\eps_g$~be an open subsemigroup of~$G$ whose elements are all $\mathbb{R}$-regular, and $\Phi$~be the function defined above. Let $h$~be an element of~$G^\eps_g$ and $h_1$, $h_2$~be elements of $G^\eps_g \cup \{1\}$. Let $h = m a = a m$ be the Jordan decomposition of~$h$ with $m$~elliptic and $a$~hyperbolic. Let $M$~denote the closure of the group generated by~$m$. Suppose that
\begin{itemize}
\item for every real $t \geq 1$, $M a^t \subset G^\eps_g$;
\item for every integer $n \geq 1$, $\Phi(h_1 h^n h_2) = 0$.
\end{itemize}
Then for every real $t \geq 1$, we have $\Phi(h_1 M a^t h_2) = 0$.
\end{lemma*}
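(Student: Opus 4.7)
The plan is to prove that the real-analytic function
\[\psi: M \times [1,\infty) \to \mathbb{R}, \qquad \psi(m', t) := \Phi(h_1 m' a^t h_2)\]
vanishes identically. Well-definedness on this domain is automatic: since $Ma^t \subset G^\eps_g$ and $G^\eps_g$ is a semigroup containing $h_1, h_2$ (or the identity), the element $h_1 m' a^t h_2$ lies in $G^\eps_g$, all of whose elements are $\mathbb{R}$-regular, so $\phi_i = \lambda_1 \circ \rho_i$ and hence $\Phi$ is real-analytic there. The hypothesis reads $\psi(m^n, n) = 0$ for every integer $n \geq 1$; and since $m$ topologically generates the compact abelian Lie group $M$, the set $\{m^n : n \geq 1\}$ is dense in $M$ (the closure of a semigroup generated by a single element of a compact group is already a group).

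The key preliminary step is to show that for each fixed $m' \in M$, the function $t \mapsto \psi(m', t)$ belongs to a Hardy field. Diagonalizing $\rho_i(a)$, the entries of $\rho_i(h_1 m' a^t h_2)$ are real linear combinations of terms $e^{tc_{ij}}$ with real $c_{ij}$, hence lie in the Hardy field $\mathbb{R}(e^{tc_{ij}})$. Since $h_1 m' a^t h_2$ is $\mathbb{R}$-regular, its largest-modulus eigenvalue under $\rho_i$ is a simple real root of the characteristic polynomial, so the algebraic case of Proposition~\ref{sec:7.2} places $\phi_i(h_1 m' a^t h_2)$ in a Hardy field extension; closing under real powers via the first-order ODE $y' = \beta_i (\phi_i'/\phi_i) y$ and under products then brings $\psi(m', \cdot)$ inside a Hardy field. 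Being additionally real-analytic, $\psi(m', \cdot)$ either vanishes identically on $[1,\infty)$ or has only finitely many zeros there.

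Now I argue by contradiction, supposing $\psi \not\equiv 0$. Set $Z := \psi^{-1}(0)$ and let $p : Z \to M$ be the first projection. If every fiber of $p$ were finite, the lemma of Section~\ref{sec:7.3} would produce a nonempty open $U \subset M$ with $p^{-1}(\bar U)$ compact; but density of $\{m^n\}$ in $M$ would put infinitely many $m^n$ into $U$, and then the points $(m^n,n) \in p^{-1}(\bar U)$ with $n \to \infty$ would violate compactness. Hence some fiber is infinite, and by the Hardy field step there exists $m_0 \in M$ with $\psi(m_0,\cdot) \equiv 0$ on $[1,\infty)$. Set $M_0 := \{m' \in M : \psi(m',\cdot) \equiv 0\}$, a closed subset of $M$ with $m_0 \in M_0$. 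If $M_0 \neq M$, then $M \setminus M_0$ is a nonempty open analytic submanifold on which, by the Hardy field step, all fibers of $p$ are finite; applying the lemma of Section~\ref{sec:7.3} on this submanifold yields a nonempty open $U \subset M \setminus M_0$ with $p^{-1}(\bar U^{M \setminus M_0})$ compact in $M \times \mathbb{R}$ (compactness being an absolute property). Density of $\{m^n\}$ in $M$ again forces infinitely many $(m^n, n)$ with $n \to \infty$ into this compact set, a contradiction. Therefore $M_0 = M$ and $\psi \equiv 0$.

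The main obstacle is the Hardy field step: one must verify carefully that for each fixed $m' \in M$ the $t$-dependence of $\psi(m', \cdot)$ is governed only by the real exponentials coming from $a^t$ (and not by any oscillatory contribution from $m'$), which is exactly why $m'$ must be frozen before invoking Proposition~\ref{sec:7.2}. Once this is secured, the combination of the lemma of Section~\ref{sec:7.3} with density of $\{m^n\}$ in $M$ is a clean mechanism to upgrade the vanishing from the discrete set $\{(m^n,n)\}$ to the whole cylinder $M \times [1,\infty)$.
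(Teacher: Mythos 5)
Your proposal is correct and follows essentially the same route as the paper: freeze $m'$, show $t \mapsto \Phi(h_1 m' a^t h_2)$ lies in a Hardy field so each fiber of $p$ is either all of $\{m'\}\times[1,\infty)$ or finite, then combine Lemma~\ref{sec:7.3} with density of $\{m^n\}_{n\geq1}$ in $M$ to force a contradiction. One small point worth flagging in the Hardy field step: $\phi_i = \lambda_1\circ\rho_i$ is the \emph{modulus} of the top eigenvalue, which may be the negative of a root of the characteristic polynomial of $\rho_i(\cdot)$; the paper sidesteps this by taking $\psi_i$ to be a root of $\det\bigl(\rho_i(h_1 m_o a^t h_2)^2 - X^2\bigr) = 0$ (equivalently one could observe that the simple top eigenvalue has constant sign along the curve, so its absolute value still lies in the Hardy field). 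Your explicit introduction of the closed set $M_0$ and restriction to $M\setminus M_0$ is a slightly more careful version of what the paper does implicitly when it applies Lemma~\ref{sec:7.3} to the open set where fibers are finite; both versions are valid.
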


\begin{proof}
Note that $M$ is a compact group, hence a Zariski-closed subset of $G \subset \SL(n, \mathbb{R}) \subset \mathbb{R}^{n^2}$. Let us call
\[Z := \setsuch{(m', t) \in M \times [1, \infty)}{\Phi(h_1 m' a^t h_2) = 0},\]
and let $p: Z \to M$ be the first projection.

Let us first check that, for every $m_o$ in~$M$, the fiber~$p^{-1}(m_o)$ is either all of $m_o \times [1, \infty)$ or is finite. For this, let $\psi_1, \ldots, \psi_r: [1, \infty) \to \mathbb{R}$ and $\Psi: [1, \infty) \to \mathbb{R}$ be the functions defined by $\psi_i(t) = \lambda_1(\rho_i(h_1 m_o a^t h_2))$ and
\[\Psi(t) = \Phi(h_1 m_o a^t h_2) = \prod_{i = 1}^r \psi_i(t)^{\beta_i} - 1.\]
By Proposition~\ref{sec:7.2}, there exists a Hardy field that contains the germs at~$+\infty$ of the functions~$\psi_i$ and~$\Psi$. Indeed, $\psi_i(t)$ is a solution of the equation in~$X$
\[\det \left( \rho_i(h_1 m_o a^t h_2)^2 - X^2 \right) = 0\]
whose coefficients are polynomials in~$t$ and in exponential expressions~$e^{\kappa t}$ whose coefficients~$\kappa$ are real. In particular, if $\Psi$~is nonzero, it has only finitely many zeros, \ie $p^{-1}(m_o)$ is finite as desired.

Suppose by contradiction that $Z \neq M \times [1, \infty)$. We can then find a nonempty open subset $U$ of~$M$ such that for every $m_o$ in~$U$, $m_o \times [1, \infty)$ is not contained in~$Z$. By what we said above, $p^{-1}(m_o)$ is then finite. Lemma~\ref{sec:7.3} then allows us to choose~$U$ in such a way that $p^{-1}(\overline{U})$ is compact. But the sequence~$S$ of the integers~$n$ such that $m^n$~is in~$U$ is infinite. By assuption, for $n \geq 1$, we have $\Phi(h_1 m^n a^n h_2) = 0$. Hence, for $n$ in~$S$, $(m^n, n)$ is in $p^{-1}(\overline{U})$. This contradicts the compactness of $p^{-1}(\overline{U})$. Hence $Z = M \times [1, \infty)$, and the conclusion follows.
\end{proof}

\noindent {\scshape Higher School of Economics, ul Usacheva d. 6, g. Moskva, 119048, Russia} \\
\noindent {\itshape E-mail address:} \url{ilia.smilga@normalesup.org}

\begin{thebibliography}{A-M-S}

\bibitem[A-M-S]{A-M-S}
H.~Abels, G.~Margulis, and G.~Soifer.
\newblock Semigroups containing proximal linear maps.
\newblock {\em Isr. Jour. Math.}, 91:1--30, 1995.

\bibitem[Be]{Be}
Y.~Benoist.
\newblock Actions propres sur les espaces homog{\`e}nes r{\'e}ductifs.
\newblock {\em Annals of Math.}, 144:315--347, 1996.

\bibitem[Be-La]{Be-La}
Y.~Benoist and F.~Labourie.
\newblock Sur les diff{\'e}omorphismes d'Anosov affines {\`a} feuilletages stable et instable diff{\'e}rentiables.
\newblock {\em Inv. Math.}, 111:285--308, 1993.

\bibitem[Bor]{Bor}
A.~Borel.
\newblock {\em Linear algebraic groups}.
\newblock GTM 126 Springer, 1991.

\bibitem[Bo-Ti]{Bo-Ti}
A.~Borel and J.~Tits.
\newblock Groupes r{\'e}ductifs.
\newblock {\em Publ. Math. I.H.E.S.}, 27:659--755, 1965.

\bibitem[Bou]{Bou}
N.~Bourbaki.
\newblock {\em Fonctions d'une variable r{\'e}elle}.
\newblock Hermann, Paris, 1961.

\bibitem[Go-Ma]{Go-Ma}
I.~Gol'dsheid and G.~Margulis.
\newblock Lyapunov indices of a product of random matrices.
\newblock {\em Russ. Math. Surv.}, 44:11--71, 1989.

\bibitem[Gu]{Gu}
Y.~Guivarc'h.
\newblock Produits de matrices al{\'e}atoires et applications.
\newblock {\em Erg. Th. Dyn. Sys.}, 10:483--512, 1990.

\bibitem[Gu-Ra]{Gu-Ra}
Y.~Guivarc'h and G.~Raugi.
\newblock Propri{\'e}t{\'e}s de contraction d'un semigroupe de matrices inversibles.
\newblock {\em Isr. Jour. Math.}, 65:165--196, 1989.

\bibitem[He]{He}
S.~Helgason.
\newblock {\em Differential geometry, Lie groups and symmetric spaces}.
\newblock Acad. Press, 1978.

\bibitem[Mac]{Mac}
I.~Macdonald.
\newblock {\em Spherical functions on a group of $p$-adic type}.
\newblock Publ. Ramanujan Inst., 1971.

\bibitem[Mar]{Mar}
G.~Margulis.
\newblock {\em Discrete subgroups of semisimple Lie groups}.
\newblock Springer, 1991.

\bibitem[Ma-So]{Ma-So}
G.~Margulis and G.~Soifer.
\newblock Maximal subgroup of infinite index in finitely generated linear group.
\newblock {\em Jour. of Alg.}, 69:1--23, 1981.

\bibitem[Pr]{Pr}
G.~Prasad.
\newblock {\em $\mathbb{R}$-regular elements in Zariski dense subgroups}.
\newblock Quarterly Jour. Math., 1994.

\bibitem[Ro]{Ro}
M.~Rosenlicht.
\newblock Hardy fields.
\newblock {\em Jour. Math. Anal. Appl.}, 93:297--311, 1983.

\bibitem[Ti1]{Ti1}
J.~Tits.
\newblock Repr{\'e}sentations lin{\'e}aires irr{\'e}ductibles d'un groupe r{\'e}ductif sur un corps quelconque.
\newblock {\em Journ. Reine Angw. Math.}, 247:196--220, 1971.

\bibitem[Ti2]{Ti2}
J.~Tits.
\newblock Free subgroups in linear groups.
\newblock {\em Jour. of Algebra}, 20:250--270, 1972.

\end{thebibliography}
\end{document}